\documentclass[11pt]{amsart}
\usepackage{color, soul}
\usepackage{graphicx}
\usepackage{tikz-cd}
\usepackage{framed}
\usepackage{simpler-wick}
\usepackage{pgf,tikz,pgfplots}
\usepackage{mathrsfs}
\usepackage{mathpple}
\usepackage{graphicx}
\usepackage[all,cmtip]{xy}
\usepackage{amsmath}
\usepackage{tikz}
\usepackage{mathdots}
\usepackage{yhmath}
\usepackage{cancel}
\usepackage{color}
\usepackage{siunitx}
\usepackage{array}
\usepackage{multirow}
\usepackage{amssymb}
\usepackage{gensymb}
\usepackage{tabularx}
\usepackage{booktabs}
\usepackage{makecell}
\usetikzlibrary{fadings}
\usetikzlibrary{patterns}
\usetikzlibrary{shadows.blur}
\usetikzlibrary{shapes}
\usepackage{hyperref,xcolor}
\definecolor{wine-stain}{rgb}{0.5,0,0}
\hypersetup{
  colorlinks,
  linkcolor=wine-stain,
  linktoc=all
}

\usepackage{latexsym,bm,amsmath,amssymb}

\parskip=5pt
\usetikzlibrary{arrows}
\linespread{1.2}
\textwidth=15cm \oddsidemargin=1cm \evensidemargin=1cm
\setlength{\headsep}{10pt}
\vfuzz2pt 
\hfuzz2pt 
\newtheorem{thm}{Theorem}[section]
\newtheorem{cor}[thm]{Corollary}
\newtheorem{lem}[thm]{Lemma}
\newtheorem{prop}[thm]{Proposition}
\theoremstyle{definition}
\newtheorem{exa}[thm]{Example}
\newtheorem{defn}[thm]{Definition}

\theoremstyle{remark}
\newtheorem{rem}[thm]{Remark}
\numberwithin{equation}{section}

\newcommand{\shift}{\text{\small{[1]}}}

\newcommand{\omegaQI}{\omega_{X^I}(*\Delta_I)_{\mathcal{Q}}}
\newcommand{\omegaQIP}{\omega_{X^{I'}}(*\Delta_{I'})_{\mathcal{Q}}}
\newcommand{\BVk}{O_{\mathrm{BV},\mathbf{k}}}
\newcommand{\OL}{O(\mathbf{L})}
\newcommand{\ap}{a^p}
\newcommand{\aq}{a^q}
\newcommand{\as}{a^s}

\newcommand{\apc}{a^{p\dagger}}
\newcommand{\aqc}{a^{q\dagger}}
\newcommand{\asc}{a^{s\dagger}}

\newcommand{\hoco}{\mathrm{hocolim}}
\newcommand{\BVL}{O_{\mathrm{BV}}(\mathbf{L})}
\newcommand{\BVLk}{O_{\mathrm{BV},\mathbf{k}}(\mathbf{L})}
\newcommand{\Rk}{\mathcal{R}_{\mathbf{k}}}
\newcommand{\equiQ}{{\textsf{Q}}}
\newcommand{\fO}{\hat{\mathbb{O}}_N}
\newcommand{\RtensorT}{(\mathcal{R}[1])^{\boxtimes T}(*\Delta_T)_{\mathcal{Q}}}
\newcommand{\DistributionS}{\mathscr{D}'(X^{\mathcal{S}})}
\newcommand{\dDistribution}{d_{\mathscr{D}}}
\newcommand{\TraceBV}{\mathrm{\mathbf{{Tr}}}^{\mathrm{BV}}}
\newcommand{\Trace}{\mathrm{\mathbf{{Tr}}}}
\newcommand{\trace}{\mathrm{\mathbf{{tr}}}}


\begin{document}
\setulcolor{green}
\setstcolor{blue}
\sethlcolor{yellow}

\title[]{Elliptic trace map on chiral algebras}%
 \author{Zhengping Gui and Si Li}

  \address{
Z. Gui: International Centre for Theoretical Physics, Trieste, Italy;
}
\email{zgui@ictp.it}
  \address{
S. Li:
Yau Mathematical Sciences Center, Tsinghua University, Beijing, China
}

\email{sili@mail.tsinghua.edu.cn}

\address{}%
\email{}%

\thanks{}%
\subjclass{}%
\keywords{}%

\begin{abstract} Trace map on deformation quantized algebra leads to the algebraic index theorem. In this paper, we investigate a two-dimensional chiral analogue of the algebraic index theorem via the theory of chiral algebras developed by Beilinson and Drinfeld. We construct a trace map on the elliptic chiral homology of the free $\beta\gamma-bc$ system using the BV quantization framework. As an example, we compute the trace evaluated on the unit constant chiral chain and obtain the formal Witten genus in the Lie algebra cohomology. We also construct a family of elliptic trace maps on coset models.
\end{abstract}
\maketitle
\tableofcontents
\def\Xint#1{\mathchoice
{\XXint\displaystyle\textstyle{#1}}%
{\XXint\textstyle\scriptstyle{#1}}%
{\XXint\scriptstyle\scriptscriptstyle{#1}}%
{\XXint\scriptscriptstyle\scriptscriptstyle{#1}}%
\!\int}
\def\XXint#1#2#3{{\setbox0=\hbox{$#1{#2#3}{\int}$}
\vcenter{\hbox{$#2#3$}}\kern-.5\wd0}}
\def\ddashint{\Xint=}
\def\dashint{\Xint-}

\section{Introduction}

It is well known that the Atiyah-Singer index theorem is closely related to supersymmetric/topological quantum mechanics \cite{getzler1983pseudodifferential,windey1983supersymmetric,witten1982constraints}. Though not rigorous, this physics interpretation provides a clear and deep insight into the origin of index theorems via the geometry of the loop space. As a natural generalization,  one can replace the circle with a two-dimensional torus. This leads to Witten's proposal  \cite{witten1987elliptic,witten1988index} for the index of Dirac operators on loop spaces, which is still a mystery in geometry and topology.

In \cite{Fedosov:1996fu,nest1995algebraic}, Fedosov and Nest-Tsygan established the algebraic index theorem for deformation quantized algebras as the algebraic analogue of the Atiyah-Singer index theorem. It was further shown  \cite{nest1996formal} that the original Atiyah-Singer index theorem can be deduced from this algebraic one. In \cite{Grady:2011jc}, Gwilliam and Grady  studied the one-dimensional Chern-Simons theory (or topological quantum mechanics) on the total space of a cotangent bundle and showed that the one-loop effective theory encodes the Todd genus. Later \cite{BVQandindex,gui2021geometry} established the exact connection between the algebraic index theorem and topological quantum mechanics via a trace map constructed in the Batalin-Vilkovisky(BV) formalism. This sets up a mathematical understanding of the physics approach to index theorem in terms of an exact low-energy effective quantum field theory \cite{gui2021geometry}.

 In this paper, we investigate a two-dimensional chiral analogue of the algebraic index theorem via the theory of chiral algebras developed by Beilinson and Drinfeld \cite{book:635773}.

\subsection{Batalin-Vilkovisky(BV) quantization framework}\label{sec: BV}
We briefly describe a version of mathematical structures appearing in quantum field theories within the BV formalism. This will play an essential role in our exploration of the trace map and chiral index later.

 Let us denote by $\mathbf{k}$ the field of Laurent series $\mathbb{C}((\hbar))$. Roughly speaking, BV quantization in quantum field theory on $X$ leads to the following data
  \begin{enumerate}
    \item A factorization algebra  of local observables (we follow the set-up in \cite{costello2021factorization}).
 $$
 \mathrm{Obs}:\quad \text{a $\mathbf{k}$-module equipped with certain algebra structure.}
 $$
 It carries an algebraic structure called factorization product (or operator product expansion in physics terminology).
    \item  A (factorization) chain complex
 $$
 C_{\bullet}(\mathrm{Obs}):\quad \text{a $\mathbf{k}$-chain complex}, \qquad d: \quad \text{the differential}.
 $$
It captures the algebraic structure and global information from local observables.
\item A BV algebra $(O_{\mathrm{BV}}, \Delta)$
  $$
  O_{\mathrm{BV}}:\quad \text{a BV algebra over}\ \mathbb{C}, \qquad \text{$\Delta:\quad$ the BV operator}
  $$
  together with a BV integration map
   $$
 \int_{\mathrm{BV}}: O_{\mathrm{BV}}\to \mathbb{C}, \qquad \text{such that} \qquad \int_{\mathrm{BV}} \Delta(-)=0.
 $$
 In physics, $O_{\mathrm{BV}}$ are functions on the space of zero modes at low energy. $\int_{\mathrm{BV}}$ is a choice (related to the gauge fixing) of the integration map on zero modes.
 \item A $\mathbf{k}$-linear map (encoding the path integral in physics)
 $$
 \mathrm{\mathbf{Tr}}:C_{\bullet}(\mathrm{Obs})\rightarrow O_{\mathrm{BV},\mathbf{k}}=O_{\mathrm{BV}}\otimes_{\mathbb{C}}\mathbb{C}((\hbar))
 $$
  satisfying the \textbf{quantum master equation} (QME)
  $$
  (d+\hbar\Delta) \mathrm{\mathbf{Tr}}=0.
  $$
 In other words, QME says that $\mathbf{Tr}$ is a chain map intertwining $d$ and $-\hbar \Delta$. In physics, it describes the quantum gauge consistency condition within BV quantization.
  \end{enumerate}

The index is obtained as the partition function of the model, which can be formulated as
$$
\text{Index}= \int_{\mathrm{BV}}  \mathrm{\mathbf{Tr}}(1).
$$

\subsection{One-dimensional topological theory and the algebraic index} Let us illustrate the above data via an example of one-dimensional topological theory and explain how it is related to the algebraic index theorem.

One way to describe topological quantum mechanics is the one-dimensional $\sigma$-model of AKSZ type  \cite{Alexandrov:276069} describing maps (with additional data required by supersymmetry)
$$
X=S^1\to Y
$$
from the circle $S^1$ to a symplectic manifold $Y$ (the phase space). Let us first describe the case $Y=\mathbb{R}^{2n}$ with the standard symplectic form $\omega=\sum\limits_{i=1}^n dp^i \wedge dq^i$. The BV quantization in the sense of Section \ref{sec: BV} is worked out in \cite{gui2021geometry} and is summarized as follows.

 \begin{itemize}
    \item The algebra  of local observables is the Weyl algebra $\mathcal{W}_{2n}$ in $2n$ variables equipped with the Moyal-Weyl product
 $$
 \mathrm{Obs}=\mathcal{W}_{2n}\simeq \mathbb{C}[[p^1,\dots,p^{n},q^1,\dots,q^{n}]]((\hbar)).
 $$
 We work in formal series for convenience in perturbative theory.

    \item   The factorization complex is the Hochschild chain complex
 $$
 C_{\bullet}(\mathrm{Obs})=(C_{\bullet}(\mathcal{W}_{2n}),b).
 $$
\item The BV algebra is space of the (formal) differential forms on the phase space
  $$
  O_{\mathrm{BV}}\simeq \mathbb{C}[[p^1,\dots,p^{n},q^1,\dots,q^{n},dp^{1},\dots,dp^{n},dq^{1},\dots,dq^{n}]],
  $$
  and we set
  $$
  \deg(p^i)=\deg(q^i)=0,\quad\deg(dp^i)= \deg(dq^i)=-1, i=1,\dots,n.
  $$
The BV operator is the Lie derivative $\Delta=\mathcal L_{\omega^{-1}}$ with respect to the Poisson tensor
$$
\omega^{-1}=\sum_{i=1}^{2n}\partial_{p^i}\wedge\partial_{q^{i}}.
$$
It is easy to see that $\Delta$ is a degree 1 operator.
 The BV integration map is  (here $\iota_{V}$ is the contraction with the vector field $V$)
 $$
  \int_{\mathrm{BV}}(\alpha)=\left. \iota_{\partial_{q^1}}\cdots \iota_{\partial_{q^n}}\iota_{\partial_{p^1}}\cdots \iota_{\partial_{p^n}}\alpha \right|_{p^i=q^i=0}, \qquad \alpha \in O_{\mathrm{BV}}.
 $$
 \item   A $\mathbf{k}$-linear map (using Feynman diagrams)
 $$
 \mathrm{\mathbf{Tr}}:C_{\bullet}(\mathcal{W}_{2n})\rightarrow O_{\mathrm{BV},\mathbf{k}}=O_{\mathrm{BV}}\otimes_{\mathbb{C}}\mathbb{C}((\hbar))
 $$
 is constructed in \cite{gui2021geometry}  and it satisfies the quantum master equation (QME)
  $$
  (b+\hbar\Delta) \mathrm{\mathbf{Tr}}=0.
  $$
  \end{itemize}
The cohomology of $(O_{\mathrm{BV},\mathbf{k}},\hbar\Delta)$ is one-dimensional and concentrated in degree $-2n$. By direct computation, one can show that $\mathrm{\mathbf{Tr}}$ is not trivial cohomologically. On the other hand, the Hochschild homology of the Weyl algebra $\mathcal{W}_{2n}$ is also one-dimensional and concentrated in degree $-2n$ (see \cite{feigin1983cohomologies,brylinski1987some}). We conclude that $\mathrm{\mathbf{Tr}}$ is a quasi-isomorphism. Later we will see that it is also true in the two-dimensional chiral theory. In fact, $\mathrm{\mathbf{Tr}}$ will be a quasi-isomorphism from the chiral chain complex to a BV algebra.

 Furthermore, the trace map
  $\mathrm{\mathbf{Tr}}$ can be extended to be valued in certain Lie algebra cohomology and $\mathrm{\mathbf{Tr}}(1)$ is equal to the formal $\widehat{A}$-genus times the exponential of the formal characteristic class associated to $\frac{1}{\hbar}\omega_{\hbar}$. This allows us to glue the above BV quantization on an arbitrary symplectic manifold $Y$ via the Gelfand-Fuks construction. Then  $\mathrm{\mathbf{Tr}}$ becomes the canonical trace $\mathrm{Tr}$ on the deformation quantized algebra and one obtains the usual algebraic index theorem
     $$
      \mathrm{Tr}(1)=\int_Y\widehat{A}(TY)\exp(\frac{\omega_{\hbar}}{\hbar}).
      $$

\subsection{Two-dimensional chiral theory and the chiral algebraic index }\label{IntroChiral}
In this paper, we focus on the two-dimensional chiral analogue of the one-dimensional theory discussed in the previous subsection. The chiral analogue of the Weyl algebra $\mathcal{W}_{2n}$ is the chiral algebra $\mathcal{A}^{\beta\gamma-bc}$ associated to the free $\beta\gamma-bc$ system.  In the seminal work of Beilinson and Drinfeld \cite{book:635773}, they give a construction of chiral chain complex $C^{\mathrm{ch}}_{\bullet}(-)$ associated to chiral algebras (in the paper we will use a specific construction of $C^{\mathrm{ch}}_{\bullet}(-)$, denoted by $\tilde{C}^{\mathrm{ch}}_{\bullet}(-)_{\mathcal{Q}}$). The main point of this paper is that the chiral chain complex of $\mathcal{A}^{\beta\gamma-bc}$ plays a role similar to the Hochschild chain complex of the Weyl algebra, where we can construct naturally an explicit (elliptic) trace map $\mathrm{\mathbf{Tr}}_{\mathcal{A}^{\beta\gamma-bc}}$ satisfying quantum master equation. See Table \ref{1dvs2d}.
\begin{table}
  \centering
\begin{tabular}{|c|c|}
\hline
 1d TQM (on $\mathbb{R}^{2n}$) & 2d Chiral CFT( free $\displaystyle \beta \gamma -bc$ system) \\
\hline\hline
 $\displaystyle S^{1}\rightarrow ( \mathbb{R}^{2n},\omega )$ & $\displaystyle X\rightarrow \left(\mathbb{C}^{n|m} ,\langle -,-\rangle \right)$ \\
\hline
 \makecell[c]{Associative algebra: deformation \\quantization algebra $\displaystyle \mathcal{W}_{2n}$} & \makecell[c]{Chiral algebra: free $\displaystyle \beta \gamma -bc$ \\system $\displaystyle \mathcal{A}^{\beta\gamma -bc}$ }\\
\hline
 Hochschild chain complex: $\displaystyle ( C_{\bullet }(\mathcal{W}_{2n}) ,b)$ & Chiral chain complex: $\displaystyle \left( C_{\bullet }^{\mathrm{ch}}\left(\mathcal{A}^{\beta \gamma -bc}\right) ,d \right)$ \\
\hline
 \makecell[c]{Quantum master equation:\\$\displaystyle ( \hbar \Delta +b) \mathrm{\mathbf{Tr}} =0$} & \makecell[c]{Quantum master equation:\\$\displaystyle ( \hbar \Delta +d) \mathrm{\mathbf{Tr}}_{\mathcal{A}^{\beta\gamma-bc}}=0$} \\
 \hline
\makecell[c]{$\displaystyle  \mathrm{\mathbf{Tr}}:C_{\bullet}(\mathcal{W}_{2n})\rightarrow O^{\mathrm{1d}}_{\mathrm{BV},\mathbf{k}}$ \\is a quasi-isomorphism }& \makecell[c]{$\displaystyle  \mathrm{\mathbf{Tr}}_{\mathcal{A}^{\beta\gamma-bc}}: C_{\bullet }^{\mathrm{ch}}\left(\mathcal{A}^{\beta \gamma -bc}\right) \rightarrow O^{\mathrm{2d}}_{\mathrm{BV},\mathbf{k}}$ \\is a quasi-isomorphism} \\
\hline
 \makecell[c]{n-point correlator:\\$\displaystyle \mathrm{\mathbf{Tr}}(\mathcal{O}_{1} \otimes \cdots \otimes \mathcal{O}_{n} ) =$integrals of \\differential forms on \ configuration\\spaces of the circle $\displaystyle S^{1}$} & \makecell[c]{n-point correlator:\\$\displaystyle \mathrm{\mathbf{Tr}}_{\mathcal{A}^{\beta\gamma-bc}}( \mathcal{O}_{1} \otimes \cdots \otimes \mathcal{O}_{n} ) =$regularized \\integrals of differential forms on $\displaystyle X^{n}$} \\
 \hline
\end{tabular}

  \caption{Here $\langle-,-\rangle$ is a symplectic form on the super space $\mathbb{C}^{n|m}$ .}\label{1dvs2d}
\end{table}

To explain our result, let us first give an informal review of Beilinson and Drinfeld's construction of chiral homology. The precise definition will be given in Section \ref{ChiralHomology}. A chiral algebra $\mathcal{A}$ on a curve $X$ (which is a $\mathcal{D}$-module on $X$) can be viewed as a Lie algebra object in the tensor category $(\mathcal{M}(X^{\mathcal{S}}),\otimes^{\mathrm{ch}})$. Roughly speaking, each element $M$ in the category $\mathcal{M}(X^{\mathcal{S}})$ is a collection that assigns every finite set $I$ a $\mathcal{D}_{X^I}$-module on the product $X^I$ satisfying certain compatibility conditions. This category carries a tensor product $\otimes^{\mathrm{ch}}$. Let $\mathcal{M}(X)$ be the category of $\mathcal{D}_X$-modules. There is an embedding
$$
\Delta^{(\mathcal{S})}_*:\mathcal{M}(X)\hookrightarrow\mathcal{M}(X^{\mathcal{S}}),
$$
which sends a $\mathcal{D}_X$-module $M$ to the collection of $\mathcal{D}_{X^I}$-module $\Delta^{(I)}_*M$ for each finite set $I$. Here $\Delta^{(I)}:X\hookrightarrow X^I$ is the diagonal embedding. A chiral algebra $\mathcal{A}$ can be viewed as a Lie algebra object  in $(\mathcal{M}(X^{\mathcal{S}}),\otimes^{\mathrm{ch}})$ via the embedding $\Delta^{(\mathcal{S})}_*$: we have a binary operation
$$
\mu^{(\mathcal{S})} : \Delta^{(\mathcal{S})}_*(\mathcal{A})\otimes^{\mathrm{ch}}\Delta^{(\mathcal{S})}_*(\mathcal{A})\rightarrow \Delta^{(\mathcal{S})}_*(\mathcal{A})
$$
induced by the chiral bracket $\mu$ on $\mathcal{A}$ (see Definition \ref{chiralDefn}). Then $\mu^{(\mathcal{S})}$ is a (graded) Lie bracket since $\mu$ is (graded) skew-symmetric and satisfies the Jacobi identity.

Let $C(\mathcal{A})$ be the reduced Chevalley-Eilenberg complex of the Lie algebra object corresponding to the chiral algebra $\mathcal{A}$
$$
(C(\mathcal{A}),d_{\mathrm{ch}})=(\bigoplus_{\bullet>0}\mathrm{Sym}_{\otimes^{\mathrm{ch}}}^{\bullet}(\Delta^{(S)}_*\mathcal{A}\shift),d_{\mathrm{ch}}),
$$
which is an admissible complex in $\mathcal{M}(X^{\mathcal{S}})$ in the sense of \cite{book:635773}. This means that we can view the complex $C(\mathcal{A})$ as  a complex of $\mathcal{D}$-modules on the Ran space $\mathcal{R}(X)$. The chiral homology is defined as follows
$$
C^{\mathrm{ch}}(X,\mathcal{A}):=R\Gamma_{\mathrm{DR}}(\mathcal{R}(X),C(\mathcal{A})),\quad H^{\mathrm{ch}}_{\bullet}(X,\mathcal{A}):=H^{-\bullet}C^{\mathrm{ch}}(X,\mathcal{A}).
$$
Any actual functorial complex representing $C^{\mathrm{ch}}(X,\mathcal{A})$ is called a \textit{chiral chain complex}.  We will use the one in \cite[Section 4.2.15.]{book:635773} which involves the Dolbeault and Spencer resolutions. See Section \ref{ChiralHomology} for details, there we use the same notation $\tilde{C}^{\mathrm{ch}}(X,\mathcal{A})_{\mathcal{Q}}$ as in \cite{book:635773}.

Now let $X$ be an elliptic curve $\mathbb{C}/\mathbb{Z}+\mathbb{Z}\tau$, and we focus on the free $\beta\gamma-bc$ system. The general set up in Section \ref{sec: BV} becomes (see Section \ref{sec:expanded} for a more detailed introduction)
\begin{itemize}
  \item The observable algebra is the chiral algebra of free $\beta\gamma-bc$ system
  $$
 \mathrm{Obs}=\mathcal{A}^{\beta\gamma-bc}.
 $$
  \item  The factorization complex is the chiral chain complex
 $$
 C_{\bullet}(\mathrm{Obs})=C^{\mathrm{ch}}_{\bullet}:=(\tilde{C}^{\mathrm{ch}}(X,\mathcal{A}^{\beta\gamma-bc})_{\mathcal{Q}},d).
 $$
 See Section \ref{ChiralHomology} for details.
 \item The BV algebra $O_{\mathrm{BV}}$ comes from cohomology groups on $X$. See Section \ref{BVchiral}.
\end{itemize}
The main results in this paper are summarized as follows (see Theorem \ref{mainThm}, Corollary \ref{mainCor} and Theorem \ref{WittenGenus}).
  \begin{thm}

  (1) We construct an explicit chain map from the chiral chain complex of  Beilinson and Drinfeld to a BV algebra
    $$
    \mathrm{\mathbf{Tr}}_{\mathcal{A}^{\beta\gamma-bc}}: (\tilde{C}^{\mathrm{ch}}(X,\mathcal{A}^{\beta\gamma-bc})_{\mathcal{Q}},d)\rightarrow (O_{\mathrm{BV},\mathbf{k}},-\hbar\Delta_{\mathrm{BV}}).
    $$
    In other words, it satisfies the quantum master equation
    $$
    (d+\hbar\Delta_{\mathrm{BV}}) \mathrm{\mathbf{Tr}}_{\mathcal{A}^{\beta\gamma-bc}}=0.
    $$
    Furthermore, the chain map $    \mathrm{\mathbf{Tr}}_{\mathcal{A}^{\beta\gamma-bc}}$ is a quasi-isomorphism.

   (2) For any Lie subalgebra $\mathfrak{g}$ of inner derivations of $\mathcal{A}^{\beta\gamma-bc}$, the above construction can be extended to the Lie algebra cochain. That is, we can get an element
     $$
     \Trace_{\mathfrak{g},\mathcal{A}^{\beta\gamma-bc}}(-)\{-\}\in C_{\mathrm{Lie},\mathrm{BV}}=C_{\mathrm{Lie}}\big(\mathfrak{g},\mathrm{Hom}_{\mathbf{k}}(\tilde{C}^{\mathrm{ch}}(X,\mathcal{A})_{\mathcal{Q}},\mathbf{k})\big)\otimes_{\mathbf{k}}\BVk
     $$
     satisfying
     $$
     \Trace_{\mathfrak{g},\mathcal{A}^{\beta\gamma-bc}}(-)\{-\}|_{C^0_{\mathrm{Lie},\mathrm{BV}}}=     \mathrm{\mathbf{Tr}}_{\mathcal{A}^{\beta\gamma-bc}}(-)\quad\text{and}\  (\partial_{\mathrm{Lie}}+d+\hbar\Delta_{\mathrm{BV}})\Trace_{\mathfrak{g},\mathcal{A}^{\beta\gamma-bc}}(-)\{-\}=0.
     $$

     (3) If we take $\mathcal{A}^{\beta\gamma-bc}$ to be the free $\beta\gamma$-system $\mathcal{A}^{\beta\gamma}_N$ of rank $N$ and $\mathfrak{g}$ to be $\widetilde{W}_N$ which is an extension of the Lie algebra of formal vector fields $W_N$ in $N$ variables. Then there is an element $\theta\in \mathrm{C}^1_{\mathrm{Lie}}(\widetilde{W}_N,\mathbb{C})\otimes_{\mathbb{C}} O_{\mathrm{BV}}$ such that the chain $e^{-\frac{\pi}{i\hbar}\theta}\Trace_{\widetilde{W}_N,\mathcal{A}^{\beta\gamma}_N}(\underline{\mathrm{\mathbf{1}}})\{-\}$  can be identified with an element in $\mathrm{C}_{\mathrm{Lie}}(W_N, \mathrm{GL}_N;\Omega_{\fO})$ which is cohomologous to the formal Witten genus
     $$
e^{-\frac{\pi}{i\hbar}\theta}\Trace_{\widetilde{W}_N,\mathcal{A}^{\beta\gamma}}(\mathrm{\underline{\mathbf{1}}})\{-\}      =\mathrm{Wit}_N(\tau)\quad \text{in}\  H_{\mathrm{Lie}}(W_N, \mathrm{GL}_N;\Omega_{\fO}).
     $$

     Here $\underline{\mathrm{\mathbf{1}}}$ is a normalized constant chiral chain defined in (\ref{ContantUnit}) and $\Omega_{\fO}$ is the space of formal differential forms in $N$ variables viewed as a $W_N$-module via Lie derivative.
  \end{thm}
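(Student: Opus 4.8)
The plan is to prove the three parts in order, with the bulk of the work devoted to setting up the explicit cochain-level formulas and then identifying the output of Part (3) with the formal Witten genus in Gelfand--Fuks cohomology.

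For Part (1), the strategy is to write down $\mathrm{\mathbf{Tr}}_{\mathcal{A}^{\beta\gamma-bc}}$ explicitly on the chiral chain complex $\tilde C^{\mathrm{ch}}(X,\mathcal{A}^{\beta\gamma-bc})_{\mathcal{Q}}$ as a sum over Feynman diagrams. Concretely, a chiral chain supported on $X^n$ is a Dolbeault-Spencer representative built from $\mathcal{A}^{\beta\gamma-bc}$-valued forms with poles along the diagonals; the trace is obtained by Wick contraction using the two-point function (propagator) of the free $\beta\gamma$-$bc$ system on the elliptic curve $X = \mathbb{C}/(\mathbb{Z}+\mathbb{Z}\tau)$, which is the Weierstrass-type kernel, followed by the regularized integral over configuration space $X^n$ (``regularized'' because the integrand has poles along diagonals). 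I would first verify that this regularized integral is well-defined and $\bar\partial$-closed in an appropriate sense, then check the quantum master equation $(d+\hbar\Delta_{\mathrm{BV}})\mathrm{\mathbf{Tr}}_{\mathcal{A}^{\beta\gamma-bc}}=0$ by a Stokes-type argument: the chiral differential $d$ produces boundary/residue terms along the diagonals, and these must match (up to sign) the action of $\hbar\Delta_{\mathrm{BV}}$, i.e.\ the BV operator coming from the pairing on $H^*(X)$. The key input is that the residue of the propagator along the diagonal reproduces the symplectic pairing $\langle-,-\rangle$ on $\mathbb{C}^{n|m}$, so that the ``collision of two insertions'' reproduces the chiral bracket $\mu$; this is precisely the compatibility needed for the QME. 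For the quasi-isomorphism claim, I would compute both sides: the cohomology of $(O_{\mathrm{BV},\mathbf{k}},\hbar\Delta_{\mathrm{BV}})$ is one-dimensional by a direct Koszul-type computation (mirroring the 1d Weyl-algebra case), and the chiral homology $H^{\mathrm{ch}}_\bullet(X,\mathcal{A}^{\beta\gamma-bc})$ is likewise one-dimensional, which should follow either from a spectral sequence / normal-ordering filtration argument on the chiral complex or by citing the relevant computation of elliptic chiral homology of the free theory; non-triviality of $\mathrm{\mathbf{Tr}}_{\mathcal{A}^{\beta\gamma-bc}}$ is then checked on the explicit generator $\underline{\mathrm{\mathbf{1}}}$.

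For Part (2), the extension to Lie algebra cochains is a formal ``equivariant'' upgrade: given a Lie subalgebra $\mathfrak{g}$ of inner derivations of $\mathcal{A}^{\beta\gamma-bc}$, each $\xi\in\mathfrak{g}$ acts on the chiral chain complex, and I would define $\Trace_{\mathfrak{g},\mathcal{A}^{\beta\gamma-bc}}(-)\{\xi_1\wedge\cdots\wedge\xi_k\}$ by inserting the corresponding currents (the observables generating these inner derivations) at $k$ extra points and taking the same regularized configuration-space integral, antisymmetrized in the $\xi_i$. The identity $(\partial_{\mathrm{Lie}}+d+\hbar\Delta_{\mathrm{BV}})\Trace_{\mathfrak{g},\mathcal{A}^{\beta\gamma-bc}}=0$ then follows from the same Stokes argument as in Part (1) together with the Ward identity for the currents: the Chevalley--Eilenberg differential $\partial_{\mathrm{Lie}}$ arises from the collision of two current insertions (reproducing the Lie bracket of $\mathfrak{g}$), while the collision of a current with a ``physical'' insertion reproduces the action of $\xi$ on the chiral chain and hence cancels against $d$. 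The restriction to $C^0_{\mathrm{Lie},\mathrm{BV}}$ recovers Part (1) tautologically since no extra insertions are present.

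For Part (3), specialize to $\mathcal{A}^{\beta\gamma}_N$ and $\mathfrak{g}=\widetilde{W}_N$, the (centrally / anomaly) extension of formal vector fields acting on the $\beta\gamma$ system via the standard ``$\beta\partial\gamma$'' currents. The first task is to produce $\theta\in\mathrm{C}^1_{\mathrm{Lie}}(\widetilde{W}_N,\mathbb{C})\otimes_{\mathbb{C}}O_{\mathrm{BV}}$ trivializing the obstruction to descending along $\widetilde{W}_N\to W_N$ --- this $\theta$ measures the conformal anomaly / the failure of the currents to form an honest $W_N$-action, and multiplying by $e^{-\frac{\pi}{i\hbar}\theta}$ corrects for it so that $e^{-\frac{\pi}{i\hbar}\theta}\Trace_{\widetilde{W}_N,\mathcal{A}^{\beta\gamma}_N}(\underline{\mathrm{\mathbf{1}}})\{-\}$ descends to a cocycle in $\mathrm{C}_{\mathrm{Lie}}(W_N,\mathrm{GL}_N;\Omega_{\fO})$. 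The second and main task is the actual evaluation: $\mathrm{\mathbf{Tr}}(\underline{\mathrm{\mathbf{1}}})\{\xi_1\wedge\cdots\wedge\xi_k\}$ becomes a sum of one-loop Feynman diagrams (loops of the elliptic propagator with current insertions), and the resulting generating function over all loop orders should assemble, after the anomaly correction, into the characteristic-class expression $\prod_j \frac{x_j/2}{\ldots}$-type product that defines the Witten genus --- concretely the product over Chern roots of the function built from the Weierstrass $\sigma$-function / Eisenstein series, exactly as the $\widehat A$-genus appears in the 1d case but with the elliptic propagator replacing the rational one. Computing these one-loop integrals on the elliptic curve and recognizing the answer as $\mathrm{Wit}_N(\tau)$ is the heart of the matter; I would organize it by a Gelfand--Fuks / Chern--Weil argument: the cocycle is determined by its value on the $\mathrm{GL}_N$-connection, and matching the curvature terms with the known expansion of the Witten genus in terms of Eisenstein series $E_{2k}$ (with $E_2$ appearing through the $\widehat{E}_2$ anomaly term absorbed by $\theta$) finishes the identification in $H_{\mathrm{Lie}}(W_N,\mathrm{GL}_N;\Omega_{\fO})$.

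The main obstacle I expect is two-fold and both parts live in Part (1) and Part (3): first, making the regularized configuration-space integral rigorous --- showing it is well-defined on the Dolbeault-Spencer model, independent of auxiliary choices, and that Stokes' theorem holds with exactly the boundary terms needed for the QME (the interplay between the $\mathcal{D}$-module/diagonal-stratification combinatorics of Beilinson--Drinfeld's complex and the analytic regularization is delicate); and second, in Part (3), carrying out the loop computation on the elliptic curve and resumming it correctly --- in particular pinning down the normalization constant $\frac{\pi}{i\hbar}$ and verifying that the $E_2$-dependent (non-modular) pieces cancel precisely against the anomaly cochain $\theta$, so that the final answer is the honest (quasi-modular-free at leading order, or appropriately modular) formal Witten genus rather than some $E_2$-corrected variant.
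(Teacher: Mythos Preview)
Your overall architecture matches the paper's: define $\mathbf{Tr}$ via Wick contractions with the elliptic propagator followed by a regularized integral, extend to Lie cochains by extra current insertions, and for Part~(3) reduce to one-loop wheel diagrams. Two points, however, need correction.

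First, your mechanism for the QME in Part~(1) is misidentified. You expect the collision/residue terms from the chiral differential to match $\hbar\Delta_{\mathrm{BV}}$; but collisions are precisely the component $d_{\mathrm{ch}}$ of the total differential, and the paper shows (via the Wick theorem, Proposition~\ref{WDChirall}) that the Feynman map $\mathcal{W}$ intertwines $d_{\mathrm{ch}}$ with $d_{\mathrm{ch}}$ on the unit chiral complex --- no BV operator appears here. Rather, $\hbar\Delta_{\mathrm{BV}}$ enters through the \emph{Dolbeault} differential: since $\bar\partial_z P(z,w)=-\tfrac{i\,d\bar z}{\mathrm{Im}(\tau)}$ is the constant zero mode, one computes $[\bar\partial,e^{\hbar\mathfrak{P}+\hbar\mathfrak{Q}+D}]=-[\hbar\Delta_{\mathrm{BV}},e^{\hbar\mathfrak{P}+\hbar\mathfrak{Q}+D}]$, giving $\bar\partial\,\mathcal{W}=\mathcal{W}\,\bar\partial-\hbar\Delta_{\mathrm{BV}}\mathcal{W}$. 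So $\Delta_{\mathrm{BV}}$ encodes the harmonic zero modes of the propagator, not diagonal residues. The paper's structural device is the factorization $\mathbf{Tr}=\mathrm{tr}\circ\mathcal{W}$ through $\tilde C^{\mathrm{ch}}(X,\omega_X)_{\mathcal{Q}}\otimes O_{\mathrm{BV},\mathbf{k}}$; your Stokes argument is used only in showing $\mathrm{tr}$ annihilates $d_{\mathrm{tot}}$ on the unit chiral algebra.

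Second, your quasi-isomorphism plan is too optimistic. The paper does not compute $H^{\mathrm{ch}}_\bullet(X,\mathcal{A}^{\beta\gamma\text{-}bc})$ by filtration; it passes through Beilinson--Drinfeld's auxiliary complexes $\tilde C^{\mathrm{ch}}(X,\mathcal{A}_{\mathcal{P}})_{\mathcal{Q}}$ and $C^{\mathrm{ch}}(X,\mathcal{A})_{\mathcal{PQ}}$, invokes their identification of the latter with the twisted Chevalley complex $C(\Gamma(X,h(L_{\mathcal{PQ}})))^\flat$ of the underlying Lie$^*$ algebra, and writes down an explicit quasi-isomorphism $s^\flat$ from $O_{\mathrm{BV},\mathbf{k}}$ to that Chevalley complex. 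Non-triviality is checked not on $\underline{\mathbf{1}}$ but on an explicit closed element $\Psi\cdot\Phi$ whose trace is the top fermion. One minor point on Part~(3): in the paper $\theta$ is simply the tree-level (single-vertex) contribution, not an anomaly trivialization, and the $\widehat E_2$ term is discarded separately as a coboundary since $p^*\mathrm{tr}(\mathrm{At}^2)$ already is one in the relative complex.
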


 This work is motivated and influenced by several mathematical works  \cite{cattaneo2000path,costello2010geometric,2011WittenGenus,gorbounov2016chiral,kontsevich2003deformation,Li:2016gcb,1999Chiral} related to the study of two-dimensional quantum field theories. Kontsevich's deformation quantization \cite{kontsevich2003deformation} is interpreted as the two-dimensional Poisson sigma model by Cattaneo and Felder \cite{cattaneo2000path}.  The chiral de Rham complex \cite{1999Chiral} constructed by Malikov, Schechtman and Vaintrob is the global version of the free $\beta\gamma-bc$. Costello \cite{costello2010geometric,2011WittenGenus} gives a construction of the Witten genus via the 2d curved $\beta\gamma$-system. Later Gorbounov, Gwilliam and Williams \cite{gorbounov2016chiral} combine the BV quantization and formal geometry to study this model following the philosophy of \cite{2011WittenGenus} and obtain the formal Witten genus in the Lie algebra cohomology.  In \cite{Li:2016gcb}, the second named author of the present paper studied $\beta\gamma-bc$ systematically and established an exact connection between the solution of the renormalized quantum master equation and the Maurer-Cartan elements in the vertex algebra.
\subsection{A bit more on main results}\label{sec:expanded}

Here we explain the main results of this paper in a bit more detail. By applying the machinery of BV quantization to the 2d chiral $\beta\gamma-bc$ system on an elliptic curve $X$, one can get the following data
\begin{itemize}
   \item A graded vertex algebra $V^{\beta\gamma-bc}$ over the base field $\mathbf{k}:=\mathbb{C}((\hbar))$;
   \item  A BV algebra $(O_{\mathrm{BV}},\Delta_{\mathrm{BV}})$ over $\mathbb{C}$. The BV algebra we will use is $(\BVk= O_{\mathrm{BV}}\otimes_{\mathbb{C}}\mathbf{k},\hbar\Delta_{\mathrm{BV}})$. This BV algebra corresponds to the algebra of the zero modes in effective BV quantization;
    \item A map $\mathcal{W}$ from $\tilde{C}^{\mathrm{ch}}(X,V^{\beta\gamma-bc}\otimes \omega_X)_{\mathcal{Q}}$ to $\tilde{C}^{\mathrm{ch}}(X,\omega_X)_{\mathcal{Q}}\otimes_{\mathbb{C}}\BVk$, where  $\tilde{C}^{\mathrm{ch}}(X,-)_{\mathcal{Q}}$ is the chiral chain complex.
\end{itemize}

The map $\mathcal{W}$ can be viewed as a mathematical construction of the physical n-point correlation functions. One may construct the correlation function using representation theory or using perturbation theory. In this paper, we use the Feynman diagram construction in perturbative quantum field theory.

 Roughly speaking,  the Feynman diagram gives rise to a map that sends a tensor product of some vertex operators labeled by a finite set $I$ to a singular differential form on $X^{I}$. This singular differential form has possible meromorphic poles along the big diagonals. We can write
$$
\mathcal{W}: \{\text{Vertex operator insertions on}\  X\}\rightarrow \{\text{singular forms on } \ X^I\}
$$
$$
V_1(z_1)\otimes\cdots\otimes V_n(z_n)\mapsto \langle V_1(z_1)\cdots V_n(z_n) \rangle ,
$$
where $\langle V_1(z_1)\cdots V_n(z_n) \rangle $ is the n-point function. In the language of chiral algebras, the above constructions can be realized as a chain map (see Theorem \ref{mainThm})
$$
\boxed{\mathcal{W}:(\tilde{C}^{\mathrm{ch}}(X,\mathcal{A}^{\beta\gamma-bc})_{\mathcal{Q}},d)\rightarrow (\tilde{C}^{\mathrm{ch}}(X,\omega_X)_{\mathcal{Q}}\otimes_{\mathbb{C}}\BVk,d-\hbar\Delta_{\mathrm{BV}}).}
$$
Here $\mathcal{A}^{\beta\gamma-bc}$ is the chiral algebra corresponding to the $\beta\gamma-bc$ system $V^{\beta\gamma-bc}$ and $\tilde{C}(X,-)_{\mathcal{Q}}$ is the chiral chain complexes constructed in \cite{book:635773}, see  Definition (\ref{ChiralChain}).

According to \cite{book:635773}, there is a canonical trace map on unit chiral algebra $\omega_X$
$$
\mathrm{tr}:(\tilde{C}^{\mathrm{ch}}(X,\omega_X)_{\mathcal{Q}},d)\xrightarrow{\sim}\mathbb{C}
$$
which is a quasi-isomorphism. In Theorem \ref{RegularizedANDTrace}, we prove that this trace map can be realized as regularized integrals introduced in \cite{li2020regularized}.

Combining the above constructions, we get the so-called \textit{trace map} on chiral algebra $\mathcal{A}^{\beta\gamma-bc}$
$$
\boxed{\mathrm{\mathbf{Tr}}_{\mathcal{A}^{\beta\gamma-bc}}:=\mathrm{tr}\circ \mathcal{W}: (\tilde{C}^{\mathrm{ch}}(X,\mathcal{A}^{\beta\gamma-bc})_{\mathcal{Q}},d)\rightarrow (\BVk,-\hbar\Delta_{\mathrm{BV}}).}
$$
Using some other realizations of the chiral homology constructed in \cite{book:635773} and direct computations, we prove that $\mathrm{\mathbf{Tr}}_{\mathcal{A}^{\beta\gamma-bc}}$ is a quasi-isomorphism (see Corollary \ref{mainCor}). As we mentioned before, this is parallel to the one-dimensional case.

This construction can be directly extended to the Lie algebra cochain. One obtains the formal Witten genus in this case, see Section \ref{TraceWittenGenus}.

If we further choose a BV integration map $\int_{\mathrm{BV}}:(\BVk,\hbar\Delta_{\mathrm{BV}})\rightarrow (\mathbf{k},0)$, then we obtain a chain map (see Corollary \ref{mainCor}) which is a quasi-isomorphism
$$
\boxed{\mathrm{\mathbf{Tr}}^{\mathrm{BV}}_{\mathcal{A}^{\beta\gamma-bc}}:=\int_{\mathrm{BV}}\circ\mathrm{\mathbf{Tr}}_{\mathcal{A}^{\beta\gamma-bc}}:(\tilde{C}^{\mathrm{ch}}(X,\mathcal{A}^{\beta\gamma-bc})_{\mathcal{Q}},d) \rightarrow (\mathbf{k},0).}
$$

There is a degree shift on the right-hand side, but we ignore this issue here (see Corollary \ref{mainCor}). By abuse of notation, we continue to call $\mathrm{\mathbf{Tr}}^{\mathrm{BV}}_{\mathcal{A}^{\beta\gamma-bc}}$ the trace map.
\subsection{More general trace maps}
Now we turn to the construction of elliptic trace map for more general class of chiral algebras.
 Free field realization is a powerful technique in the study of 2d conformal field theory. Suppose the chiral algebra $\mathcal{A}$ admits a free field realization, that is, we have an injective morphism of chiral algebras
$$
\rho: \mathcal{A}\rightarrow \mathcal{A}^{\beta\gamma-bc}.
$$
 Since the construction of chiral chain complex is functorial, it induces a chain map
$$
\rho^{\mathrm{ch}}:(\tilde{C}^{\mathrm{ch}}(X,\mathcal{A})_{\mathcal{Q}},d)\rightarrow (\tilde{C}^{\mathrm{ch}}(X,\mathcal{A}^{\beta\gamma-bc})_{\mathcal{Q}},d).
$$
Composing it with $\mathrm{\mathbf{Tr}}^{\mathrm{BV}}_{\mathcal{A}^{\beta\gamma-bc}}$ we get a trace map for $\mathcal{A}$
$$
\boxed{\mathrm{\mathbf{Tr}}^{\mathrm{BV}}_{\rho^{\mathrm{ch}}}:=\mathrm{\mathbf{Tr}}^{\mathrm{BV}}_{\mathcal{A}^{\beta\gamma-bc}}\circ\rho^\mathrm{ch}: (\tilde{C}^{\mathrm{ch}}(X,\mathcal{A})_{\mathcal{Q}},d)\rightarrow (\mathbf{k},0).}
$$

Lastly, let us remark that our construction can be naturally generalized to chiral bosons. It simply changes the map $\mathcal{W}$ using Feynman rules from chiral bosons. In this paper we focus on the $\beta\gamma-bc$ system to avoid complications from too heavy notations, and leave chiral bosons for future study.

\subsection{Outline of the paper}
In Section \ref{sec:ChiralHomology} we review the notion of vertex operator algebra and chiral algebra, and present the construction of the chiral chain complex in detail. We prove that the canonical trace map on the unit chiral algebra is given by regularized integrals. In Section \ref{BV} we review the BV formalism and introduce the quantum master equation. In particular, a trace map of the chiral chain complex can be viewed as a solution to the quantum master equation. Next, we construct the trace map as a chain map from the elliptic chiral chain complex of the free $\beta\gamma-bc$ system and a BV algebra. Furthermore, we prove that the trace map is a quasi-isomorphism. Finally, we discuss some applications of the above constructions. We construct a trace map valued in the Lie algebra cohomology. In a special case, we compute the trace map evaluated on the unit constant chiral chain and find the formal Witten genus. The second application is the construction of the trace map on coset models.

\subsection{Conventions}\label{Conventions}

\begin{itemize}
\item In this paper, we denote by $X$  a smooth complex algebraic curve and $\omega_X$ the canonical sheaf. And $X^I$  stands for the Cartesian product
    $$
    X^I=\underbrace{X\times\cdots\times X}_{I}
    $$ of $X$ , where $I$ is a finite index set $I$. We also denote by $\Delta_{I}$ the big diagonal in $X^I$
    $$
    \Delta_I=\bigcup_{i,j\in I}\Delta_{ij}, \quad \text{where}\quad \Delta_{ij}=\{(\dots,z_{k},\dots)\in X^I| k\in I, z_i=z_j\}.
    $$

  \item   We will work in the analytic category in this paper. For $\mathcal{D}_X$-modules ($\mathcal{O}_X$-modules) on $X$, we mean analytic right $\mathcal{D}_{X^{\mathrm{an}}}$-modules ($\mathcal{O}_{X^{\mathrm{an}}}$-modules) on the analytification $X^{\mathrm{an}}$ and we omit the superscript $"\mathrm{an}"$ in this paper.

  \item Following the convention in \cite[pp.280]{book:635773}, for any quasi-coherent sheave $F$ on a smooth complex variety $X$, we denote $F_{\mathcal{Q}}$ to be $F\otimes_{\mathcal{O}_X}\mathcal{Q}_X$. Here $\mathcal{Q}_X$ is the Dolbeault complex
$$
\mathcal{Q}_{X^I}(U):\Omega^{0,0}(U)\xrightarrow{\bar{\partial}}\Omega^{0,1}(U)\xrightarrow{\bar{\partial}}\cdots.
$$
The Dolbeault complex $\mathcal{Q}_X$ can be also viewed as a complex of non-quasicoherent left $\mathcal{D}_X$-modules, since the holomorphic differential $\partial$ is an integrable connection and commutes with the Dolbeault differential $\bar{\partial}$.

\item Following the convention in \cite{book:635773}, we use $f_*$ for the push forward functor of $\mathcal{D}$-modules and $f_{\bullet}$ for the sheaf theoretical push forward.

\item We denote the de Rham complex (usually called the Spencer complex in the context of $\mathcal{D}$-module, here we follow the convention in \cite{book:635773}) of a $\mathcal{D}_X$-module by $\mathrm{DR}(M)$,  here
$$
\mathrm{DR}^i(M):=M\otimes_{\mathcal{O}_X}\wedge^{-i}\Theta_X,
$$
where $\Theta_X$ is the tangent sheaf. The differential $d_{\mathrm{DR}}$ is given by
\begin{align*}
  m\otimes\xi_1\wedge\cdots\wedge \xi_k &\xrightarrow{d_{\mathrm{DR}}}\sum_{i=1}^{k}(-1)^{i-1}m\xi_i\otimes \xi_1\wedge \cdots \wedge \hat{\xi}_i\wedge\cdots \wedge_k  \\
   & +\sum_{i<j}(-1)^{i+j}m\otimes [\xi_i,\xi_j]\wedge\xi_1\wedge\cdots \wedge\hat{\xi}_i\wedge\cdots \wedge\hat{\xi}_j\wedge \xi_k.
\end{align*}
\item Let $L$ be a $\mathbb{Z}$ or $\mathbb{Z}/2\mathbb{Z}$-graded $\mathbb{C}$-vector space. We denote $L_d=\{x\in L|\deg x=d\}\subset L$ the subspace of degree $d$ elements. Let $L\text{\small{[m]}}$ denote the degree shifting by $m$ such that
    $$
    L\text{\small{[m]}}_d=L_{d+m}.
    $$
    When $L$ is $\mathbb{Z}/2\mathbb{Z}$-graded, the above degree shifting is understood modulo $2\mathbb{Z}$, i.e.,
    $$
    L\shift_{\bar{0}}=L_{\bar{1}},\quad L\shift_{\bar{1}}=L_{\bar{0}}, \quad \text{where}\quad \bar{0},\bar{1}\in \mathbb{Z}/2\mathbb{Z}.
    $$
    There we also use $p(a)$ for the parity of a homogenous element $a\in L$.
\item For any graded ($\mathbb{Z}$ or $\mathbb{Z}/2\mathbb{Z}$) smooth vector bundle $E\rightarrow X$ over $X$, we will denote by $E^{\boxtimes I}\rightarrow X^I$ the bundle $\otimes_{i\in I}\pi^*_iE$, where $\pi_i: X^I\rightarrow X$ is the i-th projection. The bundle $E^{\boxtimes I}$ equips with a natural grading : $|e_1\boxtimes\cdots\boxtimes e_{|I|}|=|e_1|+\cdots+|e_{|I|}|$. The notation $E\text{\small{[k]}}$ means that we shift the degree of each fiber of $E$ by $k$.

\end{itemize}

\noindent\textbf{Acknowledgment.} The authors would like to thank Lei Fu, Xinxing Tang, Brian Williams, Kai Xu, Dingxin Zhang and Jie Zhou for helpful communications. S.~L.   thanks specially to Ziyu Cheng from Qiuzhen College, whose tremendous help has saved him alive from administrative service to finish this work. This work of S.~L.  is supported by the National Key R\&D Program of China  (NO. 2020YFA0713000).

\section{Chiral algebras and chiral homology}\label{sec:ChiralHomology}
\subsection{Vertex algebras and chiral algebras}
We begin with the definition of vertex algebras. We refer to the standard textbooks \cite{book:1415117,book1415599} for more details.

Let $V$ be a superspace, i.e., a vector space decomposed into a direct sum of two
subspaces:
$$
V=V_{\overline{0}}\oplus V_{\overline{1}}
$$
Here and further $\overline{0}$ and $\overline{1}$ stand for the cosets in $\mathbb{Z} / 2 \mathbb{Z}$ of 0 and $1 .$ We shall say that
an element $a$ of $V$ has parity $p(a) \in \mathbb{Z} / 2 \mathbb{Z}$ if $a \in V_{p(a)} .$

A field is a series of the form $a(z)=\sum_{n \in \mathbb{Z}} a_{(n)} z^{-n-1}$ where $a_{(n)} \in \operatorname{End} V$ and
for each $v \in V$ one has
$$
a_{(n)}(v)=0 \quad \text { for } \quad n \gg 0.
$$
\begin{defn}
A vertex algebra is the following data:
\begin{itemize}
  \item the space of states: a superspace $V$,
  \item the vacuum vector: a vector $|0\rangle \in V_{\overline{0}}$,
  \item the state-field correspondence: a parity preserving linear map
of $V$ to the space of fields, $a \mapsto Y(a, z)=\sum_{n \in \mathbb{Z}} a_{(n)} z^{-n-1}$,
satisfying the following axioms:
\end{itemize}

(translation covariance): $[T, Y(a, z)]=\partial Y(a, z)$,
where $T \in \mathrm{End} V$ is defined by
$T(a)=a_{(-2)}|0\rangle$;

(vacuum): $Y(|0\rangle, z)=\mathrm{Id}_{V},\left.Y(a, z)|0\rangle\right|_{z=0}=a$;

(locality): $(z-w)^{N} Y(a, z) Y(b, w)$
$$
=(-1)^{p(a) p(b)}(z-w)^{N} Y(b, w) Y(a, z) \text { for } N \gg 0 .
$$

If the $\mathbb{Z}_2$-grading on $V$ is induced from a $\mathbb{Z}-$grading such that $|0\rangle \in V_{0}$ and the state-field correspondence is of degree 0, we call $V$ a graded vertex algebra.

\end{defn}

Now we turn to the definition of chiral algebra. See \cite{book:635773} for details. Here we follow the presentations in \cite{2018Chiral,book:1415117,1999Notes}.

Let $X$ be a smooth complex curve. Throughout this paper, we work in the analytic category. Let $I$ be a finite set and $\Delta_I=\cup_{i,j\in I}\Delta_{ij}$ be the big diagonal, where $\Delta_{ij}=\{(\dots,z_{k},\dots)\in X^I| k\in I, z_i=z_j\}.$ For an $\mathcal{O}_{X^I}$-module $\mathcal{F}$, we define the $\mathcal{O}_{X^I}$-module $\mathcal{F}(*\Delta_I)$ to be
$$
\mathcal{F}(*\Delta_I)=\lim_{\rightarrow} \mathrm{Hom}_{\mathcal{O}_{X^I}}(\mathcal{I}^n,\mathcal{F}),
$$
here $\mathcal{I}$ is the defining ideal of $\Delta_I$.

For $\mathcal{D}$-modules, we always mean right $\mathcal{D}$-modules. See Section \ref{Conventions} for conventions.
\begin{defn}\label{chiralDefn}
Let $\mathcal{A}$ be a $\mathbb{Z}$-graded $\mathcal{D}_X$-module. A graded chiral algebra structure on $\mathcal{A}$ is a degree 0 $\mathcal{D}_{X^2}$-module map:
$$
\mu:(\mathcal{A}\boxtimes\mathcal{A})(*\Delta_{\{1,2\}})\rightarrow \Delta^{X\rightarrow X^2}_*(\mathcal{A}),\quad \ \Delta^{X\rightarrow X^2}:X\rightarrow X^2\ \text{the diagonal embedding}
$$
that satisfies the following two conditions:
\begin{itemize}
  \item Antisymmetry:

  If $f(z_1,z_2)\cdot a\boxtimes b$ is a local section of $\mathcal{A}\boxtimes\mathcal{A}(*\Delta_{\{1,2\}})$, then
  \begin{equation}\label{Antisymmetry}
  \mu(f(z_1,z_2)\cdot a\boxtimes b)=-(-1)^{p(a) p(b)}\sigma_{1,2}\mu(f(z_2,z_1)\cdot b\boxtimes a),
  \end{equation}
  where $\sigma_{1,2}$ acts on $\Delta^{X\rightarrow X^2}_*\mathcal{A}=\mathcal{A}\otimes_{{\mathcal{D}_{X}}}\mathcal{D}_{X\rightarrow X^2}=\mathcal{A}\otimes_{{\mathcal{D}_{X}}}(\mathcal{O}_{X}\otimes_{\mathcal{O}_{X^2}}\mathcal{D}_{X^2})$ by permuting two factors of $X^2$.

  \item Jacobi identity:

  If $a\boxtimes b\boxtimes c\cdot f(z_1,z_2,z_3)$ is a local section of $\mathcal{A}^{\boxtimes 3}(*\Delta_{\{1,2,3\}})$, then
  \begin{align*}
      \mu(\mu(f(z_1,z_2,z_3)\cdot &a\boxtimes b)\boxtimes c)+(-1)^{p(a)\cdot (p(b)+p(c))}\sigma_{1,2,3}\mu(\mu(f(z_2,z_3,z_1)\cdot b\boxtimes c )\boxtimes a)+  \\
     & (-1)^{p(c)\cdot (p(a)+p(b))}\sigma_{1,2,3}^{-1}\mu(\mu(f(z_3,z_1,z_2)\cdot c\boxtimes a )\boxtimes b)=0,
  \end{align*}
  here $\sigma_{1,2,3}$ denotes the cyclic permutation action on $\Delta^{X\rightarrow X^3}_*\mathcal{A}=\mathcal{A}\otimes_{\mathcal{D}_X}\mathcal{D}_{X\rightarrow X^3}$, $\Delta^{X\rightarrow X^3}:X\rightarrow X^3\ \text{is the diagonal embedding}$.
\end{itemize}
\end{defn}

\begin{rem}
  In \cite{book:635773}, the chiral operation is defined to be an algebraic $\mathcal{D}_{X^2}$-module map $\mu:j_*j^*\mathcal{A}\boxtimes\mathcal{A}\rightarrow \Delta_*\mathcal{A}$ satisfying the antisymmetry and Jacobi identity. Here $j:U=X^2-\Delta_{\{1,2\}}\hookrightarrow X^2$ is the open immersion.  Our definition is compatible with theirs because the analytification of $j_*j^*\mathcal{F}$ is $\mathcal{F}^{\mathrm{an}}(*\Delta_{\{1,2\}})$ for any coherent sheaf $\mathcal{F}$ on $X^2$.
\end{rem}

\begin{exa}
  The canonical sheaf $\omega_X$ is a chiral algebra and the chiral operation is given by the residual operation. It is also called unit chiral algebra.
\end{exa}

From a quasi-conformal vertex algebra, one can get a chiral algebra on a smooth complex curve $X$. See the construction in \cite{book:1415117}. Here we recall this construction briefly. Let $\mathrm{Aut}_{\mathbb{C}[[z]]}$ be the group of continuous automorphisms of the formal power series ring $\mathbb{C}[[z]]$. Any quasi-conformal vertex algebra is a left $\mathrm{Aut}_{\mathbb{C}[[z]]}$-module. The set of pairs $(x,t_x)$  consisting of a point $x\in X$ and a formal local coordinate $t_x$ at $x$ is an $\mathrm{Aut}_{\mathbb{C}[[z]]}$-torsor $\mathrm{Aut}_X$ over $X$.  Applying the associated bundle construction to the $\mathrm{Aut}_{\mathbb{C}[[z]]}$-module $V$ yields a vector bundle
$$
\mathcal{V}=\mathrm{Aut}_X\times_{\mathrm{Aut}_{\mathbb{C}[[z]]}} V
$$
over $X$. The bundle $\mathcal{V}$ carries a flat connection $\nabla:\mathcal{V}\rightarrow \mathcal{V}\otimes_{\mathcal{O}_X}\omega_X$, thus a left $\mathcal{D}_X$-module structure. The connection can be written locally as follows
$$
\nabla_{\partial_z}=\partial_z+L_{-1},
$$
but is independent of the choice of local coordinates. The corresponding right $\mathcal{D}_X$-module is $\mathcal{V}^r=\mathcal{V}\otimes_{\mathcal{O}_X}\omega_X$.

Locally there is a binary operation
$$
\mu:\mathcal{V}^r\boxtimes\mathcal{V}^r(*\Delta_{\{1,2\}})\rightarrow\Delta_*\mathcal{V}^r,
$$
$$
\mu(\frac{f(z_1,z_2)}{(z_1-z_2)^k}v_1 dz_1\boxtimes v_2 dz_2)=\sum_{n}\sum_{l\geq 0}\frac{1}{(n+k-l)!}(\partial^{n+k-l}_{z_1}f(z_1,z_2))|_{z_1=z_2=w}{v_1}_{(n)}v_2dw\otimes_{\mathcal{D}_{X_{12}}}\frac{1}{l!}\partial^l_{z_1},
$$
here $X_{12}$ is the diagonal of $X_1\times X_2$ and $w$ is the local coordinate. The tensor product $\otimes_{\mathcal{D}_{X_{12}}}$ indicates the $\mathcal{D}$-module relation $\partial_w=\partial_{z_1}+\partial_{z_2}$. Using the notations in \cite{2018Chiral}, we can rewrite the above expression in terms of the residue

\begin{equation}\label{OPECHIRAL}
\big(\mathrm{Res}_{z_1\rightarrow z_2}e^{(z_1-z_2)\otimes\vec{\partial}_{z_1}}\cdot\frac{f(z_1,z_2)}{(z_1-z_2)^k}\sum_{n\in \mathbb{Z}}{v_1}_{(n)}v_2\cdot (z_1-z_2)^{-n-1}dz_1 \big)\cdot dz_2\otimes_{\mathcal{D}_{X_{12}}} 1, \ z_2=w,
\end{equation}
here the notation $\vec{\partial}_{z_1}$ indicates moving all powers of $\partial_{z_1}$ to the right-hand side of the tensor product symbol $-\otimes_{\mathcal{D}_{X_{12}}}-$. More explicitly, we have
\[
(\ref{OPECHIRAL})="\wick{(\mathrm{Res}_{z_1\rightarrow z_2}\sum_{l\geq 0}\frac{(z_1-z_2)^l}{l!}\c3{\boxed{{\partial}^l_{z_1}}}\cdot \frac{f(z_1,z_2)}{(z_1-z_2)^k}\sum_{n\in\mathbb{Z}}v_{1(n)}v_2\cdot (z_1-z_2)^{-n-1} dz_1)\cdot dz_2\otimes_{\mathcal{D}_{X_{12}}} \c3{\mathop{1}\limits^{\blacktriangledown}}}"
\]

$$
=\sum_{n}\sum_{l\geq 0}\frac{1}{(n+k-l)!}(\partial^{n+k-l}_{z_1}f(z_1,z_2))|_{z_1=z_2=w}{v_1}_{(n)}v_2dw\otimes_{\mathcal{D}_{X_{12}}}\frac{1}{l!}\partial^l_{z_1}.
$$

The operation $\mu$ is independent of the choice of local coordinates, see \cite{book:1415117} (based on the coordinate change formula in \cite{huang2012two}). Moreover, the binary operation $\mu$ satisfies the antisymmetry and Jacobi identity. Thus the $\mathcal{D}_X$-module $\mathcal{V}^r$ is a chiral algebra.

Now we extend the chiral operation which is defined on the space of analytic sections to a certain space of smooth sections. Let $\mathcal{Q}_{X^I}$ be the Dolbeault resolution of the analytic functions on $X^I$
$$
\mathcal{Q}_{X^I}(U):\Omega^{0,0}(U)\xrightarrow{\bar{\partial}}\Omega^{0,1}(U)\xrightarrow{\bar{\partial}}\cdots,
$$
where $U\subset X^I$ is an open subset. For any quasi-coherent sheave $F$ on $X^I$, we denote $F^{\mathrm{an}}_{\mathcal{Q}}$ to be $F^{\mathrm{an}}\otimes_{\mathcal{O}^{\mathrm{an}}_{X^I}}\mathcal{Q}_{X^I}$. We will omit the superscript "an" in this paper. See Section \ref{Conventions} for conventions.
\begin{defn}\label{ChiralDolbeault}
The chiral operation $\mu$ induces a binary operation $$\mu^{\mathcal{Q}}:\mathcal{V}^r\boxtimes\mathcal{V}^r(*\Delta_{\{1,2\}})_{\mathcal{Q}}=\mathcal{V}^r\boxtimes\mathcal{V}^r(*\Delta_{\{1,2\}})\otimes_{\mathcal{O}_{X^2}}\mathcal{Q}_{X^2}\rightarrow\Delta_*(\mathcal{V}^r_{\mathcal{Q}})=\Delta_*(\mathcal{V}^r\otimes_{\mathcal{O}_X}\mathcal{Q}_X).
$$
A local section of $\mathcal{V}^r\boxtimes\mathcal{V}^r(*\Delta_{\{1,2\}})_{\mathcal{Q}}$ can be written as $$\frac{f(z_1,z_2)}{(z_1-z_2)^k}\cdot\eta_1(z_1)\cdot\eta_2(z_2)\cdot v_1dz_1\boxtimes v_2dz_2,$$ where $f(z_1,z_2)$ is a smooth function and $\eta_i(z_i)$'s are smooth sections of $\mathcal{Q}_{X_i}$. We can define
$$
\mu^{\mathcal{Q}}(\frac{f(z_1,z_2)}{(z_1-z_2)^k}\cdot\eta_1(z_1)\cdot\eta_2(z_2)\cdot v_1dz_1\boxtimes v_2dz_2)
$$
$$
=(-1)^{|\eta_1|+|\eta_2|}\sum_{n}\sum_{l\geq 0}\frac{(\partial^{n+k-l}_{z_1}(f(z_1,z_2)\cdot \eta_{1}(z_1)))|_{z_1=z_2=w}}{(n+k-l)!}\cdot \eta_2(z_2)|_{z_2=w}{v_1}_{(n)}{v_2}dw\otimes_{\mathcal{D}_{X_{12}}}\frac{\partial^l_{z_1}}{l!}
$$
Here we use the super commutative algebra structure on $\mathcal{Q}_{X_{12}}$. To get an expression similar to (\ref{OPECHIRAL}), one needs to extend the definition of the residue (see Section \ref{Tracemap}). The operation $\mu^{\mathcal{Q}}$ is independent of the choice of coordinates by the same argument.
\end{defn}

\begin{rem}
The above construction is discussed in \cite[pp.309]{book:635773}.
\end{rem}
\subsection{The chiral chain complex $\tilde{C}^{\mathrm{ch}}(X,-)_{\mathcal{Q}}$}\label{ChiralHomology}
 Locally, chiral algebra or vertex operator algebra corresponds to the operator formalism in two-dimensional conformal field theory and encodes the algebraic structure of the operator product expansions.   A global object associated to a chiral algebra called chiral homology is defined by \cite{book:635773}. The 0-th chiral homology, also known as the space of conformal blocks, has been extensively studied by mathematicians and physicists. For mathematical discussions, see \cite{book:1415117,1996Modular}. Various chiral chain complexes computing chiral homology are constructed in \cite{book:635773}. A more explicit chain complex computing the 1-st and 2-nd chiral homology on an elliptic curve is constructed by \cite{2018Chiral,2021FirstCHiral} and they study the space of the first elliptic chiral homology groups in \cite{2021FirstCHiral}. We also mention that a version of the (co)chain complex for conformal algebras on the complex plane $X=\mathbb{C}$ is constructed in \cite{bakalov1999cohomology} (see also \cite{book1415599}).

In this section, we review the definition of a version of the chiral chain complex following \cite{book:635773}. We first recall the Chevalley-Cousin complex $C(\mathcal{A})$ of a chiral algebra $\mathcal{A}$ on a complex curve $X$.

 For a finite index set $I$, we will denote by ${\textsf{Q}}(I)$ the set of equivalence relations on $I$. Note that $\equiQ(I)$ is a disjoint union $\sqcup_{n\geq 1}\equiQ(I,n)$, where $\equiQ(I,n)$ is the set of equivalence relations on $I$ such that the number of the equivalence classes is $n$. An element of $\equiQ(I,n)$ can be described by a pair $(T,\pi:I\twoheadrightarrow T)$, where $T$ is a finite set such that $|T|=n$ and $\pi$ is a surjection. We denote $X^I$ to be the product of $X_i$'s, where $X_i$ is a copy of $X$ and $i\in I$.

 Given a surjection $\pi:I\twoheadrightarrow T$,  we have the corresponding diagonal embedding
 $$
 \Delta^{(\pi)}=\Delta^{(I/T)}:X^T\hookrightarrow X^I,
 $$
 such that $\mathrm{pr}_i\Delta^{(I/T)}=\mathrm{pr}_j\Delta^{(I/T)}$ if and only if $\pi(i)=\pi(j)$, here $\mathrm{pr}_i: X^I\rightarrow X$ is the i-th projection.

\begin{rem}
  With this notation, $\Delta^{X\rightarrow X^2}$ in Definition \ref{chiralDefn} is the same as $\Delta^{(\{1,2\}/\{\bullet\})}:X\rightarrow X^2$.
\end{rem}
 Let $\mathcal{A}$ be a graded chiral algebra. Now we define a $\mathbb{Z}$-graded $\mathcal{D}_{X^I}-$module $C(\mathcal{A})_{X^I}$ on $X^I$
$$
\boxed{C(\mathcal{A})^{\bullet}_{X^I}=\bigoplus_{T\in \equiQ(I)} \Delta^{(I/T)}_*\big((\mathcal{A}\shift)^{\boxtimes T}(*\Delta_T)\big).}
$$
The differential looks as follows. Its component
$$
d_{\mathrm{ch}, (T,T')}: \Delta_*^{(I/T)}\big((\mathcal{A}\shift)^{\boxtimes T}(*\Delta_T)\big)\rightarrow\Delta_*^{(I/T')}\big((\mathcal{A}\shift)^{\boxtimes T'}(*\Delta_{T'})\big)
$$
can be non-zero only for $T'\in \equiQ(T,|T|-1)$. Then $T=T''\bigsqcup\{\alpha',\alpha''\}, T'=T''\bigsqcup\{\alpha\}$ and $d_{\mathrm{ch},(T,T')}$ is the exterior tensor product of the chiral operation
$$
d_{\mathrm{ch},(T,T')}=\mu_{\mathcal{A}}\shift: (\mathcal{A}_{\alpha'}\shift\boxtimes \mathcal{A}_{\alpha''}\shift)(*\Delta_{\{\alpha',\alpha''\}})\rightarrow\Delta_*\mathcal{A}_{\alpha}\shift
$$
and the identity map for $\mathcal{A}^{\boxtimes T''}$.

\begin{rem}
  This definition is exactly the reduced Chevalley-Eilenberg complex discussed in the introduction (see Section \ref{IntroChiral}).
\end{rem}

 We can extend the above construction of the Chevalley-Cousin complex to the case with coefficient  $\mathcal{Q}_{X^\bullet}$
$$
\boxed{C(\mathcal{A})_{\mathcal{Q},X^I}:=\bigoplus_{T\in \equiQ(I)} \Delta^{(I/T)}_*((\mathcal{A}\shift)^{\boxtimes T}(*\Delta_T)_\mathcal{Q}).}
$$
where the differential is $$d^{\mathcal{Q}}_{\mathrm{ch},(T,T')}:=\mu^{\mathcal{Q}}_{\mathcal{A}}\shift:(\mathcal{A}_{\alpha'}\shift\boxtimes \mathcal{A}_{\alpha''}\shift)(*\Delta_{\{\alpha',\alpha''\}})_\mathcal{Q}\rightarrow \Delta_*((\mathcal{A}_{\alpha}\shift)_\mathcal{Q}),$$
see the Definition \ref{ChiralDolbeault}. More precisely,
$$
d^{\mathcal{Q}}_{\mathrm{ch},(T,T')}(\frac{f(z_1,z_2)}{(z_1-z_2)^k}\cdot\eta_1(z_1)\cdot\eta_2(z_2)\cdot v_1dz_1\shift\boxtimes v_2dz_2\shift)
$$
$$
=(-1)^{p(v_1)}\sum_{n}\sum_{l\geq 0}\frac{(\partial^{n+k-l}_{z_1}(f(z_1,z_2)\cdot \eta_{1}(z_1)))|_{z_1=z_2=w}}{(n+k-l)!}\cdot \eta_2(z_2)|_{z_2=w}{v_1}_{(n)}{v_2}dw\shift\otimes_{\mathcal{D}_{X_{12}}}\frac{\partial^l_{z_1}}{l!}
$$
 Since we mainly deal with $(C(\mathcal{A})_{\mathcal{Q},X^I},d^{\mathcal{Q}}_{\mathrm{ch}})$, from now on we omit the superscript ${\mathcal{Q}}$ of the differential $d^{\mathcal{Q}}_{\mathrm{ch},(T,T')}$.
\begin{rem}
  Here we view a smooth form in $\Omega^{0,\bullet}$ as a degree $\bullet$ element.
\end{rem}
Let $\mathcal{S}$ be the category of finite non-empty sets whose morphisms are surjections and $\mathcal{S}^{\mathrm{op}}$ be the opposite category. We now recall the notion of !-sheaf on $X^{\mathcal{S}}$.
\begin{defn}
A !-sheaf $F$ on $X^{\mathcal{S}}$ consists of a sheaf $F_{X^I}$ of vector spaces on $X^I$ for each finite set $I$ and a morphism $\theta^{(\pi)}:\Delta^{(\pi)}_{\bullet}(F_{X^I})\rightarrow F_{X^J}$ for each surjections $\pi:J\twoheadrightarrow I$, subject to the compatibility conditions
$$
\theta^{(\pi_1\circ\pi_2)}=\theta^{(\pi_2)}\circ \Delta_{\bullet}^{(\pi_2)}(\theta^{(\pi_1)}),
$$
for all $\pi_2:K\twoheadrightarrow J, \pi_1:J\twoheadrightarrow I$ and $\theta^{(\mathrm{id}_I)}=\mathrm{id}$.
We denote by $CSh^!(X^{\mathcal{S}})$ the category of complexes of !-sheaves. By abuse of notation, we continue to call an element in $CSh^!(X^{\mathcal{S}})$ a !-sheaf.
\end{defn}

Given a !-sheaf $F$ on $X^{\mathcal{S}}$ yields an $\mathcal{S}^{\mathrm{op}}$-diagrams of vector space, $I\mapsto \Gamma(X^I,F_{X^I})$. Denote by $\Gamma(X^{\mathcal{S}},F)$ its inductive limit.

According to \cite{book:635773}, it is necessary to consider a nice resolution $\tilde{F}\rightarrow F$ for a given !-sheaf on $X^{\mathcal{S}}$.
\begin{prop}{\cite{book:635773}}
  Every $F\in CSh^!(X^{\mathcal{S}})$ admits a canonical nice resolution $\tilde{F}\rightarrow F$.  In fact, $\tilde{F}_{X^I}$ can be chosen to be the homotopy direct limit $\hoco(\equiQ(I),\Delta^{(I/T)}_{\bullet}F_{X^T})$ where the structure morphisms are embeddings $\theta^{(I/J)}:\Delta^{(I/J)}_{\bullet}\tilde{F}_{X^J}\hookrightarrow\tilde{F}_{X^I}$ coming from the embedding $\equiQ(J)\subset \equiQ(I)$.
\end{prop}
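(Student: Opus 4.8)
The plan is to write the homotopy colimit down explicitly and then read every assertion of the proposition off the formula, so that the only genuine work is bookkeeping. Fix a finite set $I$ and view $\equiQ(I)$ as a poset under refinement. The assignment $T\mapsto \Delta^{(I/T)}_{\bullet}F_{X^T}$ is then a diagram of complexes of sheaves on $X^I$: a refinement $T\twoheadrightarrow T'$ produces, via the $!$-structure map $\theta^{(T/T')}$ of $F$ together with the identity $\Delta^{(I/T)}_{\bullet}\Delta^{(T/T')}_{\bullet}=\Delta^{(I/T')}_{\bullet}$, an arrow $\Delta^{(I/T')}_{\bullet}F_{X^{T'}}\to\Delta^{(I/T)}_{\bullet}F_{X^T}$ (so the diagram is contravariant for refinement, i.e.\ covariant over $\equiQ(I)^{\mathrm{op}}$). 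I would set $\tilde F_{X^I}$ to be the standard homotopy-colimit totalization of this diagram: the direct sum, over chains $T_0\geq T_1\geq\cdots\geq T_n$ in $\equiQ(I)$, of a copy of $\Delta^{(I/T_0)}_{\bullet}F_{X^{T_0}}$ shifted into degree $-n$, with differential the internal differential of $F$ plus the alternating sum of face maps (the extremal face applying a $\theta$, the inner faces forgetting a $T_j$).

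First I would establish the augmentation $\tilde F_{X^I}\to F_{X^I}$ and that it is a quasi-isomorphism. Over $\equiQ(I)^{\mathrm{op}}$ the indexing category has a terminal object, the discrete equivalence relation $\mathrm{id}\colon I\twoheadrightarrow I$, at which the diagram takes the value $\Delta^{(I/I)}_{\bullet}F_{X^I}=F_{X^I}$; since a homotopy colimit over a category with a terminal object is computed by its value there, the canonical projection $\tilde F_{X^I}\to F_{X^I}$ (onto the length-zero summand indexed by $\mathrm{id}_I$) is a quasi-isomorphism. Thus the quasi-isomorphism claim is essentially formal and the content lies in the next two steps.

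Second I would produce the $!$-sheaf structure. A surjection $\pi\colon I\twoheadrightarrow J$ gives an order-preserving, fully faithful inclusion $\equiQ(J)\hookrightarrow\equiQ(I)$, $R\mapsto\pi^{-1}(R)$, identifying $\equiQ(J)$ with the set of equivalence relations on $I$ coarser than $\ker\pi$ (an upward-closed subposet for refinement); and for $R\in\equiQ(J)$ with quotient $T$ one has $\Delta^{(I/T)}=\Delta^{(I/J)}\circ\Delta^{(J/T)}$, so the restriction of the $\equiQ(I)$-diagram to this subposet is exactly $\Delta^{(I/J)}_{\bullet}$ applied to the $\equiQ(J)$-diagram. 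Functoriality of homotopy colimits along the inclusion of indexing posets then yields $\theta^{(I/J)}\colon\Delta^{(I/J)}_{\bullet}\tilde F_{X^J}\to\tilde F_{X^I}$, and it is a degreewise-split monomorphism because the chains of the subposet form a subset of all chains. The cocycle identities $\theta^{(\pi_1\pi_2)}=\theta^{(\pi_2)}\circ\Delta^{(\pi_2)}_{\bullet}(\theta^{(\pi_1)})$ and $\theta^{(\mathrm{id})}=\mathrm{id}$ follow from composability of these subposet inclusions together with transitivity of $\Delta_{\bullet}$ along diagonals, so $\tilde F\in CSh^{!}(X^{\mathcal S})$; the augmentations above are compatible with all the $\theta$'s, giving a morphism $\tilde F\to F$ of $!$-sheaves which is a termwise quasi-isomorphism, and functoriality of the homotopy colimit in the diagram makes $F\mapsto\tilde F$ canonical.

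The remaining point, which I expect to be the real obstacle, is niceness. By construction each $\tilde F_{X^I}$ is degreewise a direct sum of pushforwards $\Delta^{(I/T)}_{\bullet}(-)$ of sheaves built from $F$ along the diagonal stratification of $X^I$, and the structure maps are split inclusions of such sums indexed by the upward-closed subposets $\equiQ(J)\subset\equiQ(I)$; I would check that these are precisely the features required of a nice resolution in \cite{book:635773}, and in particular that the split-injectivity of the $\theta^{(I/J)}$ forces the strict colimit $\Gamma(X^{\mathcal S},\tilde F)=\varinjlim_{I}\Gamma(X^I,\tilde F_{X^I})$ to compute the homotopy colimit over $\mathcal S^{\mathrm{op}}$, so that $\tilde F$ may be used to compute chiral homology. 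The subsidiary nuisance throughout is keeping the variance and the signs of the face maps straight, so that each $\theta$ is a chain map, the cocycle identity holds on the nose, and the augmentation commutes with the differential; once the explicit totalization is fixed, all of these become mechanical.
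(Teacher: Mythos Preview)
The paper does not prove this proposition; it is stated with a citation to \cite{book:635773} and is immediately followed by the explicit normalized-complex description of the homotopy colimit, with no argument for the quasi-isomorphism or for niceness. So there is no ``paper's proof'' to compare against---your proposal is filling in what the authors outsourced.

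Your outline is the standard one and is essentially correct. A couple of places to tighten. First, your variance bookkeeping is slightly at odds with the paper's: in the paper an $n$-simplex of $B\equiQ(I)$ is a chain of surjections $I_0\twoheadleftarrow I_1\twoheadleftarrow\cdots\twoheadleftarrow I_n$ with value $F_{I_0}$, so the source of the chain is the \emph{coarsest} quotient and a morphism $T\to T'$ in $\equiQ(I)$ corresponds to a surjection $T'\twoheadrightarrow T$; in this convention the discrete relation $\mathrm{id}_I$ is terminal in $\equiQ(I)$ itself, not in its opposite. Your cofinality argument (inclusion of the terminal object is homotopy cofinal, hence the augmentation $\tilde F_{X^I}\to F_{X^I}$ is a quasi-isomorphism) goes through unchanged once the convention is fixed. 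Second, your identification of $\equiQ(J)\subset\equiQ(I)$ with the equivalence relations coarser than $\ker\pi$ is correct, but you should note that this sub-poset is \emph{downward}-closed in the paper's ordering (its elements are the coarse ones), which is what makes the induced map on normalized complexes a degreewise split monomorphism---this is exactly the content of the clause ``structure morphisms are embeddings'' in the statement. Finally, ``niceness'' in \cite{book:635773} is a specific technical condition (roughly, that the $!$-structure maps are closed embeddings with complements expressible in terms of smaller strata); your observation that each graded piece of $\tilde F_{X^I}$ is a direct sum of diagonal pushforwards indexed by $\equiQ(I)$, with the structure maps given by inclusion of summands, is precisely what is needed, but you should cite or reproduce the exact definition rather than leave it as ``I would check''.
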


We now give an explicit definition of the homotopy direct limit $\hoco(\equiQ(I),\Delta^{(I/T)}_{\bullet}F_{X^T})$. This material is standard, see \cite{dold1961homologie}.
\begin{defn}
  For each sequence of surjections $(I_0\twoheadleftarrow I_1\twoheadleftarrow\cdots\twoheadleftarrow I_n)$ in the classifying space $B\equiQ(I)$ of $\equiQ(I)$, define
  $$
  F_{(I_0\twoheadleftarrow I_1\twoheadleftarrow \cdots\twoheadleftarrow I_n)}=F_{I_0}=\Delta^{(I/I_0)}_{\bullet}F_{X^{I_0}}.
  $$
  Set $\mathcal{F}_n=\mathop{\bigoplus}\limits_{(I_0\twoheadleftarrow I_1\twoheadleftarrow \cdots\twoheadleftarrow I_n)\in B\equiQ(I)}F_{(I_0\twoheadleftarrow I_1\twoheadleftarrow \cdots\twoheadleftarrow I_n)}$, define the boundary maps
  $$
  \partial_{i}:\mathcal{F}_{n}\rightarrow \mathcal{F}_{n-1}, \ 1\leq i\leq n.
  $$
  $$
  \partial_i:=\mathop{\bigoplus}\limits_{I_i,(I_0\twoheadleftarrow I_1\twoheadleftarrow \cdots\twoheadleftarrow I_n)\in B\equiQ(I)}\mathrm{id}(F_{(I_0\twoheadleftarrow I_1\twoheadleftarrow \cdots\twoheadleftarrow I_n)}\rightarrow F_{(I_0\twoheadleftarrow I_1\twoheadleftarrow \cdots\twoheadleftarrow \hat{I}_i\twoheadleftarrow \cdots\twoheadleftarrow I_n)})
  $$
  where $\mathrm{id}(F_{(I_0\twoheadleftarrow I_1\twoheadleftarrow \cdots\twoheadleftarrow I_n)}\rightarrow F_{(I_0\twoheadleftarrow I_1\twoheadleftarrow \cdots\twoheadleftarrow \hat{I}_i\twoheadleftarrow \cdots\twoheadleftarrow I_n)})$ is the identity map from the component $F_{(I_0\twoheadleftarrow I_1\twoheadleftarrow \cdots\twoheadleftarrow I_n)}=F_{I_0}$ in $\mathcal{F}_n$ to the component $F_{(I_0\twoheadleftarrow I_1\twoheadleftarrow \cdots\twoheadleftarrow \hat{I}_i\twoheadleftarrow \cdots\twoheadleftarrow I_n)}=F_{I_0}$ in $\mathcal{F}_{n-1}$.
    And $\partial_0:\mathcal{F}_n\rightarrow\mathcal{F}_{n-1}$ is defined to be the map
    $$
    \partial_0=\mathop{\bigoplus}\limits_{(I_0\twoheadleftarrow I_1\twoheadleftarrow \cdots\twoheadleftarrow I_n)\in B\equiQ(I)}{\Delta^{(I/I_1)}_{\bullet}\theta^{(I_1/I_0)}}(F_{(I_0\twoheadleftarrow I_1\twoheadleftarrow \cdots\twoheadleftarrow I_n)}\rightarrow F_{(I_1\twoheadleftarrow \cdots\twoheadleftarrow I_n)}),
    $$
    here the $\Delta^{(I/I_1)}_{\bullet}\theta^{(I_1/I_0)}(F_{(I_0\twoheadleftarrow I_1\twoheadleftarrow \cdots\twoheadleftarrow I_n)}\rightarrow F_{(I_1\twoheadleftarrow \cdots\twoheadleftarrow I_n)})$ is the structure morphism from the component $F_{(I_0\twoheadleftarrow I_1\twoheadleftarrow \cdots\twoheadleftarrow I_n)}=F_{I_0}=\Delta^{(I/I_0)}_{\bullet}F_{X^{I_0}}=\Delta^{(I/I_1)}_{\bullet}\Delta^{(I_1/I_0)}_{\bullet}F_{X^{I_0}}$ in $\mathcal{F}_n$ to the component $F_{(I_1\twoheadleftarrow \cdots\twoheadleftarrow I_n)}=F_{I_1}=\Delta^{(I/I_1)}_{\bullet}F_{X^{I_1}}$ in $\mathcal{F}_{n-1}$.

    Define the normalization complex
  $$
  \mathrm{Norm}(\equiQ(I),F)^{-n}:=\cap^{n}_{i=1}\mathrm{Ker}(\partial_i:\mathcal{F}_{n}\rightarrow\mathcal{F}_{n-1}),\quad n\geq 1,\quad
   $$
   $$
   \mathrm{Norm}(\equiQ(I),F)^{0}=\bigoplus_{I_0\in \equiQ(I)}\Delta^{(I/I_0)}_{\bullet}F_{X^{I_0}}
  $$
  $$
  \mathrm{Norm}(\equiQ(I),F)^{-n}:=0,\quad n<0,\quad
   $$
  with the differential $d_{\mathrm{Norm}}=\partial_{0}|_{ \mathrm{Norm}(\equiQ(I),F)^{-n}}: \mathrm{Norm}(\equiQ(I),F)^{-n}\rightarrow \mathrm{Norm}(\equiQ(I),F)^{-n+1}$. The homotopy direct limit is defined to be the corresponding total complex
  $$
  \hoco(\equiQ(I),\Delta^{(I/T)}_{\bullet}F_{X^T}):=\mathrm{tot} \mathrm{Norm}(\equiQ(I),F).
  $$

Here for a complex $F$, the above total complex is
$$
\mathrm{tot} \mathrm{Norm}(\equiQ(I),F)=\oplus_{n\in\mathbb{Z}} \mathrm{Norm}(\equiQ(I),F)^n[-n],
$$
and the differential is the sum of $d_{\mathrm{Norm}}$ and the structure differentials of $\mathrm{Norm}(\equiQ(I),F)^n[-n]$.
\end{defn}

Similar to the discussion on !-sheaf, a right $\mathcal{D}-$module $M$ on $X^{\mathcal{S}}$ consists of a right $\mathcal{D}_{X^I}-$module $M_{X^I}$ for each finite set $I$ and a morphism $\theta^{(\pi)}:\Delta^{(\pi)}_*(M_{X^I})\rightarrow M_{X^J}$ for each surjections $\pi:J\rightarrow I$, subject to the compatibility conditions
$$
\theta^{(\pi_1\pi_2)}=\theta^{(\pi_2)}\circ \Delta_{*}^{(\pi_2)}(\theta^{(\pi_1)})\ \ \text{and}\ \ \theta^{(\mathrm{id}_I)}=\mathrm{id}.
$$

Now consider the de Rham complex $\mathrm{DR}(M)$ as a complex of !-sheaves on $X^{\mathcal{S}}$, that is, we define $\mathrm{DR}(M)_{X^I}$ to be $\mathrm{DR}(M_{X^I})$ and the structure morphism is the obvious one. Now we are ready to introduce the chiral chain complex.

\begin{defn} {\cite{book:635773}}\label{ChiralChain}
  Let $\mathcal{A}$ be a chiral algebra on $X$. The chiral chain complex is defined as follows
  $$
\boxed{\tilde{C}^{\mathrm{ch}}(X,\mathcal{A})^{-\bullet}_{\mathcal{Q}}:=\Gamma(X^{\mathcal{S}},P)=  \lim _{\rightarrow } (I\in\mathcal{S}, \Gamma(X^I, P^{\bullet}_{X^I})),\ P^{\bullet}_{X^I}=\hoco(\equiQ(I),\Delta^{(I/T)}_{\bullet}F_{X^T} )^{\bullet},}
  $$
  where $F_{X^I}=\mathrm{DR}(C(\mathcal{A})_{\mathcal{Q},X^I})=\mathrm{DR}\big(\mathop{\bigoplus}\limits_{T\in \equiQ(I)}\Delta_*^{(I/T)}((\mathcal{A}\shift)^{\boxtimes T}(*\Delta_T)_\mathcal{Q})\big)$. The differential is
  $$
  d_{\mathrm{tot}}=d_{\mathrm{DR}}+\bar{\partial}+2\pi id_{\mathrm{ch}}+d_{\mathrm{Norm}}.
   $$
 The coefficient $2\pi i $ is not important, see Remark \ref{Rescaling}.
\end{defn}
\begin{rem}
  In \cite[pp.310]{book:635773}, they use $F_{X^T}=\mathrm{DR}\big(\mathop{\bigoplus}\limits_{T\in \equiQ(I)}\Delta_*^{(I/T)}((j_*^{(T)}j^{(T)*}(\mathcal{A}\shift)^{\boxtimes T})_\mathcal{Q})\big)$, where $j^{(T)}:X^T-\Delta_T\hookrightarrow X^T$ is the open immersion. In this paper, we use the analytic version of their chiral chain complex $\tilde{C}^{\mathrm{ch}}(X,-)_{\mathcal{Q}}$ and denote it by the same notation.
\end{rem}

 From the definition of the chiral chain complex, the homological grading is
$$
\mathrm{hodeg}(\alpha)=-\mathrm{deg}(\alpha)=-q+|T|-p-|a|,\quad\alpha\in \Omega^{0,q}(X^I,\mathrm{DR}^p\Delta_*^{(I/T)}(\mathcal{A}\shift)^{\boxtimes T}(*\Delta_T)),
$$
here we write
$$
\alpha=\eta\cdot a,\quad\eta\in \Omega^{0,q}(X^I,\mathrm{DR}^p\Delta_*^{(I/T)}(\omega_X\shift)^{\boxtimes T}(*\Delta_T)), a\in \Omega^{0,0}(X^I,\mathcal{V}^{\boxtimes I}).
$$
\begin{defn}
  {\cite{book:635773}} The chiral homology is defined as the homology the chiral chain complex
$$
H^{\mathrm{ch}}_{i}(X,\mathcal{A})=H_i\left(\tilde{C}^{\mathrm{ch}}(X,\mathcal{A})^{\bullet}_{\mathcal{Q}}\right).
$$
\end{defn}

\begin{rem}
  The original construction of \cite{book:635773} works for general differential graded chiral algebras $(\mathcal{A},d_{\mathcal{A}})$, here we only need the case when $d_{\mathcal{A}}=0$.
\end{rem}
\subsection{Regularized integrals and the unit chiral chain complex }\label{Tracemap}

In this section, we will show that the canonical trace map $$\mathrm{tr}:\tilde{C}^{\mathrm{ch}}(X,\omega_X)_{\mathcal{Q}}\xrightarrow{\sim}
H_0^{\mathrm{ch}}(X,\omega_X)\cong \mathbb{C}$$
  is exactly given by regularized integrals introduced in \cite{li2020regularized}.

The definition of the regularized integral is reviewed in Appendix \ref{ReInt}. Here we recall some important properties of regularized integrals.

\begin{itemize}
  \item Let $D\subset X$ be a finite set of points on $X$. The regularized integral $\dashint$ is a $\mathbb{C}-$linear map
      $$
      \dashint_X: \Gamma(X,\omega_X(*D)_\mathcal{Q})=\Omega^{0,\bullet}(X,\omega_X(*D))\rightarrow \mathbb{C}.
      $$
  \item $\dashint_X$ is well defined on the quotient space $$\frac{\Gamma(X,\omega_X(*D)_\mathcal{Q})}{d_{\mathrm{DR}}(\Gamma(X,(\omega_X(*D)\otimes_{\mathcal{O}_X}\Theta_X)_{\mathcal{Q}}))}=\frac{\Gamma(X,\omega_X(*D)_\mathcal{Q})}{\partial\Gamma(X,\mathcal{O}_X(*D)_{\mathcal{Q}}))},
      $$ namely,  $\dashint_X$ vanishes on holomorphic total derivatives

      \begin{equation}\label{Killholo}
      \dashint_X \partial \eta=0, \eta \in \Gamma(X,\mathcal{O}_X(*D)_\mathcal{Q}).
      \end{equation}
  \item We have
  $$
  \dashint_X\bar{\partial}\eta=-2\pi i\sum_{p\in D}\mathrm{Res}_{z\rightarrow p}\eta,
  $$
  here the $\mathrm{Res}_{z\rightarrow p}\eta$ is a generalization of the notion of residue for meromorphic forms. For the precise definition, see Appendix \ref{ReInt}.  Let $p\in D$ and $z$ be a local coordinate around $p$ such that $z(p)=0$. Then $\eta$ can be written as $\frac{f(z)}{z^{k+1}}dz$ around $p$, here $f(z)$ is a smooth function and $z(p)=0$, then
  $$
  \mathrm{Res}_{z\rightarrow p}\eta=\frac{\partial^{k}_zf(0)}{k!}.
  $$
 \item A Fubini-type theorem is satisfied by $\dashint$. In other words, one can define a regularized integral
     $$
     \dashint_{X^I}:\Gamma(X^I,\omega_{X^I}(*\Delta_{I})_\mathcal{Q})\rightarrow \mathbb{C}
     $$
     which vanishes on the subspace $\mathrm{Im}(d_{\mathrm{DR}})\subset \Gamma(X^I,\omega_{X^I}(*\Delta_{I})_\mathcal{Q})$.
     And the above integral is equal to the iterated integral
     $$
     \dashint_{X^I}=\dashint_{X_1}\cdots \dashint_{X_{|I|}},
     $$
     here we choose an identification $I\simeq\{1,\dots,|I|\}$ to get $X^I\simeq X_1\times\cdots\times X_{|I|}$ and $X_i$ is a copy of $X$.
\end{itemize}
We also use the notation $\mathrm{Res}_{z_{\alpha'}\rightarrow z_{\alpha''}} (\alpha',\alpha''\in I)$ to denote the residue operation on $\Gamma(X^I,\omegaQI)$
$$
\mathrm{Res}_{z_{\alpha'}\rightarrow z_{\alpha''}}:\Gamma(X^I,\omega_{X^I}(*\Delta_I)_\mathcal{Q})\rightarrow \Gamma(X^{I'},\omega_{X^{I'}}(*\Delta_{I'})_\mathcal{Q}),
$$
here $(I',\pi :I\twoheadrightarrow I')\in \equiQ(I)$ with $|I'|=|I|-1$ and $\pi(\alpha')=\pi(\alpha'')$. See Appendix \ref{ReInt} for details. Some important properties of this residue operation are summarized in the following proposition.

\begin{prop}\label{Stokes}
For any $(I', \pi:I\twoheadrightarrow I')\in \equiQ(I,|I|-1)$, let $\alpha'\neq\alpha''\in I$ such that $\pi(\alpha')=\pi(\alpha'')$. We write $\alpha'(I'), \alpha''(I')$ to emphasis the dependence on $I'\in \equiQ(I,|I|-1)$. Then residues $\mathrm{Res}_{z_{\alpha'(I')}\rightarrow z_{\alpha''(I')}}$  and $\mathrm{Res}_{z_{\alpha''(I')}\rightarrow z_{\alpha'(I')}}$ are equal in the quotient space $\frac{\Gamma(X^{I'},\omegaQIP)}{\mathrm{Im}(d_{\mathrm{DR}})}$. Thus, we have
$$
\dashint_{X^{I'}}\mathrm{Res}_{z_{\alpha'(I')}\rightarrow z_{\alpha''(I')}}\eta=\dashint_{X^{I'}}\mathrm{Res}_{z_{\alpha''(I')}\rightarrow z_{\alpha'(I')}}\eta.
$$
We denote the corresponding equivalent class by $\mathrm{Res}_{I,I'}\eta$. We have the following version of the Stokes theorem
\begin{equation}\label{StokesThm}
\dashint_{X^{I}}(\bar{\partial}+\partial)\eta=\dashint_{X^{I}}\bar{\partial}\eta=-2\pi i\sum_{I'\in \equiQ(I,|I|-1)}\dashint_{X^{I'}}\mathrm{Res}_{I,I'}\eta.
\end{equation}

\end{prop}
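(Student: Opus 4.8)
The plan is to reduce the whole statement to the one-variable properties of $\dashint$ listed above — the one-dimensional Stokes formula $\dashint_X\bar\partial\eta=-2\pi i\sum_{p\in D}\mathrm{Res}_{z\to p}\eta$, the vanishing \eqref{Killholo} on holomorphic total derivatives, and the Fubini-type theorem $\dashint_{X^I}=\dashint_{X_1}\cdots\dashint_{X_{|I|}}$ — and then to carry out the combinatorial bookkeeping on top of them. The residue-symmetry assertion is a local coordinate computation; the Stokes formula \eqref{StokesThm} is then an induction on $|I|$.

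For the symmetry of residues, I would argue as follows. Both $\mathrm{Res}_{z_{\alpha'(I')}\to z_{\alpha''(I')}}\eta$ and $\mathrm{Res}_{z_{\alpha''(I')}\to z_{\alpha'(I')}}\eta$ are local operations supported near the image of the diagonal $\Delta^{(I/I')}$, so one may pass to a polydisc with a local coordinate $z$ on $X$ in which $z_{\alpha'},z_{\alpha''}$ and the spectator coordinates make sense; set $u=z_{\alpha'}-z_{\alpha''}$ and $w=z_{\alpha''}$. Laurent-expanding the pole part of $\eta$ in $u$ and using the explicit residue formula recalled above, $\mathrm{Res}_{z_{\alpha'}\to z_{\alpha''}}\eta$ becomes a finite sum of terms $\frac{1}{k!}(\partial_{z_{\alpha'}}^{k}f_k)|_{\mathrm{diag}}$ wedged with the surviving forms, while $\mathrm{Res}_{z_{\alpha''}\to z_{\alpha'}}\eta$ is the same expression with $\partial_{z_{\alpha'}}^{k}$ replaced by $(-1)^{k}\partial_{z_{\alpha''}}^{k}$ (the sign coming from $u\mapsto-u$). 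The key algebraic point is the chain rule on the diagonal: for smooth $h(z_{\alpha'},z_{\alpha''})$ one has $\partial_w\big(h|_{\mathrm{diag}}\big)=(\partial_{z_{\alpha'}}h)|_{\mathrm{diag}}+(\partial_{z_{\alpha''}}h)|_{\mathrm{diag}}$, so $(\partial_{z_{\alpha'}}h)|_{\mathrm{diag}}\equiv-(\partial_{z_{\alpha''}}h)|_{\mathrm{diag}}$ modulo $\partial_w(\text{smooth})\subseteq\mathrm{Im}(d_{\mathrm{DR}})$; iterating $k$ times gives $(\partial_{z_{\alpha'}}^{k}h)|_{\mathrm{diag}}\equiv(-1)^{k}(\partial_{z_{\alpha''}}^{k}h)|_{\mathrm{diag}}$, which exactly absorbs the $(-1)^{k}$ and shows $\mathrm{Res}_{z_{\alpha'}\to z_{\alpha''}}\eta\equiv\mathrm{Res}_{z_{\alpha''}\to z_{\alpha'}}\eta$ in $\Gamma(X^{I'},\omegaQIP)/\mathrm{Im}(d_{\mathrm{DR}})$. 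Both sides and the relation are coordinate-independent, so a partition of unity gives the global statement; the equality of regularized integrals then follows from \eqref{Killholo} together with Fubini, and one defines $\mathrm{Res}_{I,I'}\eta$ to be the common class.

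For \eqref{StokesThm}, the first equality is immediate: $\eta$ is a section of $\omega_{X^I}(*\Delta_I)_{\mathcal{Q}}$, hence already carries the top holomorphic form $dz_1\wedge\cdots\wedge dz_{|I|}$, so $\partial\eta=0$. For the second equality I would induct on $|I|$, the base case being the one-dimensional Stokes formula. Fixing $n\in I$ and writing $\dashint_{X^I}=\dashint_{X^{I\setminus\{n\}}}\circ\dashint_{X_n}$, the heart of the matter is a commutation identity
$$
\dashint_{X_n}\bar\partial\eta=\bar\partial\Big(\dashint_{X_n}\eta\Big)-2\pi i\sum_{j\in I,\ j\neq n}\mathrm{Res}_{z_n\to z_j}\eta
$$
of forms on $X^{I\setminus\{n\}}$ — the one-dimensional Stokes formula applied in the $z_n$-slot, together with the fact that $\dashint_{X_n}$ commutes with $\bar\partial$ in the remaining variables up to exactly these residue corrections. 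Applying $\dashint_{X^{I\setminus\{n\}}}$, the first term is governed by the inductive hypothesis and contributes $-2\pi i$ times the sum over equivalence relations merging a pair inside $I\setminus\{n\}$, while the second term contributes the pairs containing $n$; together they exhaust $\equiQ(I,|I|-1)$, each occurring once. Finally, the residue-symmetry of the previous paragraph (and the commutation of $\dashint_{X_n}$ with residues in the other variables, itself a consequence of Fubini) lets one replace the representative attached to each $I'$ by the symmetric class $\mathrm{Res}_{I,I'}\eta$, yielding \eqref{StokesThm}. Compatibility with the inductive limit over $\mathcal{S}$ is routine, all operations being functorial for surjections.

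I expect the main obstacle to be not the algebra — the telescoping via the chain rule and the inductive peeling of one integration variable at a time are elementary — but the sign and orientation bookkeeping: tracking which Dolbeault degree of $\eta$ feeds each residue, the signs produced when $d\bar z_n$ is moved past the holomorphic volume form and when the order of iterated integration is altered, and the precise shape of the commutators of $\dashint_{X_n}$ with $\bar\partial$ and with the residues in the remaining variables, for which the definitions and lemmas of Appendix \ref{ReInt} must be invoked with care.
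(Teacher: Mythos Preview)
Your proposal is correct, and for the residue-symmetry assertion it is essentially the explicit version of the argument the paper invokes (the paper just cites the relevant lemma from Beilinson--Schechtman \cite{Beilinson1988tb}, whose proof is exactly the chain-rule telescoping you describe).

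For the Stokes formula \eqref{StokesThm}, however, the paper takes a more direct route than your induction. Since only the top Dolbeault degree contributes to $\dashint_{X^I}$, one may assume $\eta$ has $(0,|I|-1)$-type and decompose $\eta=\sum_{t\in I}\eta^t$ where $\eta^t$ is the piece missing $d\bar z_t$. Then $\bar\partial\eta=\sum_t\bar\partial_{z_t}\eta^t$, and for each $t$ one applies Fubini with $z_t$ innermost and the one-dimensional Stokes formula directly, obtaining $\dashint_{X^I}\bar\partial\eta=-2\pi i\sum_t\dashint_{X^{I\setminus\{t\}}}\sum_{s\neq t}\mathrm{Res}_{z_t\to z_s}\eta^t$. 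Reorganizing the double sum by unordered pairs $\{\alpha',\alpha''\}$ and invoking the residue symmetry once gives \eqref{StokesThm} in one stroke. This avoids your inductive peeling and, in particular, sidesteps the need to verify the commutation of $\dashint_{X_n}$ with $\bar\partial$ in the remaining variables and with residues $\mathrm{Res}_{z_i\to z_j}$ for $i,j\neq n$ --- identities which are true but which your approach must establish (and which, as you note, are the main source of bookkeeping). The type decomposition buys a shorter and more transparent argument; your induction, on the other hand, packages the commutation identities explicitly, which may be useful if one later needs them in isolation.
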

\begin{proof}
  See Appendix \ref{ReInt}.
\end{proof}
\begin{rem}
  When $\eta$ has only logarithmic singularities, the above proposition reduces to the Stokes theorem for the usual integrals.
\end{rem}
\begin{rem}\label{Rescaling}
  One can rescale the definition of the regularized integral by a non-zero constant $a\in \mathbb{C}^{\times}$
  $$
  \dashint^{a}_X:=a\cdot\dashint_X.
  $$
  Then $\dashint^{a}_{X^n}=a^n\cdot \dashint_{X^n}$. If we choose $a=\frac{1}{2\pi i}$, then
  $$
  \dashint^a_{X^{I}}\bar{\partial}\eta=-\sum_{I'\in \equiQ(I,|I|-1)}\dashint^a_{X^{I'}}\mathrm{Res}_{I,I'}\eta.
  $$
  Later we will see that the correct differential on chiral chain complex will be $d_{\mathrm{DR}}+\bar{\partial}+d_{\mathrm{ch}}+d_{\mathrm{Norm}}$ if we use $\dashint^{a}_X$ as our definition of the regularized integral.
\end{rem}

Now let $I=\{1,\dots,n\}$ and $i,j\in I$. In terms of local coordinates, we write $\eta=\frac{f(z_1,\dots,z_n)}{(z_i-z_j)^{k+1}}dz_i\wedge\eta'\wedge\eta''$, here $\eta'$ is a smooth section of $\mathcal{Q}_{X_i}$ , $\eta''\in \Gamma(X^{n-1},\omega_{X^{n-1}}(*\Delta_{\{1,\dots,n-1\}})_{\mathcal{Q}_{X^{n-1}}})$ and $f$ is a smooth function. Then
\begin{equation}\label{Local}
  \mathrm{Res}_{z_i\rightarrow z_j}\eta=\sum_{l}\frac{\partial^{k-l}_{z_i}f(z_1,\dots,z_n)|_{z_i=z_j}}{(k-l)!}\cdot \frac{ \partial^{l}_{z_i}\eta'|_{z_i=z_j}}{l!}\cdot \eta''.
\end{equation}
\begin{rem}
With this notation, for a local section $\eta_0\cdot v_1dz_1\boxtimes v_2dz_2$ of $\mathcal{V}^r\boxtimes\mathcal{V}^r(*\Delta_{\{1,2\}})_\mathcal{Q}$,  we have
$$
\mu^{\mathcal{Q}}(\eta_0\cdot v_1dz_1\boxtimes v_2dz_2)=
$$
$$
\big(\mathrm{Res}_{z_1\rightarrow z_2}e^{(z_1-z_2)\otimes\vec{\partial}_{z_1}}\cdot\eta_0\cdot \sum_{n\in\mathbb{Z}}{v_1}_{(n)}v_2(z_1-z_2)^{-n-1}dz_1\big)\cdot dz_2\otimes_{\mathcal{D}_{X_{12}}} 1, \ z_2=w,
$$
which is an extension of (\ref{OPECHIRAL}). Here $z_2=w$ means that we substitute $z_2$ by  $w$ which is the local coordinate of the diagonal.
\end{rem}

It is shown in \cite[pp.315,Proposition.4.3.3(i)]{book:635773} that the chiral homology of the unit chiral algebra $\omega_X$ is one-dimensional and concentrated in degree 0, that is, the projection
$$
\tilde{C}^{\mathrm{ch}}(X,\omega_X)_{\mathcal{Q}}\rightarrow
H_0^{\mathrm{ch}}(X,\omega_X)\simeq \mathbb{C}
$$
is a quasi-isomorphism. We denote the projection map by $\mathrm{tr}.$ Here we normalize the second isomorphism such that $\mathrm{tr}(\mathrm{vol}_X\shift)=\int_X\mathrm{vol}_X\shift=1,\mathrm{vol}_X\shift\in \Omega^{0,1}(X,\omega_X\shift)$ is a volume form (it is a degree 0 element in the chiral chain complex).

Recall that $\dashint$ vanishes on $\mathrm{Im}(d_{\mathrm{DR}})$. The same property holds for the trace map $\mathrm{tr}$.

\begin{lem}\label{VanishOnDR}
  The trace map $\mathrm{tr}$ annihilates the $d_{\mathrm{DR}}$-exact terms in $\tilde{C}^{\mathrm{ch}}(X,\omega_X)_{\mathcal{Q}}$.
\end{lem}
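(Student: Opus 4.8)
The plan is to reduce the assertion, by the bookkeeping of the total differential $d_{\mathrm{tot}}=d_{\mathrm{DR}}+\bar\partial+2\pi i\,d_{\mathrm{ch}}+d_{\mathrm{Norm}}$, to a Stokes-type vanishing on the deepest diagonal stratum. First, since $\mathrm{tr}$ is a morphism of complexes into $\mathbb C$ placed in homological degree $0$, it annihilates every chain of homological degree $\ne 0$. As $d_{\mathrm{DR}}$ is a summand of $d_{\mathrm{tot}}$, it lowers homological degree by one, so a $d_{\mathrm{DR}}$-exact chain lies in degree $0$ only when it is $d_{\mathrm{DR}}\gamma$ with $\gamma$ of homological degree $1$; in all other degrees $\mathrm{tr}$ kills it for free. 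Hence it suffices to prove $\mathrm{tr}(d_{\mathrm{DR}}\gamma)=0$ when $\deg\gamma=1$.

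Next I would use the grading by Spencer (de Rham) index $p\le 0$: the operator $d_{\mathrm{DR}}$ raises $p$ by one, while $\bar\partial$, $d_{\mathrm{ch}}$ and $d_{\mathrm{Norm}}$ preserve it. Thus the subspaces $F_k$ of Spencer index $\ge -k$ form an increasing, exhaustive filtration of $\tilde C^{\mathrm{ch}}(X,\omega_X)_{\mathcal Q}$ by subcomplexes, with bottom piece $F_0$ (Spencer index exactly $0$, on which $d_{\mathrm{DR}}$ vanishes). The structural input is that $\mathrm{tr}$ is supported in top Spencer index — it vanishes on every chain of Spencer index $\le -1$ — which I would extract from the proof of \cite[Proposition~4.3.3(i)]{book:635773}, where the quasi-isomorphism $\tilde C^{\mathrm{ch}}(X,\omega_X)_{\mathcal Q}\xrightarrow{\ \sim\ }\mathbb C$ is realized by integrating, over the deepest diagonal stratum, the top-Dolbeault, top-Spencer component of a chain. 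Granting this, decompose $\gamma=\sum_{p\le 0}\gamma_p$ by Spencer index; the summand $d_{\mathrm{DR}}\gamma_p$ has Spencer index $p+1$, so only $d_{\mathrm{DR}}\gamma_{-1}$ is seen by $\mathrm{tr}$, and
$$
\mathrm{tr}(d_{\mathrm{DR}}\gamma)=\mathrm{tr}(d_{\mathrm{DR}}\gamma_{-1})=\mathrm{tr}(d_{\mathrm{tot}}\gamma_{-1})-\mathrm{tr}\big((\bar\partial+2\pi i\,d_{\mathrm{ch}}+d_{\mathrm{Norm}})\gamma_{-1}\big)=0,
$$
because $\mathrm{tr}$ is a chain map (so $\mathrm{tr}\circ d_{\mathrm{tot}}=0$) and $(\bar\partial+2\pi i\,d_{\mathrm{ch}}+d_{\mathrm{Norm}})\gamma_{-1}$ again has Spencer index $-1$.

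The heart of the matter is thus the identification of $\mathrm{tr}$ on the top Spencer stratum with genuine integration. On the component $I=\{*\}$, $T=\{*\}$, $p=0$ the chiral chain complex is $\Gamma(X,\omega_X\otimes\mathcal Q_X)\shift=\Omega^{0,\bullet}(X,\omega_X)\shift$, which carries no diagonal; there $d_{\mathrm{DR}}$ restricts to a sign times the holomorphic de Rham differential $\partial:\Omega^{0,\bullet}(X,\mathcal O_X)\shift\to\Omega^{0,\bullet}(X,\omega_X)\shift$, and $\mathrm{tr}$ restricts to $\int_X$ (forced by the normalization $\mathrm{tr}(\mathrm{vol}_X\shift)=\int_X\mathrm{vol}_X\shift=1$), so $\int_X\partial(-)=\int_X d(-)=0$ by Stokes on the compact curve $X$ — indeed $d_{\mathrm{DR}}=d_{\mathrm{tot}}$ on this component, so the vanishing is automatic. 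Propagating this through the meromorphic poles along the diagonals, the normalization direction of the Ran space, and the colimit over $\mathcal S$ is exactly what the residue calculus of Section~\ref{Tracemap} — Proposition~\ref{Stokes} and the identity $(\ref{Killholo})$ — is designed to supply; making precise the cochain-level form of the Beilinson--Drinfeld projection so as to justify "$\mathrm{tr}$ is supported in top Spencer index" is, I expect, the main obstacle, while the rest is formal.
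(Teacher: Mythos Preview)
Your skeleton is the same as the paper's --- reduce to degree $0$, isolate the Spencer-index $-1$ piece $\gamma_{-1}$, and then show $d_{\mathrm{DR}}\gamma_{-1}=d_{\mathrm{tot}}\gamma_{-1}$ --- but you manufacture an obstacle that isn't there. The claim ``$\mathrm{tr}$ is supported in top Spencer index'' that you flag as the main difficulty is in fact an immediate consequence of the degree count you already performed, and the same count makes your subtraction step unnecessary.

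Concretely, for the unit chiral algebra the cohomological degree of a chain sitting in $\Omega^{0,q}$, Spencer index $p$, stratum $T$, and $\mathrm{Norm}^{-n}$ is $q-|T|+p-n$, with the constraints $0\le q\le|T|$, $p\le 0$, $n\ge 0$. So for a degree $-1$ chain with Spencer index $-1$ one is forced to have $q=|T|$ and $n=0$; in particular $\gamma_p=0$ for $p<-1$ (no need to invoke anything about $\mathrm{tr}$ to reduce to $\gamma_{-1}$), and on $\gamma_{-1}$ itself the remaining differentials already vanish \emph{as operators}: $\bar\partial\gamma_{-1}=0$ because $q=|T|$ is top Dolbeault, $d_{\mathrm{ch}}\gamma_{-1}=0$ because colliding two factors of a top-$(0,q)$ form produces $d\bar w\cdot d\bar w=0$, and $d_{\mathrm{Norm}}\gamma_{-1}=0$ because $n=0$. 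Hence $d_{\mathrm{DR}}\gamma_{-1}=d_{\mathrm{tot}}\gamma_{-1}$ on the nose, and $\mathrm{tr}(d_{\mathrm{DR}}\gamma)=\mathrm{tr}(d_{\mathrm{tot}}\gamma_{-1})=0$ by the chain-map property alone. This is exactly the paper's argument (``by type reasons\ldots\ since $\eta\in\mathrm{Norm}^0$''). Your appeal to integration, residues, and the analytic machinery of Section~\ref{Tracemap} is therefore beside the point: at this stage $\mathrm{tr}$ is only the abstract projection to $H_0$, and the lemma is purely a degree bookkeeping exercise.
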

\begin{proof}
  Since $\mathrm{tr}$ is the projection to the 0-th homology, we only need to consider the degree 0 component of $\tilde{C}^{\mathrm{ch}}(X,\omega_X)_{\mathcal{Q}}$. Suppose we have $\alpha=d_{\mathrm{DR}}\eta$, such that
  $$
  \eta\in \Omega^{0,|T|}(X^I,\mathrm{DR}^{-1}\Delta_*^{(I/T)}(\mathcal{A}\shift)^{\boxtimes T}(*\Delta_T)).
  $$
  Then we have $d_{\mathrm{ch}}\eta=\bar{\partial}\eta=0$ by type reasons. Since $\eta\in \mathrm{Norm}^0$, we have $d_{\mathrm{Norm}}\eta=0$. This implies that $\alpha$ is also $d_{\mathrm{tot}}-$exact and $\mathrm{tr}(\alpha)=0$.
\end{proof}

Following \cite[pp.307,Section 4.2.13]{book:635773}, we now introduce a subcomplex $\tilde{C}^{\mathrm{ch}}_{\log}(X,\omega_X)_{\mathcal{Q}}$ of $\tilde{C}^{\mathrm{ch}}(X,\omega_X)_{\mathcal{Q}}$ which consists of forms with logarithmic singularities.  Let $\mathrm{DR}^{\log}_{X^I}\subset \mathrm{DR}(\omega_X\shift^{\boxtimes I}(*\Delta
_I))$ be the DG subalgebra generated by $\mathcal{O}_{X^I}$ and 1-forms $\frac{df}{f}$, where $f=0$ is an equation of component of the diagonal divisor. Then define a subcomplex of $\mathrm{DR}(C(\mathcal{A})_{\mathcal{Q},X^I})$ as follows
$$
\mathrm{DR}C^{\log}_{X^I,\mathcal{Q}}:=\bigoplus_{T\in \equiQ(I)} \Delta^{(I/T)}_{\bullet}\mathrm{DR}^{\log}_{X^T,\mathcal{Q}}\subset \mathrm{DR}(C(\mathcal{A})_{\mathcal{Q},X^I}).
$$
The definition made in Section \ref{ChiralHomology} can be carried over in a
straightforward manner to this case by simply replacing $F_{X^I}=\mathrm{DR}(C(\mathcal{A})_{\mathcal{Q},X^I})$ with $F^{\log}_{X^I}=\mathrm{DR}C^{\log}_{X^I,\mathcal{Q}}$. The total differential is denoted by $d_{\mathrm{tot},\log}.$

We have quasi-isomorphisms (see \cite[pp.309,(4.2.14.3)]{book:635773})
$$
\tilde{C}^{\mathrm{ch}}_{\log}(X,\omega_X)_{\mathcal{Q}}\xrightarrow{\sim}\tilde{C}^{\mathrm{ch}}(X,\omega_X)_{\mathcal{Q}}\xrightarrow
{\mathrm{tr}} \mathbb{C}.
$$
We denote the first quasi-isomorphism by $\iota$ and the composition $\mathrm{tr}\circ\iota$ by $\mathrm{tr}_{\log}$.

\begin{prop}
  The trace map on $\tilde{C}^{\mathrm{ch}}_{\log}(X,\omega_X)_{\mathcal{Q}}$ is given by the usual integration map.
\end{prop}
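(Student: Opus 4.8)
The plan is to reduce the assertion to a grading computation together with the Stokes theorem of Proposition~\ref{Stokes}. First I would note that for the unit chiral algebra the complex $\tilde{C}^{\mathrm{ch}}_{\log}(X,\omega_X)_{\mathcal{Q}}$ is concentrated in non-negative homological degrees: a chain lying in $\Omega^{0,q}\bigl(X^I,\mathrm{DR}^p\Delta_*^{(I/T)}(\omega_X\shift)^{\boxtimes T}(*\Delta_T)\bigr)$ with normalization degree $n$ has homological degree $(|T|-q)+(-p)+n$, and each of the three summands is non-negative since $q\le|T|=\dim_{\mathbb C}X^T$, $\mathrm{DR}^p=0$ for $p>0$, and $n\ge 0$. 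Consequently every degree-$0$ chain is automatically a cycle, $H_0^{\mathrm{ch}}(X,\omega_X)$ is the honest quotient of the degree-$0$ part by $\mathrm{im}(d_{\mathrm{tot},\log})$, and after the normalization $\mathrm{vol}_X\shift\mapsto 1$ the map $\mathrm{tr}_{\log}$ is the tautological projection onto $H_0^{\mathrm{ch}}(X,\omega_X)\cong\mathbb C$ (and is zero in positive degrees). The same count forces $q=|T|$, $p=0$, $n=0$ in degree $0$, so a degree-$0$ chain is merely a finite sum of smooth top-degree forms on the diagonal strata $X^T$ with at worst logarithmic poles along $\Delta_T$; such forms are locally integrable, so $\int:=\sum_T\int_{X^T}$ is well defined on the degree-$0$ part, and we extend it by zero elsewhere. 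It then suffices to show that $\int$ annihilates $\mathrm{im}(d_{\mathrm{tot},\log})$ in degree $0$ and that $\int(\mathrm{vol}_X\shift)=1$; the latter is clear, and given the former, $\int$ descends to $H_0^{\mathrm{ch}}(X,\omega_X)=\mathbb C$, where it coincides with $\mathrm{tr}_{\log}$ because both send the generator $[\mathrm{vol}_X\shift]$ to $1$.

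Next I would classify degree-$1$ chains by the same bound into three types: (A) $q=|T|-1$, $p=n=0$; (B) $p=-1$, $q=|T|$, $n=0$; (C) $n=1$, $q=|T|$, $p=0$. On each type all but one component of $d_{\mathrm{tot},\log}=d_{\mathrm{DR}}+\bar\partial+2\pi i\,d_{\mathrm{ch}}+d_{\mathrm{Norm}}$ vanishes for type reasons (wrong target form type, or target Spencer/normalization degree out of range), as in the proof of Lemma~\ref{VanishOnDR}. On type (B) only $d_{\mathrm{DR}}$ survives, and $d_{\mathrm{DR}}\xi$ is, in coordinates adapted to the diagonal divisor, a sum of holomorphic total derivatives of forms meromorphic in the holomorphic variables (with a simple, hence integrable, pole) and smooth in the antiholomorphic ones; so $\int_{X^T}d_{\mathrm{DR}}\xi=0$ by the vanishing \eqref{Killholo} of $\dashint$ on holomorphic total derivatives, using that for logarithmic forms $\dashint$ is the naive integral. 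On type (A) only $\bar\partial$ and $2\pi i\,d_{\mathrm{ch}}$ survive; since the chiral bracket of the unit chiral algebra $\omega_X$ is the residue operation, $\int(d_{\mathrm{ch}}\eta)=\sum_{T'}\int_{X^{T'}}\mathrm{Res}_{T,T'}\eta$, and the required identity $\int(\bar\partial\eta+2\pi i\,d_{\mathrm{ch}}\eta)=0$ is exactly the Stokes theorem \eqref{StokesThm} of Proposition~\ref{Stokes} applied over $X^T$, once more using that on forms with logarithmic singularities $\dashint$ and $\mathrm{Res}$ reduce to the ordinary integral and ordinary residue.

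The remaining case is (C), where only $d_{\mathrm{Norm}}=\partial_0$ acts; it is a sum of the structure maps $\Delta_\bullet^{(I/I_1)}\theta^{(I_1/I_0)}$ relocating a form supported on one diagonal stratum onto a coarser one. I would handle it using that ordinary integration is compatible with the underlying pushforwards along diagonal embeddings and that, in the normalized homotopy-colimit complex, the contributions of $\partial_0$ cancel after integration — equivalently, that $\int$ is already defined on the colimit $\varinjlim_{I\in\mathcal S}$ into which the homotopy-colimit direction is absorbed. Combining the three cases gives $\int\circ\,d_{\mathrm{tot},\log}=0$, and together with $\int\mathrm{vol}_X\shift=\int_X\mathrm{vol}_X\shift=1=\mathrm{tr}(\mathrm{vol}_X\shift)$ this yields $\int=\mathrm{tr}_{\log}$. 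I expect case (C) — making precise the interplay between $d_{\mathrm{Norm}}$, the colimit over $\mathcal S$, and the integration — to be the main obstacle, whereas the analytically substantive ingredients, namely \eqref{Killholo} for logarithmic forms and Proposition~\ref{Stokes}, are already in hand.
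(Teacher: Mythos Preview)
Your argument is correct and is in fact more streamlined than the paper's. Both proofs first verify that $\int$ is a chain map, using exactly the ingredients you list: Proposition~\ref{Stokes} (reduced to the ordinary Stokes theorem for log forms) handles $\bar\partial+2\pi i\,d_{\mathrm{ch}}$, the vanishing \eqref{Killholo} handles $d_{\mathrm{DR}}$, and the combinatorics of the normalized homotopy colimit handles $d_{\mathrm{Norm}}$; the paper is just as brief as you are on this last point, simply asserting $\int d_{\mathrm{Norm}}(-)=0$.

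Where the two diverge is in the final identification $\int=\mathrm{tr}_{\log}$. You use the degree bound to see that both maps factor through $H_0\cong\mathbb{C}$ and agree on the single generator $[\mathrm{vol}_X\shift]$, which finishes immediately. The paper instead works on each $X^I$ separately: it first notes that on \emph{regular} top forms $\mathrm{tr}_{\log}$ must equal $c_I\int_{X^I}$ for some constant (since it kills $d_{\mathrm{DR}}$- and $\bar\partial$-exact forms), and then runs an inductive computation with the Szeg\H{o} kernel $P^{1,0}$---using $\bar\partial P^{1,0}\cdot\mathrm{vol}_{X_2}=\mathrm{vol}_{X^2}+\cdots$ together with $\mathrm{tr}_{\log}\circ\bar\partial=-2\pi i\,\mathrm{tr}_{\log}\circ d_{\mathrm{ch}}$---to force $c_I=1$. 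Only then does it invoke the quasi-isomorphism to pass from regular to general log forms. Your route avoids the Szeg\H{o} kernel entirely; what the paper's route buys is the explicit intermediate statement $\mathrm{tr}_{\log}(\mathrm{vol}_{X^I}[|I|])=1$ for every $I$, though this is not needed elsewhere.
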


\begin{proof}

We denote the integration map by
$$
\int :\tilde{C}^{\mathrm{ch}}_{\log}(X,\omega_X)_{\mathcal{Q}}\rightarrow \mathbb{C}.
$$
For $\alpha\in \tilde{C}^{\mathrm{ch}}_{\log}(X,\omega_X)_{\mathcal{Q}}$ which can be represented as a differential form on $X^I$, the integration map is defined to be
$$
\int\alpha:=\int_{X^I}\alpha.
$$
 From the fact that $\int d_{\mathrm{Norm}}(-)=0$ and Stokes formula $\int(d_{\mathrm{DR}}+\bar{\partial}+2\pi id_{\mathrm{ch}})(-)=0$, we conclude that $\int$ is a chain map. We need to prove that $\int=\mathrm{tr}_{\log}$.

Consider a degree 0 subspace
$$
 \Omega^{0,|I|}(X^I,(\omega_X\shift)^{\boxtimes I}(\log\Delta_I))\hookrightarrow \Gamma(X^I, F^{\log}_{X^I})\hookrightarrow \Gamma(X^I, \mathrm{Norm}(\equiQ(I),F^{\log})^{0})\rightarrow\tilde{C}^{\mathrm{ch}}_{\log}(X,\omega_X)_{\mathcal{Q}}.
$$
Let $\alpha\in \Omega^{0,|I|}(X^I,(\omega_X\shift)^{\boxtimes I}(\log\Delta_I))$. By abuse of notation, we continue to write $\alpha$ for its image in $\tilde{C}^{\mathrm{ch}}_{\log}(X,\omega_X)_{\mathcal{Q}}$. We will prove that
$$
\mathrm{tr}_{\log}(\alpha)=\int\alpha=\int_{X^I}\alpha.
$$

  We restrict the trace map to the subspace of regular forms $\Omega^{0,|I|}(X^I,(\omega_X\shift)^{\boxtimes I})$. Since it vanishes on $d_{\mathrm{DR}}$ and $\bar{\partial}$ exact forms, we can express it as integration up to a scalar factor. This means that for regular $\alpha$
  $$
  \mathrm{tr}_{\log}(\alpha)=c_I\cdot \int_{X^I}\alpha,
  $$
  here $c_I$ is a constant depending on $I$. We will prove that $c_I=1$ of all $I$. We know that $c_{\{\bullet\}}=1$ from the definition. Let $P^{1,0}\in \Omega^{1,0}(X^2,\mathcal{O}_{X^2}(\log \Delta))$ be the Szeg\"{o} kernel, see \cite{1992Kernel}. It has the following property
  $$
  \bar{\partial}_{1}P^{1,0}=\mathrm{vol}_{X_1}+\sum(\text{product of 1-forms on}\  X_1 \ \text{and} \ X_2).
  $$
  Here we write $X^2=X_1\times X_2$ and $  \bar{\partial}_{1}$ is the Dolbeault operator along the first factor.

 Then
  $$
  \mathrm{tr}_{\mathrm{log}}(\bar{\partial}P^{1,0}\shift\cdot \mathrm{vol}_{X_2}\shift)=\mathrm{tr}_{\mathrm{log}}(\mathrm{vol}_{X_1} \shift\cdot  \mathrm{vol}_{X_2}\shift)=c_{\{\bullet\bullet\}}\cdot \int_{X^2}\mathrm{vol}_{X_1} \shift\cdot \mathrm{vol}_{X_2}\shift
  $$
  $$
=c_{\{\bullet\bullet\}}\int_{X^2}\bar{\partial}P^{1,0}\shift\cdot \mathrm{vol}_{X_2}  \shift =-c_{\{\bullet\bullet\}}\cdot \int_X 2\pi i\cdot\mathrm{Res} (P^{1,0}\cdot \mathrm{vol}_{X_2}\shift).
  $$
  On the other hand, we have
  $$
  \mathrm{tr}_{\mathrm{log}}(\bar{\partial}P^{1,0}\shift\cdot \mathrm{vol}_{X_2}\shift)=\mathrm{tr}_{\mathrm{log}}(-2\pi i d_{\mathrm{ch}}(P^{1,0}\shift\cdot \mathrm{vol}_{X_2}\shift))=- \int_X 2\pi i\cdot\mathrm{Res} (P^{1,0}\cdot \mathrm{vol}_{X_2}\shift).
  $$
  This implies that $c_{\{\bullet\bullet\}}=1$. The same argument works for general $I$.

 Now for $\alpha\in  \Omega^{0,|I|}(X^I,(\omega_X\shift)^{\boxtimes I}(\log\Delta_I))$, consider the element
 $$\eta=\alpha-\mathrm{tr}_{\log}(\alpha)\cdot \mathrm{vol}_{X^I}[|I|].
 $$
  Then
  $$
  \mathrm{tr}_{\log}(\eta)=\mathrm{tr}_{\log}(\alpha-\mathrm{tr}_{\log}(\alpha)\cdot \mathrm{vol}_{X^I}[|I|])=0.
  $$
  Since $\mathrm{tr}_{\log}$ is a quasi-isomorphism, we can find $\eta'$ such that
  $$
  \eta=d_{\mathrm{tot},\log}\eta'.
  $$
  Then
  $$
  \int\alpha=\int \mathrm{tr}_{\log}(\alpha)\cdot \mathrm{vol}_{X^I}[|I|]+\int d_{\mathrm{tot},\log}\eta'=\mathrm{tr}_{\log}(\alpha).
  $$

\end{proof}

One can reformulate the definition of regularized integral as follows.

\begin{prop}\label{RegularizedIntDefn2}
   Let $\alpha\in \Omega^{0,|I|}(X^I,(\omega_X\shift)^{\boxtimes I}(*\Delta_I))$. We can find an element $\alpha_{\mathrm{log}}\in \tilde{C}^{\mathrm{ch}}_{\log}(X,\omega_X)_{\mathcal{Q}}$ such that $\iota(\alpha)$ is homologous to  $\alpha$ in $ \tilde{C}^{\mathrm{ch}}(X,\omega_X)_{\mathcal{Q}}$ . Then
   $$
   \dashint_{X^I}\alpha=\mathrm{tr}_{\log}(\alpha_{\log}).
   $$
\end{prop}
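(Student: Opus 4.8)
The plan is to deduce the identity from the logarithmic case, which the preceding proposition settles, by producing a representative of $\alpha$ inside $\tilde{C}^{\mathrm{ch}}_{\log}(X,\omega_X)_{\mathcal{Q}}$ and then checking that the two operations at issue, $\dashint_{X^I}$ and $\mathrm{tr}$, are both blind to the correction terms used in passing to that representative.

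First I would note that $\alpha\in\Omega^{0,|I|}(X^I,(\omega_X\shift)^{\boxtimes I}(*\Delta_I))$ is automatically a $d_{\mathrm{tot}}$-cycle of homological degree $0$: it has top Dolbeault degree, so $\bar{\partial}\alpha=0$; it lies in $\mathrm{DR}^0$ while $\mathrm{DR}^1=0$, so $d_{\mathrm{DR}}\alpha=0$; the chiral differential vanishes on it because the residue of a top Dolbeault form along a diagonal is zero by type reasons (restricting to $\{z_i=z_j\}$ forces $d\bar{z}_i\wedge d\bar{z}_j=0$); and $d_{\mathrm{Norm}}\alpha=0$ since $\alpha$ sits in $\mathrm{Norm}^0$. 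Hence $[\alpha]\in H^{\mathrm{ch}}_0(X,\omega_X)\cong\mathbb{C}$ is well-defined and $\mathrm{tr}(\alpha)$ is its image. Because $\iota$ is a quasi-isomorphism, such an $\alpha_{\log}$ exists, and any choice with $\iota(\alpha_{\log})$ homologous to $\alpha$ satisfies $\mathrm{tr}(\alpha)=\mathrm{tr}(\iota(\alpha_{\log}))=\mathrm{tr}_{\log}(\alpha_{\log})$ since $\mathrm{tr}$ is a chain map; so the content of the proposition reduces to the single equality $\dashint_{X^I}\alpha=\mathrm{tr}(\alpha)$.

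To prove $\dashint_{X^I}\alpha=\mathrm{tr}(\alpha)$ I would run the standard reduction of the order of poles together with an induction on $|I|$. Along each component $\{z_i=z_j\}$ on which $\alpha$ has a pole of order $k+1\ge 2$, use $(z_i-z_j)^{-k-1}=-k^{-1}\partial_{z_i}(z_i-z_j)^{-k}$ and integrate by parts, rewriting $\alpha$ as a form with lower pole order along that component, plus a holomorphic total derivative — a $d_{\mathrm{DR}}$-exact chain — plus possibly a chiral-exact chain supported on a deeper diagonal. Iterating over all components (one at a time), $\alpha$ becomes, modulo $\mathrm{Im}(d_{\mathrm{DR}})$ and $\mathrm{Im}(d_{\mathrm{ch}})$ inside $\tilde{C}^{\mathrm{ch}}(X,\omega_X)_{\mathcal{Q}}$, a form $\alpha_{\log}$ with only logarithmic singularities along $\Delta_I$. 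On the $\mathrm{tr}$-side, Lemma \ref{VanishOnDR} kills the $d_{\mathrm{DR}}$-exact corrections and the chain-map property kills the chiral-exact ones, so $\mathrm{tr}(\alpha)=\mathrm{tr}_{\log}(\alpha_{\log})$. On the $\dashint$-side, $(\ref{Killholo})$ kills the holomorphic total derivatives, while Proposition \ref{Stokes} turns the remaining $\bar{\partial}$-type corrections into regularized integrals of residues over diagonals $X^{I'}$ with $|I'|<|I|$ — precisely the terms produced by $d_{\mathrm{ch}}$, up to the factor $2\pi i$ of Remark \ref{Rescaling} — which by the inductive hypothesis agree with their $\mathrm{tr}$-values. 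Finally, logarithmic singularities along $\Delta_I$ are integrable, so $\dashint_{X^I}\alpha_{\log}=\int_{X^I}\alpha_{\log}=\mathrm{tr}_{\log}(\alpha_{\log})$ by the preceding proposition; assembling the pieces gives $\dashint_{X^I}\alpha=\mathrm{tr}(\alpha)=\mathrm{tr}_{\log}(\alpha_{\log})$, which is the assertion.

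The main obstacle I expect is the bookkeeping in the reduction step: one has to check that the formal integration by parts really yields a $d_{\mathrm{DR}}$-exact chain plus a $d_{\mathrm{ch}}$-exact chain inside the Dolbeault, hocolim-normalized complex $\tilde{C}^{\mathrm{ch}}(X,\omega_X)_{\mathcal{Q}}$, so that neither $d_{\mathrm{Norm}}$ nor the structure morphisms $\theta^{(\pi)}$ of the inductive limit interfere, and that the residue terms land in the degree-$0$ part over $X^{I'}$ in the exact shape needed to invoke the inductive hypothesis. Matching the constant $2\pi i$ between the chiral differential and the Stokes formula of Proposition \ref{Stokes} (cf. Remark \ref{Rescaling}) is the only numerical subtlety, and it is the same residue computation already carried out via the Szeg\"{o} kernel in the proof of the preceding proposition.
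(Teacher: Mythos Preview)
Your reduction to the single equality $\dashint_{X^I}\alpha=\mathrm{tr}(\alpha)$ is correct, and the overall strategy---reduce to logarithmic poles and invoke the preceding proposition---is exactly the paper's. But you have made the argument far more elaborate than it needs to be, and in doing so introduced correction terms that do not actually arise.

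The point you are missing is that the pole-reduction you carry out by hand is literally the \emph{definition} of the regularized integral (Appendix~\ref{ReInt}): one writes $\alpha=\alpha_{\log}+d_{\mathrm{DR}}\eta_1$ with $\alpha_{\log}\in\Omega^{0,|I|}(X^I,\omega_{X^I}(\log\Delta_I))$ and $\eta_1\in\Omega^{0,|I|}(X^I,\omega_{X^I}(*\Delta_I)\otimes\Theta_{X^I})$, and \emph{defines} $\dashint_{X^I}\alpha:=\int_{X^I}\alpha_{\log}$. This specific $\alpha_{\log}$ already sits in the log subcomplex; since $\eta_1$ is in top Dolbeault degree and $\mathrm{DR}^{-1}$, the argument of Lemma~\ref{VanishOnDR} shows $d_{\mathrm{DR}}\eta_1=d_{\mathrm{tot}}\eta_1$, so $\iota(\alpha_{\log})$ is homologous to $\alpha$. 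The preceding proposition then gives $\mathrm{tr}_{\log}(\alpha_{\log})=\int_{X^I}\alpha_{\log}=\dashint_{X^I}\alpha$. That is the entire proof.

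In particular, the decomposition produces only a $d_{\mathrm{DR}}$-exact correction: there are no ``chiral-exact chains supported on deeper diagonals'' and no ``$\bar\partial$-type corrections'' to worry about. Your induction on $|I|$, the appeal to Proposition~\ref{Stokes}, and the residue bookkeeping are therefore all unnecessary here; they are the ingredients of a direct inductive proof of Theorem~\ref{RegularizedANDTrace} that bypasses this proposition, rather than of this proposition itself.
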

\begin{proof}
  This follows from the definition of regularized integrals. See Appendix \ref{ReInt}.
\end{proof}

Now we can prove the main theorem of this section, which establishes a simple relation between $\mathrm{tr}$ and regularized integrals.

\begin{thm}\label{RegularizedANDTrace}
  Let $\alpha\in \Omega^{0,|I|}(X^I,(\omega_X\shift)^{\boxtimes I}(*\Delta_I))$. By abuse of notation,  the composition of following maps
 $$
 \Omega^{0,|I|}(X^I,(\omega_X\shift)^{\boxtimes I}(*\Delta_I))\rightarrow \tilde{C}^{\mathrm{ch}}(X,\omega_X)_{\mathcal{Q}}\xrightarrow{\mathrm{tr}}
\mathbb{C}
$$
is still denoted by $\mathrm{tr}$.  Here the first map is given by
$$
 \Omega^{0,|I|}(X^I,(\omega_X\shift)^{\boxtimes I}(*\Delta_I))\hookrightarrow \Gamma(X^I, F_{X^I})\hookrightarrow \Gamma(X^I, \mathrm{Norm}(\equiQ(I),F)^{0})\rightarrow\tilde{C}^{\mathrm{ch}}(X,\omega_X)_{\mathcal{Q}}
$$
Then we have
$$
\mathrm{tr}(\alpha)=\dashint_{X^I}\alpha.
$$

\end{thm}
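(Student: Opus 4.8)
The plan is to obtain the identity as a short consequence of the two preceding propositions together with the defining property of $\mathrm{tr}$ as a chain map.

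First I would record two elementary facts. (i) The map $\mathrm{tr}\colon \tilde{C}^{\mathrm{ch}}(X,\omega_X)_{\mathcal{Q}}\to\mathbb{C}$ is a chain map into $\mathbb{C}$ placed in homological degree $0$ with zero differential, so $\mathrm{tr}$ annihilates every $d_{\mathrm{tot}}$-boundary; hence $\mathrm{tr}(\beta)=\mathrm{tr}(\beta')$ whenever $\beta$ and $\beta'$ differ by a $d_{\mathrm{tot}}$-exact term (the same mechanism used in the proof of Lemma~\ref{VanishOnDR}). (ii) By definition $\mathrm{tr}_{\log}=\mathrm{tr}\circ\iota$, and the preceding proposition identifies $\mathrm{tr}_{\log}$ on $\tilde{C}^{\mathrm{ch}}_{\log}(X,\omega_X)_{\mathcal{Q}}$ with ordinary integration over $X^I$. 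One also checks, though it is not strictly needed, that the image of $\alpha\in\Omega^{0,|I|}(X^I,(\omega_X\shift)^{\boxtimes I}(*\Delta_I))$ under the embedding in the statement is a $d_{\mathrm{tot}}$-cycle: $d_{\mathrm{DR}}$ and $d_{\mathrm{Norm}}$ vanish on it for degree reasons, $\bar{\partial}$ because it already has top antiholomorphic degree, and $d_{\mathrm{ch}}$ because collapsing any two points forces a factor $d\bar{w}\wedge d\bar{w}=0$ in $\mu^{\mathcal{Q}}$.

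Then, given such an $\alpha$, I would invoke Proposition~\ref{RegularizedIntDefn2}: it supplies $\alpha_{\log}\in\tilde{C}^{\mathrm{ch}}_{\log}(X,\omega_X)_{\mathcal{Q}}$ with $\iota(\alpha_{\log})$ homologous to $\alpha$ in $\tilde{C}^{\mathrm{ch}}(X,\omega_X)_{\mathcal{Q}}$, and with $\dashint_{X^I}\alpha=\mathrm{tr}_{\log}(\alpha_{\log})$. Chaining these,
\[
\mathrm{tr}(\alpha)=\mathrm{tr}\bigl(\iota(\alpha_{\log})\bigr)=\mathrm{tr}_{\log}(\alpha_{\log})=\dashint_{X^I}\alpha ,
\]
where the first equality is (i) applied to the homology relation, the second is the definition of $\mathrm{tr}_{\log}$, and the third is the last assertion of Proposition~\ref{RegularizedIntDefn2}.

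Since the statement reduces to this chase, there is no serious obstacle internal to the proof; the real work sits upstream, in the Szeg\"{o}-kernel computation establishing $\mathrm{tr}_{\log}=\int$ on the logarithmic subcomplex and in the construction of a logarithmic representative $\alpha_{\log}$ compatible with $\dashint$ (Appendix~\ref{ReInt}). The one point worth stating explicitly is that ``homologous'' throughout refers to the full differential $d_{\mathrm{tot}}=d_{\mathrm{DR}}+\bar{\partial}+2\pi i\,d_{\mathrm{ch}}+d_{\mathrm{Norm}}$, not a single summand --- exactly the sense in which Proposition~\ref{RegularizedIntDefn2} is phrased --- and this is why $\mathrm{tr}(\alpha)=\dashint_{X^I}\alpha$ holds in the chiral chain complex rather than only at the level of forms.
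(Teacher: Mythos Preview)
Your proof is correct and follows essentially the same approach as the paper: invoke Proposition~\ref{RegularizedIntDefn2} to obtain $\alpha_{\log}$, then run the chain $\mathrm{tr}(\alpha)=\mathrm{tr}(\iota(\alpha_{\log}))=\mathrm{tr}_{\log}(\alpha_{\log})=\dashint_{X^I}\alpha$. The paper's proof is exactly this three-line computation; your additional commentary on why $\alpha$ is a $d_{\mathrm{tot}}$-cycle and on the meaning of ``homologous'' is accurate but, as you note, not needed for the argument.
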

\begin{proof}
  We choose $\alpha_{\log}$ as in Proposition \ref{RegularizedIntDefn2}, then
  $$
  \mathrm{tr}(\alpha)=\mathrm{tr}(\iota(\alpha_{\log}))=\mathrm{tr}_{\log}(\alpha_{\log})=\dashint_{X^I}\alpha.
  $$
\end{proof}
\section{Quantum master equation and elliptic trace map}\label{BV}
\subsection{BV formalism and quantum master equation}

The BV formalism \cite{batalin1984gauge} is the most general method to quantize quantum field theory with gauge symmetries. Costello \cite{costello2011renormalization} gives one approach to the rigorous mathematical formulation of the perturbative BV formalism using a homotopic renormalization framework, which is an important motivating source of development in this work. See also \cite{costello2021factorization} for factorization algebras in the same framework. In this section, we summarize the mathematical structures in the BV quantization that we will use in this paper.

We start with the definition of the BV algebras. A Batalin-Vilkovisky (BV) algebra is a pair $(O_{\mathrm{BV}},\Delta_{\mathrm{BV}})$ where
\begin{itemize}
  \item $O_{\mathrm{BV}}$ is a $\mathbb{Z}$-graded commutative associative unital algebra over $\mathbb{C}$.
  \item $\Delta:O_{\mathrm{BV}}\rightarrow O_{\mathrm{BV}}$ is a linear operator of degree 1 such that $\Delta^2=0$.
  \item Define $\{-,-\}:O_{\mathrm{BV}}\otimes O_{\mathrm{BV}}\rightarrow O_{\mathrm{BV}}$ by
  $$
  \{a,b\}:=\Delta(ab)-(\Delta a)b-(-1)^{|a|}a\Delta b, \ a,b\in O_{\mathrm{BV}}.
  $$
  Then $\{-,-\}$ satisfies the following graded Leibnitz rule
  $$
  \{a,bc\}:=\{a,b\}c+(-1)^{(|a|+1)|b|}b\{a,c\},\ \ a,b,c\in O_{\mathrm{BV}}.
  $$
\end{itemize}
Let $(O_{\mathrm{BV}},\Delta)$ be a BV algebra. Let $I=I_0+I_1\hbar+\cdots\in O_{\mathrm{BV}}[[\hbar]]$ be a degree 0 element, that is, $I_i$ is degree 0 for each $i\geq 0$.
\begin{defn}
Assume that $I_0$ and $I_1$ are nilpotent. The element $I$ is said to satisfy quantum master equation (QME) if
$$
{{\hbar\Delta e^{I/\hbar}=0.}}
$$
\end{defn}

This is equivalent to
\begin{equation}\label{QME2}
\hbar\Delta I+\frac{1}{2}\{I,I\}=0.
\end{equation}

\begin{rem}
 In the Equation (\ref{QME2}),  we do not need the conditions on $I_0$ and $I_1$, but we assume these in order to get the well defined exponential series $e^{I/\hbar}$ for our purpose.
\end{rem}

In general, if $(C_{\bullet},d_{C})$ is a $\mathbb{C}((\hbar))$-chain complex and $(B_{\bullet
},d_{B})$ is a $\mathbb{C}-$chain complex, we introduce the following generalized notion of quantum master equation.
\begin{defn}\label{GenQME}
 W say a $\mathbf{k}$-linear map ($\mathbf{k}=\mathbb{C}((\hbar))$)
$$
\langle-\rangle: C_{\bullet}\rightarrow B_{\bullet}\otimes_{\mathbb{C}}O_{\mathrm{BV}}((\hbar))
$$
satisfies QME if
$$
{(d_{C}-d_{B}+\hbar\Delta)\langle-\rangle=0}.
$$
\end{defn}

\begin{rem}\label{GenQMERem}
If we take $(C_{\bullet},d)=(\mathbf{k},0)$, the map $I(-)$
$$
I(-):\mathbf{k}\rightarrow O_{\mathrm{BV}}((\hbar)), \ I(c)=ce^{I/\hbar}
$$
satisfies QME if and only if $I\in\mathbb{C}[[\hbar]]$ itself satisfies QME.
\end{rem}
We recall the notion of BV integration.
\begin{defn}
  A BV integration is a linear map
  $$
  \int_{\mathrm{BV}}:O_{\mathrm{BV}}\rightarrow \mathbb{C}
  $$
  such that $\int_{\mathrm{BV}}\Delta \Phi=0$.
\end{defn}
A typical example of BV integration is the top fermion integration of the BV algebra $(\mathbb{C}[[x_i,\theta_i]]_{i=1,\dots,n}, \Delta_{\mathrm{BV}}=\sum_{i=1}^n\frac{\partial}{\partial x_i}\frac{\partial}{\partial\theta_i})$. Here $|x_i|=0, |\theta_i|=-1$ for $i=1,\dots,n$. The top fermion integration is defined by
$$
\int_{\mathrm{BV},\mathrm{top}}f:=(\frac{\partial}{\partial\theta_1}\cdots \frac{\partial}{\partial\theta_n}f)|_{x_1=\cdots=x_n=0},\quad f\in \mathbb{C}[[x_i,\theta_i]]_{i=1,\dots,n}.
$$

\begin{rem}
In \cite{BVQandindex},  the algebraic index is computed using this BV integral. In general,  a choice of super Lagrangian submanifold in the BV manifold gives rise to a BV integration. The top fermion is given by the odd Lagrangian $\mathbb{C}^n\shift\subset \mathbb{C}^n\oplus\mathbb{C}^n\shift$.
\end{rem}
Roughly speaking, the effective BV quantization theory $T$ on a smooth manifold $X$ contains
\begin{itemize}
 \item Local observable algebra (Factorization algebra) : ${\mathrm{Obs}_T}$, a $\mathbf{k}$-module equipped with a certain algebraic structure.
  \item Factorization homology (complex): ${(C_{\bullet}(\mathrm{Obs}_T),d_T)}$, a $\mathbf{k}$-chain complex.
  \item A BV algebra ${(O_{\mathrm{BV},T},\Delta)}$. This BV algebra corresponds to the algebra of the zero modes in effective BV quantization.
  \item A BV integration map
  $$
  \int_{\mathrm{BV}}:O_{\mathrm{BV},T}\rightarrow \mathbb{C}
  $$
  such that $\int_{\mathrm{BV}}\Delta \Phi=0$.
  \item A $\mathbf{k}$-linear map
  $$
 \mathrm{Tr}_T: C_{\bullet}(\mathrm{Obs}_T)\rightarrow (\BVk, -\hbar\Delta_{\mathrm{BV}}),
  $$
which satisfies QME, that is, we have
  $$
{(d_T+\hbar\Delta)\mathrm{Tr}_T(-)=0.}
  $$
\end{itemize}

Here we give an informal explanation of the above structures. Intuitively, the complex ${(C_{\bullet}(\mathrm{Obs}_T),d_T)}$ captures the information of the operator product expansions.  It is convenient to formulate this in the language of right $\mathcal{D}$-modules, as we see in the construction of the chiral chain complex. Naively, the last step corresponds to the Feynman integrals in physics. Since these integrals often diverge, we need to do the renormalization. Costello \cite{costello2011renormalization} gives a general mathematical framework of renormalization and effective theories using heat kernel cut-off and counterterms. In \cite{li2012feynman,Li:2016gcb}, the renormalization of a large class of 2d chiral quantum field theories is accomplished in Costello's framework.



\begin{rem}
The trace map  $\mathrm{tr}: \tilde{C}^{\mathrm{ch}}(X,\omega_X)_{\mathcal{Q}}\rightarrow
\mathbb{C}$ constructed by Beilinson and Drinfeld can be viewed as a homological renormalization procedure since we can use this trace map to integrate singular forms in the chiral chain complex. In \cite{li2020regularized}, the authors develop analytic tools for integrals arising from two-dimensional chiral quantum field theories. In particular, the regularized integral  is introduced for integrating differential forms on products of Riemann surfaces with arbitrary holomorphic poles
along the diagonals. From Theorem \ref{RegularizedANDTrace}, these two approaches are essentially the same.

\end{rem}
\subsection{BV formalism in 2d chiral quantum field theory}\label{BVchiral}
In this section, we review the 2d chiral quantum field theory following \cite{Li:2016gcb}. In \cite{Li:2016gcb}, the second named author of the present paper establishes an exact correspondence between renormalized quantum master equations for chiral deformations and Maurer-Cartan equations for modes Lie algebra of the free $\beta\gamma-bc$ vertex algebra. The relation between this result and our main theorem is discussed in Section \ref{TraceLieCohomology}.

 From now on, we assume that $X=\mathbb{C}/\mathbb{Z}+\tau\mathbb{Z}$ is an elliptic curve. Similar methods could be generalized to higher genus.  Here we focus on the explicit construction on an elliptic curve and leave higher genus situations for future study.

Let $\mathbf{L}=\oplus_{\alpha \in \mathbb{Q}} \mathbf{L}^{\alpha}$, where $\mathbf{L}^{\alpha}=\oplus_{i\in\mathbb{Z}}\mathbf{L}_i^{\alpha}$ is a $\mathbb{Z}$-graded vector space for each $\alpha$. Here
the $\mathbb{Q}$ -grading is the conformal weight and we assume for simplicity only finitely many weights appear in $\mathrm{L}$. The data of conformal weight will not be used for our main theorem in Section \ref{MainTheoremBV}.

Suppose that $\mathrm{\mathbf{L}}$ is equipped with a degree 0 symplectic pairing
$$
\langle-,-\rangle: \wedge^{2} \mathbf{L} \rightarrow \mathbb{C}
$$
which is of conformal weight $-1,$ that is, the only nontrivial pairing is
$$
\langle-,-\rangle: \mathbf{L}^{\alpha} \otimes \mathbf{L}^{1-\alpha} \rightarrow \mathbb{C}
$$
For each $a \in \mathbf{L}^{\alpha}$, we associate a field

\begin{equation}\label{ConformalWeight}
a(z)=\sum_{r \in \mathbb{Z}-\alpha} a_{r} z^{-r-\alpha}.
\end{equation}
We define their singular part of OPE by
$$
a(z) b(w) \sim\left(\frac{ i\hbar}{\pi}\right) \frac{\langle a, b\rangle}{(z-w)}, \quad \forall a, b \in \mathbf{L},
$$

\begin{rem}
  In (\ref{ConformalWeight}), we use the conformal weight notation. If we use the VOA notation, we have
  $$
  a(z)=\sum_{n \in \mathbb{Z}} a_{(n)} z^{-n-1},
  $$
  that is, $a_{(n)}=a_{n+1-\alpha}.$
\end{rem}

The vertex algebra ${V}^{\beta\gamma-bc}$ that realizes the above OPE relations is given by the Fock representation space over the base ring $\mathbf{k}=\mathbb{C}((\hbar))$. The vacuum vector satisfies
$$
a_{r}|0\rangle=0, \quad \forall a \in \mathbf{L}^{\alpha}, r+\alpha>0,
$$
and ${V}^{\beta\gamma-bc}$ is freely generated from the vacuum by the operators $\left\{a_{r}\right\}_{r+\alpha \leq 0}, a \in \dim\mathbf{L}^{\alpha} .$ For any $a \in \mathbf{L}$, $a(z)$ becomes a field acting naturally on the Fock space ${V}^{\beta\gamma-bc}$ . Note that ${V}^{\beta\gamma-bc}$ is isomorphic to $\mathbb{C}[[\partial^ka^s]]((\hbar))_{s=1,\dots,\mathrm{\mathbf{L}},k\geq 0}$ as a vector space if we choose a basis $\{a^s\}_{s=1,\dots,\dim \mathrm{\mathbf{L}}}$ of $\mathrm{\mathbf{L}}$. The identification is the following map
$$
\mathbb{C}[[\partial^ka^s]]((\hbar))_{s=1,\dots,\dim \mathrm{\mathbf{L}},k\geq 0}\cong{V}^{\beta\gamma-bc}
$$
$$
P(\partial^ka^s)\mapsto P(a^s_{-\alpha-k})|0\rangle.
$$

 Since ${V}^{\beta\gamma-bc}$ is conformal, we have a graded vertex algebra bundle $\mathcal{V}^{\beta\gamma-bc}\cong X\times {V}^{\beta\gamma-bc}$. We denote the corresponding graded chiral algebra by $$\mathcal{A}=\mathcal{A}^{\beta\gamma-bc}=\mathcal{V}^{\beta\gamma-bc}\otimes_{\mathcal{O}_X} \omega_X.$$

\begin{rem}
 Let $\mathcal{L}^{\vee}=X\times \mathbf{L}^{\vee}$ be the trivial holomorphic super vector bundle. The degree 0 pairing on $\mathbf{L}$  induces a (-1)-symplectic pairing on the space of field $\mathcal{E}=\Gamma(X,\mathcal{L}^{\vee}_\mathcal{Q})$
$$
(\varphi_1.\varphi_2):=\int_X dz\wedge \langle\varphi_1,\varphi_2\rangle, \ \varphi_1,\varphi_2\in \mathcal{E}=\Gamma(X,\mathcal{L}^{\vee}_\mathcal{Q})=\Omega^{0,\bullet}(X,\mathcal{L}^{\vee}).
$$
One can then use the BV formalism to construct the perturbative theory of this 2d chiral quantum field theory, here we just summarize the mathematical structures in the construction. For more details, see \cite{costello2021factorization,gorbounov2016chiral,Li:2016gcb}.
\end{rem}
\begin{rem}\label{twistedEnvelope}
  Our notion of free $\beta\gamma-bc$ system is essentially a special example of chiral Weyl algebras introduced in \cite[Section 3.8.1]{book:635773}. In fact, one set (assume that $\omega^{\alpha}_X$ formally exists)
  $$
  L=(\bigoplus_{\alpha\in \mathbb{Q}}\mathrm{\mathbf{L}}^{\alpha}\otimes_{\mathbb{C}}\omega_X^{-\alpha+1})\otimes_{\mathcal{O}_X}\mathcal{D}_X,
$$
  $$
  L^{\flat}=\omega_{X}\oplus (\bigoplus_{\alpha\in \mathbb{Q}}\mathrm{\mathbf{L}}^{\alpha}\otimes_{\mathbb{C}}\omega_X^{-\alpha+1})\otimes_{\mathcal{O}_X}\mathcal{D}_X.
  $$
  Then
$$
 \big((\mathrm{\mathbf{L}}^{\alpha}\otimes_{\mathbb{C}}\omega_X^{-\alpha+1})\otimes_{\mathcal{O}_X}\mathcal{D}_X\big) \boxtimes\big((\mathrm{\mathbf{L}}^{1-\alpha}\otimes_{\mathbb{C}}\omega_X^{\alpha})\otimes_{\mathcal{O}_X}\mathcal{D}_X\big) \xrightarrow{\langle-,-\rangle} \omega_X\otimes_{\mathcal{O}_X}\mathcal{D}_{X^2}
 $$
 $$
 \rightarrow \omega_X\otimes_{\mathcal{D}_X}\mathcal{D}_{X\rightarrow X^2}=\Delta_*\omega_X
$$
makes $L^{\flat}$ a Lie* algebra and one obtains a chiral algebra $U(L)^{\flat}$ called the twisted chiral enveloping algebra of $L^{\flat}$ (which is the chiral envelope $U(L^{\flat})$ modulo the ideal generated by $1-1^{\flat}$, here $1=\omega_X$ is the unit of the envelope and $1^{\flat}= \omega_X\subset L^{\flat}$ is the direct summand in $L^{\flat}$). Our free $\beta\gamma-bc$ system can be obtained in the same way by introducing a formal variable $\hbar$ and rescale the Lie* pairing to $\frac{i\hbar}{\pi}\langle-,-\rangle$. We will denote it by $U(L)^{\flat}_{\hbar}$. The corresponding vertex algebra bundle is generated by elements in $\mathrm{\mathbf{L}}^{\alpha}$ which transform as $\omega_X^{-\alpha}$.
   Since we focus on the elliptic curve which is flat, the conformal weight is not manifest in our later construction.
\end{rem}
Let $\{a^s\}_{s=1,\dots,\dim \mathbf{L}}$ (resp. $\{\asc\}_{s=1,\dots,\dim \mathbf{L}}$) be a basis of $\mathbf{L}$ (resp.  $\mathbf{L}\shift$). We introduce a commutative ring generate by  $\mathbf{L}\oplus \mathbf{L}\shift$ and their formal derivatives
$$
\OL:=\mathbb{C}[[\partial^ka^s,\partial^ka^{s\dagger}]]_{s=1,\dots,\dim \mathrm{\mathbf{L}}, k\geq 0},
$$
with
$$
\deg(\partial^k\as)=|a^s|, \  \deg(\partial^k\asc)=|a^s|-1, \quad k\geq 0.
$$
Here $\OL$ is considered as a commutative super vertex algebra and the translation operator $L_{-1}$ is a derivation on $\OL$ such that
$$
L_{-1}\partial^ka^s=\partial^{k+1}a^s,\quad L_{-1}\partial^ka^{\dagger}=\partial^{k+1}a^{s\dagger}, \quad s=1,\dots,\dim \mathrm{\mathbf{L}}, k\geq 0.
$$
Let $(L_{-1} \OL)$ be the ideal generated by $L_{-1}\OL$ in $\OL$. We define a BV algebra
$$
\BVL:=\mathbb{C}[[\as,\asc]]=\OL/(L_{-1}\OL),
$$
with
$$
\deg(\as)=|a^s|, \  \deg(\asc)=|a^s|-1.
$$

For simplicity of notation, we sometimes write $O, O_{\mathrm{BV}}$ instead. We also introduce $O_{\mathbf{k}}:=O\otimes_{\mathbb{C}}\mathbf{k}$, then $\BVk:=O_{\mathrm{BV}}\otimes_{\mathbb{C}}\mathbf{k}=O_{\mathbf{k}}/(L_{-1}O_{\mathbf{k}}).$ The quotient map $O_{\mathbf{k}}\rightarrow \BVk$ is denoted by $\mathbf{p}_{\mathbf{BV}}$.

 The BV differential on $O_{\mathrm{BV}}$ is
$$
\Delta_{\mathrm{BV}}=\frac{-i}{{\mathrm{Im}(\tau)}}\sum_{p,q}\omega_{pq}\partial_{\ap}\partial_{\aqc},
$$
where $\omega_{pq}$ is the even symplectic form $\langle-,-\rangle$ w.r.t. the basis $\{a^s\}_{s=1,\dots,\dim \mathrm{\mathbf{L}}}$.

Now we collect the data in effective BV formalism  of 2d chiral quantum field theory:
\begin{itemize}
 \item The local observable algebra (factorization algebra) ${\mathrm{Obs}_{2d-chiral}}$ is $\mathcal{V}^{\beta\gamma-bc}$.

   \item The factorization complex ${(C_{\bullet}(\mathrm{Obs}_{2d-chiral}),d)}$ is the chiral chain complex
       $$
{(C_{\bullet}(\mathrm{Obs}_{2d-chiral}),d)}:=       (\tilde{C}^{\mathrm{ch}}(X,\mathcal{A}^{\beta\gamma-bc}),d_{\mathrm{tot}}).
       $$
  \item The BV algebra is ${(O_{\mathrm{BV}},\Delta_{\mathrm{BV}})}$.
  \item A BV integration map
  $$
  \int_{\mathrm{BV}}:O_{\mathrm{BV}}\rightarrow \mathbb{C}.
  $$
  \item A $\mathbf{k}-$linear map
  $$
\mathrm{Tr}_{2d-chiral}: C_{\bullet}(\mathrm{Obs}_{2d-chiral})\rightarrow \BVk
  $$
satisfying QME  , which means that it is a chain map
  $$
{(d_{\mathrm{tot}}+\hbar\Delta)\mathrm{Tr}_{2d-chiral}=0.}
  $$
\end{itemize}
 We will construct the chain map {$\mathrm{Tr}_{2d-chiral}$ }using Feynman integrals. It will be the composition of the following sequence of maps
 $$
\mathrm{Tr}_{2d-chiral}: \tilde{C}^{\mathrm{ch}}(X,\mathcal{A}^{\beta\gamma-bc})_{\mathcal{Q}}\xrightarrow{\mathcal{W}}\tilde{C}^{\mathrm{ch}}(X,\omega_X)_{\mathcal{Q}}\otimes_{\mathbb{C}}\BVk\xrightarrow{\mathrm{tr}} \BVk.
 $$

In the rest of this section, we describe the $\mathcal{W}$ explicitly. This is a Feynman diagram construction and we first introduce the propagator. The BV propagator $P^{\mathrm{BV}}(z,w)\in \Gamma(X\times X, (\mathbf{L}^{\vee}\boxtimes\mathbf{L}^{\vee}(*\Delta))_\mathcal{Q})$ is a bisection defined by
$$
P^{\mathrm{BV}}(z,w)=P(z,w)\cdot \sum_{p,q}\omega_{pq}(\frac{\partial}{\partial \ap}\otimes \frac{\partial}{ \partial \aq})
$$
Its analytic part $P(z,w)$ is the Szeg\"{o} kernel.

Using the theta function $\vartheta_1(z;\tau)=-i\mathop{\sum}^{\infty}\limits_{n=-\infty}(-1)^nq^{(n+\frac{1}{2})^2}e^{(2n+1)\pi i z}$, we can write
$$
P(z,w)=\frac{i}{\pi}\partial_z(\log \vartheta_1(z-w;\tau))-\frac{2\mathrm{Im} (z-w)}{\mathrm{Im}(\tau)}.
$$
It is easy to see that
\begin{equation}\label{Zeromodes}
 P(z,w)= \frac{i}{\pi (z-w)}+P^{\mathrm{reg}}, \quad \bar{\partial}_zP(z,w)=-\frac{id\bar{z}}{\mathrm{Im}(\tau)},
\end{equation}
here $P^{\mathrm{reg}}$ is regular along the diagonal.
\begin{rem}
  Here $P(z,w)$ is the propagator for the free 2d chiral theory $\int_{X}\beta\bar{\partial}\gamma$. In other words, $P(z,w)$ is the Green's function $\bar{\partial}^{-1}$.
\end{rem}
We introduce some notations, let
$$
P_{ij}(k,l)=\partial^k_{z_i}{\partial^l}_{z_j}P(z_i,z_j), \quad Q(k,l)=\lim_{z_i\rightarrow z_j}\partial^k_{z_i}{\partial^l}_{z_j}(P(z_i,z_j)-\frac{i}{\pi (z_i-z_j)}).
$$

We now construct the map
$$
\mathcal{W}:\Gamma(X^n,(\mathcal{A}\shift)^{\boxtimes n}(*\Delta_{\{1,\dots,n\}})_\mathcal{Q}) \rightarrow\Gamma(X^n,(\omega_X\shift)^{\boxtimes n}(*\Delta_{\{1,\dots,n\}})_\mathcal{Q})\otimes_{\mathbb{C}}\BVk
$$
\begin{equation}\label{DefinitionOfW}
\mathcal{W}( v_1 \otimes\cdots\otimes v_n\cdot f(z_1,\dots,z_n))= \mathbf{p}_{\mathrm{BV}}\circ \mathbf{Mult}\big( e^{\hbar \mathfrak{P}+\hbar\mathfrak{Q}+D}( v_1\otimes\cdots\otimes v_n)\big)\cdot f(z_1,\dots,z_n).
\end{equation}

We explain the notations in the above expression. Here
$$
 v\cdot f= v_1\otimes\cdots\otimes v_n\cdot f(z_1,\dots,z_n)\in \Gamma\big(X^{n},(\mathcal{A}\shift)^{\boxtimes n}(*\Delta_{\{1,\dots,n\}})_\mathcal{Q}\big)
$$
where $f\in \Gamma\big(X^{n},\omega_{X^n}(*\Delta_{\{1,\dots,n\}})_{\mathcal{Q}}\big)$ and $v_1\otimes\cdots\otimes v_n\in V^{\otimes n}$. And we use the identification
$$
\Gamma\big(X^{n},(\mathcal{A}\shift)^{\boxtimes n}(*\Delta_{\{1,\dots,n\}})_\mathcal{Q}\big)\simeq V^{\otimes n}\otimes_{\mathbb{C}}\Gamma\big(X^{n},\omega_{X^n}(*\Delta_{\{1,\dots,n\}})_{\mathcal{Q}}\big)\text{\small{[n]}},
$$
and the identification
$$
V\simeq \mathbb{C}[[\partial^ka^s]]((\hbar))_{s=1,\dots,\dim \mathrm{\mathbf{L}}, k\geq 0}.
$$
The operator $D$ is defined as follows
$$
D(v_1\otimes\cdots\otimes v_n):=\sum_iv_1\otimes\cdots\otimes D_i(v_i)\otimes \cdots\otimes v_n,
$$
where $D_i$ is a derivation from $\mathbb{C}[[\partial^ka^s]]((\hbar))_{s=1,\dots,\dim \mathrm{\mathbf{L}}, k\geq 0}$ to $O_{\mathbf{k}}d\bar{z}_i$ defined by
$$
D_i(\partial^k\as)=(-1)^{p(\as)} \partial^k\asc \cdot d\bar{z}_i.
$$

And bi-differential operators $\mathfrak{P}$ and $\mathfrak{Q}$ are defined as follows
$$
\mathfrak{P}:=\sum_{i<j}\mathcal{P}_{ij}, \ \mathfrak{Q}:=\sum_{i}\mathcal{Q}_i
$$
$$
\mathcal{P}_{ij}:=\sum_{k,l}\sum_{p,q}P_{ij}(k,l)\omega_{pq}(\frac{\partial}{\partial (\partial^k \ap)})_i\otimes (\frac{\partial}{\partial (\partial^l \aq)})_j,\quad i<j,
$$
here
$$
(\frac{\partial}{\partial (\partial^k \ap)})_i\otimes (\frac{\partial}{\partial (\partial^l \aq)})_j(v_1\otimes\cdots\otimes v_n)=(-1)^{p(a^p)\cdot (p(v_1)+\cdots+p(v_{i-1}))+p(a^q)\cdot (p(v_1)+\cdots+p(v_{j-1}))}
$$
$$
\cdot v_1\otimes\cdots\otimes \frac{\partial}{\partial (\partial^k \ap)}(v_i) \otimes\cdots\otimes \frac{\partial}{\partial (\partial^l \aq)}(v_j)\otimes  \cdots\otimes v_n,
$$
and
$$
\mathcal{Q}_i=\frac{1}{2}\sum_{k,l}\sum_{p,q}Q(k,l)\omega_{pq}(\frac{\partial}{\partial (\partial^k \ap)}\cdot\frac{\partial}{\partial (\partial^l \aq)})_i,
$$
where
$$
\omega_{pq} (\frac{\partial}{\partial (\partial^k \ap)}\cdot\frac{\partial}{\partial (\partial^l \aq)})_i(v_1\otimes\cdots\otimes v_n)=\omega_{pq}v_1\otimes\cdots\otimes\frac{\partial}{\partial (\partial^k \ap)}\cdot\frac{\partial}{\partial (\partial^l \aq)}v_i\otimes \cdots\otimes v_n .
$$
See figure(\ref{WFeynman}). To summarize, we have a sequence of map
$$
V^{\otimes n}\otimes_{\mathbb{C}}\Gamma\big(X^{n},\omega_{X^n}(*\Delta_{\{1,\dots,n\}})_{\mathcal{Q}}\big)\text{\small{[n]}}\xrightarrow{e^{\hbar \mathfrak{P}+\hbar\mathfrak{Q}+D}}{O_{\mathbf{k}}}^{\otimes n}\otimes_{\mathbb{C}}\Gamma\big(X^{n},\omega_{X^n}(*\Delta_{\{1,\dots,n\}})_{\mathcal{Q}}\big)\text{\small{[n]}}
$$
$$
\xrightarrow{\mathbf{p}_{\mathrm{BV}}\circ \mathbf{Mult}}{\BVk}\otimes_{\mathbb{C}}\Gamma\big(X^{n},\omega_{X^n}(*\Delta_{\{1,\dots,n\}})_{\mathcal{Q}}\big)\text{\small{[n]}},
$$
here $\mathbf{Mult}:O_{\mathbf{k}}^{\otimes n}\rightarrow O_{\mathbf{k}}$ is the multiplication map and recall that $\mathbf{p}_{\mathrm{BV}}:O_{\mathbf{k}}\rightarrow\BVk=O_{\mathbf{k}}/(L_{-1}O_{\mathbf{k}})$ is the quotient map. The exponential $e^{\hbar \mathfrak{P}+\hbar\mathfrak{Q}+D}$ is well defined since these operators are commuting with each other.
\begin{rem}
  In the definition of operators $D$ and $\mathcal{Q}_i$, the Koszul signs are trivial since $D_i=\sum_p\sum_{k\geq 0}(\partial^k\apc\frac{\partial}{\partial(\partial^k\ap)})_id\bar{z}_i$  and $\omega_{pq}(\frac{\partial}{\partial (\partial^k \ap)}\cdot\frac{\partial}{\partial (\partial^l \aq)})_i$ are even operators. The parity of $(\partial^k\apc\frac{\partial}{\partial(\partial^k\ap)})_id\bar{z}_i$ is $p(\apc)+p(\ap)+p(d\bar{z}_i)=\bar{0}$ and the parity of $\omega_{pq}(\frac{\partial}{\partial (\partial^k \ap)}\cdot\frac{\partial}{\partial (\partial^l \aq)})_i$ is $p(\ap)+p(\aq)=\bar{0}$ (recall that the bilinear pairing $\omega$ is even).
\end{rem}
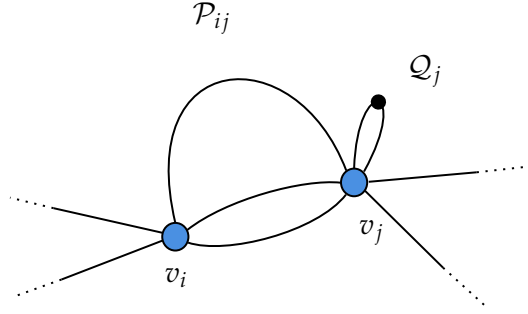
\begin{figure}
  \centering

\tikzset{every picture/.style={line width=0.75pt}} 

\begin{tikzpicture}[x=0.75pt,y=0.75pt,yscale=-1,xscale=1]

\draw  [fill={rgb, 255:red, 74; green, 144; blue, 226 }  ,fill opacity=1 ] (232.2,166.33) .. controls (232.2,162.47) and (235.16,159.35) .. (238.8,159.35) .. controls (242.45,159.35) and (245.41,162.47) .. (245.41,166.33) .. controls (245.41,170.18) and (242.45,173.31) .. (238.8,173.31) .. controls (235.16,173.31) and (232.2,170.18) .. (232.2,166.33) -- cycle ;
\draw  [fill={rgb, 255:red, 74; green, 144; blue, 226 }  ,fill opacity=1 ] (322.36,138.93) .. controls (322.36,135.08) and (325.32,131.95) .. (328.96,131.95) .. controls (332.61,131.95) and (335.57,135.08) .. (335.57,138.93) .. controls (335.57,142.79) and (332.61,145.91) .. (328.96,145.91) .. controls (325.32,145.91) and (322.36,142.79) .. (322.36,138.93) -- cycle ;
\draw [color={rgb, 255:red, 0; green, 0; blue, 0 }  ,draw opacity=1 ][line width=0.75]    (238.8,159.35) .. controls (218,70.9) and (299,64.9) .. (325,132.9) ;
\draw [color={rgb, 255:red, 0; green, 0; blue, 0 }  ,draw opacity=1 ]   (232.2,164.33) -- (177.4,151.93) ;
\draw [color={rgb, 255:red, 0; green, 0; blue, 0 }  ,draw opacity=1 ]   (232.2,168.33) -- (181.4,187.93) ;
\draw [color={rgb, 255:red, 0; green, 0; blue, 0 }  ,draw opacity=1 ]   (334.39,143.06) -- (373.8,181.93) ;
\draw [color={rgb, 255:red, 0; green, 0; blue, 0 }  ,draw opacity=1 ]   (244.81,162.93) .. controls (264.4,147.47) and (303.4,137.07) .. (322.2,139.07) ;
\draw  [dash pattern={on 0.84pt off 2.51pt}]  (177.4,151.93) -- (155.4,146.6) ;
\draw  [dash pattern={on 0.84pt off 2.51pt}]  (181.4,187.93) -- (158.65,196.11) ;
\draw  [dash pattern={on 0.84pt off 2.51pt}]  (373.8,181.93) -- (396.19,202.23) ;
\draw [color={rgb, 255:red, 0; green, 0; blue, 0 }  ,draw opacity=1 ]   (245.22,169.3) .. controls (264.31,175.68) and (310.22,163.97) .. (324.87,145.07) ;
\draw [color={rgb, 255:red, 0; green, 0; blue, 0 }  ,draw opacity=1 ]   (328.96,131.95) .. controls (330.67,83.72) and (357.96,89.95) .. (333.96,133.45) ;
\draw [shift={(341.22,98.33)}, rotate = 329.55] [color={rgb, 255:red, 0; green, 0; blue, 0 }  ,draw opacity=1 ][fill={rgb, 255:red, 0; green, 0; blue, 0 }  ,fill opacity=1 ][line width=0.75]      (0, 0) circle [x radius= 3.35, y radius= 3.35]   ;
\draw [color={rgb, 255:red, 0; green, 0; blue, 0 }  ,draw opacity=1 ]   (336.36,138.61) -- (390.78,133.81) ;
\draw  [dash pattern={on 0.84pt off 2.51pt}]  (390.78,133.81) -- (420.23,130.56) ;

\draw (246,46.4) node [anchor=north west][inner sep=0.75pt]    {$\mathcal{P}_{ij}$};
\draw (354,73.4) node [anchor=north west][inner sep=0.75pt]    {$\mathcal{Q}_{j}$};
\draw (232,180.4) node [anchor=north west][inner sep=0.75pt]    {$v_{i}$};
\draw (330,156.4) node [anchor=north west][inner sep=0.75pt]    {$v_{j}$};

\end{tikzpicture}
  \caption{The Feynman graph.}\label{WFeynman}
\end{figure}
\begin{rem}
This construction is a mathematical reformulation of the point-splitting method used by physicists.  Similar Feynman diagram constructions related to vertex operator algebra have been discussed in \cite{Tuite12,Mason03,Takhtajan05}.

\end{rem}

From the following lemma, we know that $\mathcal{W}$ is compatible with the $\mathcal{D}$-module structure and we can extend it to a map
$$
\mathcal{W}: \Gamma\big(X^I,\Delta_{\bullet}^{(I/I_0)}\mathrm{DR}\big(\mathop{\bigoplus}\limits_{T\in \equiQ(I_0)}\Delta_*^{(I_0/T)}((\mathcal{A}\shift)^{\boxtimes T}(*\Delta_T)_\mathcal{Q}\big)\big)
$$
$$\rightarrow {\BVk}\otimes_{\mathbb{C}}\Gamma\big(X^I,\Delta_{\bullet}^{(I/I_0)}\mathrm{DR}\big(\mathop{\bigoplus}\limits_{T\in \equiQ(I_0)}\Delta_*^{(I_0/T)}((\omega_X\shift)^{\boxtimes T}(*\Delta_T)_\mathcal{Q}\big)\big).
$$
\begin{lem}\label{WDmodule}
We have
  $$
  [(L_{-1})_i+\partial_{z_i},e^{\hbar \mathfrak{P}+\hbar \mathfrak{Q}+D}]=0, \ 1\leq i\leq n,
  $$
  here
  $$
  (L_{-1})_i(v_1\otimes \cdots\otimes v_n)=v_1\otimes\cdots \otimes L_{-1}v_i\otimes \cdots\otimes v_n.
  $$
\end{lem}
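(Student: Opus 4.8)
The plan is to verify the commutation relation $[(L_{-1})_i+\partial_{z_i},\,e^{\hbar\mathfrak{P}+\hbar\mathfrak{Q}+D}]=0$ by showing that $(L_{-1})_i+\partial_{z_i}$ commutes with each of the three operators $\mathfrak{P}$, $\mathfrak{Q}$, and $D$ separately; since these three mutually commute, the exponential is well-defined and it suffices to check the generators. First I would recall that on the vertex algebra side the operator $L_{-1}$ acts as a derivation with $L_{-1}\partial^k a^s=\partial^{k+1}a^s$, so under the identification $V\simeq\mathbb{C}[[\partial^k a^s]]((\hbar))$ we have $(L_{-1})_i=\sum_{p,k}(\partial^{k+1}a^p)\,\partial/\partial(\partial^k a^p)$ acting on the $i$-th slot, and similarly on $O_{\mathbf{k}}$. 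The key computational input is the translation covariance of the propagator: $(\partial_{z_i}+\partial_{z_j})P(z_i,z_j)$ applied appropriately, together with the basic identity $\partial_{z_i}\big(\partial^k_{z_i}\partial^l_{z_j}P(z_i,z_j)\big)=\partial^{k+1}_{z_i}\partial^l_{z_j}P(z_i,z_j)=P_{ij}(k+1,l)$ and the analogous statement for $Q(k,l)$.

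For the $D$-term the check is essentially formal: $D_i$ sends $\partial^k a^s\mapsto (-1)^{p(a^s)}\partial^k a^{s\dagger}d\bar z_i$, and both $L_{-1}$ on $V$ and $L_{-1}$ on $O_{\mathbf{k}}$ shift the derivative index by one in the same way, so $[(L_{-1})_i, D_i]=0$ and $\partial_{z_i}$ commutes with $D_j$ because $D_j$ involves only the formal variables, not the curve coordinates (and $\partial_{z_i}d\bar z_j=0$). For $\mathfrak{P}=\sum_{i<j}\mathcal{P}_{ij}$ and $\mathfrak{Q}=\sum_i\mathcal{Q}_i$ one computes the commutator of $(L_{-1})_i+\partial_{z_i}$ against $\mathcal{P}_{ij}$: the $(L_{-1})_i$ piece acting through the coefficient $\partial/\partial(\partial^k a^p)$ produces, after reindexing $k\mapsto k-1$, the operator with coefficient $P_{ij}(k-1,l)$ but with $\partial/\partial(\partial^k a^p)$, which is exactly cancelled by the contribution of $\partial_{z_i}$ hitting $P_{ij}(k,l)=\partial^k_{z_i}\partial^l_{z_j}P$ and producing $P_{ij}(k+1,l)$; the bookkeeping is a standard telescoping/reindexing argument, and the only subtlety is tracking the Koszul signs coming from the definition of $(\partial/\partial(\partial^k a^p))_i\otimes(\partial/\partial(\partial^l a^q))_j$, which are already built in consistently. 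The term $\mathcal{Q}_i$ is handled the same way, using $\partial_{z_i}Q(k,l)=Q(k+1,l)+Q(k,l+1)$ (from $Q(k,l)=\lim_{z_i\to z_j}\partial^k_{z_i}\partial^l_{z_j}(P-\tfrac{i}{\pi(z_i-z_j)})$ and the chain rule on the diagonal limit), matched against the two terms produced by $(L_{-1})_i$ acting on the two factors $\partial/\partial(\partial^k a^p)$ and $\partial/\partial(\partial^l a^q)$ in $\mathcal{Q}_i$.

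The main obstacle I anticipate is the precise sign and reindexing bookkeeping in the $\mathcal{P}_{ij}$ and $\mathcal{Q}_i$ computations: one must be careful that the derivation $L_{-1}$ acting on the \emph{target} $O_{\mathbf{k}}$ (through the $D$-image and through $\mathbf{Mult}$) is compatible with $L_{-1}$ acting on the \emph{source} $V$, and that the formal adjoint relation $L_{-1}\circ\partial/\partial(\partial^k a^p)=\partial/\partial(\partial^k a^p)\circ L_{-1} - \partial/\partial(\partial^{k-1}a^p)$ (the commutator of the shift derivation with a partial derivative) is used with the correct sign. I would organize the proof by first stating this elementary commutator identity on $\mathbb{C}[[\partial^k a^s]]$ as a lemma, then reducing the whole statement to the propagator identities $\partial_{z_i}P_{ij}(k,l)=P_{ij}(k+1,l)$ and $\partial_{z_i}Q(k,l)=Q(k+1,l)+Q(k,l+1)$, which follow directly from the definitions and the translation covariance $[L_{-1},a(z)]=\partial a(z)$ underlying the Szeg\H{o} kernel. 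Once these are in place, the commutator $[(L_{-1})_i+\partial_{z_i},\,\mathfrak{P}+\mathfrak{Q}+D]=0$ follows term by term, and hence the same holds for the exponential, giving $\mathcal{W}$-compatibility with the $\mathcal{D}$-module structure so that $\mathcal{W}$ extends to de Rham complexes as claimed.
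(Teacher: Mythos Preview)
Your overall strategy matches the paper's: check that $(L_{-1})_i+\partial_{z_i}$ commutes with each of $\hbar\mathcal{P}_{ij}$, $\hbar\mathcal{Q}_i$, and $D$ separately, using the commutator identity $[L_{-1},\partial/\partial(\partial^k a^p)]=-\partial/\partial(\partial^{k-1}a^p)$ together with $\partial_{z_i}P_{ij}(k,l)=P_{ij}(k+1,l)$. Your treatment of $\mathcal{P}_{ij}$ and of $D$ is correct and essentially what the paper does.

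There is, however, a genuine confusion in your handling of $\mathcal{Q}_i$. You write ``$\partial_{z_i}Q(k,l)=Q(k+1,l)+Q(k,l+1)$'' and propose that this contribution be ``matched against'' the $(L_{-1})_i$ contribution. But $Q(k,l)$ is a \emph{constant}: it is the value of $\partial_{z_i}^k\partial_{z_j}^l P^{\mathrm{reg}}$ on the diagonal, and since $P$ is translation-invariant this does not depend on any point. Hence $[\partial_{z_i},\mathcal{Q}_i]=0$ trivially; there is nothing to cancel. The actual content is that $[(L_{-1})_i,\mathcal{Q}_i]=0$ on its own, and this follows from the identity $Q(k+1,l)+Q(k,l+1)=0$, which is a direct consequence of translation invariance (equivalently, $P^{\mathrm{reg}}$ depends only on $z_i-z_j$, so $\partial_{z_i}$ and $-\partial_{z_j}$ agree on it). Your claimed equation is in fact $0=0$, so your final conclusion is not wrong, but the mechanism you describe is not what is happening. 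Rewrite the $\mathcal{Q}_i$ step as: $\partial_{z_i}$ commutes with $\mathcal{Q}_i$ because the coefficients $Q(k,l)$ are constants, and $(L_{-1})_i$ commutes with $\mathcal{Q}_i$ because the two reindexed terms combine to $-(1/2)\sum(Q(k+1,l)+Q(k,l+1))\omega_{pq}(D^p_kD^q_l)_i=0$.
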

\begin{proof}
  We have
  \begin{align*}
    [(L_{-1})_i,\hbar \mathcal{P}_{ij}]  & =[\sum_{k\geq 0}\sum_r\partial^{k+1} a^r(\frac{\partial}{\partial (\partial^k a^r)})_i,\sum_{k,l\geq 0}\sum_{p,q}P_{ij}(k,l)\omega_{pq}(\frac{\partial}{\partial (\partial^k a^p)})_i\otimes (\frac{\partial}{\partial (\partial^l a^q)})_j]\\
     &=-\sum_{k,l\geq 0}\sum_{p,q}P_{ij}(k+1,l)\omega_{pq}(\frac{\partial}{\partial (\partial^k a^p)})_i\otimes (\frac{\partial}{\partial (\partial^l a^q)})_j\\
     &=-[\partial_{z_i}, \hbar \mathcal{P}_{ij}].
  \end{align*}
  Thus
  $$
  [(L_{-1})_i+\partial_{z_i},e^{\hbar \mathcal{P}_{ij}}]=0.
  $$
We have
\begin{align*}
  [(L_{-1})_i+\partial_{z_i}, Q_i] &=[(L_{-1})_i,Q_i]  \\
   & =[\sum_{k\geq 0}\sum_r(\partial^{k+1} a^r\frac{\partial}{\partial (\partial^k a^r)})_i,\frac{1}{2}\sum_{k,l\geq 0}\sum_{p,q}Q(k,l)\omega_{pq}(\frac{\partial}{\partial (\partial^k a^p)}\cdot\frac{\partial}{\partial (\partial^l a^q)})_i]\\
   &=-\frac{1}{2}\sum_{k,l\geq 0}\sum_{p,q}(Q(k+1,l)+Q(k,l+1))\omega_{pq}(\frac{\partial}{\partial (\partial^k a^p)}\cdot\frac{\partial}{\partial (\partial^l a^q)})_i\\
   &=0,
\end{align*}
  here we use $Q(k+1,l)+Q(k,l+1)=0$.
  And we have
  \begin{align*}
[(L_{-1})_i+\partial_{z_i}, D]&=[(L_{-1})_i, D_i]\\
& =   [\sum_{k\geq 0}\sum_r(\partial^{k+1} a^r\frac{\partial}{\partial (\partial^k a^r)})_i+\sum_{k\geq 0}\sum_r(\partial^{k+1} a^{r\dagger}\frac{\partial}{\partial (\partial^k a^{r\dagger})})_i, \sum_{l\geq 0}(\partial^{l} \apc\frac{\partial}{\partial (\partial^l a^p)})_id\bar{z}_i]  \\
     &= -\sum_{k\geq 0}(\partial^{k+1} \apc\frac{\partial}{\partial (\partial^k a^p)})_id\bar{z}_i+\sum_{k\geq 0}(\partial^{k+1} \apc\frac{\partial}{\partial (\partial^k a^p)})_id\bar{z}_i\\
     &=0.
  \end{align*}
\end{proof}

Since $\mathcal{W}$ is linear, it commutes with $d_{\mathrm{Norm}}$. In summary, we have a chain map
$$
\mathcal{W}:(\tilde{C}^{\mathrm{ch}}(X,\mathcal{A}^{\beta\gamma-bc})_{\mathcal{Q}},d_{\mathrm{Norm}}+d_{\mathrm{dR}})\rightarrow({\BVk}\otimes_{\mathbb{C}}\tilde{C}^{\mathrm{ch}}(X,\omega_X)_{\mathcal{Q}},d_{\mathrm{Norm}}+d_{dR}).
$$

\subsection{Feynman diagrams and the chiral differential}
In this section, we will prove that $\mathcal{W}$ intertwines chiral differentials.

We extend the BV operator to $O_{\mathbf{k}}^{\otimes k}$ by defining
$$
\Delta_{\mathrm{BV}}(O_1\otimes \cdots\otimes O_k)=\frac{-i}{\mathrm{Im}(\tau)}\sum_{p,q}\sum_{i,j}\omega^{pq}{(\frac{\partial}{\partial \ap})_i\otimes(\frac{\partial}{\partial \aqc})_j}(O_1\otimes \cdots\otimes O_k)
$$
$$
=\frac{-i}{\mathrm{Im}(\tau)}\sum_{p,q}\sum_{i,j}\pm\omega^{pq}O_1\otimes \cdots \otimes \frac{\partial}{\partial \ap}O_i\otimes\cdots\otimes\frac{\partial}{\partial \aqc}O_j\otimes \cdots\otimes O_k.
$$
Here $\pm$ are Koszul signs. More explicitly, we have
\begin{equation}\label{BVextend}
  \begin{split}
     \Delta_{\mathrm{BV}}(O_1\otimes \cdots\otimes O_k) & =\frac{-i}{\mathrm{Im}(\tau)}\sum_{p,q}\sum_{i<j}(-1)^{\bullet_1}\omega^{pq}O_1\otimes \cdots \otimes \frac{\partial}{\partial \ap}O_i\otimes\cdots\otimes\frac{\partial}{\partial \aqc}O_j\otimes \cdots\otimes O_k \\
       &+\frac{-i}{\mathrm{Im}(\tau)}\sum_{p,q}\sum_{i>j}(-1)^{\bullet_2}\omega^{pq}O_1\otimes \cdots \otimes \frac{\partial}{\partial \aqc}O_j\otimes\cdots\otimes\frac{\partial}{\partial \ap}O_i\otimes \cdots\otimes O_k,
  \end{split}
\end{equation}
where
$$
\bullet_1=p(a^p)\cdot (p(O_1)+\cdots+p(O_{i-1}))+(p(a^q)+1)\cdot (p(O_1)+\cdots+p(O_{j-1})),
$$
$$
\bullet_2=p(a^p)\cdot(p(a^q)+1)+\bullet_1=\bullet_1.
$$
Here we use the fact that $p(\ap)+p(\aq)=\bar{0}$ for nonvanishing $\omega^{pq}$. From the above definition, we have
$$
\Delta_{\mathrm{BV}}\circ \mathbf{Mult}=\mathbf{Mult}\circ \Delta_{\mathrm{BV}}.
$$
\begin{lem}
For $\eta\in \Gamma(X^n,(\mathcal{A}\shift)^{\boxtimes n}(*\Delta_{\{1,\dots,n\}})_\mathcal{Q})$, we have
  $$
  \bar{\partial}(\mathcal{W}(\eta))=-\hbar\Delta_{\mathrm{BV}}\mathcal{W}(\eta)+\mathcal{W}(\bar{\partial}\eta).
  $$
\end{lem}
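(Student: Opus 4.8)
The plan is to verify the identity by a direct computation, tracking how the Dolbeault operator $\bar{\partial}$ interacts with the three pieces of the exponent in the definition \eqref{DefinitionOfW} of $\mathcal{W}$, namely $\hbar\mathfrak{P}$, $\hbar\mathfrak{Q}$ and $D$. Since $\mathbf{p}_{\mathrm{BV}}$ and $\mathbf{Mult}$ are linear algebraic maps that do not involve the $X^n$-variables, it suffices to prove the corresponding statement before applying them, i.e. to compute $\bar{\partial}$ of $e^{\hbar\mathfrak{P}+\hbar\mathfrak{Q}+D}(v_1\otimes\cdots\otimes v_n)\cdot f$. Only $\mathfrak{P}$ and $D$ carry antiholomorphic content: $\mathfrak{P}$ through the coefficients $P_{ij}(k,l)=\partial^k_{z_i}\partial^l_{z_j}P(z_i,z_j)$, whose $\bar{\partial}$ is governed by the second equation in \eqref{Zeromodes} ($\bar{\partial}_{z_i}P(z_i,z_j)=-id\bar{z}_i/\mathrm{Im}(\tau)$, hence $\bar{\partial}_{z_i}P_{ij}(k,l)=0$ for $k\geq 1$ and $=-id\bar{z}_i/\mathrm{Im}(\tau)$ for $k=0$, and symmetrically in $j$), and $D$ through the explicit $d\bar{z}_i$ factors it produces.

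First I would record that $\bar{\partial}$ acting on the underlying singular form $f$ and the Dolbeault degrees in $v_1\otimes\cdots\otimes v_n$ commutes with $e^{\hbar\mathfrak{P}+\hbar\mathfrak{Q}+D}$ up to the commutator $[\bar{\partial},\hbar\mathfrak{P}+\hbar\mathfrak{Q}+D]$, since the operators $\mathfrak{P},\mathfrak{Q},D$ all commute with one another (as noted after the definition, this is why the exponential makes sense) and with $\bar{\partial}$ their commutator can be pulled outside via the standard identity $[\bar{\partial},e^{A}]=\int_0^1 e^{sA}[\bar{\partial},A]e^{(1-s)A}\,ds = [\bar{\partial},A]e^{A}$ when $[[\bar{\partial},A],A]=0$. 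So the computation reduces to evaluating $[\bar{\partial},\hbar\mathfrak{P}]$ and $[\bar{\partial},D]$ and checking $[\bar{\partial},\hbar\mathfrak{Q}]=0$ (the kernel $Q(k,l)$ is a constant, so $\mathfrak{Q}$ is $\bar{\partial}$-closed). Then I would compute $[\bar{\partial},\hbar\mathcal{P}_{ij}]$: only the $k=0$ or $l=0$ terms survive, contributing $-\dfrac{i\hbar}{\mathrm{Im}(\tau)}\sum_{p,q}\omega_{pq}\bigl(d\bar{z}_i\,(\partial_{a^p})_i\otimes(\partial_{\partial^l a^q})_j + d\bar{z}_j\,(\partial_{\partial^k a^p})_i\otimes(\partial_{a^q})_j\bigr)$ type terms. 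Next, $[\bar{\partial},D_i]$ produces the $d\bar{z}_i$ needed to convert the derivative $\partial_{\partial^k a^p}$ landing on a $\partial^k a^{p\dagger}$ created earlier. The key point is that after applying $\mathbf{p}_{\mathrm{BV}}$ (which sets $L_{-1}$-exact terms to zero, so all $\partial^k$ with $k\geq 1$ collapse) and $\mathbf{Mult}$ (which merges the tensor slots and is what makes $\Delta_{\mathrm{BV}}$ well-defined on $\BVk$ via $\Delta_{\mathrm{BV}}\circ\mathbf{Mult}=\mathbf{Mult}\circ\Delta_{\mathrm{BV}}$), the combined contribution of $[\bar{\partial},\hbar\mathfrak{P}]$ and $[\bar{\partial},D]$ reorganizes precisely into $-\hbar\Delta_{\mathrm{BV}}$ acting on $\mathcal{W}(\eta)$, using formula \eqref{BVextend} for the extended BV operator and the explicit form of $\Delta_{\mathrm{BV}}=\dfrac{-i}{\mathrm{Im}(\tau)}\sum_{p,q}\omega_{pq}\partial_{a^p}\partial_{a^{q\dagger}}$.

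The main obstacle I anticipate is bookkeeping the Koszul signs: the operators $(\partial_{\partial^k a^p})_i\otimes(\partial_{\partial^l a^q})_j$ and $D_i$ carry the sign conventions spelled out after \eqref{DefinitionOfW}, the BV operator on the tensor product carries the signs $\bullet_1,\bullet_2$ in \eqref{BVextend}, and $\bar{\partial}$ must be commuted past the parities of the $v$'s and of the Dolbeault forms $\eta_i$; one must check these all conspire so that the $k=0,l=0$ contraction from $\mathfrak{P}$ together with the $d\bar{z}$-insertion from $D$ assemble with the correct overall sign into the single term $-\hbar\Delta_{\mathrm{BV}}\mathcal{W}(\eta)$ rather than its negative. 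A secondary subtlety is that $\bar{\partial}_{z_i}P(z_i,z_j)$ is a genuine smooth $(0,1)$-form, not a delta-type current (the pole of $P$ is the holomorphic $\frac{i}{\pi(z_i-z_j)}$, which is $\bar{\partial}$-closed away from the diagonal and is handled by the chiral differential, not here), so no distributional terms appear and the identity holds strictly at the level of smooth singular forms; I would flag this explicitly using \eqref{Zeromodes}. Once the sign check is done, the lemma follows by assembling $[\bar{\partial},\hbar\mathfrak{P}]+[\bar{\partial},D] = -\hbar\Delta_{\mathrm{BV}}$ on the image of $\mathcal{W}$ and noting $[\bar{\partial},\hbar\mathfrak{Q}]=0$.
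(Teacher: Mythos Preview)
Your reduction to a commutator computation is correct, as is the observation that $[\bar{\partial},\hbar\mathfrak{Q}]=0$. But the mechanism you describe after that is wrong: $[\bar{\partial},D]=0$ too, since the coefficients of each $D_i$ are constants (independent of $z_i,\bar{z}_i$); there is no ``$d\bar{z}_i$ produced by $[\bar{\partial},D_i]$''. More fundamentally, your proposed identity $[\bar{\partial},\hbar\mathfrak{P}]+[\bar{\partial},D]=-\hbar\Delta_{\mathrm{BV}}$ cannot hold even on the image of $\mathcal{W}$: the left side is built from derivatives $(\partial/\partial a^p)_i\otimes(\partial/\partial a^q)_j$ in the $a$-variables only, whereas $\Delta_{\mathrm{BV}}$ contains $\partial/\partial a^{q\dagger}$. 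No amount of sign bookkeeping will match these.

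The missing idea is that the $\Delta_{\mathrm{BV}}$ side has to be computed separately, as a second commutator. Because the inputs $v_1\otimes\cdots\otimes v_n$ lie in $V^{\otimes n}\cong\mathbb{C}[[\partial^k a^s]]^{\otimes n}$ and contain no $a^\dagger$-variables, one has $\Delta_{\mathrm{BV}}|_{V^{\otimes n}}=0$, so $-\hbar\Delta_{\mathrm{BV}}\mathcal{W}(\eta)$ equals $-[\hbar\Delta_{\mathrm{BV}},e^{\hbar\mathfrak{P}+\hbar\mathfrak{Q}+D}]$ applied to the input. In that commutator only $-[\hbar\Delta_{\mathrm{BV}},D]$ survives: the $\partial/\partial a^{q\dagger}$ in $\Delta_{\mathrm{BV}}$ hits the $a^{r\dagger}$ created by $D_i$, yielding operators of the form $\frac{-i}{\mathrm{Im}(\tau)}\omega_{pq}(\partial/\partial a^p)_k\otimes(\partial/\partial a^q)_i\,d\bar{z}_i$. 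This is exactly the shape of $[\bar{\partial},\hbar\mathfrak{P}]$ from \eqref{Zeromodes}. The identity one actually checks is $[\bar{\partial},\hbar\mathfrak{P}]=-[\hbar\Delta_{\mathrm{BV}},D]$ as operators, and hence $[\bar{\partial},e^{\hbar\mathfrak{P}+\hbar\mathfrak{Q}+D}]=-[\hbar\Delta_{\mathrm{BV}},e^{\hbar\mathfrak{P}+\hbar\mathfrak{Q}+D}]$. Your worry about delta-currents is well-placed and correctly resolved, but the core of the argument needs to be rerouted through $[\Delta_{\mathrm{BV}},D]$ rather than $[\bar{\partial},D]$.
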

\begin{proof}

  It is clear that $[\bar{\partial}, \hbar\mathfrak{Q}+D]=0$ .  We now compute $[\bar{\partial},\hbar\mathfrak{P}]$
  \begin{align*}
&[\bar{\partial},\hbar\mathfrak{P}]=\sum_i[\bar{\partial}_{z_i},\hbar\mathfrak{P}]\\
&=\sum_i[\bar{\partial}_{z_i}, \hbar\sum_{j,j>i}\mathcal{P}_{ij}+\hbar\sum_{k,k<i}\mathcal{P}_{ki}]\\
&= \hbar\sum_i [\bar{\partial}_{z_i}, \sum_{j,j>i}\sum_{p,q}P(z_i,z_j)\omega_{pq}(\frac{\partial}{\partial a^p})_i\otimes(\frac{\partial}{\partial a^q})_j+\sum_{k<i}\sum_{p,q}P(z_k,z_i)\omega_{pq}(\frac{\partial}{\partial a^p})_k\otimes(\frac{\partial}{\partial a^q})_i]   \\
   & =\hbar\sum_i\sum_{j,j>i}\sum_{p,q}\frac{-id\bar{z}_i}{\mathrm{Im}(\tau)}\omega_{pq}(\frac{\partial}{\partial a^p})_i\otimes(\frac{\partial}{\partial a^q})_j-\hbar\sum_i\sum_{k,k<i}\sum_{p,q}\frac{-id\bar{z}_i}{\mathrm{Im}(\tau)}\omega_{pq}(\frac{\partial}{\partial a^p})_k\otimes(\frac{\partial}{\partial a^q})_i.
\end{align*}
In the last step we use (\ref{Zeromodes}). Note that the minus sign of the second term comes from the fact that the function $P(z,w)=P(z-w)$ is odd and $z_i$ is in the second position in $P(z_k,z_i)$.   On the other hand, using the identification
$$
V\simeq \mathbb{C}[[\partial^ka^s]]((\hbar))_{s=1,\dots,\dim \mathrm{\mathbf{L}}, k\geq 0}\subset \mathbb{C}[[\partial^ka^s,\partial^ka^{s\dagger}]]((\hbar))_{s=1,\dots,\dim \mathrm{\mathbf{L}}, k\geq 0}\cong O_{\mathbf{k}} ,
$$
 we can view $\Delta_{\mathrm{BV}}, \hbar\mathfrak{P}$ and $\mathfrak{Q}$ as operators acting on both $V^{\otimes n}$ and $O_{\mathbf{k}}^{\otimes n}$. Since
 $$
 \Delta_{\mathrm{BV}}|_{V\simeq\mathbb{C}[[\partial^ka^s]]((\hbar))_{s=1,\dots,\dim \mathrm{\mathbf{L}}, k\geq 0}}=0,
 $$
  to get $\Delta_{\mathrm{BV}}\mathcal{W}(\eta)$ we only need to compute the commutator
\begin{align*}
&-[\Delta_{\mathrm{BV}}, \hbar\mathfrak{P}+\hbar\mathfrak{Q}+D]=-\sum_i[\Delta_{\mathrm{BV}},D_i]\\
& = \sum_i[\sum_{k,k<i}\sum_{p,q}\frac{i}{\mathrm{Im}(\tau)}\omega_{pq}(\frac{\partial}{\partial \ap})_k\otimes (\frac{\partial}{ \partial \aqc})_i+\sum_{j,j>i}\sum_{p,q}\frac{i}{\mathrm{Im}(\tau)}\omega_{pq}(\frac{\partial}{\partial \aqc})_i\otimes (\frac{\partial}{ \partial \ap})_j,\sum_r(a^{r\dagger}\frac{\partial}{\partial a^r})_id\bar{z}_i]   \\
&+\sum_i[\sum_{p,q}\frac{i}{\mathrm{Im}(\tau)}\omega_{pq}(\frac{\partial}{\partial \aqc})_i\cdot (\frac{\partial}{ \partial \ap})_i,\sum_r(a^{r\dagger}\frac{\partial}{\partial a^r})_id\bar{z}_i].
\end{align*}
Here the first and the second term correspond to the BV operator acting on different tensor components and the same tensor component, respectively. By direct computations, the above commutator equals to
\begin{align*}
   & \sum_{i}\sum_{k,k<i}\sum_{p,q}\frac{i}{\mathrm{Im}(\tau)}\omega_{pq}(\frac{\partial}{\partial \ap})_k\otimes (\frac{\partial}{ \partial \aq}d\bar{z}_i)_i+\sum_i\sum_{j,j>i}\sum_{p,q}\frac{i}{\mathrm{Im}(\tau)}\omega_{pq}(\frac{\partial}{\partial \aq}d\bar{z}_i)_i\otimes (\frac{\partial}{ \partial \ap})_j\\
   &+\underbrace{\sum_i\sum_{p,q}\frac{i}{\mathrm{Im}(\tau)}\omega_{pq}(\frac{\partial}{\partial \aq})_i\cdot (\frac{\partial}{ \partial \ap})_id\bar{z}_i}_{=0}\\
   &=\sum_{i}\sum_{k,k<i}\sum_{p,q}(-1)^{p(\ap)+p(\aq)}\frac{id\bar{z}_i}{\mathrm{Im}(\tau)}\omega_{pq}(\frac{\partial}{\partial \ap})_k\otimes (\frac{\partial}{ \partial \aq})_i+\sum_i\sum_{j,j>i}\sum_{p,q}(-1)^{p(\aq)}\frac{id\bar{z}_i}{\mathrm{Im}(\tau)}\omega_{pq}(\frac{\partial}{\partial \aq})_i\otimes (\frac{\partial}{ \partial \ap})_j\\
   &=\sum_{i}\sum_{k,k<i}\sum_{p,q}\frac{id\bar{z}_i}{\mathrm{Im}(\tau)}\omega_{pq}(\frac{\partial}{\partial \ap})_k\otimes (\frac{\partial}{ \partial \aq})_i-\sum_i\sum_{j,j>i}\sum_{p,q}(-1)^{p(\ap)p(\aq)+p(\ap)}\frac{id\bar{z}_i}{\mathrm{Im}(\tau)}\omega_{pq}(\frac{\partial}{\partial \ap})_i\otimes (\frac{\partial}{ \partial \aq})_j\\
   &=\sum_{i}\sum_{k,k<i}\sum_{p,q}\frac{id\bar{z}_i}{\mathrm{Im}(\tau)}\omega_{pq}(\frac{\partial}{\partial \ap})_k\otimes (\frac{\partial}{ \partial \aq})_i-\sum_i\sum_{j,j>i}\sum_{p,q}\frac{id\bar{z}_i}{\mathrm{Im}(\tau)}\omega_{pq}(\frac{\partial}{\partial \ap})_i\otimes (\frac{\partial}{ \partial \aq})_j.
\end{align*}
Here we use the fact that $\omega$ is symplectic, that is, $\omega_{pq}=-(-1)^{p(a^p)p(a^q)}\omega_{qp}=-(-1)^{p(a^p)}\omega_{qp}$. Now we conclude that $-[\hbar\Delta_{\mathrm{BV}}, \hbar\mathfrak{P}+\hbar\mathfrak{Q}+D]$ is equal to $[\bar{\partial}, \hbar\mathfrak{P}+\hbar\mathfrak{Q}+D]$. Then
$$
-[\hbar\Delta_{\mathrm{BV}},e^{\hbar\mathfrak{P}+\hbar\mathfrak{Q}+D}]=[\bar{\partial},e^{\hbar\mathfrak{P}+\hbar\mathfrak{Q}+D}],
$$
and the proof is complete by observing that $[\Delta_{\mathrm{BV}},\mathbf{p}_{\mathrm{BV}}\circ \mathbf{Mult}]=[\bar{\partial},\mathbf{p}_{\mathrm{BV}}\circ \mathbf{Mult}]=0$.
\end{proof}

\begin{rem}
  This lemma can be viewed as a two-dimensional analogue of \cite[Lemma 2.33]{BVQandindex}.
\end{rem}

\begin{prop}\label{WDChirall}
  For $T'\in \equiQ(T,|T|-1)$, we have following commutative diagram
  $$
\begin{tikzcd}
\Gamma(X^T,(\mathcal{A}\shift)^{\boxtimes T}(*\Delta_T)_\mathcal{Q}) \arrow[d, "\bar{\partial}+2\pi id_{\mathrm{ch},(T,T')}"] \arrow[r,"\mathcal{W}"] &  {\BVk}\otimes_{\mathbb{C}}\Gamma(X^T,(\omega_X\shift)^{\boxtimes T}(*\Delta_T)_\mathcal{Q})\arrow[d, "\bar{\partial}+2\pi id_{\mathrm{ch},(T,T')}-\hbar\Delta_{\mathrm{BV}}"] \\
\Gamma(X^T,\Delta_*^{(T/T')}((\mathcal{A}\shift)^{\boxtimes T'}(*\Delta_{T'})_\mathcal{Q})) \arrow[r," \mathcal{W}"]                    & {\BVk}\otimes_{\mathbb{C}}\Gamma(X^T,\Delta_*^{(T/T')}((\omega_X\shift)^{\boxtimes T'}(*\Delta_{T'})_\mathcal{Q}))
\end{tikzcd}
$$
\end{prop}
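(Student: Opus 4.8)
\textbf{Proof strategy for Proposition \ref{WDChirall}.}

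The plan is to reduce everything to the two-point case $|T|=2$, $T=\{1,2\}$ and $T'=\{\bullet\}$, since both $d_{\mathrm{ch},(T,T')}$ and the map $\mathcal{W}$ are built out of a product over pairs of insertion points, and the factor associated to indices outside $\{\alpha',\alpha''\}$ is the identity. So I would first isolate the two points $\alpha',\alpha''\in T$ with $\pi(\alpha')=\pi(\alpha'')$, tensor everything else through untouched (checking only that the Koszul signs match, which is routine given the sign conventions already fixed in Definition \ref{ChiralDolbeault} and in \eqref{BVextend}), and thereby assume $T=\{1,2\}$.

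Next, in the two-point case I would compute both composites in the diagram applied to a local section $\eta=\frac{f(z_1,z_2)}{(z_1-z_2)^k}\eta_1(z_1)\eta_2(z_2)\,v_1dz_1\shift\boxtimes v_2dz_2\shift$. The key input is the previous Lemma, which already gives $\bar\partial\circ\mathcal{W}=\mathcal{W}\circ\bar\partial-\hbar\Delta_{\mathrm{BV}}\circ\mathcal{W}$ on $\Gamma(X^2,(\mathcal{A}\shift)^{\boxtimes 2}(*\Delta)_\mathcal{Q})$. So the content of the proposition is the commutation of $\mathcal{W}$ with $d_{\mathrm{ch}}$ alone, i.e.\ the identity $\mathcal{W}\circ d_{\mathrm{ch},(\{1,2\},\{\bullet\})}=d_{\mathrm{ch},(\{1,2\},\{\bullet\})}\circ\mathcal{W}$ as maps into $\BVk\otimes_{\mathbb{C}}\Gamma(X^2,\Delta_*\omega_X\shift\otimes_{\mathcal{O}}\mathcal{Q})$. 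I would expand $d_{\mathrm{ch}}\eta$ by its definition (the $\mathrm{Res}_{z_1\to z_2}$ formula with the $e^{(z_1-z_2)\otimes\vec\partial_{z_1}}$ exponential, as in \eqref{OPECHIRAL}), and on the other side expand $\mathcal{W}(\eta)=\mathbf{p}_{\mathrm{BV}}\circ\mathbf{Mult}\,(e^{\hbar\mathfrak{P}+\hbar\mathfrak{Q}+D}(v_1\otimes v_2))\cdot f$; contracting the propagator $\mathcal{P}_{12}$ between the two slots, together with the self-contractions $\mathcal{Q}_1,\mathcal{Q}_2$, produces exactly the OPE coefficients ${v_1}_{(n)}v_2$ after one uses the mode expansion $P(z,w)=\frac{i}{\pi(z-w)}+P^{\mathrm{reg}}$: the singular part $\frac{i}{\pi(z-w)}$ reproduces the $(z_1-z_2)^{-n-1}$ terms of $Y(v_1,z_1)v_2$, and the regular part together with $Q(k,l)$ accounts for the normal-ordered (non-singular) modes. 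Taking the residue $\mathrm{Res}_{z_1\to z_2}$ on the $\mathcal{W}$-side corresponds precisely to reading off the $(z_1-z_2)^{-1}$ coefficient, which is what $d_{\mathrm{ch}}$ does on the $\mathcal{A}$-side; and the $\vec\partial_{z_1}$-exponential is matched by Lemma \ref{WDmodule}, which says $\mathcal{W}$ intertwines $(L_{-1})_i+\partial_{z_i}$, i.e.\ the $\mathcal{D}$-module relation on the diagonal $\partial_w=\partial_{z_1}+\partial_{z_2}$.

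The main obstacle, and the step I would spend the most care on, is the bookkeeping that matches the two expansions term-by-term while keeping track of: (i) the factor $\frac{i\hbar}{\pi}$ in the OPE versus the $\frac{i}{\pi}$ in the analytic propagator (so powers of $\hbar$ from $\mathfrak{P}$ line up with the free-field normal ordering that defines ${v_1}_{(n)}v_2$ in $V^{\beta\gamma-bc}$), (ii) the Koszul signs $(-1)^{p(v_1)}$ etc.\ appearing in Definition \ref{ChiralDolbeault} versus the signs in the definitions of $\mathcal{P}_{ij},\mathcal{Q}_i,D_i$, and (iii) the passage to the quotient $\BVk=O_{\mathbf{k}}/(L_{-1}O_{\mathbf{k}})$: one must check that the ``leftover'' $\partial_{z_1}^l/l!$ tail in $d_{\mathrm{ch}}$ that does not hit the $(z_1-z_2)^{-1}$ pole maps into holomorphic total derivatives, hence dies in the quotient — this is essentially the same mechanism as in Lemma \ref{ResidueChiral}, so I would invoke that computation rather than redo it. Once these are checked, commutation with $d_{\mathrm{Norm}}$ is automatic since $\mathcal{W}$ is $\mathbb{C}$-linear and defined componentwise, and assembling the pieces over the inductive limit in $\mathcal{S}^{\mathrm{op}}$ gives the full chain map statement; combined with the previous Lemma for the $\bar\partial$ part and Lemma \ref{WDmodule} for $d_{\mathrm{DR}}$, this yields the commutativity of the stated square with the full differential $\bar\partial+2\pi i d_{\mathrm{ch}}-\hbar\Delta_{\mathrm{BV}}$.
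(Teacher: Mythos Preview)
Your overall shape is right---the content of the proposition, once the $\bar\partial$ part is handled by the preceding Lemma, is indeed the commutation $\mathcal{W}\circ d_{\mathrm{ch},(T,T')}=d_{\mathrm{ch},(T,T')}\circ\mathcal{W}$, and the paper proves exactly this (equation \eqref{FeynmanChiral}). The idea that the singular part $\frac{i}{\pi(z_i-z_j)}$ of the propagator reproduces the free-field OPE while the regular part $Q(k,l)$ matches the normal-ordered remainder is correct and is precisely the Wick theorem (Theorem \ref{WickThm}) that the paper invokes.

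However, your reduction to $|T|=2$ has a genuine gap. It is \emph{not} true that ``the factor associated to indices outside $\{\alpha',\alpha''\}$ is the identity'' for $\mathcal{W}$. The operator $\mathfrak{P}$ contains cross-propagators $\mathcal{P}_{i\bullet}$ for every spectator index $\bullet\neq i,j$, with coefficient $P(z_i,z_\bullet)$ depending on $z_i$. When $d_{\mathrm{ch}}$ collides $z_i\to z_j$, the post-collision $\mathcal{W}$ (on $T'$) sees the single propagator $P(z_j,z_\bullet)$ acting on the merged slot, whereas the pre-collision $\mathcal{W}$ (on $T$) had both $P(z_i,z_\bullet)$ and $P(z_j,z_\bullet)$ acting on separate slots. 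Matching these requires two nontrivial steps that your reduction would suppress: first the Leibniz-type identity $e^{\hbar\mathcal{P}_{j\bullet}}\,\mathrm{\mathbf{Mult}}_{i\rightarrow j}=\mathrm{\mathbf{Mult}}_{i\rightarrow j}\,e^{\hbar\mathcal{P}_{i\bullet}(i\rightarrow j)|_{z_i=z_j}+\hbar\mathcal{P}_{j\bullet}}$ (this is \eqref{PMultCommute}), and second the commutation of the Taylor-expanded propagator $\mathcal{P}_{i\bullet}(i\rightarrow j)$ past $e^{(z_i-z_j)(L_{-1})_i}$ (this is \eqref{PLCommute}, proved via Baker--Campbell--Hausdorff in Appendix \ref{Identities}). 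The analogous identities \eqref{QMultCommute} and \eqref{QLCommute} handle the regular self-contractions $\mathcal{Q}_i,\mathcal{Q}_j$ merging into $\mathcal{Q}$ on the collided point. Only the spectator-spectator propagators $\mathcal{P}_{\blacktriangle\blacktriangledown}$ with $\blacktriangle,\blacktriangledown\neq i,j$ commute through trivially.

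So rather than reducing to two points, the paper keeps the full $n$-point setup, splits $\mathfrak{P}=\mathcal{P}_{ij}+\sum_{\bullet\neq i,j}(\mathcal{P}_{i\bullet}+\mathcal{P}_{j\bullet})+\sum_{\blacktriangle,\blacktriangledown\neq i,j}\mathcal{P}_{\blacktriangle\blacktriangledown}$, introduces the Taylor-expanded version $\mathfrak{P}(i\rightarrow j)$, and moves $e^{\hbar\mathfrak{P}+\hbar\mathfrak{Q}+D}$ inward through $\mathrm{\mathbf{Mult}}_{i\rightarrow j}$ and $e^{(z_i-z_j)(L_{-1})_i}$ using the six identities \eqref{PMultCommute}--\eqref{QLCommute}. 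Your point (iii) about killing $L_{-1}$-terms in $\BVk$ is correct in spirit and is used, but not via a residue lemma: it enters simply as $\mathbf{p}_{\mathrm{BV}}\circ\mathbf{Mult}(-)=\mathbf{p}_{\mathrm{BV}}\circ\mathbf{Mult}(e^{(z_i-z_j)(L_{-1})_i}\cdot-)$.
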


\begin{proof}
  We need to prove
\begin{equation}\label{FeynmanChiral}
 d_{\mathrm{ch},(T,T')}(\mathcal{W}(f\cdot v))=\mathcal{W}(d_{\mathrm{ch},(T,T')}(f\cdot v)).
\end{equation}
We first identify $T$ and $\{1,\dots,n\}$, where $n=|T|$. Suppose $T'\in \equiQ(T,n-1)$ is described by colliding $i$-th and $j$-th positions ($i<j$).  Recall
$$
\mathfrak{P}=\mathcal{P}_{ij}+\sum_{\bullet\neq i,j}\mathcal{P}_{i\bullet}+\sum_{\bullet\neq i,j}\mathcal{P}_{j\bullet}+\sum_{\substack{\blacktriangle\neq i,j\\\blacktriangledown\neq i,j}}\mathcal{P}_{\blacktriangle\blacktriangledown},
$$
$$
\mathfrak{Q}=\mathcal{Q}_{i}+\mathcal{Q}_{j}+\sum_{\bullet\neq i,j}\mathcal{Q}_{\bullet},
$$
We write
$$
\mathfrak{P}(i\rightarrow j):=\mathcal{P}_{ij}(i\rightarrow j)+\sum_{\bullet\neq i,j}\mathcal{P}_{i\bullet}(i\rightarrow j)+\sum_{\bullet\neq i,j}\mathcal{P}_{j\bullet}+\sum_{\substack{\blacktriangle\neq i,j\\\blacktriangledown\neq i,j}}\mathcal{P}_{\blacktriangle\blacktriangledown},
$$
here
$$
\mathcal{P}_{ij}(i\rightarrow j)=\mathcal{P}_{ij}^{\mathrm{Sing}}(i\rightarrow j)+\mathcal{Q}_{ij}(i\rightarrow j),
$$
$$
\mathcal{P}_{ij}^{\mathrm{Sing}}(i\rightarrow j):=\sum_{p,q}\sum_{k,l}\partial^k_{z_i}\partial^l_{z_j}(\frac{i}{\pi (z_i-z_j)})\cdot \omega_{pq}(\frac{\partial}{\partial (\partial^k a^p)})_i\otimes (\frac{\partial}{\partial (\partial^l a^q)})_j,
$$
$$
\mathcal{Q}_{ij}(i\rightarrow j):=\sum_{r\geq 0}\sum_{k,l}\sum_{p,q}Q(k+r,l)(z_i-z_j)^{(r)}\omega_{pq}(\frac{\partial}{\partial (\partial^k a^p)})_i\otimes (\frac{\partial}{\partial (\partial^l a^q)})_j,
$$
and
$$
\mathcal{P}_{i\bullet}(i\rightarrow j):=\sum_{r\geq 0}\sum_{k,l}\sum_{p,q}P_{j\bullet}(k+r,l)(z_i-z_j)^{(r)} \omega_{pq}(\frac{\partial}{\partial (\partial^k a^p)})_i\otimes (\frac{\partial}{\partial (\partial^l a^q)})_{\bullet},\quad \bullet\neq i,j.
$$
Here $(z_i-z_j)^{(r)}:=\frac{(z_i-z_j)^r}{r!}, r\geq 0$. In other words, the notation $(i\rightarrow j)$ just means the Taylor expansions of the regular part of $P_{ij}$ and $P_{i\bullet}$ at $z_i=z_j$ (here $\bullet\neq i,j$).

We also introduce
$$
\mathrm{\mathbf{Mult}}_{i\rightarrow j}(o_1\otimes \cdots\otimes o_i\otimes \cdots\otimes o_j\otimes \cdots\otimes o_n)=
$$
$$
o_1\otimes \cdots\otimes 1\otimes \cdots\otimes o_i\cdot o_j\otimes \cdots\otimes o_n,
$$
here $o_1\otimes \cdots\otimes o_n\in (O_{\mathbf{k}})^{\otimes n}$ .

  With above notations, (\ref{FeynmanChiral}) is equivalent to the following identity (recall the residue formula (\ref{OPECHIRAL}) of the chiral operation)
$$
  \mathrm{Res}_{z_i\rightarrow z_ j} f\cdot e^{(z_i-z_j)\otimes \vec{\partial}_{z_i}}\cdot \mathbf{p}_{\mathrm{BV}}\circ \mathbf{Mult}(e^{\hbar\mathfrak{P}(i\rightarrow j)+\hbar\mathfrak{Q}+{D}}v)
  $$

  $$
  =\mathbf{p}_{\mathrm{BV}}\circ \mathbf{Mult}\left(e^{\hbar\mathfrak{P}+\hbar\mathfrak{Q}+{D}}\mathrm{\mathbf{Mult}}_{i\rightarrow j}\mathrm{Res}_{z_i\rightarrow z_j}f\cdot e^{(z_i-z_j)\otimes \vec{\partial}_{z_i}}
  \cdot e^{(z_i-z_j)(L_{-1})_i} \cdot e^{\hbar \mathcal{P}^{\mathrm{Sing}}_{ij}(i\rightarrow j)}v\right).
  $$
  Here we use the Wick theorem, see Appendix \ref{Wichtheorem}.

  We begin by proving
  $$
  e^{D}\mathrm{\mathbf{Mult}}_{i\rightarrow j}\mathrm{Res}_{z_i\rightarrow z_j}=\mathrm{\mathbf{Mult}}_{i\rightarrow j}\mathrm{Res}_{z_i\rightarrow z_j}e^{D}.
  $$
  This follows from the definition of $\mathrm{Res}_{z_i\rightarrow z_j}$ and type reasons $$D_i(o_i)|_{z_i=z_j}\cdot D_j(o_j)=\cdots d\bar{z}_j\cdot d\bar{z}_j=0.$$
  Since $\mathbf{p}_{\mathrm{BV}}$ is the quotient map $O_{\mathbf{k}}/(L_{-1}O_{\mathbf{k}})\rightarrow \BVk$, we have
  $$
  \mathbf{p}_{\mathrm{BV}}\circ \mathbf{Mult}(-)=\mathbf{p}_{\mathrm{BV}}\circ \mathbf{Mult} ( e^{(z_i-z_j)(L_{-1})_i}\cdot -)
  $$

Now we move the operators $\hbar\mathfrak{P}+\hbar\mathfrak{Q}$ inside. We have
$$
\mathcal{P}_{i\bullet}(i\rightarrow j)|_{z_i=z_j}=\sum_{k,l}\sum_{p,q}P_{j\bullet }(k,l)\omega_{pq}(\frac{\partial}{\partial (\partial^k a^p)})_i\otimes (\frac{\partial}{\partial (\partial^l a^q)})_{\bullet},
$$
 $$
  \mathcal{Q}_{ij}(i\rightarrow j)|_{z_i=z_j}=\sum_{k,l}\sum_{p,q}Q(k,l)\omega_{pq}(\frac{\partial}{\partial (\partial^k a^p)})_i\otimes (\frac{\partial}{\partial (\partial^l a^q)})_j.
  $$

  Then we have following identities (\ref{PMultCommute})-(\ref{QLCommute}). The first two identities show how to move the operator $e^{\hbar\mathfrak{P}+\hbar\mathfrak{Q}}$ to the right of the multiplication symbol $\mathbf{\mathrm{Mult}}_{i\rightarrow j}$
  \begin{equation}\label{PMultCommute}
  \begin{split}
     e^{\hbar\mathcal{P}_{j\bullet}}\mathrm{\mathbf{Mult}}_{i\rightarrow j}& = \mathrm{\mathbf{Mult}}_{i\rightarrow j}e^{\hbar\mathcal{P}_{i\bullet}(i\rightarrow j)|_{z_i=z_j}+\hbar\mathcal{P}_{j\bullet}},
  \end{split}
  \end{equation}
   here $\bullet\neq i,j$.
  \begin{equation}\label{QMultCommute}
  e^{\hbar\mathcal{Q}_{j}}\mathrm{\mathbf{Mult}}_{i\rightarrow j}=\mathrm{\mathbf{Mult}}_{i\rightarrow j}e^{\hbar\mathcal{Q}_{j}+\hbar\mathcal{Q}_{i}+\hbar\mathcal{Q}_{ij}(i\rightarrow j)|_{z_i=z_j}}.
  \end{equation}
  The remaining four identities allow us to exchange the positions of $e^{\hbar \mathfrak{P}+\hbar\mathfrak{Q}}$ and $e^{(z_i-z_j)(L_{-1})_i}$
 \begin{equation}\label{QiCommute}
  e^{\hbar\mathcal{Q}_{i}}e^{(z_i-z_j)(L_{-1})_i} =e^{(z_i-z_j)(L_{-1})_i}  e^{\hbar\mathcal{Q}_{i}},
  \end{equation}
\begin{equation}\label{QjCommute}
  e^{\hbar\mathcal{Q}_{j}}e^{(z_i-z_j)(L_{-1})_i} =e^{(z_i-z_j)(L_{-1})_i}  e^{\hbar\mathcal{Q}_{j}},
\end{equation}

  \begin{equation}\label{PLCommute}
  e^{\hbar\mathcal{P}_{i\bullet}(i\rightarrow j)|_{z_i=z_j}}e^{(z_i-z_j)(L_{-1})_i} =e^{(z_i-z_j)(L_{-1})_i}e^{\hbar \mathcal{P}_{i\bullet}(i\rightarrow j)},
  \end{equation}
   here $\bullet\neq i,j$.
\begin{equation}\label{QLCommute}
    e^{\hbar\mathcal{Q}_{ij}(i\rightarrow j)|_{z_i=z_j}}e^{(z_i-z_j)(L_{-1})_i} =e^{(z_i-z_j)(L_{-1})_i}e^{\hbar\mathcal{Q}_{ij}(i\rightarrow j)}.
\end{equation}

The above identities will be proved in Appendix \ref{Identities}.  Combining these, we obtain
  $$
  \mathbf{p}_{\mathrm{BV}}\circ \mathbf{Mult}\left(e^{\hbar\mathfrak{P}+\hbar\mathfrak{Q}+D}\mathrm{\mathbf{Mult}}_{i\rightarrow j}\mathrm{Res}_{z_i\rightarrow z_j}f\cdot e^{(z_i-z_j)\otimes \vec{\partial}_{z_i}}
  \cdot e^{(z_i-z_j)(L_{-1})_i}\cdot e^{\hbar \mathcal{P}^{\mathrm{Sing}}_{ij}(i\rightarrow j)}v\right)
  $$
  \begin{align*}
     & =\mathbf{p}_{\mathrm{BV}}\circ \mathbf{Mult}(\mathrm{\mathbf{Mult}}_{i\rightarrow j}\mathrm{Res}_{z_i\rightarrow  z_j}e^{\hbar\sum_{\bullet\neq i,j}\mathcal{P}_{i\bullet}(i\rightarrow j)|_{z_i=z_j}+\hbar\sum_{\bullet\neq i,j}\mathcal{P}_{j\bullet}}\cdot e^{\hbar\mathcal{Q}_{j}+\hbar\mathcal{Q}_{i}+\hbar\mathcal{Q}_{ij}(i\rightarrow j)|_{z_i\rightarrow  z_j}}\cdot e^{\hbar\sum_{\tiny{\substack{\blacktriangle\neq i,j\\\blacktriangledown\neq i,j}}}\mathcal{P}_{\blacktriangle\blacktriangledown}+\hbar\sum_{\bullet\neq i,j}\mathcal{Q}_{\bullet}} \\
     &\cdot f\cdot e^{(z_i-z_j)\otimes \vec{\partial}_{z_i}}
  \cdot e^{(z_i-z_j)(L_{-1})_i}\cdot e^{\hbar \mathcal{P}^{\mathrm{Sing}}_{ij}(i\rightarrow j)}e^{D}v)
  \end{align*}
  $$
  =\mathbf{p}_{\mathrm{BV}}\circ \mathbf{Mult}\left(\mathrm{\mathbf{Mult}}_{i\rightarrow j}\mathrm{Res}_{z_i \rightarrow z_j}f\cdot e^{(z_i-z_j)\otimes \vec{\partial}_{z_i}}
  \cdot e^{(z_i-z_j)(L_{-1})_i}\cdot e^{\hbar\mathfrak{P}(i\rightarrow j)+\hbar\mathfrak{Q}+D}v\right)
  $$
    $$
  =\mathrm{Res}_{z_i\rightarrow  z_j}f\cdot e^{(z_i-z_j)\otimes \vec{\partial}_{z_i}}
  \cdot \mathbf{p}_{\mathrm{BV}}\circ \mathbf{Mult}( e^{\hbar\mathfrak{P}(i\rightarrow j)+\hbar\mathfrak{Q}+D}v)
  $$
  which proves the proposition.
\end{proof}
\subsection{Main Theorem}\label{MainTheoremBV}
Now we are ready to state our main theorem. We first recall some notations and definitions.
\begin{itemize}
  \item Let $\mathbf{L}$ be a $\mathbb{Z}$-graded vector space equipped with a symplectic form of degree 0.

      \item Denote $V=V^{\beta\gamma-bc}$ to be the associated free $\beta\gamma-bc$ graded vertex operator algebra. The $\mathbb{Z}$-grading of $V$ comes from that of $\mathbf{L}$.

      \item Let $\mathcal{A}$ be the corresponding graded chiral algebra on an elliptic curve $X$.

  \item The chiral chain complex is denoted by $\tilde{C}^{\mathrm{ch}}(X,\mathcal{A})_{Q}$ equipped with the differential $$
      d_{\mathrm{tot}}=d_{\mathrm{DR}}+\bar{\partial}+2\pi i d_{\mathrm{ch}}+d_{\mathrm{Norm}}.
      $$
          \item Let $O_{\mathrm{BV},\mathbf{k}}(\mathbf{L})=O_{\mathbf{k}}(\mathbf{L})/(L_{-1}O_{\mathbf{k}}(\mathbf{L}))$ be the corresponding BV algebra. We introduce the homological grading on $O_{\mathrm{BV},\mathbf{k}}(\mathbf{L})$ by setting
$$
\mathrm{hodeg}(a)=-\mathrm{deg}(a),\quad a\in O_{\mathrm{BV},\mathbf{k}}(\mathbf{L}).
$$
Now the BV algebra $O_{\mathrm{BV},\mathbf{k}}(\mathbf{L})$ becomes a chain complex with differential $-\hbar\Delta_{\mathrm{BV}}$.

\end{itemize}

We have the following result.

\begin{thm}\label{mainThm}

(1)
The map
$$
\mathcal{W}:(\tilde{C}^{\mathrm{ch}}(X,\mathcal{A})_{Q},d_{\mathrm{tot}})\rightarrow(\tilde{C}^{\mathrm{ch}}(X,\omega_X)_{Q}\otimes_{\mathbb{C}}\BVk,d_{\mathrm{tot}}-\hbar\Delta_{\mathrm{BV}}).
$$
constructed in Section \ref{BVchiral}  is a chain map.

(2) The composition $\mathrm{\mathbf{Tr}}:=\mathrm{tr}\circ \mathcal{W}$ satisfies the quantum master equation. That is,
  $$
  \mathrm{\mathbf{Tr}}: (\tilde{C}^{\mathrm{ch}}(X,\mathcal{A})_{Q},d_{\mathrm{tot}})\rightarrow (\BVLk,-\hbar\Delta_{\mathrm{BV}})
  $$
  is a chain map.

 (3) Both $\mathcal{W}$ and $\mathrm{\mathbf{Tr}}$ are quasi-isomorphisms.
\end{thm}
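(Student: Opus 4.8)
For parts (1) and (2) the plan is to assemble the statement from the results already in place. The map $\mathcal{W}$ commutes with $d_{\mathrm{DR}}$ by Lemma \ref{WDmodule} (it respects the $\mathcal{D}$-module structure), commutes with $d_{\mathrm{Norm}}$ since it is $\mathbf{k}$-linear and defined componentwise, and intertwines $\bar\partial+2\pi i\,d_{\mathrm{ch}}$ on the source with $\bar\partial+2\pi i\,d_{\mathrm{ch}}-\hbar\Delta_{\mathrm{BV}}$ on the target by the commutation lemma for $\bar\partial$ together with Proposition \ref{WDChirall}; since the differential on $\tilde C^{\mathrm{ch}}(X,\omega_X)_{\mathcal{Q}}\otimes_{\mathbb{C}}O_{\mathrm{BV},\mathbf{k}}$ is precisely $d_{\mathrm{tot}}$ on the first factor plus, up to sign, $-\hbar\Delta_{\mathrm{BV}}$ on the second, this gives (1). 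For (2) one uses that the canonical trace $\mathrm{tr}\colon(\tilde C^{\mathrm{ch}}(X,\omega_X)_{\mathcal{Q}},d_{\mathrm{tot}})\to(\mathbb{C},0)$ is a chain map by Theorem \ref{RegularizedANDTrace} and \cite{book:635773}; tensoring it with $(O_{\mathrm{BV},\mathbf{k}},-\hbar\Delta_{\mathrm{BV}})$ and composing with $\mathcal{W}$ yields the chain map $\mathrm{\mathbf{Tr}}=\mathrm{tr}\circ\mathcal{W}$, i.e. the quantum master equation.

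For (3) the first step is a reduction. Since $\mathrm{\mathbf{Tr}}=(\mathrm{tr}\otimes\mathrm{id})\circ\mathcal{W}$ and $\mathrm{tr}$ is a quasi-isomorphism onto $H^{\mathrm{ch}}_0(X,\omega_X)\cong\mathbb{C}$ by \cite{book:635773}, the K\"unneth theorem over the field $\mathbf{k}$ (homology commutes with tensor products of complexes of vector spaces and with the colimit defining the chiral chain complex) shows $\mathrm{tr}\otimes\mathrm{id}_{O_{\mathrm{BV},\mathbf{k}}}$ is a quasi-isomorphism, so by two-out-of-three it suffices to prove that $\mathrm{\mathbf{Tr}}$ is a quasi-isomorphism. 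We then compare the homologies of its two sides. On the target, $(O_{\mathrm{BV},\mathbf{k}},\hbar\Delta_{\mathrm{BV}})$ with $\Delta_{\mathrm{BV}}=\frac{-i}{\mathrm{Im}(\tau)}\sum_{p,q}\omega_{pq}\partial_{a^p}\partial_{a^{q\dagger}}$ is a Koszul-type complex: passing to a Darboux basis of $\mathbf{L}$ splits it, via K\"unneth, into a completed tensor product of rank-one blocks $(\mathbb{C}[[x,\theta]],\partial_x\partial_\theta)$ with $\deg x=0$ and $\deg\theta$ the degree of the corresponding odd antifield, each of which has homology $\mathbf{k}$ spanned by $[\theta]$. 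Hence $H_\bullet(O_{\mathrm{BV},\mathbf{k}},\hbar\Delta_{\mathrm{BV}})\cong\mathbf{k}$, concentrated in a single homological degree $d=d(\mathbf{L})$.

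The heart of the matter, and the step I expect to be the main obstacle, is to prove that $H^{\mathrm{ch}}_\bullet(X,\mathcal{A}^{\beta\gamma-bc})$ is likewise $\mathbf{k}$ concentrated in degree $d$; this is the chiral counterpart of the fact that the Hochschild homology of $\mathcal{W}_{2n}$ is one-dimensional in degree $-2n$. The plan is to use a more computable realization of chiral homology from \cite{book:635773}. By Remark \ref{twistedEnvelope}, $\mathcal{A}^{\beta\gamma-bc}$ is the twisted chiral enveloping algebra $U(L)^{\flat}_{\hbar}$, so it carries a PBW filtration by the number of generators whose associated graded is the commutative chiral algebra attached to the commutative vertex algebra $\mathbb{C}[[\partial^k a^s,\partial^k a^{s\dagger}]]$. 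For such a commutative chiral algebra the chiral homology can be computed through the trace construction of Section \ref{Tracemap} (the analogue of the Hochschild--Kostant--Rosenberg description), and a direct computation of the resulting complex yields one-dimensional homology in the expected degree. Feeding this into the PBW spectral sequence --- which one expects to degenerate for degree reasons, exactly as in the Weyl-algebra case --- gives $H^{\mathrm{ch}}_\bullet(X,\mathcal{A}^{\beta\gamma-bc})\cong\mathbf{k}$ in degree $d$.

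With both homologies identified, it remains to check that $\mathrm{\mathbf{Tr}}$ is nonzero on $H_d$, which forces it to be a quasi-isomorphism. For this one would exhibit an explicit cycle $\gamma$ of homological degree $d$ in $\tilde C^{\mathrm{ch}}(X,\mathcal{A}^{\beta\gamma-bc})_{\mathcal{Q}}$ --- the chiral analogue of the generating Hochschild cycle of the Weyl algebra, built from the vacuum together with suitable insertions of the fields $a^s$ and a top-degree Dolbeault form on a product $X^n$ --- and evaluate $\mathrm{\mathbf{Tr}}(\gamma)$ directly from the Feynman rules \eqref{DefinitionOfW} and Theorem \ref{RegularizedANDTrace}: the propagator contractions together with the regularized integrals produce a nonzero scalar multiple of $\prod_s a^{s\dagger}$, a representative of the generator of $H_d(O_{\mathrm{BV},\mathbf{k}},\hbar\Delta_{\mathrm{BV}})$. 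Then $\mathrm{\mathbf{Tr}}$, and hence $\mathcal{W}$, is a quasi-isomorphism. Beyond the identification of $H^{\mathrm{ch}}_\bullet(X,\mathcal{A}^{\beta\gamma-bc})$ and the control of the PBW spectral sequence, the only remaining work is the routine bookkeeping of signs and degrees in the evaluation $\mathrm{\mathbf{Tr}}(\gamma)$.
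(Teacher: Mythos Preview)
Your treatment of (1) and (2) is essentially identical to the paper's: assemble $\mathcal{W}$ being a chain map from Lemma~\ref{WDmodule}, linearity for $d_{\mathrm{Norm}}$, and Proposition~\ref{WDChirall}, then compose with the canonical $\mathrm{tr}$.

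For (3), your overall architecture---reduce to $\mathrm{\mathbf{Tr}}$, compute both homologies as one-dimensional, then show nontriviality on an explicit cycle---matches the paper, but your proposed computation of $H^{\mathrm{ch}}_\bullet(X,\mathcal{A}^{\beta\gamma-bc})$ is both different and problematic. First, the associated graded you describe is wrong: the PBW-associated graded of $\mathcal{A}^{\beta\gamma-bc}=U(L)^{\flat}_\hbar$ is the commutative chiral algebra on $\mathbb{C}[[\partial^k a^s]]$, with \emph{no} $a^{s\dagger}$'s; the daggered generators live only in $O(\mathbf{L})$ on the target side, not in the vertex algebra. So the chiral homology of the associated graded is not one-dimensional---it is (up to completion) $\mathrm{Sym}\big(R\Gamma_{\mathrm{DR}}(X,L)[1]\big)$, which as a graded space is exactly $O_{\mathrm{BV},\mathbf{k}}$ but with \emph{zero} differential. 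The reduction to one dimension must therefore happen via higher differentials in the spectral sequence, so ``degenerate for degree reasons'' cannot be right, and you would need to identify those differentials with $\hbar\Delta_{\mathrm{BV}}$ and control convergence. That is a real argument you have not supplied.

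The paper bypasses the PBW route entirely. It passes to the variant complexes $\tilde C^{\mathrm{ch}}(X,\mathcal{A}_{\mathcal{P}})_{\mathcal{Q}}$ and $C^{\mathrm{ch}}(X,\mathcal{A})_{\mathcal{PQ}}$ and invokes \cite[{(4.8.3.1)}]{book:635773}, which for a twisted chiral envelope gives a direct quasi-isomorphism
\[
C\big(\Gamma(X,h(L_{\mathcal{PQ}}))\big)^{\flat}\otimes_{\mathbb{C}}\mathbf{k}\ \xrightarrow{\ \sim\ }\ C^{\mathrm{ch}}(X,\mathcal{A})_{\mathcal{PQ}},
\]
a twisted Chevalley complex on the global sections of $L$. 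Since $R\Gamma_{\mathrm{DR}}(X,L)\simeq\mathbf{L}\oplus\mathbf{L}[-1]$, an explicit chain map $s^{\flat}\colon O_{\mathrm{BV},\mathbf{k}}\to C(\Gamma(X,h(L_{\mathcal{PQ}})))^{\flat}\otimes\mathbf{k}$ sending $a_i\mapsto a_i\,dz\,d\bar z[1]$ and $a_i^{\dagger}\mapsto a_i\,dz[1]$ (up to constants) is checked to be a quasi-isomorphism. This identifies the source homology with $H_\bullet(O_{\mathrm{BV},\mathbf{k}},\hbar\Delta_{\mathrm{BV}})$ without any spectral sequence. For nontriviality the paper builds a concrete cycle $\Psi\cdot\Phi$ in $\tilde C^{\mathrm{ch}}(X,\mathcal{A}_{\mathcal{P}})_{\mathcal{Q}}$: the odd part $\Psi$ is a product of $\psi\,dz\,d\bar z[1]$ insertions, while each even pair $\phi\,dz[1]\cdot\phi^\vee\,dw[1]$ is corrected by an auxiliary chain $\alpha$ from the \emph{unit} chiral complex to make it closed; the trace then lands on the top fermion. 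Your sketch of ``vacuum plus field insertions with a top Dolbeault form'' is morally this, but note the correction terms from $\tilde C^{\mathrm{ch}}(X,\omega_{\mathcal{P}})_{\mathcal{Q}}$ are needed to get an honest cycle in the even sector.
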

\begin{proof}[Proof of (1) and (2)]
From the definition of the map $\mathcal{W}=e^{\hbar \mathfrak{P}+\hbar\mathfrak{Q}+D}$ (see (\ref{DefinitionOfW})), we see that $\mathcal{W}$ preserves the homological grading and this implies that $\Trace$ also preserves the homological grading.

From  Lemma \ref{WDmodule} and Proposition \ref{WDChirall}, we know that $\mathcal{W}$ is a chain map. Together with the canonical trace map $\mathrm{tr}$ on $\tilde{C}^{\mathrm{ch}}(X,\omega_X)_{Q}$, we get the second statement.
\end{proof}

In the rest of this section, we will prove that the trace map $\mathrm{\mathbf{Tr}}$ is a quasi-isomorphism. We first compute the homology of the BV complex $(\BVLk,-\hbar\Delta_{\mathrm{BV}})$.

Since the pairing $\langle-,-\rangle$ is nondegenerate, we can find a basis $\{\phi_{2k,i},\phi_{2k,i}^{\vee},\psi_{2k+1,j},\psi_{2k+1,j}^{\vee}\}$ of $\mathrm{\mathbf{L}}$ such that
$$
\mathrm{span}\{\phi_{0,i},\phi_{0,i}^{\vee}\}_{i=1,\dots,\frac{\dim \mathrm{\mathbf{L}}_0}{2}}=\mathrm{\mathbf{L}}_0,\quad \langle \phi_{0,i},\phi_{0,j}^{\vee}\rangle=\delta_{ij},
$$
$$
\mathrm{span}\{\phi_{2k,i}\}_{i=1,\dots,\dim\mathrm{\mathbf{L}}_{2k}}\oplus \mathrm{span}\{\phi_{2k,i}^{\vee}\}_{i=1,\dots,\dim\mathrm{\mathbf{L}}_{-2k}=\dim\mathrm{\mathbf{L}}_{2k}}=\mathrm{\mathbf{L}}_{2k}\oplus\mathrm{\mathbf{L}}_{-2k},k>0, \quad
$$
$$
\langle \phi_{2k,i},\phi_{2k,j}^{\vee}\rangle=\delta_{ij},
$$
and
$$
\mathrm{span}\{\psi_{2k-1,i}\}_{i=1,\dots,\dim\mathrm{\mathbf{L}}_{2k-1}}\oplus \mathrm{span}\{\psi_{2k-1,i}^{\vee}\}_{i=1,\dots,\dim\mathrm{\mathbf{L}}_{-2k+1}=\dim\mathrm{\mathbf{L}}_{2k-1}}=\mathrm{\mathbf{L}}_{2k-1}\oplus\mathrm{\mathbf{L}}_{-2k+1},k>0,
$$
$$
\langle \psi_{2k-1,i},\psi_{2k-1,j}^{\vee}\rangle=\delta_{ij}.
$$
Our BV algebra is
$$
\mathbb{C}[[\phi_{2k,i},\phi_{2k,j}^{\vee},\psi_{2k-1,i},\psi_{2k-1,j}^{\vee},\phi_{2k,i}^{\dagger},\phi_{2k,j}^{\vee\dagger},\psi_{2k-1,i}^{\dagger},\psi_{2k-1,j}^{\vee\dagger}]].
$$
We will also use the notation
$$
x_{2k,i}:=\phi_{2k,i},x_{2k,i}^{\vee}:=\phi_{2k,j}^{\vee},x_{2k-1,i}:=\psi_{2k-1,i}^{\dagger},x_{2k-1,j}^{\vee}:=\psi_{2k-1,j}^{\vee\dagger},
$$
here the grading is
$$
\deg x_{2k,i}=2k,\deg x_{2k,i}^{\vee}=-2k,\deg x_{2k-1,i}=2k-2,\deg x_{2k-1,j}^{\vee}=-2k,
$$
and similarly
$$
\theta_{2k,i}:=\phi_{2k,i}^{\dagger},\theta_{2k,j}^{\vee}:=\phi_{2k,j}^{\vee\dagger},{\theta_{2k-1,i}}:=\psi_{2k-1,i},{\theta_{2k-1,j}^{\vee}}:=\psi_{2k-1,j}^{\vee},
$$
$$
\deg\theta_{2k,i}=2k-1,\deg\theta_{2k,j}^{\vee}=-2k-1,\deg{\theta_{2k-1,i}}=2k-1,\deg{\theta_{2k-1,j}^{\vee}}=-2k+1.
$$

Now the BV operator is
$$
\Delta_{\mathrm{BV}}=\frac{-i}{\mathrm{Im}(\tau)}(\sum \frac{\partial }{\partial x_{k,i}}\frac{\partial}{\partial\theta^{\vee}_{k,i}}+\frac{\partial }{\partial x^{\vee}_{k,i}}\frac{\partial}{\partial\theta_{k,i}}).
$$
To simplify the notation, we sometimes omit the subscript. The homological grading on $O_{\mathrm{BV}}$ is
$$
\mathrm{hodeg}(a)=-\mathrm{deg}(a),\quad a\in O_{\mathrm{BV}}.
$$
In particular
$$
\mathrm{hodeg}(\phi)=\mathrm{hodeg}(\psi)=0,\mathrm{hodeg}(\phi^{\dagger})=\mathrm{hodeg}(\psi^{\dagger})=1.
$$

Now it is easy to see that our BV algebra $O_{\mathrm{BV}}(\mathbf{L})=\mathbb{C}[[x,\theta^{\vee},x^{\vee},\theta]]$ is isomorphic to the BV algebra of formal polyvector fields $\mathbb{C}[[x,\partial_x,x^{\vee},\partial_{x^{\vee}}]]$ if we set
$$
\theta^{\vee}=\partial_x,\theta=\partial_{x^{\vee}}.
$$

Now consider the formal de Rham algebra
$$
\hat{\Omega}:=\mathbb{C}[[x,dx,x^{\vee},dx^{\vee}]]
$$
and $\hat{d}$ is the de Rham differential. We choose a constant top form
$$
\Omega_{\mathrm{top}}:=\prod dx\cdot \prod dx^{\vee},
$$
and its degree is
$$
\mathrm{hodeg} (\Omega_{\mathrm{top}})=-\dim \mathrm{\mathbf{L}}_{\bar{0}}.
$$
Denote by $\dashv \Omega_{\mathrm{top}}$ the contraction with $\Omega_{\mathrm{top}}$, we have an isomorphism of chain complexes
$$
\dashv \Omega_{\mathrm{top}}:(O_{\mathrm{BV}}(\mathbf{L}),\Delta_{\mathrm{BV}})\rightarrow (\hat{\Omega}[-\dim\mathrm{\mathbf{L}}_{\bar{0}}],\frac{-i}{\mathrm{Im}(\tau)}\hat{d}).
$$
Recall that the top fermion integration  $\int_{\mathrm{BV},\mathrm{top}}: (O_{\mathrm{BV}}(\mathbf{L}),\Delta_{\mathrm{BV}})\rightarrow (\mathbb{C}[-\dim \mathrm{\mathbf{L}}_{\bar{0}}],0)$ can be written as the composition of $\dashv \Omega_{\mathrm{top}}$ and the quasi-isomorphism
$$
(\hat{\Omega}[-\dim\mathrm{\mathbf{L}}_{\bar{0}}],\frac{-i}{\mathrm{Im}(\tau)}\hat{d})\rightarrow (\mathbb{C}[-\dim \mathrm{\mathbf{L}}_{\bar{0}}],0).
$$

 We can extend it linearly to a quasi-isomorphism $$\int_{\mathrm{BV},\mathrm{top}}: (O_{\mathrm{BV}}(\mathbf{L})\otimes_{\mathbb{C}}\mathbf{k},
\hbar\Delta_{\mathrm{BV}})\rightarrow (\mathbf{k}[-\dim \mathrm{\mathbf{L}}_{\bar{0}}],0).$$

We are ready to prove that the trace map is a quasi-isomorphism. We first state a direct corollary of our main theorem.
\begin{cor}\label{mainCor}
 We can define a chain map

  $$
\TraceBV:=\int_{\mathrm{BV},\mathrm{top}}\circ\mathrm{\mathbf{Tr}}:(\tilde{C}^{\mathrm{ch}}(X,\mathcal{A})_{Q},d_{\mathrm{tot}})\rightarrow   (\mathbf{k}[-\dim \mathrm{\mathbf{L}}_{\bar{0}}],0).
  $$
Furthermore, $\TraceBV$  is a quasi-isomorphism.
\end{cor}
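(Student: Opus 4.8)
The plan is to show that $\TraceBV$ is a quasi-isomorphism by factoring it as the composition of two quasi-isomorphisms $\int_{\mathrm{BV},\mathrm{top}}$ and $\mathrm{\mathbf{Tr}}$, where the former has just been shown above to be a quasi-isomorphism. Thus everything reduces to proving part (3) of Theorem \ref{mainThm}, namely that $\mathcal{W}$ and $\mathrm{\mathbf{Tr}}$ are quasi-isomorphisms; since $\mathrm{tr}$ is a quasi-isomorphism by the result of Beilinson and Drinfeld cited before Lemma \ref{VanishOnDR}, and $\mathrm{\mathbf{Tr}}=\mathrm{tr}\circ\mathcal{W}\otimes\mathrm{id}$ up to the identification $\tilde{C}^{\mathrm{ch}}(X,\omega_X)_{\mathcal{Q}}\otimes_{\mathbb{C}}\BVk\simeq\BVk$, it suffices to prove that $\mathcal{W}$ is a quasi-isomorphism.

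First I would set up a spectral sequence / filtration argument on both sides. The differential $d_{\mathrm{tot}}=d_{\mathrm{DR}}+\bar\partial+2\pi i\,d_{\mathrm{ch}}+d_{\mathrm{Norm}}$ carries a natural filtration (e.g. by the cardinality $|T|$ of the equivalence classes, or by the "number of propagators/contractions"), and $\mathcal{W}=e^{\hbar\mathfrak{P}+\hbar\mathfrak{Q}+D}$ is filtered with associated graded the identity on the $\hbar^0$-piece. On the $E_1$-page this reduces the claim to comparing the homology of the chiral chain complex of $\mathcal{A}^{\beta\gamma-bc}$ with that of $\omega_X\otimes_{\mathbb C}\BVk$. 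Concretely, I would exploit the PBW-type decomposition $V^{\beta\gamma-bc}\simeq\mathbb{C}[[\partial^k a^s]]((\hbar))$ and the structure of $\mathcal{A}^{\beta\gamma-bc}$ as the twisted chiral enveloping algebra $U(L)^\flat_\hbar$ described in Remark \ref{twistedEnvelope}: its chiral homology should be computable by a chiral Koszul/Chevalley argument (as in \cite{book:635773}, Section 4.8 on chiral Weyl algebras), giving $H^{\mathrm{ch}}_\bullet(X,\mathcal{A}^{\beta\gamma-bc})\simeq H^{\mathrm{ch}}_\bullet(X,\omega_X)\otimes_{\mathbb C}\BVk$ with a degree shift, matching the computation of $H_\bullet(\BVLk,-\hbar\Delta_{\mathrm{BV}})$ via the contraction $\dashv\Omega_{\mathrm{top}}$ done just above. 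Then one checks that $\mathcal{W}$, or rather $\mathrm{\mathbf{Tr}}$, induces this isomorphism on homology — the "zero mode" directions $a^{s\dagger}$ produced by the operator $D$ exactly account for the BV-algebra factor, while the propagator $P(z,w)=\frac{i}{\pi(z-w)}+P^{\mathrm{reg}}$ with $\bar\partial_z P=-\frac{i\,d\bar z}{\mathrm{Im}(\tau)}$ (see \eqref{Zeromodes}) produces the constant-coefficient de Rham differential $\frac{-i}{\mathrm{Im}(\tau)}\hat d$ on the BV side.

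An alternative, cleaner route — which I would actually prefer — is to avoid computing $H^{\mathrm{ch}}_\bullet(X,\mathcal{A}^{\beta\gamma-bc})$ from scratch and instead use the other models of chiral homology from \cite{book:635773}. Namely, one can deform the problem: filter so that on $E_0$ the free $\beta\gamma-bc$ system degenerates to a commutative chiral algebra (the associated graded of the chiral Weyl algebra is $\mathrm{Sym}$ of a commutative object), reducing the comparison to the unit chiral algebra case plus a Künneth-type statement, for which the trace map $\mathrm{tr}$ on $\tilde C^{\mathrm{ch}}(X,\omega_X)_{\mathcal{Q}}$ is already known to be a quasi-isomorphism. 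One must verify the spectral sequences converge (finiteness: only finitely many conformal weights appear in $\mathbf{L}$, and each weight space is finite-dimensional, so the relevant pieces are bounded) and degenerate appropriately.

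The main obstacle I expect is the bookkeeping of the spectral sequence and the identification of the $E_1$-differential with the BV differential $-\hbar\Delta_{\mathrm{BV}}$: one needs the precise interplay between $D$ (which creates $a^{s\dagger}$'s), the $\bar\partial$-exactness of the propagator's singular part, and the passage to the quotient $\BVk=O_{\mathbf k}/(L_{-1}O_{\mathbf k})$, so that the residual homology is exactly $\mathbf{k}[-\dim\mathbf{L}_{\bar 0}]$ and not something larger. In particular one must rule out extra classes coming from the infinitely many derivative directions $\partial^k a^s$, $\partial^k a^{s\dagger}$ — this is precisely where quotienting by $(L_{-1}O_{\mathbf k})$ and the structure of $\tilde C^{\mathrm{ch}}(X,\omega_X)_{\mathcal{Q}}$ (concentrated in homological degree $0$) must be used, via Theorem \ref{RegularizedANDTrace} and the fact that regularized integrals kill holomorphic total derivatives. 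Once this identification is in place, degeneration at $E_1$ (or $E_2$) and the comparison with the known homology of $(\BVLk,-\hbar\Delta_{\mathrm{BV}})$ finish the proof, and composing with the quasi-isomorphism $\int_{\mathrm{BV},\mathrm{top}}$ yields that $\TraceBV$ is a quasi-isomorphism.
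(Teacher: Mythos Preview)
Your overall reduction is right: since $\int_{\mathrm{BV},\mathrm{top}}$ is a quasi-isomorphism, everything rests on part (3) of Theorem~\ref{mainThm}, and you correctly identify the input from \cite[Section~4.8]{book:635773} on twisted chiral enveloping algebras $U(L)^\flat_\hbar$ as the way to compute $H^{\mathrm{ch}}_\bullet(X,\mathcal{A}^{\beta\gamma-bc})$ abstractly. The paper follows exactly this route: it passes to the models $\tilde{C}^{\mathrm{ch}}(X,\mathcal{A}_{\mathcal{P}})_{\mathcal{Q}}\xrightarrow{\sim}C^{\mathrm{ch}}(X,\mathcal{A})_{\mathcal{PQ}}$, invokes the quasi-isomorphism $C(\Gamma(X,h(L_{\mathcal{PQ}})))^\flat\otimes\mathbf{k}\xrightarrow{\sim}C^{\mathrm{ch}}(X,\mathcal{A})_{\mathcal{PQ}}$ from \cite[(4.8.3.1)]{book:635773}, and writes down an explicit quasi-isomorphism $s^\flat:O_{\mathrm{BV},\mathbf{k}}\to C(\Gamma(X,h(L_{\mathcal{PQ}})))^\flat\otimes\mathbf{k}$ sending $a_i\mapsto -\tfrac{\pi}{\mathrm{Im}(\tau)}a_i\,dz\,d\bar z[1]$, $a_i^\dagger\mapsto -\tfrac{\pi}{\mathrm{Im}(\tau)}a_i\,dz[1]$. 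This establishes that source and target of $\mathbf{Tr}$ have the same homology, namely $\mathbf{k}[-\dim\mathbf{L}_{\bar 0}]$.

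Where your proposal diverges is in the step ``check that $\mathbf{Tr}$ induces this isomorphism on homology.'' You propose a filtration/spectral-sequence argument and correctly flag convergence and the identification of the $E_1$-differential as the main obstacles. The paper sidesteps all of this: since both homologies are one-dimensional, it suffices to exhibit a single cycle on which $\mathbf{Tr}$ is nonzero. Concretely, the paper builds an explicit closed element $\Psi\cdot\Phi\in\tilde{C}^{\mathrm{ch}}(X,\mathcal{A}_{\mathcal{P}})_{\mathcal{Q}}$ (a product of $\psi\,dz[1]\,d\bar z$ factors for the odd directions and of corrected pairs $\phi\,dz[1]\cdot\phi^\vee\,dw[1]+\tfrac{i\hbar}{\pi}\alpha$ for the even directions, where $\alpha$ is a primitive in the unit chiral complex), and computes $\mathbf{Tr}_{\mathcal{P}}(\Psi\cdot\Phi)$ by a one-vertex-tree Feynman argument to be a nonzero scalar multiple of the top fermion $\prod\psi\psi^\vee\prod\phi^\dagger\phi^{\vee\dagger}$. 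This nontriviality-plus-one-dimensionality trick replaces your spectral sequence entirely and avoids the bookkeeping you were worried about; your approach is not wrong in spirit, but it is doing more work than necessary and leaves genuine gaps (convergence, degeneration) that the paper's explicit-cycle argument does not face.
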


\begin{proof}[Proof of (3)]
  We only need to prove that $\Trace$ is a quasi-isomorphism. We first recall the definition of  another two chiral chain complexes $\tilde{C}^{\mathrm{ch}}(X,\mathcal{A}_{\mathcal{P}})_{\mathcal{Q}}$ and $C^{\mathrm{ch}}(X,\mathcal{A})_{\mathcal{PQ}}$ in \cite[pp.306]{book:635773}.

  For a right $\mathcal{D}$-module $M$ set
  $$
h(M):=M\otimes_{\mathcal{D}_X}\mathcal{O}_X.
$$
Consider the (non-unital) commutative $\mathcal{D}_X$-algebra resolution of $\mathcal{O}_X$
$$
0\rightarrow \mathcal{P}^{-1}\rightarrow\mathcal{P}^0\xrightarrow{\varepsilon_{\mathcal{P}}}\mathcal{O}_X,
$$
where
$$
\mathcal{P}^0=\mathrm{Sym}^{>0}_{\mathcal{O}_X}\mathcal{D}_X,\quad
\mathcal{P}^{-1}:=\mathrm{Ker}(\varepsilon_{\mathcal{P}}),
$$
and for any $k>0$
$$
\varepsilon_{\mathcal{P}}(\prod_{i=1}^kP_i(\partial_{z,i}))=\prod_{i=1}^k(P_i(\partial_{z,i})\cdot 1)\in \mathcal{O}_X,\quad \prod_{i=1}^kP_i(\partial_{z,i})\in \mathrm{Sym}^{k}_{\mathcal{O}_X}\mathcal{D}_X.
$$

We now have the chiral chain complex $\tilde{C}^{\mathrm{ch}}(X,\mathcal{A}_{\mathcal{P}})_{\mathcal{Q}}$ associated to the chiral algebra $\mathcal{A}_{\mathcal{P}}=\mathcal{A}\otimes \mathcal{P}$ and define
$$
{C}^{\mathrm{ch}}(X,\mathcal{A})_{\mathcal{PQ}}:=\Gamma(X^{\mathcal{S}},h(C(\mathcal{A}_{\mathcal{P}})_{\mathcal{Q}}))=\lim_{\rightarrow}(I\in X^\mathcal{S},\Gamma(X^I,h(C(\mathcal{A}_{\mathcal{P}})_{\mathcal{Q},X^I}))).
$$

By \cite[pp.307,(4.2.12.6)]{book:635773}, we have quasi-isomorphisms
$$
\tilde{C}^{\mathrm{ch}}(X,\mathcal{A})_{\mathcal{Q}}\xleftarrow{\sim}\tilde{C}^{\mathrm{ch}}(X,\mathcal{A}_{\mathcal{P}})_{\mathcal{Q}}\xrightarrow{\sim} C^{\mathrm{ch}}(X,\mathcal{A})_{\mathcal{PQ}}.
$$
The first quasi-isomorphism is induced by $\varepsilon_{\mathcal{P}}$ and we get the induced trace map on $\tilde{C}^{\mathrm{ch}}(X,\mathcal{A}_{\mathcal{P}})_{\mathcal{Q}}$
$$
\Trace_{\mathcal{P}}:\tilde{C}^{\mathrm{ch}}(X,\mathcal{A}_{\mathcal{P}})_{\mathcal{Q}}\rightarrow O_{\mathrm{BV},\mathbf{k}}(\mathbf{L}).
$$
The second quasi-isomorphism is given by the projection $\mathrm{DR}(-)\rightarrow h(-)$ and we denote it by $p$.

By Remark \ref{twistedEnvelope}, we have
$$
\mathcal{A}=U(L)^{\flat}_{\hbar}.
$$
Recall there is a canonical trace map
$$
\mathrm{tr}:\Gamma(X,h((\omega_X\otimes\mathcal{P})_{\mathcal{Q}}))\rightarrow \mathbb{C}[-1].
$$
Define $L_{\mathcal{PQ}}:=L\otimes\mathcal{P}\otimes\mathcal{Q},L^{\flat}_{\mathcal{PQ}}:=L^{\flat}\otimes\mathcal{P}\otimes\mathcal{Q}$. We can construct a central extension $\Gamma(X,h(L_{\mathcal{PQ}}))^{\flat}$ of $\Gamma(X,h(L_{\mathcal{PQ}}))$ by

$$
\begin{tikzcd}
0 \arrow[r] & {\Gamma(X,h(\omega_{\mathcal{PQ}}))} \arrow[d, "\mathrm{tr}"'] \arrow[r] & {\Gamma(X,h(L^{\flat}_{\mathcal{PQ}}))} \arrow[d] \arrow[r] & {\Gamma(X,h(L_{\mathcal{PQ}}))} \arrow[d, equal] \arrow[r] & 0 \\
0 \arrow[r] & {\mathbb{C}[-1]} \arrow[r]                                                        & {\Gamma(X,h(L_{\mathcal{PQ}}))^{\flat}} \arrow[r]           & {\Gamma(X,h(L_{\mathcal{PQ}}))} \arrow[r]                                & 0
\end{tikzcd}
$$
Suppose we have a central extension of a super Lie algebra $W$

$$
0\rightarrow \mathbb{C}[-1]\rightarrow W^{\flat}\rightarrow
 W\rightarrow 0.
$$
The $\flat$-twisted Chevalley complex of $W$ is defined by
$$
C(W)^{\flat}:=\frac{\mathrm{Sym}_{\mathbb{C}}W^{\flat}\shift}{\langle 1-(1^{\flat})^k|k\geq 1\rangle},
$$
where $1\in \mathrm{Sym}^0_{\mathbb{C}}W^{\flat}\shift=\mathbb{C}$ and $(1^{\flat})^k\in \mathrm{Sym}^k_{\mathbb{C}}W^{\flat}\shift$ is the $k$-th ($k>0$) symmetric product of $1^{\flat}\in \mathbb{C}\hookrightarrow W^{\flat}\shift$. The differential is denoted by $d_{\mathrm{Chevalley}}$.

By \cite[pp.355,(4.8.3.1)]{book:635773}, there is a quasi-isomorphism
$$
C:(C(\Gamma(X,h(L_{\mathcal{PQ}})))^{\flat}\otimes_{\mathbb{C}}\mathbf{k},\frac{i\hbar}{\pi}d_{\mathrm{Chevalley}})\xrightarrow{\sim} C^{\mathrm{ch}}(X,U(L)^{\flat}_{\hbar})_{\mathcal{PQ}}=C^{\mathrm{ch}}(X,\mathcal{A})_{\mathcal{PQ}}.
$$

We now construct an explicit chain map from $O_{\mathrm{BV},\mathbf{k}}$ to $C(\Gamma(X,h(L_{\mathcal{PQ}})))^{\flat}\otimes_{\mathbb{C}}\mathbf{k}$ by defining
$$
s^{\flat}:O_{\mathrm{BV},\mathbf{k}}(\mathbf{L})\rightarrow C(\Gamma(X,h(L_{\mathcal{PQ}})))^{\flat}\otimes_{\mathbb{C}}\mathbf{k}.
$$
$$
\deg=0:1\mapsto 1,
$$
$$
\deg=|a_i|: a_i\mapsto -\frac{\pi}{\mathrm{Im}(\tau)}\cdot a_idzd\bar{z}\shift
$$
$$
\deg=|a_i|-1:a_i^{\dagger}\mapsto -\frac{\pi}{\mathrm{Im}(\tau)}\cdot a_idz\shift.
$$
This is chain map, since
$$
s^{\flat}(\hbar\Delta_{\mathrm{BV}}(a_i\cdot a_{j}^{\dagger}))=\frac{i\hbar\cdot \omega^{ij}}{\pi}\cdot \frac{-\pi}{\mathrm{Im}(\tau)}=\int_X(-\frac{\pi}{\mathrm{Im}(\tau)})^2\mu[1](a_idz\shift\cdot  a_jdw\shift d\bar{w})
$$
$$
=\mathrm{tr}\left((-\frac{\pi}{\mathrm{Im}(\tau)})^2\mu[1](a_idz\shift\cdot  a_jdw\shift d\bar{w})\right)=\frac{i\hbar}{\pi}d_{\mathrm{Chevalley}}\left((-\frac{\pi}{\mathrm{Im}(\tau)})^2(a_idz\shift\cdot  a_jdw\shift d\bar{w})\right).
$$
This chain map is a quasi-isomorphism since $C(\Gamma(X,h(L_{\mathcal{PQ}})))^{\flat}\otimes_{\mathbb{C}}\mathbf{k}$ is quasi-isomorphic to
$$
C(R\Gamma_{\mathrm{DR}}(X,L))^{\flat}\otimes_{\mathbb{C}}\mathbf{k},
$$
where
$$
R\Gamma_{\mathrm{DR}}(X,L)\simeq R\Gamma(X,\mathcal{O}_X)\otimes_{\mathbb{C}}\mathbf{L}\cong \mathrm{span}\{a_i,a_i^{\dagger}\}[-1].
$$
 In summary, we have the following diagram.
$$
\begin{tikzcd}
{\tilde{C}^{\mathrm{ch}}(X,\mathcal{A})_{\mathcal{Q}}} \arrow[d,"\Trace"']                                                                                       & {\tilde{C}^{\mathrm{ch}}(X,\mathcal{A}_{\mathcal{P}})_{\mathcal{Q}}} \arrow[l, "\sim"'] \arrow[r, "\sim","p"'] \arrow[ld, phantom] \arrow[ld,"\Trace_{\mathcal{P}}"',shift left] & {C^{\mathrm{ch}}(X,\mathcal{A})_{\mathcal{PQ}}}                                                           \\
O_{\mathrm{BV},\mathbf{k}} \arrow[rr, "\sim"',"s^{\flat}"] \arrow[rd, "\sim" description]  &                                                                                                                                            & {C(\Gamma(X,h(L_{\mathcal{PQ}})))^{\flat}\otimes_{\mathbb{C}}\mathbf{k}} \arrow[u, "\sim"',"C"] \arrow[ld, "\sim" description] \\
                                                                                                                                             & {C(R\Gamma_{\mathrm{DR}}(X,L))^{\flat}\otimes_{\mathbb{C}}\mathbf{k}}                                                                                                    &
\end{tikzcd}
$$

We only need to prove that $\Trace_{\mathcal{P}}$ is not trivial on the homology. Consider the element (here we use the basis $\{\phi,\phi^{\vee},\psi,\psi^{\vee}\}$ of $\mathrm{\mathbf{L}}$)
$$
\phi_{2k,i}dz_{(2k,i)}\shift \cdot\phi^{\vee}_{2k,i}dw_{(2k,i)}\shift\in \tilde{C}^{\mathrm{ch}}(X,\mathcal{A}_{\mathcal{P}})_{\mathcal{Q}}.
$$
We have
$$
d_{\mathrm{tot}}(\phi_{2k,i}dz_{(2k,i)}\shift \cdot\phi^{\vee}_{2k,i}dw_{(2k,i)}\shift)=\frac{i\hbar}{\pi}\cdot dw_{(2k,i)}\shift.
$$
By \cite[pp.315,Proposition.4.3.3(i)]{book:635773}, we have
$$
\tilde{C}^{\mathrm{ch}}(X,\omega_{\mathcal{P}})_{\mathcal{Q}}\xrightarrow{\sim}\mathbb{C}.
$$
Here $\omega$ is the unit chiral algebra. Notice that
$$
\tilde{C}^{\mathrm{ch}}(X,\omega_{\mathcal{P}})_{\mathcal{Q}}\subset \tilde{C}^{\mathrm{ch}}(X,\mathcal{A}_{\mathcal{P}})_{\mathcal{Q}}.
$$
Since $d_{\mathrm{tot}}1\cdot dw_{(2k,i)}\shift=0$ and $\mathrm{deg}(1\cdot dw_{(2k,i)}\shift)=-1$, we can find $\alpha_{(2k,i)}\in \tilde{C}^{\mathrm{ch}}(X,\omega_{\mathcal{P}})_{\mathcal{Q}}\subset \tilde{C}^{\mathrm{ch}}(X,\mathcal{A}_{\mathcal{P}})_{\mathcal{Q}}$ such that
$$
d_{\mathrm{tot}}\alpha_{(2k,i)}=-1\cdot dw_{(2k,i)}\shift,\quad \mathrm{deg}(\alpha_{(2k,i)})=-2.
$$
Then
$$
d_{\mathrm{tot}}(\phi_{2k,i}dz_{(2k,i)}\shift \cdot\phi^{\vee}_{2k,i}dw_{(2k,i)}\shift+\frac{i\hbar}{\pi}\alpha_{(2k,i)})=0.
$$
We consider the element $\Psi\cdot \Phi$ in $\tilde{C}^{\mathrm{ch}}(X,\mathcal{A}_{\mathcal{P}})_{\mathcal{Q}}$, where
$$
\Psi=\prod_{k,i}\psi_{2k-1,j}d{z_{(2k-1,j)}}\shift d\bar{z}_{(2k-1,j)}\cdot \prod_{k,i}\psi^{\vee}_{2k-1,j}d{w_{(2k-1,j)}}\shift d\bar{w}_{(2k-1,j)},
$$
$$
\Phi=\prod_{k,i}(\phi_{2k,j}d{z_{(2k,j)}}\shift\cdot \phi^{\vee}_{2k,j}d{w_{(2k,j)}}\shift+\frac{i\hbar}{\pi}\alpha_{(2k,i)}).
$$
Since $\alpha_{(k,i)}$ is central, we have
$$
d_{\mathrm{tot}}(\Psi\cdot \Phi)=0.
$$
By the same Feynman diagram argument as in Appendix \ref{FeynmanCompute}, only one-vertex tree diagram contributes and we have
$$
\Trace_{\mathcal{P}}(\Psi\cdot \Phi)=
$$
$$
\prod_{k,i}\Trace_{\mathcal{P}}(\psi_{2k-1,j}d{z_{(2k-1,j)}}\shift d\bar{z}_{(2k-1,j)})\cdot\Trace_{\mathcal{P}}( \prod_{k,i}\psi^{\vee}_{2k-1,j}d{w_{(2k-1,j)}}\shift d\bar{w}_{(2k-1,j)})
$$
$$
\cdot \prod_{k,i}\left(\Trace_{\mathcal{P}}(\phi_{2k,j}d{z_{(2k,j)}}\shift\cdot \phi^{\vee}_{2k,j}d{w_{(2k,j)}}\shift)+\Trace_{\mathcal{P}}(\frac{i\hbar}{\pi}\alpha_{(2k,i)})\right).
$$
By type reasons, we have
$$
\Trace_{\mathcal{P}}(\frac{i\hbar}{\pi}\alpha_{(2k,i)})=0.
$$
Finally, we get
$$
\Trace_{\mathcal{P}}(\Psi\cdot \Phi)=(-\frac{\mathrm{Im}(\tau)}{\pi})^{\dim \mathrm{\mathbf{L}}}\cdot \prod_{k,i}\psi_{2k-1,j}\cdot\psi^{\vee}_{2k-1,j}\cdot \prod_{k,i}\phi_{2k,j}^{\dagger}\cdot \phi^{\vee\dagger}_{2k,j}
$$
which is a scalar multiple of the top fermion in the BV algebra $O_{\mathrm{BV},\mathbf{k}}$. This proves the result.
\end{proof}
\begin{rem}
  The chiral chain complex $C^{\mathrm{ch}}(X,\mathcal{A})_{\mathcal{PQ}}$ has the advantage of being reasonably small and it is naturally a BV algebra (see \cite[pp.314,Proposition.4.3.1(i)]{book:635773}). However, the direct construction of the trace map on   $C^{\mathrm{ch}}(X,\mathcal{A})_{\mathcal{PQ}}$ will involve more complicated notations because of the quotient sheaf $h(C(\mathcal{A}_{\mathcal{P}})_{\mathcal{Q},X^I})$ and we will not do it here.
\end{rem}
\begin{rem}
  When $\mathrm{\mathbf{L}}=\mathrm{\mathbf{L}}_{\bar{1}}$ is purely odd, we get the trace map for the bc-system. It is well known that the space of conformal block (the 0-th chiral homology) of the bc-system is one-dimensional and can be identified with the fiber of the determinant line bundle $(\det R\Gamma_{\mathrm{DR}}(X,L_{>0}) )^{-1}$ (see \cite[pp35,Proposition 11.1]{1999Notes}), where
 $$
  L_{>0}=(\bigoplus_{\alpha\in \mathbb{Q}}\mathrm{\mathbf{L}}_{>0}^{\alpha}\otimes_{\mathbb{C}}\omega_X^{-\alpha+1})\otimes_{\mathcal{O}_X}\mathcal{D}_X,
$$
and $\mathrm{\mathbf{L}}_{>0}:=\bigoplus_{k>0}\mathrm{\mathbf{L}}_{2k-1}$. When $X$ is an elliptic curve, we see that our trace map gives an explicit quasi-isomorphism from the chiral chain complex to the fiber of the determinant line bundle.
\end{rem}
\section{Applications}
In this section, we discuss some applications of our main theorem. For any Lie subalgebra of the inner derivations of $V^{\beta\gamma-bc}$, we construct a cocycle in the Lie algebra cochain complex valued in the linear dual of the elliptic chiral chain complex. This construction can be viewed as a chiral algebra analogue of the construction in the algebraic index theory (see \cite{bressler2002riemann,feigin2005hochschild,gui2021geometry,nest1995algebraic} and reference therein),  where a cocycle in the Lie algebra cochain complex valued in the linear dual of the Hochschild complex of the Weyl algebra was constructed. This cocycle computes the algebraic index.  The chiral algebra version of this cocycle constructed here will be of the same importance to understanding the two-dimensional chiral algebraic index.

The second application is the construction of the trace map on the coset models. Coset models are realized in the theory of vertex operator algebra as the commutant algebras. Many important vertex operator algebras can be constructed in this way, like $\mathcal{W}-$algebras.  The commutant is a subalgebra inside a given vertex operator algebra and has no singular OPEs with another fixed subalgebra. Using the free field realization and our main theorem, one can construct a family of trace maps on the cosets.

\subsection{Trace map valued in Lie algebra cohomology}\label{TraceLieCohomology}

We first recall the definition of derivations of a vertex algebra $V$ following \cite{book1415599}.

\begin{defn}
  A derivation $d$ of parity $\gamma\in \mathbb{Z}/2\mathbb{Z}$ of a vertex algebra $V$ is an endomorphism of the space $V$ such that $dV_{\alpha}\subset V_{\alpha+\gamma}$ and
  $$
  d(a_{(n)}b)=(da)_{(n)}b+(-1)^{\alpha\cdot \gamma}a_{(n)}(db)\ \ \text{for all}\ a\in V_{\alpha},\ b\in V.
  $$
  We will denote by $\mathrm{Der}(V)$ the space of derivations of $V$. It is clear that $\mathrm{Der}(V)$ is a Lie superalgebra.

  By Borcherds identity, each homogeneous element $a\in V_{p(a)}$ gives rise to a derivation $a_{(0)}(-)$ of parity $p(a)$ of $V$. We denote this linear map by $\varphi: V\rightarrow \mathrm{Der}(V)$
  $$
  \varphi(a)(b)=a_{(0)}b.
  $$

 \end{defn}
By definition the map $\varphi: V\rightarrow\mathrm{Der}(V)$ factors through $V/L_{-1}V$
$$
V\rightarrow V/L_{-1}V\rightarrow \mathrm{Der}(V)
$$
and the second morphism is a Lie superalgebra morphism.

In the rest of this subsection, $V$ (resp. $\mathcal{A}$) means the free $\beta\gamma-bc$ vertex operator algebra (resp. chiral algebra) for a fixed symplectic super vector space $\mathbf{L}$. Throughout this subsection, we rescale the regularized integral such that
$$
\dashint_{X}dz\wedge d\bar{z}:=\frac{1}{2\pi i}\int_{X}dz\wedge d\bar{z}=\frac{1}{2\pi i}\int_{X}(-2i)dxdy=-\frac{\mathrm{Im}(\tau)}{\pi}.
$$
Then from Remark \ref{Rescaling}, the differential of the chiral chain complex becomes
$$
  d_{\mathrm{tot}}=d_{\mathrm{DR}}+\bar{\partial}+d_{\mathrm{ch}}+d_{\mathrm{Norm}}.
$$
We also introduce the notation $\underline{\mathrm{\mathbf{1}}}$ for the normalized constant section

\begin{equation}\label{ContantUnit}
\underline{\mathrm{\mathbf{1}}}:=-\frac{\pi}{\mathrm{Im}(\tau)}dz\cdot d\bar{z}\in \Gamma(X,\mathcal{A}_{\mathcal{Q}}).
\end{equation}
Then $\underline{\mathrm{\mathbf{1}}}$ can be viewed as a chiral chain in $\tilde{C}^{\mathrm{ch}}(X,\mathcal{A})_{\mathcal{Q}}$ and
$$
\mathrm{\mathbf{Tr}}(\underline{\mathrm{\mathbf{1}}})=\dashint_X \underline{\mathrm{\mathbf{1}}}=1.
$$

 Suppose $\mathfrak{g}$ is Lie superalgebra and $\rho:\mathfrak{g}\rightarrow V$  is a linear map such that the composition
  $$
   \mathfrak{g}\xrightarrow{\rho}V\xrightarrow{\varphi}\mathrm{Der}(V)
  $$
 $\varphi\circ\rho: \mathfrak{g} \rightarrow \mathrm{Der}(V)$ is a Lie superalgebra morphism.

  Now we introduce the trace map valued in Lie superalgebra cohomology. For detailed discussion of Lie superalgebra cohomology, see \cite{1975Cohomologies}.

\begin{defn}
 We view  $\xi=\xi_1\otimes\cdots \otimes\xi_m\in \mathfrak{g}^{\otimes n}$ as a constant section $$\xi=\rho(\xi_1)dz_1\shift\boxtimes\cdots\boxtimes\rho(\xi_m)dz_M\shift\in \Gamma(X^m,(\mathcal{A}\shift)^{\boxtimes m}(*\Delta_{\{1,\dots,m\}})).$$
  We will also use the identification
  $$
\rho(\xi_1)dz_1\shift\boxtimes\cdots\boxtimes\rho(\xi_m)dz_M\shift=  (-1)^{\bullet_\xi}\rho(\xi)d^mz[m],
  $$
  where
  $$
  \rho(\xi)=\rho(\xi_1)\otimes\cdots\otimes\rho(\xi_m),
  $$
  and
  $$
  \bullet_{\xi}=\sum_{i=1}^m(i-1)p(\xi_i).
  $$
  For any $a\in \Gamma(X^I,(\mathcal{A}\shift)^{\boxtimes I}(*\Delta_I)_\mathcal{Q})$, we define
  $$
\Trace_{\mathfrak{g}}(a)\{\xi_1\otimes\cdots \otimes\xi_m\}:=  \Trace(a\boxtimes \xi)\in \BVk.
  $$
\end{defn}
\begin{thm}\label{LieCohomologyChiral}
  The element $\Trace_{\mathfrak{g}}(-)\{-\}\in C_{\mathrm{Lie}}\big(\mathfrak{g},\mathrm{Hom}_{\mathbf{k}}(\tilde{C}^{\mathrm{ch}}(X,\mathcal{A})_{\mathrm{Q}},\mathbf{k})\big)\otimes_{\mathbf{k}}\BVk$ is a cocycle.
\end{thm}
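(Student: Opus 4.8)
The plan is to verify that $\Trace_{\mathfrak{g}}(-)\{-\}$ is annihilated by the Chevalley--Eilenberg differential of the Lie superalgebra $\mathfrak{g}$ with coefficients in $\mathrm{Hom}_{\mathbf{k}}(\tilde{C}^{\mathrm{ch}}(X,\mathcal{A})_{\mathrm{Q}},\mathbf{k})$, where the $\mathfrak{g}$-module structure on the dual of the chiral chain complex comes from the action of $\mathfrak{g}$ on $\mathcal{A}$ via inner derivations (i.e.\ via $\varphi\circ\rho$). Concretely, the statement $\partial_{\mathrm{Lie}}\Trace_{\mathfrak{g}}(-)\{-\}=0$ unwinds to an identity asserting that for a chiral chain $a$ and elements $\xi_0,\dots,\xi_m\in\mathfrak{g}$, the alternating sum over $\mathrm{Tr}$ of $a$ with one of the $\xi_i$ replaced by its action on $a$, plus the sum over $\mathrm{Tr}$ of $a$ with two of the $\xi$'s replaced by their bracket, vanishes modulo $\hbar\Delta_{\mathrm{BV}}$-exact terms. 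First I would make precise how the derivation $\varphi(\rho(\xi_i))=\rho(\xi_i)_{(0)}$ acts on the chiral chain complex: since $\rho(\xi_i)\in V=V^{\beta\gamma-bc}$ sits inside the chiral algebra as a section, the inner derivation is realized on $\tilde{C}^{\mathrm{ch}}(X,\mathcal{A})_{\mathrm{Q}}$ by the chiral bracket with the constant section $\rho(\xi_i)\,dz\,d\bar z$, i.e.\ it is essentially the component $d_{\mathrm{ch}}$ of the total differential applied with one extra insertion followed by contraction of that diagonal. This is the key structural observation: the Lie action on chiral chains and the chiral differential $d_{\mathrm{ch}}$ are built from the same binary operation $\mu$.

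The heart of the argument is then to trade the Chevalley--Eilenberg differential on $\Trace_{\mathfrak{g}}$ against the chiral differential $d_{\mathrm{tot}}$ on the enlarged chain $a\boxtimes\xi_1\boxtimes\cdots\boxtimes\xi_m$. Precisely, I would show that
$$
\sum_i \pm\,\Trace(a\boxtimes\xi_1\boxtimes\cdots\boxtimes(\rho(\xi_i)_{(0)}a)\boxtimes\cdots) \;+\; \sum_{i<j}\pm\,\Trace(a\boxtimes\cdots\boxtimes\rho([\xi_i,\xi_j])\boxtimes\cdots)
$$
equals, up to sign, $\Trace\big(d_{\mathrm{tot}}(a\boxtimes\xi_1\boxtimes\cdots\boxtimes\xi_m)\big)$ restricted to those $d_{\mathrm{ch}}$-components that collapse one $\xi_i$ into either $a$ or another $\xi_j$, while the remaining components of $d_{\mathrm{tot}}$ (the $d_{\mathrm{DR}}$, $\bar\partial$, $d_{\mathrm{Norm}}$ pieces, and the $d_{\mathrm{ch}}$-pieces internal to $a$) either vanish on the constant sections $\xi_i$ by type reasons or are already accounted for. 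The crucial input that the $d_{\mathrm{ch}}$-collision of two insertions $\xi_i$ and $\xi_j$ produces precisely $\rho([\xi_i,\xi_j]) = (\varphi\rho(\xi_i))(\rho(\xi_j))$ plus an $L_{-1}$-exact (hence trace-annihilated, by Lemma~\ref{DRexact}/the $\mathbf{p}_{\mathrm{BV}}$ quotient) term is a consequence of the definition of the chiral operation via the residue formula (\ref{OPECHIRAL}): the $(0)$-mode of $Y(a,z)$ is exactly the residue $\mathrm{Res}_{z_1\to z_2}$ of the state-field correspondence, which is how the bracket on $\mathrm{Der}(V)$ is computed. Then Theorem~\ref{mainThm}(2) (the QME $(d_{\mathrm{tot}}+\hbar\Delta_{\mathrm{BV}})\Trace=0$) lets me replace $\Trace(d_{\mathrm{tot}}(\cdots))$ by $-\hbar\Delta_{\mathrm{BV}}\Trace(\cdots)$, which is zero in the relevant $\hbar\Delta_{\mathrm{BV}}$-cohomology context needed for the cocycle statement; more carefully, since the Chevalley differential squares to zero and the Koszul signs match, the alternating sums assemble exactly into $\Trace$ of a $d_{\mathrm{tot}}$-boundary.

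I would organize the proof in the following steps: (i) identify the $\mathfrak{g}$-action on $\tilde{C}^{\mathrm{ch}}(X,\mathcal{A})_{\mathrm{Q}}$ with the operation ``adjoin $\rho(\xi_i)\underline{\mathrm{\mathbf{1}}}$-type insertion and apply $d_{\mathrm{ch}}$ at that slot'', justified by (\ref{OPECHIRAL}) and the fact that $\rho(\xi_i)_{(0)}$ is a derivation of the chiral bracket (Jacobi identity of Definition~\ref{chiralDefn}); (ii) check that, on the constant insertions $\xi_i$, all components of $d_{\mathrm{tot}}$ other than the $d_{\mathrm{ch}}$-collisions involving some $\xi_i$ vanish — $d_{\mathrm{DR}}$ because $\xi_i$ is a constant $(0,1)$-form with no room for a holomorphic derivative leg, $\bar\partial$ because $\bar\partial(dz\,d\bar z)=0$, and $d_{\mathrm{Norm}}$ because it is handled by Lemma~\ref{Normzero} after composing with $\mathrm{tr}$; (iii) match Koszul signs between the Chevalley--Eilenberg differential (with its $(i-1)p(\xi_i)$ bookkeeping as encoded in $\bullet_\xi$) and the signs appearing in $d_{\mathrm{ch}}$ and in the definition of $\mathcal{W}$; (iv) invoke the QME from Theorem~\ref{mainThm}(2) to conclude the sum is $\hbar\Delta_{\mathrm{BV}}$-exact, hence that $\Trace_{\mathfrak{g}}(-)\{-\}$ descends to a cocycle in $C_{\mathrm{Lie}}\big(\mathfrak{g},\mathrm{Hom}_{\mathbf{k}}(\tilde{C}^{\mathrm{ch}}(X,\mathcal{A})_{\mathrm{Q}},\mathbf{k})\big)\otimes_{\mathbf{k}}\BVk$ once the total differential $\partial_{\mathrm{Lie}}+d_{\mathrm{tot}}+\hbar\Delta_{\mathrm{BV}}$ is taken into account (this is the more refined statement (2) of the main theorem in the introduction, of which the present theorem is the $\partial_{\mathrm{Lie}}$-part). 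The main obstacle I anticipate is entirely bookkeeping: getting the signs right in (iii), in particular reconciling the parity conventions for $\mathfrak{g}^{\otimes m}$, for the shift $\shift$ on $\mathcal{A}$, and for the Koszul rule in $d_{\mathrm{ch}}$ — there is no conceptual difficulty, but it is where an error is most likely to hide. A secondary subtlety is making sure the ``$L_{-1}$-exact error terms'' from collisions genuinely lie in $(L_{-1}O_{\mathbf{k}})$ so that $\mathbf{p}_{\mathrm{BV}}$ kills them; this follows from Lemma~\ref{DRexact} and the structure of $\mathbf{h}_{I_0,T}$ but should be stated carefully.
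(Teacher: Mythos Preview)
Your strategy is the paper's strategy: apply the QME from Theorem~\ref{mainThm}(2) to the enlarged chain $a\boxtimes\xi$, decompose $d_{\mathrm{tot}}(a\boxtimes\xi)$, and identify the $d_{\mathrm{ch}}$-collisions of $\xi_i$ with $a$ (resp.\ with $\xi_j$) as the Lie action (resp.\ the bracket), up to terms killed by the trace. The structural steps (i)--(iv) you list are exactly what the paper does.

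There is, however, one genuine gap in your step concerning the $\xi_i$--$\xi_j$ collision. You assert that the residue formula (\ref{OPECHIRAL}) gives $\rho(\xi_i)_{(0)}\rho(\xi_j)=\rho([\xi_i,\xi_j])$ up to an $L_{-1}$-exact term. The residue formula only tells you that the collision produces $\rho(\xi_i)_{(0)}\rho(\xi_j)$ (modulo $d_{\mathrm{DR}}$-exact). The hypothesis on $\rho$ is merely that $\varphi\circ\rho:\mathfrak g\to\mathrm{Der}(V)$ is a Lie morphism, so all you know a priori is that the error $x:=\rho(\xi_i)_{(0)}\rho(\xi_j)-\rho([\xi_i,\xi_j])$ lies in $\ker\varphi$, i.e.\ $x_{(0)}=0$. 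To conclude that $x$ is harmless you need the structural fact, specific to the $\beta\gamma$--$bc$ Fock space, that $x_{(0)}=0$ forces $x\in\mathbb{C}\,|0\rangle\oplus L_{-1}V$; the paper cites \cite[Lemma~5.3]{1999Chiral} for this. You then have \emph{two} cases to dispose of, not one: the $L_{-1}V$ part is killed because regularized integrals annihilate holomorphic total derivatives (this is Lemma~\ref{VanishOnDR}, not the commented-out Lemma you cite), while the $\mathbb{C}\,|0\rangle$ part is killed for a separate type reason (a constant vacuum insertion at a point with no $d\bar z$ contributes a form of the wrong bidegree to the integral). Without this input your argument does not close.

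A small correction: the constant insertion is $\rho(\xi_i)\,dz_i\shift$, a $(1,0)$-type section of $\mathcal{A}\shift$, not $\rho(\xi_i)\,dz\,d\bar z$. This does not affect the logic of your steps (ii)--(iii), but it does affect the degree count and is why the $|0\rangle$ error term above drops out by type.
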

\begin{proof}
  From the main theorem (Theorem \ref{mainThm}), we have
  \begin{align*}
   &\Trace((\bar{\partial}+d_{\mathrm{ch}})(a\boxtimes \xi))=-\hbar\Delta_{\mathrm{BV}}\Trace((a\boxtimes \xi)).
  \end{align*}
  Note that $\bar{\partial}(a\boxtimes \rho(\xi))=\bar{\partial}a\boxtimes \rho(\xi)$ and
  $$
\sum_{1\leq k\leq m}\sum_{i\in I}d_{\mathrm{ch},(I\sqcup \{1,\dots,m\}, I\sqcup_{i\sim k} \{1,\dots,m\})}(a\boxtimes \xi)=
  $$
  $$
\sum_{1\leq k\leq m}\sum_{i\in I}  (-1)^{p(\xi_i)\cdot (\sum\limits^{i-1}_{l=1}p(\xi_{l})+p(a)_i)+i-1}\big(\rho(\xi_k)_{(0)}\big)_ia\boxtimes \rho(\xi_1)\wedge\cdots\wedge\widehat{\rho(\xi_i)}\wedge\cdots\wedge\rho(\xi_m)\cdot(-1)^{\bullet_\xi}d^mz[m]
$$
$$
+d_\mathrm{DR}(-),
  $$
  here $\big((\xi_k)_{(0)}\big)_i$ means the action on the i-th component without the Koszul sign and the parity $p(a)_i=\mathop{\sum}^{|I|}\limits_{t=i}p(a_t)$ if we write $a$ as $a_1\boxtimes\cdots \boxtimes a_{|I|}$. This is exactly the Lie algebra action on the chiral chain complex. And we have
\begin{equation}\label{LieBracket}
  \begin{split}
      \sum_{1\leq k<l \leq m }  d_{\mathrm{ch},(I\sqcup \{1,\dots,m\}, I\sqcup \{1,\dots,m\}/k\sim l)}(a\boxtimes \xi)&=  \\
        \sum_{1\leq k<l\leq m}(-1)^{\bullet_{k,l}}a\boxtimes
\rho(\xi_k)_{(0)}\rho(\xi_l)\wedge\rho(\xi_1)\wedge\cdots \wedge \widehat{\rho(\xi_k)}\wedge\cdots\wedge &\widehat{\rho(\xi_l)}\wedge\cdots\wedge\rho(\xi_m)\cdot(-1)^{\bullet_\xi}d^mz[m]+d_{\mathrm{DR}}(-),
  \end{split}
\end{equation}
where
$$
\bullet_{k,l}=(p(\xi_k)+p(\xi_l))\cdot(p(\xi_1)+\cdots+p(\xi_{k-1}))+p(\xi_l)\cdot (p(\xi_{k+1}+\cdots+p(\xi_{l-1})))+k+l.
$$
Now by our assumption that $\varphi\circ \rho:\mathfrak{g}\rightarrow\mathrm{Der}(V)$ is a Lie superalgebra morphism and Borcherds identity, we have
$$
\varphi(\rho(\xi_k)_{(0)}\rho(\xi_l))=(\rho(\xi_k)_{(0)}\rho(\xi_l))_{(0)}=[\rho(\xi_k)_{(0)},\rho(\xi_l)_{(0)}]=[\varphi\circ \rho(\xi_k),\varphi\circ \rho(\xi_l)]=\varphi\circ\rho([\xi_k,\xi_l]).
$$
Let
$$
x=\rho(\xi_k)_{(0)}\rho(\xi_l)-\rho([\xi_k,\xi_l]).
$$

Then $x_{(0)}=0\in\mathrm{Der}(V)$ implies that $x=\mathrm{const}\cdot |0\rangle$ or $x\in L_{-1}V$ (see \cite[Lemma 5.3]{1999Chiral} and the references therein). We can replace $\rho(\xi_k)_{(0)}\rho(\xi_l)$ in (\ref{LieBracket}) by $\rho([\xi_k,\xi_l])$. To see this, suppose $x=\mathrm{const}\cdot |0\rangle$.  By the type reason, it will not contribute to the regularized integral. If $x\in L_{-1}V$ , from the fact that the regularized integral annihilates holomorphic total derivatives we get the same conclusion.

Using Lemma \ref{VanishOnDR}, we have
$$
\Trace_{\mathfrak{g}}((\bar{\partial}+d_{\mathrm{ch}})a)\{\xi\}+\partial_{\mathrm{Lie}}\Trace_{\mathfrak{g}}(a)\{\xi\}=\Trace((\bar{\partial}+d_{\mathrm{ch}})(a\boxtimes \xi))=-\hbar\Delta_{\mathrm{BV}}\Trace_{\mathfrak{g}}(a)\{ \xi\}.
$$
The proof is complete.
\end{proof}

\begin{rem}
  In particular, we can take $\mathfrak{g}=V/L_{-1}V$  (choose an arbitrary lift $\rho:\mathfrak{g}\rightarrow V$) and obtain a cocycle $\Trace_{V/L_{-1}V}(-)\{-\}\in C_{\mathrm{Lie}}\big(V/L_{-1}V,\mathrm{Hom}_{\mathbf{k}}(\tilde{C}^{\mathrm{ch}}(X,\mathcal{A})_{\mathrm{Q}},\mathbf{k})\big)\otimes_{\mathbf{k}}\BVk$.
Note that in general $\mathfrak{g}\rightarrow V/L_{-1}V$ is not a Lie superalgebra morphism as we only assume that $\varphi\circ \rho :\mathfrak{g}\rightarrow \mathrm{Der}(V)$ is. That is why our proof is slightly more complicated. In the later discussion on the Witten genus, we will only use the case when $\mathfrak{g}=V/L_{-1}V$.
\end{rem}
Now we want to compare our constructions to the result in \cite{Li:2016gcb}. We take $\mathfrak{g}=\mathbb{C}\text{\small{[-1]}}$ to be the one-dimensional abelian Lie superalgebra. Suppose we have an element $S\in V_{\bar{1}}$ satisfying $(S_{(0)})^2v=0, \forall v\in V$, then we have a morphism of Lie superalgebras
$$
\mathfrak{g}\rightarrow \mathrm{Der}(V),
$$
$$
c\cdot\text{\small{[-1]}}\mapsto c\cdot S_{(0)}.
$$

Also we have a complex $(V,S_{(0)})$ and its 0-th cohomology $H_S(V)$ is a vertex super algebra(see \cite[pp.96]{book:1415117}). We have
\begin{align*}
  \hbar\Delta_{\mathrm{BV}}\Trace_{\mathfrak{g}}(a)\{P^{\otimes}(S)\}& = \Trace_{\mathfrak{g}}(d_{\mathrm{tot}}a)\{P^{\otimes}(S)\},\\
   &
\end{align*}
for any $a\in \Gamma(X^I,(\mathcal{A}\shift)^{\boxtimes I}(*\Delta_I)_\mathcal{Q})$. Here $P(-)$ is a formal power series and $P^{\otimes}(-)$ means that we replace $\otimes$ with the commutative product
$$
P(x)=\sum_{k\geq 0}p_kx^k, \ P^{\otimes}(S)=\sum_{k\geq 0}p_kS^{\otimes n}
$$
 In summary, we have following proposition.

\begin{prop}\label{PS}
  For each power series $P(-)$, we have a chain map
  $$
  \Trace_{\mathfrak{g}}(-)\{P^{\otimes}(S)\}: (\tilde{C}^{\mathrm{ch}}(X,H_S(V)^r)_{Q},d_{\mathrm{tot}})\rightarrow (\BVk,\hbar\Delta_{\mathrm{BV}}).
  $$
\end{prop}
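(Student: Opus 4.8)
The plan is to deduce the proposition from the cocycle property of $\Trace_{\mathfrak{g}}(-)\{-\}$ proved in Theorem \ref{LieCohomologyChiral}, specialized to the one-dimensional odd abelian Lie superalgebra $\mathfrak{g}=\mathbb{C}[-1]$ with $\rho$ sending the generator to $S$. The hypothesis $(S_{(0)})^2=0$ is precisely what makes $\varphi\circ\rho:\mathfrak{g}\to\mathrm{Der}(V)$ a Lie superalgebra morphism, so Theorem \ref{LieCohomologyChiral} applies; contracting the resulting cocycle identity $(\partial_{\mathrm{Lie}}+d_{\mathrm{tot}}+\hbar\Delta_{\mathrm{BV}})\Trace_{\mathfrak{g}}(-)\{-\}=0$ against the fixed element $P^{\otimes}(S)=\sum_k p_k S^{\otimes k}$ and using that for the abelian $\mathfrak{g}$ the Chevalley--Eilenberg differential reduces to the action term (which, as in the proof of Theorem \ref{LieCohomologyChiral}, is absorbed into the $d_{\mathrm{ch}}$-part) yields the displayed identity $\hbar\Delta_{\mathrm{BV}}\Trace_{\mathfrak{g}}(a)\{P^{\otimes}(S)\}=\Trace_{\mathfrak{g}}(d_{\mathrm{tot}}a)\{P^{\otimes}(S)\}$ for every chiral chain $a$ of $\mathcal{A}=\mathcal{A}^{\beta\gamma-bc}$. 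Thus $\Trace_{\mathfrak{g}}(-)\{P^{\otimes}(S)\}$ is already a chain map from $(\tilde{C}^{\mathrm{ch}}(X,\mathcal{A})_{\mathcal{Q}},d_{\mathrm{tot}})$ to $(\BVk,\hbar\Delta_{\mathrm{BV}})$, and the one remaining point is that it descends to the chiral chain complex of the cohomology chiral algebra $H_S(V)^r$.

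For the descent I would first fix the meaning of the map on $\tilde{C}^{\mathrm{ch}}(X,H_S(V)^r)_{\mathcal{Q}}$: represent a chiral chain of $H_S(V)^r$ by a chiral chain $\tilde{a}$ of $\mathcal{A}$ assembled from $S_{(0)}$-closed vertex operators (every class in $H_S(V)$ has a closed representative, and the chiral bracket of closed operators represents the bracket of classes modulo $S_{(0)}$-exact terms), and declare its image to be $\Trace_{\mathfrak{g}}(\tilde{a})\{P^{\otimes}(S)\}$. Two closed lifts differ by a chiral chain built from $S_{(0)}$-exact operators, so well-definedness reduces to the vanishing statement: $\Trace_{\mathfrak{g}}(b)\{P^{\otimes}(S)\}\in (L_{-1}O)$, hence $=0$ in $\BVk$, whenever some tensor factor of $b$ has the form $S_{(0)}v$. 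This is where $(S_{(0)})^2=0$ re-enters: the $S_{(0)}$-action on the $i$-th factor of a chiral chain is realized, up to $d_{\mathrm{DR}}$-exact terms, by inserting an extra copy of $S$ at a colliding point and applying $d_{\mathrm{ch}}$ — exactly the computation carried out in the proof of Theorem \ref{LieCohomologyChiral} with $\xi=S$ — so after using that the regularized integral annihilates holomorphic total derivatives (see (\ref{Killholo}) and Lemma \ref{VanishOnDR}) together with the cocycle identity, $\Trace_{\mathfrak{g}}(b)\{P^{\otimes}(S)\}$ becomes a trace in which two copies of $S$ collide; by antisymmetry of the chiral bracket and the oddness of $S$ this is proportional to $(S_{(0)})^2$, hence vanishes.

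The chain-map property on $\tilde{C}^{\mathrm{ch}}(X,H_S(V)^r)_{\mathcal{Q}}$ then follows: if $a$ is represented by an $S_{(0)}$-closed lift $\tilde{a}$, then $d_{\mathrm{tot}}\tilde{a}$ is again $S_{(0)}$-closed and represents $d_{\mathrm{tot}}a$ in $H_S(V)^r$ modulo an $S_{(0)}$-exact chiral chain, so
\[
\Trace_{\mathfrak{g}}(d_{\mathrm{tot}}a)\{P^{\otimes}(S)\}=\Trace_{\mathfrak{g}}(d_{\mathrm{tot}}\tilde{a})\{P^{\otimes}(S)\}=\hbar\Delta_{\mathrm{BV}}\,\Trace_{\mathfrak{g}}(\tilde{a})\{P^{\otimes}(S)\}=\hbar\Delta_{\mathrm{BV}}\,\Trace_{\mathfrak{g}}(a)\{P^{\otimes}(S)\},
\]
where the first equality uses the vanishing statement, the second the displayed identity, and the third the definition of the map on $H_S(V)^r$ via closed lifts.

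The step I expect to be the main obstacle is the careful treatment of the passage from the differential graded chiral algebra $(\mathcal{A},S_{(0)})$ to its cohomology chiral algebra $H_S(V)^r$: one has to verify that $H_S(V)^r$ is (quasi-)conformal so that its chiral chain complex is defined, that $S_{(0)}$-closed lifts of chiral chains exist and are functorial enough for the above bookkeeping, and — most delicately — that the Koszul and combinatorial signs identifying the Chevalley--Eilenberg differential of $C_{\mathrm{Lie}}(\mathbb{C}[-1],-)$ with the $S_{(0)}$-insertion operation work out, exactly as in the sign-heavy proof of Theorem \ref{LieCohomologyChiral}. Everything else is a formal consequence of identities already established in the paper.
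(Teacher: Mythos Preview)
Your overall strategy is the paper's: specialize Theorem~\ref{LieCohomologyChiral} to $\mathfrak{g}=\mathbb{C}[-1]$ with $\rho(\text{generator})=S$ and contract the cocycle against $P^{\otimes}(S)$. (The paper itself offers no further argument than recording the displayed identity and saying ``in summary''.)

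But your first paragraph contains a real error. You claim the action piece of $\partial_{\mathrm{Lie}}$ ``is absorbed into the $d_{\mathrm{ch}}$-part'' and conclude that the displayed identity holds for \emph{every} chiral chain $a$ of $\mathcal{A}$. Unwind the computation in the proof of Theorem~\ref{LieCohomologyChiral}: the action piece is exactly the cross-collisions in $d_{\mathrm{ch}}(a\boxtimes S^{\otimes m})$ where a copy of $S$ hits a tensor factor of $a$, producing $(S_{(0)})_i\,a$. That term sits inside $d_{\mathrm{tot}}(a\boxtimes S^{\otimes m})$ but \emph{not} inside $(d_{\mathrm{tot}}a)\boxtimes S^{\otimes m}$, so it survives as an extra term in the cocycle identity and does not cancel for generic $a$. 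The bracket piece does vanish (abelianity, together with the argument that $S_{(0)}S$ is central or $L_{-1}$-exact), but the action piece vanishes precisely when the factors of $a$ are $S_{(0)}$-closed. \emph{That} is the actual mechanism by which one passes to $H_S(V)^r$: choose $S_{(0)}$-closed lifts, then the identity holds; since $S_{(0)}$ is a derivation of $V$ commuting with every piece of $d_{\mathrm{tot}}$, closed lifts stay closed under $d_{\mathrm{tot}}$, and the chain-map property follows. Your descent paragraphs are therefore doing the right work but for the wrong reason --- they are needed to kill the action term, not merely to establish well-definedness after an identity you thought you already had on all of $\mathcal{A}$.

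Separately, your well-definedness sketch conflates $S_{(0)}S$ with $(S_{(0)})^2$: colliding two copies of $S$ yields the former, and what one actually uses is $(S_{(0)}S)_{(0)}=2(S_{(0)})^2=0$, hence $S_{(0)}S$ is central or $L_{-1}$-exact. That disposes of $S$--$S$ collisions but does not by itself force $\Trace_{\mathfrak g}(b)\{P^\otimes(S)\}=0$ on the nose when $b$ has an $S_{(0)}$-exact factor. The paper does not address this point either; for its intended application $a=\underline{\mathbf 1}$ it is immaterial.
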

Now we take $a=\underline{\mathrm{\mathbf{1}}}$ (recall $\underline{\mathrm{\mathbf{1}}}$ is defined by (\ref{ContantUnit})) and $P(x)=e^{x/\hbar}$. In order to obtain a well-defined Laurent series, we assume that
$$
\Trace_{\mathfrak{g}}(\underline{\mathrm{\mathbf{1}}})\{S_i^{\otimes N}\}=0, \quad i\in \{0,1\},  \quad\text{for N large enough},
$$
here $S=\sum_{i\geq 0}S_ih^i$.

From Proposition \ref{PS}, we can get the following result in \cite{Li:2016gcb}.

\begin{prop}
The linear map $S(-): \mathbf{k}\rightarrow O_{\mathrm{BV},\mathbf{k}}$ defined by
$$
S(c)=c\cdot \Trace_{\mathfrak{g}}(\underline{\mathrm{\mathbf{1}}})\{e^{S/\hbar}\}
$$
satisfies the quantum master equation (see Definition \ref{GenQME} and Remark \ref{GenQMERem}). In other words,
$$
\hbar\Delta_{\mathrm{BV}}\Trace_{\mathfrak{g}}(\underline{\mathrm{\mathbf{1}}})\{e^{S/\hbar}\}=0.
$$
\end{prop}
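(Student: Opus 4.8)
The plan is to deduce this statement as a direct consequence of Proposition \ref{PS} together with Remark \ref{GenQMERem}. The key observation is that the statement is precisely the specialization of Proposition \ref{PS} to the input chain $a=\underline{\mathrm{\mathbf{1}}}$ and the power series $P(x)=e^{x/\hbar}$. First I would note that $\underline{\mathrm{\mathbf{1}}}\in\Gamma(X,\mathcal{A}_{\mathcal{Q}})\subset\tilde{C}^{\mathrm{ch}}(X,\mathcal{A})_{\mathcal{Q}}$ is a closed chiral chain, i.e. $d_{\mathrm{tot}}\underline{\mathrm{\mathbf{1}}}=0$: indeed $d_{\mathrm{DR}}$, $d_{\mathrm{Norm}}$, and $d_{\mathrm{ch}}$ all vanish on a single-insertion constant section, and $\bar{\partial}\underline{\mathrm{\mathbf{1}}}=0$ since $dz\cdot d\bar{z}$ is $\bar{\partial}$-closed. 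Hence for the linear map $S(-)\colon\mathbf{k}\to O_{\mathrm{BV},\mathbf{k}}$, $S(c)=c\cdot\Trace_{\mathfrak{g}}(\underline{\mathrm{\mathbf{1}}})\{e^{S/\hbar}\}$, the chain-map property from Proposition \ref{PS} gives
$$
\hbar\Delta_{\mathrm{BV}}\,\Trace_{\mathfrak{g}}(\underline{\mathrm{\mathbf{1}}})\{e^{S/\hbar}\}
=\Trace_{\mathfrak{g}}(d_{\mathrm{tot}}\underline{\mathrm{\mathbf{1}}})\{e^{S/\hbar}\}=0,
$$
which is exactly the asserted quantum master equation.

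Second, I would address the well-definedness point. The expression $\Trace_{\mathfrak{g}}(\underline{\mathrm{\mathbf{1}}})\{e^{S/\hbar}\}$ a priori involves the formal series $e^{S/\hbar}=\sum_{k\ge 0}\frac{1}{k!\hbar^k}S^{\otimes k}$ with $S=\sum_{i\ge 0}S_i\hbar^i$, $S_0\in V_{\bar 1}$, $S_1\in V_{\bar 1}$. Reorganizing in powers of $\hbar$, the coefficient of each $\hbar^m$ is a $\mathbb{C}$-linear combination of terms $\Trace_{\mathfrak{g}}(\underline{\mathrm{\mathbf{1}}})\{S_{i_1}^{\otimes}\cdots\}$, and one must check that only finitely many such terms are nonzero for each $m$. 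This is where the standing hypothesis
$$
\Trace_{\mathfrak{g}}(\underline{\mathrm{\mathbf{1}}})\{S_i^{\otimes N}\}=0,\quad i\in\{0,1\},\ N\gg 0,
$$
enters: combined with the fact that $\Trace_{\mathfrak{g}}$ is symmetric in its $\mathfrak{g}$-arguments (it is a Lie-cochain, and $S_0,S_1$ are odd so $S_i^{\otimes}S_i^{\otimes}$ survives symmetrization), and that insertions of $S_i$ for $i\ge 2$ carry at least two powers of $\hbar$ each, only finitely many monomials contribute to any fixed $\hbar$-order. I would spell out this counting: a monomial $S_{i_1}^{\otimes}\cdots S_{i_k}^{\otimes}$ contributes to order $\hbar^{-k+\sum_j i_j}$, and the nilpotency hypothesis bounds the number of $S_0$- and $S_1$-factors, so $k$ is bounded at each order; hence $S(-)$ lands in $O_{\mathrm{BV},\mathbf{k}}=O_{\mathrm{BV}}\otimes_{\mathbb{C}}\mathbb{C}((\hbar))$.

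Third, I would invoke Remark \ref{GenQMERem} to phrase the conclusion in the language of the quantum master equation: taking $(C_\bullet,d)=(\mathbf{k},0)$, the map $S(-)$ satisfies QME in the sense of Definition \ref{GenQME} if and only if $(d_C-0+\hbar\Delta_{\mathrm{BV}})S(-)=\hbar\Delta_{\mathrm{BV}}S(-)=0$, which is what we established. This also makes transparent the connection to the result of \cite{Li:2016gcb} the proposition is meant to recover.

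The main obstacle, and the only genuinely non-formal point, is the well-definedness/finiteness argument: one has to be careful that the Koszul signs in the symmetrization of the Lie cochain do not cause $S_0^{\otimes}$- or $S_1^{\otimes}$-towers to survive in a way not controlled by the hypothesis, and that reindexing the double sum (over $k$ and over the multi-indices $(i_1,\dots,i_k)$) in powers of $\hbar$ is legitimate. Everything else is a direct quotation of Proposition \ref{PS} applied to the closed chain $\underline{\mathrm{\mathbf{1}}}$, for which I have already checked $d_{\mathrm{tot}}\underline{\mathrm{\mathbf{1}}}=0$.
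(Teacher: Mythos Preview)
Your proposal is correct and follows exactly the paper's approach: the paper simply states ``From Proposition \ref{PS}, we can get the following result,'' after having set $a=\underline{\mathrm{\mathbf{1}}}$ and $P(x)=e^{x/\hbar}$ and imposed the same finiteness hypothesis on $\Trace_{\mathfrak{g}}(\underline{\mathrm{\mathbf{1}}})\{S_i^{\otimes N}\}$. Your explicit verification that $d_{\mathrm{tot}}\underline{\mathrm{\mathbf{1}}}=0$ and your unpacking of the $\hbar$-order finiteness are details the paper leaves implicit, but the argument is the same.
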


\subsection{Trace map and the formal Witten genus }\label{TraceWittenGenus}
We now focus on the free $\beta\gamma$-system which is a special case of our previous discussion and prove that the formal Witten genus appears naturally in our construction. We first briefly recollect the data in the free $\beta\gamma$-system, following the presentations in \cite{gorbounov2016chiral,1999Chiral}.

Write $\mathrm{\mathbf{L}}=\mathrm{\mathbf{L}}_{\bar{0}}=\mathrm{\mathbf{L}}^0_{\bar{0}}\oplus\mathrm{\mathbf{L}}^1_{\bar{0}}$, here recall that the subscript $\bar{0}$ means that $\mathrm{\mathbf{L}}$ is purely even and the superscripts $0$ and $1$ are conformal weights. We can choose a basis of $\mathrm{\mathbf{L}}$ such that
$$
\mathrm{\mathbf{L}}=\mathrm{\mathbf{L}}^0_{\bar{0}}\oplus\mathrm{\mathbf{L}}^1_{\bar{0}}\cong \mathbb{C}^N\oplus \mathbb{C}^N=\mathrm{span}_{\mathbb{C}}\{\beta^i\}_{i=1,\dots,N}\oplus\mathrm{span}_{\mathbb{C}}\{\gamma^i\}_{i=1,\dots,N},
$$
and the even symplectic pairing is given by
$$
\langle (u_1,v_1),(u_2,v_2)\rangle=u_1v_2^t-u_2v_1^t,\quad (u_1,v_1),(u_2,v_2)\in \mathbb{C}^N\oplus \mathbb{C}^N,\quad v^t \ \text{is the transpose of }\ v.
$$
The corresponding vertex algebra (resp. chiral algebra) is still denoted by $V=V^{\beta\gamma}$(resp. $\mathcal{A}=\mathcal{A}^{\beta\gamma}$). Let $\hat{\mathbb{O}}_N$ denote the formal power series ring with $N$ variables
$$
\hat{\mathbb{O}}_N:=\mathbb{C}[[y_1,\cdots,y_N]],
$$
and write $\Omega^{\bullet}_{\fO}:=\oplus_{\bullet\geq 0}\Omega^{\bullet}_{\hat{\mathbb{O}}_N}$ for the formal differential forms
$$
\Omega^{i}_{\hat{\mathbb{O}}_N}:=\mathrm{span}_{\mathbb{C}}\{f(y)\cdot dy_{k_1}\wedge\cdots\wedge dy_{k_i}|1\leq k_1<\cdots< k_i\leq N, f(y)\in \hat{\mathbb{O}}_N\}.
$$
We introduce the Lie algebra $W_N$ of formal vector fields as follows
$$
W_N=\{\sum^N_{i=1}f_i\frac{\partial}{\partial y^i}: f_i\in \hat{\mathbb{O}}_N\}.
$$
The finite dimensional Lie algebra $\mathfrak{gl}_N$ can be viewed as a Lie subalgebra of $W_N$
$$
\mathfrak{gl}_N\ni \{a_{ij}\}_{1\leq i,j\leq N}\mapsto a_{ij}y^i\frac{\partial}{\partial y^j}\in W_N.
$$

We consider an extension of the Lie algebra $W_N$ \cite{gorbounov2016chiral,1999Chiral}.

\begin{defn}
  The 2-cocycle $c:W_N\times W_N \rightarrow  \Omega^1_{\fO}/d\fO$
  $$
  c(f\frac{\partial}{\partial y^i},g\frac{\partial}{\partial y^j}):=[d(\frac{\partial f}{\partial y^j})\cdot \frac{\partial g}{\partial y^i}]\in \Omega^1_{\fO}/d\fO,
  $$
  where $[-]$ is the equivalence class in $\Omega^1_{\fO}/d\fO$, defines an extension
  $$
  0\rightarrow \Omega^1_{\fO}/d\fO\rightarrow \widetilde{W}_N\rightarrow W_N\rightarrow 0.
  $$
\end{defn}
Next we define a Lie algebra morphism from $\widetilde{W}_N$ to $V/L_{-1}V$.   Write $\widetilde{W}_N=\Omega^1_{\fO}/d\fO\oplus W_N$ as a vector space. Define a linear map
  $$
  \rho_{W_N}: W_N\rightarrow V\cong \mathbb{C}[[\partial^k\gamma^i,\partial^k\beta^i]]_{k\geq 0, i=1,\dots,N}
  $$
  $$
  \rho_{W_N}(f(y)\cdot \frac{\partial}{\partial y^i})=\frac{\pi}{i\hbar}f(\gamma_0)\beta^i_{-1}|0\rangle=\frac{\pi}{i\hbar}f(\gamma)\beta^i,
  $$
  here $f(\gamma_0)$ means that we replace $y^i$ by $\gamma^i_0$. The induced map $\varphi\circ \rho_{W_N}: W_N\rightarrow \mathrm{Der}(V)$ is not a Lie algebra homomorphism, since
  \begin{align*}
&   [\varphi\circ \rho_{W_N}(f\frac{\partial}{\partial y^i}),\varphi\circ \rho_{W_N}(g\frac{\partial}{\partial y^j})]  \\
   &= [(\frac{\pi}{i\hbar}f(\gamma)\beta^i)_{(0)},(\frac{\pi}{i\hbar}g(\gamma)\beta^j)_{(0)}] \\
     & =\frac{\pi}{i\hbar}(f(\gamma)\frac{\partial}{\partial y^i} g(\gamma)\beta^j-g(\gamma)\frac{\partial}{\partial y^j} f(\gamma)\beta^i)_{(0)}+(\sum^N_{k=1}\frac{\partial}{{\partial}{\gamma^k}}(\frac{\partial f(\gamma)}{\partial \gamma^j})\partial\gamma^k\cdot \frac{\partial g(\gamma)}{\partial \gamma^i})_{(0)}\\
     &=\varphi\circ \rho_{W_N}([f\frac{\partial}{\partial y^i}, g\frac{\partial}{\partial y^j}])+\varphi\circ   \rho_{\Omega^1_{\fO}}(d(\frac{\partial f}{\partial y^j})\cdot \frac{\partial g}{\partial y^i}).
  \end{align*}

  Here $ \rho_{\Omega^1_{\fO}}$ is defined by
  $$
  \rho_{\Omega^1_{\fO}}:\Omega^1_{\fO}\rightarrow V  \cong  \mathbb{C}[[\partial^k\gamma^i,\partial^k\beta^i]]_{k\geq 0, i=1,\dots,N}
  $$
  $$
  \rho_{\Omega^1_{\fO}}(f(y)dy^i)=f(\gamma_0)\gamma^i_{-1}|0\rangle=f(\gamma)\partial\gamma^i.
  $$
  Note that the map $\varphi\circ\rho_{\Omega^1_{\fO}}:\Omega^1_{\fO}\rightarrow \mathrm{Der}(V)$ factors through a sequence of maps
   $$
  \Omega^1_{\fO}\rightarrow \Omega^1_{\fO}/d\fO\rightarrow V/L_{-1}V\rightarrow \mathrm{Der}(V),
   $$
    since
  $$
  \varphi\circ\rho_{\Omega^1_{\fO}}(df(y))=L_{-1}(f(\gamma_0)|0\rangle).
  $$
We denote by $\overline{\rho}_{W_N}$ the map $W_N\rightarrow V/L_{-1}V$ induced by $\rho_{W_N}$. Similarly, set $\overline{\rho}_{\Omega^1_{\fO}/d\fO}$ the map $\Omega^1_{\fO}/d\fO\rightarrow V/L_{-1}V$ induced by $  \rho_{\Omega^1_{\fO}}:\Omega^1_{\fO}\rightarrow V$.

In summary, we have the following definition.

\begin{defn}
Define the Lie algebra morphism $\overline{\rho}_{\widetilde{W}_{N}}:\widetilde{W}_N\rightarrow V/L_{-1}V$ by
$$
\overline{\rho}_{\widetilde{W}_{N}}:=\overline{\rho}_{\Omega^1_{\fO}/d\fO}\oplus\overline{\rho}_{W_N}: \Omega^1_{\fO}/d\fO\oplus W_N\cong\widetilde{W}_N\rightarrow V/L_{-1}V.
$$
We choose an arbitrary lift ${\rho}_{\widetilde{W}_{N}}:\widetilde{W}_N\rightarrow V$ of $\overline{\rho}_{\widetilde{W}_{N}}$. It automatically satisfies that $\varphi\circ {\rho}_{\widetilde{W}_{N}}:\widetilde{W}_N\rightarrow \mathrm{Der}(V)$ is a Lie algebra morphism.
\end{defn}

Applying Theorem \ref{LieCohomologyChiral}, we get the following proposition.
\begin{prop}\label{BVFeynman}
  We identify the subalgebra $\mathbb{C}[[\gamma^i,\gamma^{i\dagger}]]\subset O_{\mathrm{BV}}(\mathrm{\mathbf{L}})$ with $\Omega_{\fO}=\oplus_{\bullet\geq 0}\Omega^{\bullet}_{\fO}$ by
  $$
  \gamma^i\mapsto y^i,\quad \gamma^{i\dagger}\mapsto dy^i.
  $$
  Write
  $$
  \Trace_{\widetilde{W}_N}(\underline{\mathrm{\mathbf{1}}})\{-\}=\sum_{k\geq 0, l\in \mathbb{Z}}  \Trace_{(k),l}\{-\}\hbar^l,\quad \Trace_{(k),l}\in   C^k_{\mathrm{Lie}}\big(\widetilde{W}_N,\mathbb{C}\big)\otimes_{\mathbb{C}}O_{\mathrm{BV}}(\mathrm{\mathbf{L}}).
  $$
  Here $\mathrm{\mathbf{1}}$ is defined in (\ref{ContantUnit}). Then $  \Trace_{(k),l}=0$ for $l>0, k\geq 0$ and the chain
  $$
  \Trace_{0}:=\sum_{k\geq 0}  \Trace_{(k),0}\{-\}\in C_{\mathrm{Lie}}\big(\widetilde{W}_N,\mathbb{C}\big)\otimes_{\mathbb{C}}\Omega^{\bullet}_{\fO}
  $$
  is a cocycle in $C_{\mathrm{Lie}}\big(\widetilde{W}_N,\Omega^{\bullet}_{\fO})$ if we equip $\Omega^{\bullet}_{\fO}$ with the action of $\widetilde{W}_N$ induced by the $W_N$-action as Lie derivative.

\end{prop}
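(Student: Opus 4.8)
The plan is to run the Feynman diagram computation for $\Trace_{\widetilde{W}_N}(\underline{\mathrm{\mathbf{1}}})\{-\}$ and extract the claimed vanishing and cocycle properties. First I would unwind the definitions: $\Trace_{\widetilde{W}_N}(\underline{\mathrm{\mathbf{1}}})\{\xi_1\otimes\cdots\otimes\xi_m\}=\Trace(\underline{\mathrm{\mathbf{1}}}\boxtimes\xi)$ where $\xi=\rho_{\widetilde{W}_N}(\xi_1)dz_1\shift\boxtimes\cdots\boxtimes\rho_{\widetilde{W}_N}(\xi_m)dz_m\shift$ (up to the sign $(-1)^{\bullet_\xi}$), and $\Trace=\mathrm{tr}\circ\mathcal{W}$. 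By Theorem \ref{RegularizedANDTrace}, $\mathrm{tr}$ is the regularized integral $\dashint_{X^{m+1}}$ composed with $\mathbf{p}_{\mathrm{BV}}\circ\mathbf{Mult}$; since one of the vertices carries $\underline{\mathrm{\mathbf{1}}}=-\tfrac{\pi}{\mathrm{Im}(\tau)}dz\cdot d\bar z$ which is a constant form, the integration over that factor is trivial and contributes the normalization $\mathrm{\mathbf{Tr}}(\underline{\mathrm{\mathbf{1}}})=1$. The key structural input is that $\mathcal{W}=\mathbf{p}_{\mathrm{BV}}\circ\mathbf{Mult}\circ e^{\hbar\mathfrak{P}+\hbar\mathfrak{Q}+D}$, so each $\hbar$ in the output comes either from a propagator edge $\mathcal{P}_{ij}$ (connecting two distinct vertices), a self-loop $\mathcal{Q}_i$, or the $D$-operator which contributes no $\hbar$. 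Since $\underline{\mathrm{\mathbf{1}}}$ is the vacuum at one vertex and each $\rho_{\widetilde{W}_N}(\xi_k)=\tfrac{\pi}{i\hbar}f(\gamma)\beta^k+\cdots$ carries an explicit $\hbar^{-1}$, a careful power counting in $\hbar$ of each connected Feynman graph shows that the total $\hbar$-degree is always $\le 0$, giving $\Trace_{(k),l}=0$ for $l>0$.

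For the power counting I would argue as follows. A connected graph on the vertex set $\{0,1,\dots,m\}$ (vertex $0$ carrying $\underline{\mathrm{\mathbf{1}}}$) with $E$ propagator edges and $S$ self-loops and an over-all $\mathbf{Mult}$ contributes $\hbar^{E+S}$ from the edges against $\hbar^{-m}$ from the $m$ insertions $\rho_{\widetilde{W}_N}(\xi_k)$, but since $\underline{\mathrm{\mathbf{1}}}=|0\rangle$ kills any edge landing on vertex $0$ unless that edge is itself trivial, and since connectedness forces $E+S\ge m$ with equality only for tree-type graphs, one gets net $\hbar$-degree $\le 0$. (This is exactly the ``only one-vertex tree diagram contributes'' phenomenon referenced in the proof of Theorem \ref{mainThm}(3) and in Appendix \ref{FeynmanCompute}, which I would cite rather than redo.) The main obstacle — and the step I would spend the most care on — is tracking the $\beta$-$\gamma$ contractions precisely: each $\rho_{\widetilde{W}_N}(\xi_k)$ is linear in $\beta$, so a propagator must pair that $\beta$ with a $\gamma$ (or $\partial^l\gamma$) appearing in $f(\gamma)$ at another vertex; the combinatorics of which graphs survive, and the identification of the surviving sum with a Lie-algebra-cochain expression in the $\gamma$-variables only, is where the bookkeeping lives.

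Having established $\Trace_{(k),l}=0$ for $l>0$, the element $\Trace_0=\sum_k\Trace_{(k),0}\{-\}$ is simply the $\hbar^0$-component of $\Trace_{\widetilde{W}_N}(\underline{\mathrm{\mathbf{1}}})\{-\}$, valued in $O_{\mathrm{BV}}(\mathbf{L})$; restricting to the subalgebra $\mathbb{C}[[\gamma^i,\gamma^{i\dagger}]]\cong\Omega^\bullet_{\fO}$ is legitimate because (again by the $\beta$-linearity of each insertion and the structure of the propagators contracting all $\beta$'s) the output of $\mathcal{W}(\underline{\mathrm{\mathbf{1}}}\boxtimes\xi)$ lies in $\mathbb{C}[[\gamma,\gamma^\dagger]]$ after $\mathbf{p}_{\mathrm{BV}}\circ\mathbf{Mult}$. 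Finally, the cocycle property follows from Theorem \ref{LieCohomologyChiral}: that theorem gives $(\partial_{\mathrm{Lie}}+d_{\mathrm{tot}}+\hbar\Delta_{\mathrm{BV}})\Trace_{\widetilde{W}_N}(-)\{-\}=0$, and evaluated on the closed chain $\underline{\mathrm{\mathbf{1}}}$ (which has $d_{\mathrm{tot}}\underline{\mathrm{\mathbf{1}}}=0$) this reads $\partial_{\mathrm{Lie}}\Trace_{\widetilde{W}_N}(\underline{\mathrm{\mathbf{1}}})\{-\}=-\hbar\Delta_{\mathrm{BV}}\Trace_{\widetilde{W}_N}(\underline{\mathrm{\mathbf{1}}})\{-\}$. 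Taking the $\hbar^0$-component: the right side is $\hbar^1\cdot(\text{its }\hbar^{-1}\text{-part})$, but $\Trace_{(k),l}=0$ for $l>0$ forces the $\hbar^{-1}$-part of $\Delta_{\mathrm{BV}}\Trace$ to have no $\hbar^0$-contribution, so $\partial_{\mathrm{Lie}}\Trace_0=0$; one must only check that $\partial_{\mathrm{Lie}}$ on $C_{\mathrm{Lie}}(\widetilde{W}_N,\mathbb{C})\otimes\Omega^\bullet_{\fO}$ coincides with the Chevalley–Eilenberg differential of $C_{\mathrm{Lie}}(\widetilde{W}_N,\Omega^\bullet_{\fO})$ for the $\widetilde{W}_N$-module structure on $\Omega^\bullet_{\fO}$ induced from the $W_N$-action by Lie derivative, which is a direct comparison of the Lie-bracket action computed in the proof of Theorem \ref{LieCohomologyChiral} (via $\rho(\xi_k)_{(0)}$) with the Lie derivative formula on formal forms.
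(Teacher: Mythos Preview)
Your power-counting for the vanishing $\Trace_{(k),l}=0$ when $l>0$ has the right conclusion but the wrong mechanism: connectedness is irrelevant here. The actual constraint is that each insertion $\rho(\xi_i)=\tfrac{\pi}{i\hbar}f_i(\gamma)\beta^{l_i}+g_i(\gamma)\partial\gamma^{m_i}$ is (after discarding the $g_i$-term, which the paper handles in Appendix~\ref{FeynmanCompute}) \emph{linear in $\beta$}. Every propagator edge or self-loop contributes $\hbar^{+1}$ and consumes exactly one $\beta$; with only $k$ $\beta$'s available, there are at most $k$ such contractions, and the $k$ factors of $\hbar^{-1}$ from the insertions give net $\hbar$-degree $\le 0$. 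This is the one-line argument the paper uses.

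The real gap is in your cocycle argument. Extracting the $\hbar^0$ component of $\partial_{\mathrm{Lie}}\Trace(\underline{\mathrm{\mathbf{1}}})=-\hbar\Delta_{\mathrm{BV}}\Trace(\underline{\mathrm{\mathbf{1}}})$ yields
\[
\partial_{\mathrm{Lie}}\Trace_{(k),0}\;=\;-\,\Delta_{\mathrm{BV}}\Trace_{(k+1),-1},
\]
and the right-hand side is \emph{not} zero: the vanishing for $l>0$ says nothing about $\Trace_{(k+1),-1}$. Furthermore, the $\partial_{\mathrm{Lie}}$ inherited from Theorem~\ref{LieCohomologyChiral} has its module-action part (the action of $\rho(\xi_j)_{(0)}$ on the chiral chain) vanishing when evaluated at the vacuum $\underline{\mathrm{\mathbf{1}}}$, so here $\partial_{\mathrm{Lie}}$ is the differential for the \emph{trivial} coefficients, not for $\Omega^\bullet_{\fO}$ with the Lie-derivative action. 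Your final ``one must only check'' sentence asserts that these two differentials coincide; they do not --- they differ exactly by the Lie-derivative terms. Those terms must come from $-\Delta_{\mathrm{BV}}\Trace_{(k+1),-1}$, and this is the substance of the paper's proof: it computes $\Trace_{(k+1),-1}$ explicitly as a single uncontracted vertex carrying
\[
\theta(\xi_j)=f_j(\gamma)\beta^{l_j\dagger}+\sum_{s}\tfrac{\partial f_j(\gamma)}{\partial\gamma^s}\gamma^{s\dagger}\beta^{l_j}
\]
tensored with $\Trace_{(k),0}\{\xi_1\wedge\cdots\widehat{\xi}_j\cdots\wedge\xi_{k+1}\}$, and then checks by hand that $\Delta_{\mathrm{BV}}$ applied to this product gives precisely $\mathcal{L}_{f_j\partial/\partial y^{l_j}}$ acting on the latter factor. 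That explicit computation, together with the Feynman-diagram verification that $\Trace_{(k),0}$ lands in $\mathbb{C}[[\gamma,\gamma^\dagger]]$, is what you are missing.
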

\begin{proof}
To simplify the notation, we write $\rho$ instead of $\rho_{\widetilde{W}_{N}}$. By definition
$$
\sum_{l\in\mathbb{Z}}\hbar^l\Trace_{(k),l}\{\xi_1\wedge\cdots\wedge\xi_k\}=\dashint_{X^k} e^{\hbar \mathfrak{P}+\hbar\mathfrak{Q}+D}(-1)^{\bullet_\xi}\rho(\xi_1\wedge\cdots\wedge\xi_k)d^mz[m],
$$

We can assume that
$$
\xi_i=f_i(y)\frac{\partial}{\partial y^{l_i}}+[g_i(y)dy^{m_i}],\quad f_i(y)\frac{\partial}{\partial y^{l_i}}\in W_N,\ [g_i(y)dy^{m_i}]\in \Omega^1_{\fO}/d\fO \quad i=1,\dots,k.
$$
Thus
$$
\rho(\xi_i)=\frac{\pi}{i\hbar}f_i(\gamma)\beta^{l_i}+g_i(\gamma)\partial\gamma^{m_i}\quad \mod (L_{-1}V).
$$
We simply take $\rho(\xi_i)=\frac{\pi}{i\hbar}f_i(\gamma)\beta^{l_i}+g_i(\gamma)\partial\gamma^{m_i}$ since the element in $L_{-1}V$ will not affect the result. And the term $g_i(\gamma)\partial\gamma^{m_i}$ will also not contribute to the result by Feynman diagram arguments (see Appendix \ref{FeynmanCompute}).

Now the fact that $\rho(\xi_i)$ is linear in $\beta$ implies that
$$
  \Trace_{(k),l}=0\quad\text{for}\  l>0, k\geq 0.
$$
We have (QME)
$$
\Delta_{\mathrm{BV}}\Trace_{(k+1),-1}+\partial_{\mathrm{Lie}}\Trace_{(k),0}=0.
$$
One can use Feynman diagrams (see Appendix \ref{FeynmanCompute}) to show that
$$
  \Trace_{0}\in C_{\mathrm{Lie}}\big(\widetilde{W}_N,\mathbb{C}\big)\otimes_{\mathbb{C}}\Omega^{\bullet}_{\fO}
  $$
  and
$$
\Trace_{(k+1),-1}=\sum_{j=1}^{k+1} (-1)^{\bullet_j} \dashint_{X}dz_j\frac{\pi}{i}(\sum_{s=1}^{N}\frac{\partial f_j(\gamma)}{\partial \gamma^s}\gamma^{s\dagger}\beta^{l_j}d\bar{z}_j+f_j(\gamma)\beta^{l_j\dagger}d\bar{z}_j)\wedge \mathrm{\mathbf{Tr}}_{(k),0}\{\xi_1\wedge\cdots\wedge \widehat{\xi}_j\wedge\cdots\wedge \xi_{k+1}\},
$$
where
$$
\bullet_j= {p(\xi_j)\cdot (p(\xi_{j-1})+\cdots+p(\xi_1))+j-1}=j-1.
$$
Then the left-hand side of the QME is equal to
  $$
\sum_{j=1}^{k+1} (-1)^{\bullet_j} \Delta_{\mathrm{BV}}(\dashint_{X}dz_j\frac{\pi}{i}(\sum_{s=1}^{N}\frac{\partial f_j(\gamma)}{\partial \gamma^s}\gamma^{s\dagger}\beta^{l_j}d\bar{z}_j+f_j(\gamma)\beta^{l_j\dagger}d\bar{z}_j)\wedge \mathrm{\mathbf{Tr}}_{(k),0}\{\xi_1\wedge\cdots\wedge \widehat{\xi}_j\wedge\cdots\wedge \xi_{k+1}\}).
  $$

  Write
  $$
\Omega_j=\mathrm{\mathbf{Tr}}_{(k),0}(\xi_1\wedge\cdots\wedge \widehat{\xi}_j\wedge\cdots\wedge \xi_{k+1})).
  $$
  Then
  \begin{align*}
&  \Delta_{\mathrm{BV}}(\dashint_{X}dz_j\frac{\pi}{i}(\sum_{s=1}^{N}\frac{\partial f_j(\gamma)}{\partial \gamma^s}\gamma^{s\dagger}\beta^{l_j}d\bar{z}_j+f_j(\gamma)\beta^{l_j\dagger}d\bar{z}_j)\wedge\Omega_j)\\
      &=\Delta_{\mathrm{BV}}(\big(-\dashint_{X}dz_j\wedge d\bar{z}_j\frac{\pi}{i}(\sum_{s=1}^N\frac{\partial f_j(\gamma)}{\partial \gamma^s}\gamma^{s\dagger}\beta^{l_j}+f_j(\gamma)\beta^{l_j\dagger})\big)\wedge \Omega_j) \\
      &=\frac{i}{\mathrm{Im}(\tau)}(\sum_{t=1}^{N}\frac{\partial}{\partial\beta^t}\frac{\partial}{\partial\gamma^{t\dagger}}-\frac{\partial}{\partial\gamma^t}\frac{\partial}{\partial\beta^{t\dagger}})(\frac{-\mathrm{Im}(\tau)}{\pi}\cdot\frac{\pi}{i}(\sum_{s=1}^N\frac{\partial f_j(\gamma)}{\partial \gamma^s}\gamma^{s\dagger}\beta^{l_j}+f_j(\gamma)\beta^{l_j\dagger})\wedge \Omega_j)\\
      &=-(\sum_{t=1}^{N}\frac{\partial}{\partial\beta^t}(\sum_{s=1}^N\frac{\partial f_j(\gamma)}{\partial \gamma^s}\gamma^{s\dagger}\beta^{l_j})\wedge (-1)^{p(\gamma^{t\dagger})p(\gamma^{s\dagger})}\frac{\partial}{\partial\gamma^{t\dagger}} \Omega_j-\sum_{t=1}^{N}\frac{\partial}{\partial\beta^{t\dagger}}(\sum_{s=1}^Nf_j(\gamma)\beta^{l_j\dagger})\wedge (-1)^{p(\beta^{t\dagger})p(\gamma^{t})} \frac{\partial}{\partial\gamma^t}  \Omega_j)\\
      &= (df_j(\gamma)\cdot \iota_{\frac{\partial}{\partial y^{l_j}}}+f_j(\gamma)\mathcal{L}_{\frac{\partial}{\partial y^{l_j}}}))\Omega_j \quad (\text{we use}\ p(\gamma^{t\dagger})=\bar{1},p(\gamma^{t})=\bar{0})\\
     & = d(f_j(\gamma)\cdot \iota_{\frac{\partial}{\partial y^{l_j}}}\Omega_j)+f_j(\gamma)\mathcal{L}_{\frac{\partial}{\partial y^{l_j}}}\Omega_j-f_j(\gamma)\cdot d(\iota_{\frac{\partial}{\partial y^{l_j}}}\Omega_j)\\
     & = d(f_j(\gamma)\cdot \iota_{\frac{\partial}{\partial y^{l_j}}}\Omega_j)+f_j(\gamma)\cdot \iota_{\frac{\partial}{\partial y^{l_j}}}d\Omega_j\\
     &=\mathcal{L}_{f_j(y)\frac{\partial}{\partial y^{l_j}}}\Omega_j.
  \end{align*}
  Here we use the fact that $\frac{\partial}{\partial\beta^t}\Omega_j=\frac{\partial}{\partial\beta^{t\dagger}}\Omega_j=0$, since $\Omega_j\in \Omega_{\fO}\cong \mathbb{C}[[\gamma^t,\gamma^{t\dagger}]]_{t=1,\dots,N}$.

  Finally, we get
  $$
  \partial_{\mathrm{Lie}}\Trace_{(k),0}\{-\}+\sum_{j=1}^{k+1}(-1)^{\bullet_j}\mathcal{L}_{f_j(y)\frac{\partial}{\partial y^{l_j}}}\Omega_j=0.
  $$
  The proof is complete.
\end{proof}
\begin{rem}
  The cocycle property also can be derived by direct computation (see the following proposition). Here we emphasize that the cocycle property is related to the quantum master equation and can be generalized to more complicated Lie algebra other than $\tilde{W}_N$.
\end{rem}

We can compute the cocycle $\mathrm{\mathbf{Tr}}_{0}$ explicitly. In fact, it is equal to the formal Witten genus times a simple factor. Let us briefly review the formal Witten genus in Lie algebra cohomology.  We first define the formal Atiyah class as follows
$$
\mathrm{At}\in\mathrm{C}_{\mathrm{Lie}}^1(W_N, \mathrm{GL}_N;\Omega^1_{\fO}\otimes_{\mathbb{C}}\mathrm{End}(\mathbb{C}^N))
$$
$$
\mathrm{At}(f(y)\frac{\partial}{\partial y^i})=d(\partial_kf(y))\otimes (\frac{\partial}{\partial y^i}\otimes y^k)\in\Omega^1_{\fO}\otimes_{\mathbb{C}}\mathrm{End}(\mathbb{C}^N).
$$

The formal Chern character is defined by
$$
\mathrm{ch}_{k}:=\frac{1}{(-2\pi i)^kk!}\mathrm{tr}(\mathrm{At}^k)\in \mathrm{C}_{\mathrm{Lie}}^k(W_N, \mathrm{GL}_N;\Omega^k_{\fO}), \quad k\geq 0.
$$
The cochain $\mathrm{ch}_{k}$ is indeed a cocylce.
\begin{defn}
  The formal Witten genus is defined by
  $$
  \log \mathrm{Wit}_{N}(\tau):=\sum_{k\geq 2}\frac{(2k-1)!}{(2\pi i)^{2k}}E_{2k}(\tau)\mathrm{ch}_{2k}.
  $$

\end{defn}
Here $E_{2k}$ are the Eisenstein series that are defined by
$$
E_{2k}(\tau)=\frac{1}{2\zeta(2k)}\sum_{(m,n)\in\mathbb{Z}^2-\{(0,0)\}}\frac{1}{(m\tau+n)^{2k}},\quad k\geq 2,
$$
$$
E_2(\tau)=\frac{1}{2\zeta(2)}(\sum_{n\neq 0}\frac{1}{n^2}\sum_{m\neq 0}\sum_{n\in \mathbb{Z}}\frac{1}{(m\tau+n)^2}),
$$
where $\zeta(2k),k\geq 1$ are the zeta-values.

We also use $\widehat{E}_2(\tau)=E_2(\tau)-\frac{3}{\pi}\frac{1}{\mathrm{Im}(\tau)}$ to denote the modular completion of the 2nd Eisenstein series.

We introduce
$$
\theta\in C^1_{\mathrm{Lie}}\big(\widetilde{W}_N,\mathbb{C}\big)\otimes_{\mathbb{C}}O_{\mathrm{BV}}(\mathrm{\mathbf{L}})
$$
$$
\theta (f(y)\frac{\partial}{\partial y^l}+[g(y)dy^m])=\sum_{k=1}^{N}\frac{\partial f(\gamma)}{\partial \gamma^k}\gamma^{k\dagger}\beta^l+f(\gamma)\beta^{l\dagger}.
$$
With this notation, we have the following theorem.
\begin{thm}\label{WittenGenus}
We have
$$
  \Trace_{\widetilde{W}_N}(\underline{\mathrm{\mathbf{1}}})\{-\}=e^{\frac{\pi}{i\hbar}\theta}\cdot e^{p^*\log \mathrm{Wit}_N(\tau)+\frac{1}{32\pi^4}\widehat{E}_2\cdot p^*\mathrm{tr}(\mathrm{At}^2)}\in C^k_{\mathrm{Lie}}\big(\widetilde{W}_N,\mathbb{C}\big)\otimes_{\mathbb{C}}\BVk.
$$
In particular, $\Trace_{0}$ is cohomologous to $p^*\mathrm{Wit}_N(\tau)$ when we viewed it as an element in the relative cochain complex $\mathrm{C}_{\mathrm{Lie}}^{\bullet}(\widetilde{W}_N, \mathrm{GL}_N;\Omega_{\fO})$, where
$$
p^*:\mathrm{C}_{\mathrm{Lie}}^k(W_N, \mathrm{GL}_N;\Omega^k_{\fO})\rightarrow\mathrm{C}_{\mathrm{Lie}}^k(\widetilde{W}_N, \mathrm{GL}_N;\Omega^k_{\fO})\subset C^k_{\mathrm{Lie}}\big(\widetilde{W}_N,\mathbb{C}\big)\otimes_{\mathbb{C}}\BVk
$$
 is the pull back map induced by $\widetilde{W}_N\rightarrow W_N$.
\end{thm}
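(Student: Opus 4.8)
The plan is to compute $\Trace_{\widetilde{W}_N}(\underline{\mathrm{\mathbf{1}}})\{\xi_1\wedge\cdots\wedge\xi_k\}$ by an explicit Feynman-diagram expansion of $\mathcal{W}=e^{\hbar\mathfrak{P}+\hbar\mathfrak{Q}+D}$, following the set-up in the proof of Proposition \ref{BVFeynman}, and to recognize the answer as a prefactor times the formal Witten genus. First I would record the drastic simplifications forced by form degrees and by the structure of $\rho=\rho_{\widetilde{W}_N}$. The vacuum vertex $\underline{\mathrm{\mathbf{1}}}$ carries no generators, so it decouples and contributes $\dashint_X\underline{\mathrm{\mathbf{1}}}=1$; the propagator $P(z,w)$ and the constants $Q(k,l)$ carry no anti-holomorphic form degree and $D_i^2=0$, while $\rho(\xi_i)\,dz_i\shift$ is a $(1,0)$-form, so matching $(k,k)$-type for $\dashint_{X^k}$ forces $e^D$ to contribute only its top term $D_1\cdots D_k$. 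Since moreover (as in the proof of Proposition \ref{BVFeynman}, and by the Feynman arguments of Appendix \ref{FeynmanCompute}) the $\Omega^1_{\fO}/d\fO$-directions and all $L_{-1}$-exact terms drop out of the regularized integral, each $\xi_i$ may be replaced by its image $f_i\partial_{y^{l_i}}\in W_N$, and one has $D_i(\rho(\xi_i))=\tfrac{\pi}{i\hbar}\theta(\xi_i)\,d\bar z_i$ with $\theta(\xi_i)\in\mathbb{C}[[\gamma,\gamma^\dagger]]$ linear in the zero mode $\beta^{l_i}$ (in its $\beta$-containing summand) or in $\beta^{l_i\dagger}$. Thus the trace reduces to
$$\dashint_{X^k}(-1)^{\bullet_\xi}\Bigl(\textstyle\prod_i dz_i\wedge d\bar z_i\Bigr)\,\mathbf{p}_{\mathrm{BV}}\circ\mathbf{Mult}\Bigl(e^{\hbar\mathfrak{P}+\hbar\mathfrak{Q}}\bigl(\textstyle\prod_i\tfrac{\pi}{i\hbar}\theta(\xi_i)\bigr)\Bigr).$$

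Next I would classify the surviving Feynman graphs. Only $P_{ij}(0,0)=P(z_i,z_j)$ and $Q(0,0)$ can occur, since the vertices involve only $\partial^0$-modes, and $Q(0,0)=P^{\mathrm{reg}}(z,z)=0$ by the oddness of $\partial_z\log\vartheta_1$, so the tadpole $\mathfrak{Q}$ acts trivially; each propagator pairs one $\partial/\partial\beta$ with one $\partial/\partial\gamma$ through $\omega$. Counting $\hbar$ (a vertex weighs $\tfrac{\pi}{i\hbar}$, a propagator weighs $\hbar$), tree-type components weigh $\hbar^{-(\#\text{components})}$ while one-loop (wheel) components weigh $\hbar^0$; the tree-type contributions are what resum into the prefactor $e^{\frac{\pi}{i\hbar}\theta}$ and do not enter $\Trace_0$, whereas $\Trace_0$ is the pure-wheel part (one-point and odd-length-cycle subintegrals vanishing by $\bar\partial$-considerations and the antisymmetry $P(z,w)=-P(w,z)$, leaving disjoint even wheels). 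For a wheel of length $k$ I would identify the $\gamma,\gamma^\dagger$-dependence with $p^*\mathrm{tr}\bigl(\mathrm{At}(\xi_1)\cdots\mathrm{At}(\xi_k)\bigr)$: the index contractions dictated by $\omega$ on the propagators give exactly $\mathrm{tr}\bigl(E^{k_1}_{l_1}\cdots E^{k_k}_{l_k}\bigr)=\prod_i\delta_{l_i,k_{i+1}}$, and $D_i$ produces the one-form factor $d(\partial_{\,\cdot\,}f_i)$ entering $\mathrm{At}(\xi_i)$. Hence the length-$k$ wheel contributes a universal constant times $\bigl(\dashint_{X^k}\prod_i P(z_i,z_{i+1})\prod_i dz_i\wedge d\bar z_i\bigr)\cdot p^*\mathrm{tr}(\mathrm{At}^k)$.

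The third step is to evaluate the wheel integrals and resum. Expanding $P$ via $\tfrac{i}{\pi}\partial_z\log\vartheta_1(z-w;\tau)$ and using the Stokes formula of Proposition \ref{Stokes} together with $\bar\partial_z P=-\tfrac{i\,d\bar z}{\mathrm{Im}(\tau)}$ (equation \eqref{Zeromodes}), the length-$2k$ wheel integral evaluates to a multiple of the Eisenstein series $E_{2k}(\tau)$ for $k\ge 2$, while the length-two wheel integral $\dashint_{X^2}P(z_1,z_2)^2$ produces, because of the $\tfrac{1}{\mathrm{Im}(\tau)}$ boundary correction built into the regularized integral exactly as in \cite{li2020regularized}, a multiple of the modular-completed $\widehat{E}_2(\tau)$. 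Summing the exponential of connected (wheel) diagrams and tracking the combinatorial factors gives $\Trace_0=e^{\,p^*\log\mathrm{Wit}_N(\tau)+\frac1{32\pi^4}\widehat{E}_2(\tau)\,p^*\mathrm{tr}(\mathrm{At}^2)}$, and including the tree part yields the full formula with prefactor $e^{\frac{\pi}{i\hbar}\theta}$. Finally, for the cohomology statement I would use that the defining $2$-cocycle $c$ of $\widetilde{W}_N$ satisfies $d\circ c\simeq\mathrm{tr}(\mathrm{At}^2)$ in $\mathrm{C}_{\mathrm{Lie}}(W_N,\Omega^\bullet_{\fO})$ and that a splitting of the extension supplies a $1$-cochain $\mu$ on $\widetilde{W}_N$ with $\partial_{\mathrm{Lie}}\mu=p^*c$; thus $p^*\mathrm{tr}(\mathrm{At}^2)$ is $\partial_{\mathrm{Lie}}$-exact in $\mathrm{C}_{\mathrm{Lie}}(\widetilde{W}_N,\mathrm{GL}_N;\Omega_{\fO})$ (compare \cite{gorbounov2016chiral}), so $e^{\frac1{32\pi^4}\widehat{E}_2\,p^*\mathrm{tr}(\mathrm{At}^2)}=1+\partial_{\mathrm{Lie}}(\cdots)$ and, since $p^*\mathrm{Wit}_N$ is a cocycle, $\Trace_0$ is cohomologous to $p^*\mathrm{Wit}_N(\tau)$.

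I expect the main obstacle to be two intertwined bookkeeping problems: (a) evaluating the wheel integrals $\dashint_{X^k}\prod_i P(z_i,z_{i+1})$ as Eisenstein series, in particular obtaining the completed $\widehat{E}_2$ in the length-two case from the regularization, and pinning down the universal constants so that the resummation reproduces $\log\mathrm{Wit}_N$ with the coefficients $\tfrac{(2k-1)!}{(2\pi i)^{2k}}E_{2k}$ and the factor $\tfrac1{32\pi^4}$; and (b) controlling the Koszul and permutation signs so that the tree-type contributions collapse exactly to $e^{\frac{\pi}{i\hbar}\theta}$ in $\mathrm{C}_{\mathrm{Lie}}(\widetilde{W}_N,\mathbb{C})\otimes\BVk$. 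The conceptual steps, namely the decoupling of $\underline{\mathrm{\mathbf{1}}}$, the top-$D$ reduction, the graph classification, the $\hbar$-counting and the extension trick killing $\mathrm{tr}(\mathrm{At}^2)$, are comparatively routine.
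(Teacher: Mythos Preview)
Your plan is essentially the paper's own argument: reduce via form-degree and $\mathbf{p}_{\mathrm{BV}}$, discard the $\Omega^1_{\fO}/d\fO$ and $L_{-1}$-exact pieces, classify the surviving connected Feynman graphs into single-vertex trees ($\mathfrak{G}_2$, giving the prefactor $e^{\frac{\pi}{i\hbar}\theta}$) and pure wheels ($\mathfrak{G}_1$, giving $\Trace_0$), identify the wheel combinatorics with $p^*\mathrm{tr}(\mathrm{At}^k)$, and conclude the cohomology statement from the exactness of $p^*\mathrm{tr}(\mathrm{At}^2)$ on $\widetilde{W}_N$. The paper spells this out in Appendix~\ref{FeynmanCompute}, where the graph classification is made a bit more systematic than yours: five further families $\mathfrak{G}_I$--$\mathfrak{G}_V$ (one-loop vertices from the $g_i(\gamma)\partial\gamma^{m_i}$ piece, multi-vertex trees, and wheels with tails) are shown to vanish, all by the single mechanism $\dashint_X P(z,w)\,dz\wedge d\bar z=0$ or $\dashint_X \partial_z P(z,w)\,dz\wedge d\bar z=0$. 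You implicitly use this when asserting that only pure even wheels survive, but you do not state it; it is worth making the ``wheels-with-tails vanish'' step explicit, since otherwise your $\hbar^0$ sector contains more than disjoint wheels.

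The one genuine tactical difference is in evaluating the wheel integrals. The paper's point is that the wheel integrand $\prod_i P(z_i,z_{i+1})\,dz_i\,d\bar z_i$ is (up to a scalar) a \emph{logarithmic} form, $\prod_i P(z_i,z_{i+1})(dz_i-dz_{i+1})\cdot d^k\bar z$, so the regularized integral coincides with the ordinary one and the entire evaluation (including the appearance of $E_{2k}$ and $\widehat{E}_2$ with the stated constants) is delegated to \cite{2011WittenGenus,gorbounov2016chiral}. Your route---iterated Stokes via Proposition~\ref{Stokes} and $\bar\partial P=-\tfrac{i\,d\bar z}{\mathrm{Im}(\tau)}$---would also work and is more self-contained, but the logarithmic observation is what lets the paper avoid redoing that computation; either way the bookkeeping you flag in (a) is the only real labor.
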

\begin{proof}
  We only need to notice that in this computation, the singular differential forms that we need to integrate are of the following form
  $$
d^nzd^n\bar{z}\cdot  P(z_1,z_2)P(z_2,z_3)\cdots P(z_{n-1},z_n)P(z_n,z_1).
   $$
This can be written as a logarithmic form
$$
P(z_1,z_2)(dz_1-dz_2)\cdot P(z_2,z_3)(dz_2-dz_3)\cdots P(z_n,z_1)(dz_n-dz_1)\cdot d^n\bar{z},
$$
up to a scalar factor.

 The above differential form is integrable and the regularized integral is equal to the usual integral (see Appendix \ref{ReInt} for the precise definition of the regularized integral).  Then the proof is the same as in \cite{2011WittenGenus,gorbounov2016chiral}. See Appendix \ref{FeynmanCompute} for the relevant Feynman diagram computations.
\end{proof}
\begin{rem}
  Our construction is motivated by the Gelfand-Fuks construction on Lie algebra cohomology of formal vector fields \cite{Gel_fand_1970}. This type of construction appears in mathematical formulations of non-linear sigma models, see \cite{feigin1989riemann,kapranov1999rozansky,kontsevich1999rozansky} and later developments \cite{gorbounov2016chiral,gui2021geometry}. In particular, \cite{gorbounov2016chiral} shows that the one-loop part of the partition function is the formal Witten genus (see \cite{2011WittenGenus,costello2010geometric} for the original geometric approach) in the relative Lie algebra cohomology of formal vector fields.
\end{rem}

\begin{rem}
  In \cite{BVQandindex}, the authors study the topological quantum mechanics in BV formalism and reduce the algebraic index theorem to the one-loop computation in Feynman diagrams.   In \cite{gui2021geometry}, the universal algebraic index is computed using the Gelfand-Fuks construction and the same $S^1$-equivariant localization method used in \cite{BVQandindex}. We expect that our construction here provides a two-dimensional analogue of \cite{gui2021geometry} and sheds some new light on the 2d chiral algebraic index (which can be thought of as a certain algebraic index on the loop space).
\end{rem}

\begin{rem}
  The computation in the above Proposition is automatically one-loop because $\mathfrak{g}=W_N$ is a very special Lie algebra. If we take a general Lie subalgebra of the inner derivation of $V$, the computations may involve higher loops and can be in principle very complicated. In other words, the formal Witten genus can be viewed as a special example of general (unknown) chiral indices.
\end{rem}

\subsection{Trace map for coset models}
In this subsection, we do not rescale the regularized integral. The differential of the chiral chain complex is the usual one
$$
  d_{\mathrm{tot}}=d_{\mathrm{DR}}+\bar{\partial}+2\pi id_{\mathrm{ch}}+d_{\mathrm{Norm}}.
$$

Let $V$ be a vertex algebra, a vertex subalgebra of $V$ is a subspace $A$ of $V$ containing $|0\rangle$ such that
$$
a_{(n)}A\subset A\ \text{for all}\ a\in A\ \text{and}\ n\in\mathbb{Z}.
$$

Now let $A$ be a vertex subalgebra of a vertex algebra $V$. The commutant of $A$ in $V$, denoted by $\mathrm{Com}(A,V)$, is the vertex subalgebra consisting of all elements $v\in V$ such that $a_{(n)}v=0$ for $n\geq 0$ and all $a\in A$. This construction is standard in the theory of vertex algebra, see \cite{book:1415117}.

Suppose we have a free field realization of $V$, that is, there is a morphism of chiral algebras
$$
\rho:\mathcal{V}^r\rightarrow \mathcal{A}^{\beta\gamma-bc}.
$$
$\rho$ induces a chain map
$$
\rho^{\mathrm{ch}}: \tilde{C}^{\mathrm{ch}}(X,\mathcal{V}^r)_{\mathcal{Q}}\rightarrow \tilde{C}^{\mathrm{ch}}(X,\mathcal{A}^{\beta\gamma-bc})_{\mathcal{Q}}.
$$
Then we get the induced trace map $\mathrm{\mathbf{Tr}}^{\mathrm{BV}}_{\rho^{\mathrm{ch}}}=\mathrm{\mathbf{Tr}}^{\mathrm{BV}}_{\mathcal{A}^{\beta\gamma-bc}}\circ\rho^{\mathrm{ch}}$ for chiral algebra $\mathcal{V}^r$.  Let $\alpha\in \Gamma(X^I,(\mathcal{V}^r\shift)^{\boxtimes T}(*\Delta_T)_\mathcal{Q})\subset \Gamma(X^I, \mathrm{Norm}(\equiQ(I),F)^{0})$  such that
$$
(\bar{\partial}+2\pi i d_{\mathrm{ch}})\alpha\in \mathrm{Im}(d_{\mathrm{DR}}).
$$
Then we can construct a trace map for chiral algebra $\mathrm{Com}(A,V)$ associated to $\alpha$ as follows. Define
$$
\mathrm{\mathbf{Tr}}_{\alpha}(\eta):=\mathrm{\mathbf{Tr}}^{\mathrm{BV}}_{\rho^{\mathrm{ch}}}(\alpha\boxtimes \eta), \eta\in \tilde{C}^{\mathrm{ch}}(X,(\mathrm{Com}(A,V))^r)_{\mathcal{Q}}.
$$
In the rest of this subsection, we denote the differential of the chiral complex $\tilde{C}^{\mathrm{ch}}(X,(\mathrm{Com}(A,V))^r)_{\mathcal{Q}}$ (resp. $\tilde{C}^{\mathrm{ch}}(X,\mathcal{V}^r)_{\mathcal{Q}}$) by
$$
d^{\mathrm{Com}(A,V)}_{\mathrm{tot}}=\bar{\partial}+d_{\mathrm{DR}}+2\pi id_{\mathrm{ch}}^{\mathrm{Com}(A,V)}+d_{\mathrm{Norm}}\quad (\text{resp.}\ d_{\mathrm{tot}}^{V}=\bar{\partial}+d_{\mathrm{DR}}+2\pi id_{\mathrm{ch}}^{V}+d_{\mathrm{Norm}}).
$$
\begin{thm}
  $\mathrm{\mathbf{Tr}}_{\alpha}$ is a trace map on $\tilde{C}^{\mathrm{ch}}(X,(\mathrm{Com}(A,V))^r)_{\mathcal{Q}}$, that is,
  $$
  \mathrm{\mathbf{Tr}}_{\alpha} : (\tilde{C}^{\mathrm{ch}}(X,(\mathrm{Com}(A,V))^r)_{\mathcal{Q}},d^{\mathrm{Com}(A,V)}_{\mathrm{tot}})\rightarrow (\mathbf{k},0)
$$
is a chain map. Here we omit the degree shift in $ (\mathbf{k},0)$.
\end{thm}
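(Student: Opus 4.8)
The plan is to show that $\mathrm{\mathbf{Tr}}_{\alpha}$ intertwines the two differentials, i.e. that $\mathrm{\mathbf{Tr}}_{\alpha}(d^{\mathrm{Com}(A,V)}_{\mathrm{tot}}\eta)=0$ for every $\eta\in \tilde{C}^{\mathrm{ch}}(X,(\mathrm{Com}(A,V))^r)_{\mathcal{Q}}$. The key observation is that for $\eta$ built out of sections of $(\mathrm{Com}(A,V))^r$ and $\alpha$ built out of sections of $\mathcal{V}^r$ (hence of $\mathcal{A}^{\beta\gamma-bc}$ via $\rho$), the chiral operation between an element of $\mathrm{Com}(A,V)$ and an element of $V$ acquires no new singular contributions beyond those already present in $V$; concretely, $a_{(n)}v=0$ for $n\ge 0$ whenever $a\in A$, but more relevantly, for $b\in \mathrm{Com}(A,V)$ and $v\in V$ arbitrary, the mutual OPE $b(z)v(w)$ may still be singular, so one cannot naively split. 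The right statement is rather the compatibility of the chiral differential: $d_{\mathrm{ch}}^{V}(\alpha\boxtimes\eta)$ decomposes into the piece where the collision happens among the $\alpha$-legs, the piece among the $\eta$-legs, and the ``mixed'' piece. The $\eta$-$\eta$ piece is exactly $\rho^{\mathrm{ch}}$ applied to $\alpha\boxtimes d_{\mathrm{ch}}^{\mathrm{Com}(A,V)}\eta$ (since $\mathrm{Com}(A,V)$ is a chiral subalgebra, its chiral operation is the restriction of that of $V$); the $\alpha$-$\alpha$ piece is $d_{\mathrm{ch}}^{V}\alpha$ boxed with $\eta$; and the mixed piece must be absorbed into $d_{\mathrm{DR}}$-exact terms.

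First I would set up the bookkeeping: write $\alpha\boxtimes\eta$ as a chiral chain in $\tilde{C}^{\mathrm{ch}}(X,\mathcal{V}^r)_{\mathcal{Q}}$ via the embedding of $\mathrm{Com}(A,V)$ and $V$ into $V$, apply the main theorem (Theorem \ref{mainThm}, part (2), and its composition with $\int_{\mathrm{BV}}$ as in Corollary \ref{mainCor}) to get $(d_{\mathrm{tot}}^{V}+\hbar\Delta_{\mathrm{BV}})\mathrm{\mathbf{Tr}}^{\mathrm{BV}}_{\rho^{\mathrm{ch}}}=0$, hence $d_{\mathrm{tot}}^{V}$ applied to $\mathrm{\mathbf{Tr}}^{\mathrm{BV}}_{\rho^{\mathrm{ch}}}(\alpha\boxtimes\eta)$ vanishes (the $\hbar\Delta_{\mathrm{BV}}$ term is irrelevant after $\int_{\mathrm{BV}}$). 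Then I would expand $d_{\mathrm{tot}}^{V}(\alpha\boxtimes\eta)$ using the Leibniz-type rule for the total differential. The $\bar\partial$, $d_{\mathrm{Norm}}$, and $d_{\mathrm{DR}}$ pieces split cleanly (linearity, and $\mathrm{\mathbf{Tr}}$ annihilates $d_{\mathrm{DR}}$-exact terms by the argument underlying Lemma \ref{VanishOnDR} transported through $\mathcal{W}$), so $\bar\partial\eta$ and $d_{\mathrm{Norm}}\eta$ already produce the desired $\bar\partial$ and $d_{\mathrm{Norm}}$ on the $\mathrm{Com}(A,V)$-side. The crux is the chiral term: I would argue that $d_{\mathrm{ch}}^{V}(\alpha\boxtimes\eta)=(d_{\mathrm{ch}}^{V}\alpha)\boxtimes\eta\ \pm\ \alpha\boxtimes d_{\mathrm{ch}}^{\mathrm{Com}(A,V)}\eta\ +\ (\text{mixed})$, where the mixed term involves $\mu_{\mathcal{A}^{\beta\gamma-bc}}$ between $\rho(a_i)$-legs (from $\alpha$) and $\rho(v_j)$-legs (from $\eta$). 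Since $\mathrm{Com}(A,V)$ has no singular OPE with $A$ but $\alpha$ lies in $\mathcal{V}^r$, not in $\mathcal{A}^r$, the mixed term is not automatically zero; instead I would invoke the hypothesis $(\bar\partial+2\pi i d_{\mathrm{ch}})\alpha\in\mathrm{Im}(d_{\mathrm{DR}})$ together with the explicit Feynman/residue description of $\mu^{\mathcal{Q}}$ to show that, after applying $\mathrm{\mathbf{Tr}}$, the $(d_{\mathrm{ch}}^{V}\alpha)\boxtimes\eta$ contribution and the mixed contribution together form a $\mathrm{\mathbf{Tr}}$-trivial (i.e.\ $\bar\partial$- plus $d_{\mathrm{DR}}$-exact) combination, leaving exactly $\mathrm{\mathbf{Tr}}_{\alpha}(d_{\mathrm{ch}}^{\mathrm{Com}(A,V)}\eta)$.

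Concretely, the cleanest route is: from Theorem \ref{mainThm} applied to $\mathcal{V}^r$, one has a chain map $\mathrm{\mathbf{Tr}}^{\mathrm{BV}}_{\rho^{\mathrm{ch}}}$; define the contraction-with-$\alpha$ operator $\iota_\alpha$ on $\tilde{C}^{\mathrm{ch}}(X,\mathcal{V}^r)_{\mathcal{Q}}$, and observe that the condition $(\bar\partial+2\pi i d_{\mathrm{ch}}+d_{\mathrm{Norm}})\alpha\equiv 0$ modulo $\mathrm{Im}(d_{\mathrm{DR}})$ makes $\eta\mapsto \alpha\boxtimes\eta$ a chain map up to $\mathrm{Im}(d_{\mathrm{DR}})$ from $(\tilde{C}^{\mathrm{ch}}(X,(\mathrm{Com}(A,V))^r)_{\mathcal{Q}},d^{\mathrm{Com}(A,V)}_{\mathrm{tot}})$ to $(\tilde{C}^{\mathrm{ch}}(X,\mathcal{V}^r)_{\mathcal{Q}},d^{V}_{\mathrm{tot}})$ --- this step uses crucially that $\mathrm{Com}(A,V)$ is a chiral subalgebra so its bracket is the restriction of $\mu^V$, and that the ``mixed'' chiral collisions between $\mathrm{Com}(A,V)$-legs and arbitrary $V$-legs assemble into $d_{\mathrm{DR}}$-exact residues of the form governed by Lemma \ref{ComplementaryTotalDer}. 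Composing with the chain map $\mathrm{\mathbf{Tr}}^{\mathrm{BV}}_{\rho^{\mathrm{ch}}}$ and using that it kills $d_{\mathrm{DR}}$-exact classes gives the result. The main obstacle I expect is precisely controlling the mixed term: proving that the sum over collisions between the (fixed, closed-up-to-exact) chain $\alpha$ and a variable chain $\eta$ built from $\mathrm{Com}(A,V)$ contributes only $d_{\mathrm{DR}}$-exact (equivalently, regularized-integral-trivial) terms. This requires either a careful residue computation analogous to Lemma \ref{ResidueChiral} and Proposition \ref{DbarChiral}, or an abstract argument that $\eta\mapsto\alpha\boxtimes\eta$ commutes with $d_{\mathrm{tot}}$ up to homotopy whenever $\alpha$ is a cocycle; I would pursue the latter, invoking functoriality of the chiral chain complex and the (co)module structure of $\tilde{C}^{\mathrm{ch}}(X,-)_{\mathcal{Q}}$ over the chiral chains of $\mathcal{V}^r$, with the Leibniz rule for $d_{\mathrm{tot}}$ on a box product supplying the needed identity.
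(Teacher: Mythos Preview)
Your overall skeleton matches the paper exactly: write $\mathrm{\mathbf{Tr}}_{\alpha}(d_{\mathrm{tot}}^{\mathrm{Com}(A,V)}\eta)=\mathrm{\mathbf{Tr}}^{\mathrm{BV}}_{\rho^{\mathrm{ch}}}(\alpha\boxtimes d_{\mathrm{tot}}^{\mathrm{Com}(A,V)}\eta)$, compare with $\mathrm{\mathbf{Tr}}^{\mathrm{BV}}_{\rho^{\mathrm{ch}}}(d_{\mathrm{tot}}^{V}(\alpha\boxtimes\eta))=0$, and observe that $\bar\partial$, $d_{\mathrm{DR}}$, $d_{\mathrm{Norm}}$ split cleanly while the hypothesis $(\bar\partial+2\pi i d_{\mathrm{ch}})\alpha\in\mathrm{Im}(d_{\mathrm{DR}})$ kills the $\alpha$-side residual.

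Where you diverge from the paper is the mixed term, and this is where you are overcomplicating. The paper asserts the clean identity
\[
d_{\mathrm{ch}}^{V}(\alpha\boxtimes\eta)=(d_{\mathrm{ch}}^{V}\alpha)\boxtimes\eta+\alpha\boxtimes d_{\mathrm{ch}}^{\mathrm{Com}(A,V)}\eta
\]
with \emph{no} mixed contribution to absorb. The mechanism is elementary: the box product $\alpha\boxtimes\eta$ carries poles only along $\Delta_T$ (from $\alpha$) and $\Delta_S$ (from $\eta$), and none along diagonals mixing an $\alpha$-index with an $\eta$-index. Feeding a section with $k=0$ into the formula for $\mu^{\mathcal{Q}}$, the mixed collision produces only the modes $a_{(n)}b$ with $n\geq 0$, and these vanish by the defining property of the commutant. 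Your reading that ``$\alpha$ lies in $\mathcal{V}^r$, not in $\mathcal{A}^r$'' is literally what the setup says, but the phrase ``defining property of the commutant'' in the proof is only usable if the legs of $\alpha$ lie in $A$ (or more generally in $\mathrm{Com}(\mathrm{Com}(A,V),V)\supseteq A$); this is the implicit hypothesis the paper is using. Under that reading the mixed term is exactly zero, and your proposed homotopy and residue arguments (Lemma~\ref{ComplementaryTotalDer}, Proposition~\ref{DbarChiral}, comodule structure, etc.) are unnecessary. The remaining unwind is then the two-line computation
\[
\mathrm{\mathbf{Tr}}_{\alpha}(d_{\mathrm{tot}}^{\mathrm{Com}(A,V)}\eta)=\mathrm{\mathbf{Tr}}^{\mathrm{BV}}_{\rho^{\mathrm{ch}}}\bigl(d_{\mathrm{tot}}^{V}(\alpha\boxtimes\eta)\bigr)-\mathrm{\mathbf{Tr}}^{\mathrm{BV}}_{\rho^{\mathrm{ch}}}\bigl((\bar\partial+2\pi i d_{\mathrm{ch}}^{V})\alpha\boxtimes\eta\bigr)=0-\mathrm{\mathbf{Tr}}^{\mathrm{BV}}_{\rho^{\mathrm{ch}}}\bigl(d_{\mathrm{DR}}(-)\boxtimes\eta\bigr)=0.
\]
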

\begin{proof}
We will prove that
  $$
  \mathrm{\mathbf{Tr}}^{\mathrm{BV}}_{\rho^{\mathrm{ch}}}(\alpha\boxtimes d_{\mathrm{tot}}^{\mathrm{Com}(A,V)}\eta)=  \mathrm{\mathbf{Tr}}^{\mathrm{BV}}_{\rho^{\mathrm{ch}}}(d_{\mathrm{tot}}^{V}(\alpha\boxtimes \eta))=0.
  $$
  And we have following obvious identities
  $$
  \bar{\partial}(\alpha\boxtimes \eta)=\bar{\partial}\alpha\boxtimes \eta+\alpha\boxtimes \bar{\partial} \eta,
  $$
  $$
  \mathrm{\mathbf{Tr}}^{\mathrm{BV}}_{\rho^{\mathrm{ch}}}(d_{\mathrm{DR}}(\alpha\boxtimes \eta))= \mathrm{\mathbf{Tr}}^{\mathrm{BV}}_{\rho^{\mathrm{ch}}}(d_{\mathrm{DR}}\alpha\boxtimes \eta)= \mathrm{\mathbf{Tr}}^{\mathrm{BV}}_{\rho^{\mathrm{ch}}}(\alpha\boxtimes d_{\mathrm{DR}}\eta)=0,
  $$
  $$
    \mathrm{\mathbf{Tr}}^{\mathrm{BV}}_{\rho^{\mathrm{ch}}}(d_{\mathrm{Norm}}(\alpha\boxtimes \eta))=\mathrm{\mathbf{Tr}}^{\mathrm{BV}}_{\rho^{\mathrm{ch}}}(\alpha\boxtimes d_{\mathrm{Norm}}\eta).
  $$
  Note that the defining property of the commutant implies
  $$
  d_{\mathrm{ch}}^V(\alpha\boxtimes \eta)= (d_{\mathrm{ch}}^{V}\alpha)\boxtimes \eta+\alpha\boxtimes d_{\mathrm{ch}}^{\mathrm{Com}(A,V)}\eta.
  $$

Then
\begin{align*}
    \mathrm{\mathbf{Tr}}^{\mathrm{BV}}_{\rho^{\mathrm{ch}}}(\alpha\boxtimes d_{\mathrm{tot}}^{\mathrm{Com}(A,V)}\eta) &=  \mathrm{\mathbf{Tr}}^{\mathrm{BV}}_{\rho^{\mathrm{ch}}}(\alpha\boxtimes (\bar{\partial}+d_{\mathrm{DR}}+2\pi i d_{\mathrm{ch}}^{\mathrm{Com}(A,V)}+d_{\mathrm{Norm}})\eta)  \\
   & = \mathrm{\mathbf{Tr}}^{\mathrm{BV}}_{\rho^{\mathrm{ch}}}(d_{\mathrm{tot}}^{V}(\alpha\boxtimes \eta))-\mathrm{\mathbf{Tr}}^{\mathrm{BV}}_{\rho^{\mathrm{ch}}}((\bar{\partial}+2\pi id_{\mathrm{ch}}^V)\alpha\boxtimes \eta)\\
   &=-\mathrm{\mathbf{Tr}}^{\mathrm{BV}}_{\rho^{\mathrm{ch}}}( (d_{\mathrm{DR}}(-))\boxtimes\eta)=0,
\end{align*}
in the last step we use the assumption that $(\bar{\partial}+2\pi id_{\mathrm{ch}}^{V})\alpha\in \mathrm{Im}(d_{\mathrm{DR}}).$
\end{proof}

\begin{rem}
  Since $\mathcal{W}$-algebras can be constructed as coset models, our construction provides a trace map on the elliptic chiral homology of $\mathcal{W}$-algebras.
\end{rem}

\section{Appendix}
\subsection{The Wick theorem}\label{Wichtheorem}
In this appendix, we review the Wick theorem and reformulate it to fit our context.  For a field $A(z)=\sum_{n\in\mathbb{Z}}A_{(n)}z^{-n-1}$ in a vertex algebra , we write
$$
  A(z)_-=\sum_{n\geq 0}A_{(n)}z^{-n-1}, A(z)_+=\sum_{n<0}A_{(n)}z^{-n-1}.
$$

Here we recall a version of Wick theorem in \cite{book1415599}.

\begin{thm}(Wick theorem)
   Let $A^{1}(z), \ldots, A^{M}(z)$ and $B^{1}(z), \ldots, B^{N}(z)$ be
two collections of fields such that the following properties hold:

(i) $\left[\left[A^{i}(z)_{-}, B^{j}(w)\right], C^{k}(z)_{\pm}\right]=0$ for all $i, j, k,$ and $C=A$ or $B$

(ii) $\left[A^{i}(z)_{\pm}, B^{j}(w)_{\pm}\right]=0$ for all $i$ and $j$.

Let $\left[A^{i} B^{j}\right]=\left[A^{i}(z)_{-}, B^{j}(w)\right]$ denote the "contraction" of $A^{i}(z)$ and $B^{j}(w) .$ Then one
has:
$$
\begin{array}{ll}
: & A^{1}(z) \cdots A^{M}(z):: B^{1}(w) \cdots B^{N}(w):=\sum\limits_{s=0}^{\min (M, N)} \sum\limits_{i_{1}<\cdots<i_{s} \atop j_{1} \neq \cdots \neq j_{s}}^{N} \\
& \left(\pm\left[A^{i_{1}} B^{j_{1}}\right] \cdots\left[A^{i_{s}} B^{j_{s}}\right]: A^{1}(z) \cdots A^{M}(z) B^{1}(w) \cdots B^{N}(w):\left(i_{1}, \ldots, i_{s} ; j_{1}, \ldots, j_{s}\right)\right)
\end{array}
$$
where the subscript $\left(i_{1} \cdots i_{s} ; j_{1} \cdots j_{s}\right)$ means that the fields $A^{i_{1}}(z), \ldots, A^{i_{s}}(z),$
$B^{j_{1}}(w), \ldots, B^{j_{s}}(w)$ are removed, and the sign $\pm$ is obtained by the usual Koszul sign rule: each permutation of the adjacent odd fields changes the sign.
\end{thm}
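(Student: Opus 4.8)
The plan is to prove this by a double induction, following the standard strategy for Wick-type formulas in vertex algebra theory (as in \cite{book1415599}), with the two hypotheses (i) and (ii) controlling respectively the centrality of contractions and the vanishing of same-sign commutators. Throughout I abbreviate the super-commutator by $[x,y]=xy-(-1)^{p(x)p(y)}yx$ and write $\epsilon$ for the various Koszul signs produced by transposing adjacent odd fields.

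First I would establish the elementary two-field identity. Directly from the definition of the normally ordered product $:A(z)B(w):=A(z)_+B(w)+(-1)^{p(A)p(B)}B(w)A(z)_-$ together with the splitting $A(z)=A(z)_++A(z)_-$, one obtains
$$
A(z)B(w)=\,:A(z)B(w):+[A(z)_-,B(w)]=\,:A(z)B(w):+[AB],
$$
so that a single contraction is exactly the super-commutator of the negative part of $A$ with the full field $B$. This is the $M=N=1$ case and the seed of the whole induction.

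Next I would prove the key lemma of moving one field into a normally ordered product: under (i) and (ii),
$$
A(z):B^1(w)\cdots B^N(w):\,=\,:A(z)B^1(w)\cdots B^N(w):+\sum_{j=1}^N\epsilon_j\,[AB^j]\,:B^1(w)\cdots\widehat{B^j(w)}\cdots B^N(w):.
$$
This is carried out by induction on $N$, using the recursive definition $:B^1\cdots B^N:\,=\,:B^1(:B^2\cdots B^N:):$ and the two-field identity applied to $A$ against $B^1$ and against the inner normal-ordered product. Hypothesis (ii) is what lets me commute the $\pm$ parts of $A$ and $B^1$ so that the outer normal ordering reassembles correctly, while hypothesis (i) is what lets each contraction $[AB^j]$ be pulled out to the front past the remaining normally ordered factors, since it super-commutes with every $C^k(z)_{\pm}$. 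The main induction is then on $M$: writing $:A^1\cdots A^M:\,=\,:A^1(:A^2\cdots A^M:):$, I first apply the inductive hypothesis to $:A^2\cdots A^M::B^1\cdots B^N:$ to expand it as a sum over contraction patterns among $A^2,\dots,A^M$ and $B^1,\dots,B^N$, then bring in $A^1$ via the single-field lemma, producing either no new contraction (leaving $A^1$ inside the final normal-ordered remainder) or one new contraction $[A^1B^{j}]$ with a still-available index $j$. Because all previously formed contractions are central by (i), they slide freely to the front and commute past $A^1$, and this is precisely what makes the bookkeeping of ordered subsets $i_1<\cdots<i_s$ paired with distinct indices $j_1\neq\cdots\neq j_s$ reproduce the stated right-hand side.

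The main obstacle, and the step I expect to be most delicate, is the Koszul sign accounting in this last induction: one must verify that the sign attached to each term $[A^{i_1}B^{j_1}]\cdots[A^{i_s}B^{j_s}]\,:A^1\cdots A^M B^1\cdots B^N:(i_1,\dots,i_s;j_1,\dots,j_s)$ is exactly the one dictated by the naive super-rule of permuting adjacent odd fields, and in particular that it is independent of the order in which the contractions were formed during the induction. I would handle this by fixing once and for all the convention that contractions are extracted in increasing order of their $A$-index, and then checking that the sign is invariant under the only two moves used in the argument—sliding a central contraction to the front (justified by (i)) and exchanging same-sign parts (justified by (ii)). The remaining content of the proof is the routine reassembly of the normal-ordered remainders, which contributes no further signs beyond those already tracked.
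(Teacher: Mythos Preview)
The paper does not actually prove this theorem: it is stated in the appendix as a result recalled from Kac's \emph{Vertex algebras for beginners} \cite{book1415599}, with no accompanying argument. Your inductive strategy---reducing to the two-field identity $A(z)B(w)=\,:A(z)B(w):+[A(z)_-,B(w)]$, then proving a ``move one field in'' lemma by induction on $N$, then inducting on $M$---is precisely the standard proof given in that reference, and your identification of hypotheses (i) and (ii) as supplying centrality of contractions and vanishing of same-sign commutators is correct. So your proposal is sound and in fact supplies what the paper omits.
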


Since we have the identification $V^{\beta\gamma-bc}\simeq \mathbb{C}[\partial^ka^s]$, any two elements $v_1=\partial^{k_1}a^{s_1}$ and $v_2=\partial^{k_2}a^{s_2}$ satisfy the conditions in the Wick theorem. We can reformulate the Wick theorem using the chiral operation $\mu$ as follows.
\begin{thm}\label{WickThm}
For $V^{\beta\gamma-bc}$, we have the Wick theorem
  $$
\mu(F(z_1,z_2)v_1dz_1\otimes v_2dz_2)
$$
$$
=\mathrm{\mathbf{Mult}}_{1\rightarrow 2}  \mathrm{Res}_{z_1\rightarrow z_2}F(z_1,z_2)\cdot e^{(z_1-z_2)\otimes \vec{\partial}_{z_1}}
  \cdot e^{(z_1-z_2)(L_{-1})_1} \cdot e^{\hbar \mathcal{P}^{\mathrm{Sing}}_{12}(1\rightarrow 2)}v_1dz_1\otimes v_2dz_2,
  $$
  for any local section $F(z_1,z_2)dz_1\otimes dz_2$ of $\omega_{X^2}(*\Delta_{\{1,2\}})_{\mathcal{Q}}$. For the definition of this generalized residue operation, see Definition \ref{genResidue}.
\end{thm}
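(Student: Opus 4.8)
The plan is to reduce the claimed identity to the classical Wick theorem quoted above by unwinding the definition of the chiral operation $\mu$ on $\mathcal{V}^{\beta\gamma-bc}$. By $\mathcal{Q}$-bilinearity (Definition \ref{ChiralDolbeault}) the $\bar\partial$-part of $F$ is inert, so it suffices to treat $F(z_1,z_2)dz_1\otimes dz_2$ holomorphic with a pole along the diagonal; then $\mu$ is given by the residue formula $(\ref{OPECHIRAL})$, whose essential content is the operator product expansion $Y(v_1,z_1-z_2)v_2=\sum_{n\in\mathbb{Z}}{v_1}_{(n)}v_2\,(z_1-z_2)^{-n-1}$, followed by multiplication with $F$, the application of $e^{(z_1-z_2)\otimes\vec\partial_{z_1}}$ that records the $\mathcal{D}_{X_{12}}$-module tail, and $\mathrm{Res}_{z_1\to z_2}$. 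Thus the theorem is equivalent to the statement that this OPE, repackaged through $e^{(z_1-z_2)\otimes\vec\partial_{z_1}}$, $\mathrm{Res}_{z_1\to z_2}$ and $\mathrm{\mathbf{Mult}}_{1\rightarrow 2}$, agrees with $\mathrm{\mathbf{Mult}}_{1\rightarrow 2}\,e^{(z_1-z_2)(L_{-1})_1}\,e^{\hbar\mathcal{P}^{\mathrm{Sing}}_{12}(1\to 2)}(v_1dz_1\otimes v_2dz_2)$.

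First I would fix the bijection $V^{\beta\gamma-bc}\simeq\mathbb{C}[[\partial^k a^s]]$ and write $v_1,v_2$ as monomials; under the state--field correspondence $Y(v_i,z)$ is then the normally ordered product of the generating fields $\partial^{k}a^s(z)$, and each such field satisfies the hypotheses (i)--(ii) of the Wick theorem with contraction $[\partial^{k}a^p(z)_-,\partial^{l}a^q(w)]=\partial^{k}_z\partial^{l}_w\big(\tfrac{i\hbar}{\pi}\tfrac{\omega_{pq}}{z-w}\big)$ --- and \emph{no} higher-order term, precisely because the $\beta\gamma-bc$ OPE is exactly $\tfrac{i\hbar}{\pi}\tfrac{\langle-,-\rangle}{z-w}$. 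Applying the Wick theorem to $Y(v_1,z_1)Y(v_2,z_2)$ and then to the vacuum, one obtains the sum over all partial matchings of the $z_1$-field occurrences with the $z_2$-field occurrences, each weighted by a product of such contraction kernels, times the normally ordered product of the uncontracted fields. I would then observe: the contraction part is resummed by $e^{\hbar\mathcal{P}^{\mathrm{Sing}}_{12}(1\to 2)}$ --- the exponential balances the $1/s!$ against the number of orderings of a matching of size $s$, and each operator $\partial/\partial(\partial^k a^p)$ correctly selects one of the several occurrences of the field $\partial^k a^p$; the uncontracted $z_1$-fields must be Taylor-expanded about $z_2$, which by translation covariance $[L_{-1},Y(a,z)]=\partial_z Y(a,z)$ is exactly $e^{(z_1-z_2)(L_{-1})_1}$; and the normally ordered product of the two residual states is $\mathrm{\mathbf{Mult}}_{1\rightarrow 2}$, the commutative polynomial multiplication on $\mathbb{C}[[\partial^k a^s]]$ (the normally ordered product of generators being graded commutative since their mutual OPE has the single term $\sim c/(z-w)$). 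Finally $e^{(z_1-z_2)\otimes\vec\partial_{z_1}}$ together with $\mathrm{Res}_{z_1\to z_2}$ converts the outcome into the $\otimes_{\mathcal{D}_{X_{12}}}\tfrac{\partial^l_{z_1}}{l!}$-form appearing in $(\ref{OPECHIRAL})$, as in the computation displayed there.

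\textbf{Main obstacle.} The hard part will be the careful bookkeeping of Koszul signs: the Wick theorem attaches a sign for each transposition of adjacent odd generating fields, whereas the right-hand side encodes the same data through the operators $\mathcal{P}^{\mathrm{Sing}}_{12}(1\to 2)$, $(L_{-1})_1$ and $\mathrm{\mathbf{Mult}}_{1\rightarrow 2}$, each carrying its own sign convention (as in Definition \ref{hDef} and in the definition of $\mathcal{W}$). Reconciling them requires a sign analysis in the spirit of the verifications in Lemma \ref{ComplementaryTotalDer} and Proposition \ref{WDChirall}. I would organize it by first treating the purely even case, where the identity ``exponential of the propagator $=$ sum over pairings'' is transparent, and then introducing the parities and checking by induction on the number of contractions that the sign attached to a fixed contraction pattern agrees on both sides. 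A secondary point to record explicitly is that no self-contraction operator $\mathcal{Q}$ enters, because the monomials $v_i$ are already written in normal-ordered form, so all contractions are cross-contractions between the two tensor slots; this is what restricts the right-hand side to $e^{\hbar\mathcal{P}^{\mathrm{Sing}}_{12}(1\to 2)}$ rather than the full $e^{\hbar\mathfrak P+\hbar\mathfrak Q+D}$ of Section \ref{BVchiral}. The generalized residue operation in $\mathrm{Res}_{z_1\to z_2}$ (Definition \ref{genResidue}) is used only to accommodate the smooth $\mathcal{Q}$-coefficients and does not affect the algebraic identity.
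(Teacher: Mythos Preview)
Your proposal is correct and follows exactly the line the paper takes: the paper does not give a detailed proof of Theorem~\ref{WickThm} at all, but simply remarks (just before the statement) that under the identification $V^{\beta\gamma-bc}\simeq\mathbb{C}[\partial^ka^s]$ the generating fields $\partial^{k}a^{s}(z)$ satisfy the hypotheses of the classical Wick theorem, and then presents the theorem as a direct reformulation of that result in the language of the chiral operation~$\mu$. Your outline unpacks precisely this reformulation---identifying the contraction sum with $e^{\hbar\mathcal{P}^{\mathrm{Sing}}_{12}(1\to 2)}$, the Taylor shift with $e^{(z_1-z_2)(L_{-1})_1}$, and the normally ordered product with $\mathrm{\mathbf{Mult}}_{1\to 2}$---so you have simply written out what the paper leaves implicit.
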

\subsection{Proof of the identities (3.3)-(3.8)}\label{Identities}
Here we prove the identities appear in the proof of Proposition \ref{WDChirall}. To simplify notation, we use $D^p_k$ for $\frac{\partial}{\partial(\partial^k a^p)}$.

Note that (\ref{PMultCommute}), (\ref{QMultCommute}) and (\ref{QjCommute}) follow directly from the definition and (\ref{QiCommute}) is already proved in Lemma \ref{WDmodule}.

For (\ref{PLCommute}), we use a version of the Baker-Campbell-Hausdorff  formula
  $$
  e^{X}e^{Y}=e^{Y+[X,Y]+\frac{1}{2!}[X,[X,Y]]+\frac{1}{3!}[X,[X,[X,Y]]]+\cdots}e^{X}.
  $$
  It can be proved using
  $$
  e^XYe^{-X}=\sum_{n=0}^{\infty}\frac{[X,\cdots[X,[X,Y]]\cdots]}{n!}, e^{e^XYe^{-X}}=e^Xe^Ye^{-X}.
  $$
  To simplify notation, we write
  $$
  (z_i-z_j)^{(r)}=\frac{1}{r!}(z_i-z_j)^r,
  $$
  $$
  [[X]^s,Y]=\underbrace{[X,\cdots[X,[X}_{s},Y]]\cdots],
  $$
    $$
  [[X]^{(s)},Y]=\frac{1}{s!}\underbrace{[X,\cdots[X,[X}_{s},Y]]\cdots].
  $$
  Then we have

\begin{align*}
&[(z_i-z_j)(L_{-1})_i,\mathcal{P}_{i\bullet}(i\rightarrow j)]=[(z_i-z_j)(L_{-1})_i,\sum_{r\geq 0}\sum_{k,l\geq 0}\sum_{p,q}P_{j\bullet}(k+r,l)(z_i-z_j)^{(r)} \omega_{pq}(D^p_k)_i\otimes (D^q_l)_{\bullet}]\\
&=[(z_i-z_j)\sum_t\sum_{m} \partial^{t+1}a^m\frac{\partial}{\partial(\partial^t a^m)},\sum_{r\geq 0}\sum_{k,l\geq 0}\sum_{p,q}P_{j\bullet}(k+r,l)(z_i-z_j)^{(r)} \omega_{pq}(\frac{\partial}{\partial(\partial^ka^p)})_i\otimes (D^q_l)_{\bullet}]\\
&=\sum_{r\geq 0}\sum_{k,l\geq 0}\sum_{p,q}P_{j\bullet}(k+r,l)(z_i-z_j)^{(r)}\cdot (z_j-z_i) \omega_{pq}(D^p_{k-1})_i\otimes (D^q_l)_{\bullet}.
\end{align*}
By induction we get
\begin{align*}
  &[[(z_i-z_j)(L_{-1})_i]^{(s)},\mathcal{P}_{i\bullet}(i\rightarrow j)]\\
  &=[[(z_i-z_j)(L_{-1})_i]^{(s)},\sum_{r\geq 0}\sum_{k,l\geq 0}\sum_{p,q}P_{j\bullet}(k+r,l)(z_i-z_j)^{(r)} \omega_{pq}(D^p_k)_i\otimes (D^q_l)_{\bullet}]   \\
   & =\sum_{r\geq 0}\sum_{k,l\geq 0}\sum_{p,q}P_{j\bullet}(k+r,l)(z_i-z_j)^{(r)} (z_j-z_i)^{(s)} \omega_{pq}(D^p_{k-s})_i\otimes (D^q_l)_{\bullet}.
\end{align*}
Then we sum over $s$
\begin{align*}
  &\sum_{s\geq 0}  [[(z_i-z_j)(L_{-1})_i]^{(s)},\mathcal{P}_{i\bullet}(i\rightarrow j)]   \\ &=\sum_{s\geq 0}\sum_{r\geq 0}\sum_{k,l\geq 0}\sum_{p,q}P_{j\bullet}(k+r,l)(z_i-z_j)^{(r)} (z_j-z_i)^{(s)} \omega_{pq}(D^p_{k-s})_i\otimes (D^q_l)_{\bullet}\\
&=\sum_{n\geq 0}\sum_{m\geq 0}\sum_{k,l\geq 0}\sum_{p,q}P_{j\bullet}(m,l)(z_i-z_j)^{(m-k)} (z_j-z_i)^{(k-n)} \omega_{pq}(D^p_{n})_i\otimes (D^q_l)_{\bullet}\\
  &=\sum_{n\geq 0}\sum_{m,l\geq 0}\sum_{p,q}P_{j\bullet}(m,l) \omega_{pq}(D^p_{m})_i\otimes (D^q_l)_{\bullet},
\end{align*}
  here we set $D^p_{n}=(z_i-z_j)^{(n)}=0$, if $n<0$. We get the identity (\ref{PLCommute})
  $$
e^{(z_i-z_j)(L_{-1})_i}e^{\hbar \mathcal{P}_{i\bullet}(i\rightarrow j)}e^{-(z_i-z_j)(L_{-1})_i}=  e^{\hbar\mathcal{P}_{i\bullet}(i\rightarrow j)|_{z_i=z_j}} .
  $$
Using the same argument, we can derive the identity (\ref{QLCommute}).
\subsection{Regularized integrals}\label{ReInt}
In this appendix, we review the definition and some properties of regularized integrals introduced in \cite{li2020regularized}. We always assume that $X$ is a Riemann surface. Most of the materials in this appendix are stated without proof, we refer to \cite{li2020regularized} for details.

\begin{lem}\label{Decompo}
  Any $\eta\in \Omega^{0,1}(X,\omega_X(*D))$ can be written as
  $$
  \eta=\eta_{\log}+\partial\eta_1,
  $$
  where
  $$\eta_{\log}\in \Omega^{0,1}(X,\omega_X(\log D)), \eta_1\in \Omega^{0,1}(X,\mathcal{O}_X(*D)).
  $$
\end{lem}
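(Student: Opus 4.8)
\textbf{Proof plan for Lemma \ref{Decompo}.}

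The statement is a local-to-global decomposition for $(0,1)$-forms with poles along a finite set $D$: any such form is, up to a holomorphic exact term $\partial\eta_1$, a form with only logarithmic singularities. The plan is to reduce to a local model near each puncture $p\in D$ and then patch using a partition of unity. First I would fix a local coordinate $z$ centered at each $p\in D$ with $z(p)=0$, so that near $p$ one can write $\eta = \frac{g(z,\bar z)}{z^{k+1}}\,dz$ for a smooth function $g$ and some $k\geq 0$; here I should separate the ``negative'' part of the Taylor expansion of $g$ in $z$ (the obstruction to being logarithmic) from the rest. Concretely, write $g(z,\bar z) = \sum_{j=0}^{k} a_j(\bar z)\, z^j + z^{k+1} h(z,\bar z)$ where $a_j$ are smooth in $\bar z$ and $h$ is smooth; then the term $j=k$ contributes a logarithmic pole $\frac{a_k(\bar z)}{z}\,dz$, while for $j<k$ each term $\frac{a_j(\bar z)}{z^{k+1-j}}\,dz$ must be written as $\partial$ of something plus a less singular correction.

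The key local computation is to observe that, for $m\geq 2$, one has $\partial\!\left(\frac{-1}{(m-1)}\frac{a(\bar z)}{z^{m-1}}\right) = \frac{a(\bar z)}{z^{m}}\,dz$ exactly, since $a(\bar z)$ is annihilated by $\partial=\partial_z\,dz$. Thus each strictly polar (non-logarithmic) monomial of the local expansion of $\eta$ is a holomorphic total $\partial$-derivative of a smooth section of $\mathcal{O}_X(*D)$ supported near $p$. Summing over the finitely many monomials with $1 < k+1-j$, i.e.\ $j<k$, and over the finitely many points of $D$, this produces a local $\eta_1^{(p)}\in\Omega^{0,1}(U_p,\mathcal{O}_X(*D))$ on a neighborhood $U_p$ of $p$ such that $\eta - \partial\eta_1^{(p)}$ has at worst a logarithmic pole at $p$. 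Choosing disjoint coordinate neighborhoods $U_p$ and a smooth cutoff $\rho_p$ supported in $U_p$ and equal to $1$ near $p$, set $\eta_1 := \sum_{p\in D}\rho_p\,\eta_1^{(p)}$, which is a globally defined element of $\Omega^{0,1}(X,\mathcal{O}_X(*D))$. Then $\eta_{\log} := \eta - \partial\eta_1$ has, near each $p$, the form $(\text{logarithmic pole at }p) + \partial\big((1-\rho_p)\eta_1^{(p)}\big) + (\text{smooth contributions from }\partial\rho_p)$; since $\partial\rho_p$ is supported away from $p$, all correction terms are smooth near $p$, so $\eta_{\log}$ indeed lies in $\Omega^{0,1}(X,\omega_X(\log D))$, and away from $D$ it agrees with $\eta$ hence is smooth there too.

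The main obstacle is bookkeeping rather than conceptual: one must check that the cutting-off procedure does not reintroduce higher-order poles. The point to be careful about is that $\partial\big((1-\rho_p)\eta_1^{(p)}\big)$ could a priori be singular, but since $1-\rho_p$ vanishes identically in a neighborhood of $p$, the product $(1-\rho_p)\eta_1^{(p)}$ is smooth across $p$ (the pole of $\eta_1^{(p)}$ is killed), so its $\partial$ is smooth near $p$; and on the overlap region the terms involving $\partial\rho_p$ are products of a smooth compactly supported form with $\eta_1^{(p)}$, which is regular there. A secondary point is to confirm the decomposition is stated only up to the indicated ambiguity and that no global cohomological obstruction arises --- this is automatic because the construction is purely local near $D$ and the ``error'' $\eta_{\log}$ is allowed to carry the remaining logarithmic singularity. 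I expect the proof to be short; the only real content is the elementary identity $\partial(z^{-(m-1)}a(\bar z)) = m{-}1$ times $z^{-m}a(\bar z)\,dz$ up to sign, together with the partition-of-unity globalization.
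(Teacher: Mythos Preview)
The paper does not prove this lemma; it is quoted in the appendix with a reference to \cite{li2020regularized}. Your overall strategy---reduce the pole order locally near each $p\in D$ and then globalize by a partition of unity---is correct and is exactly what that reference does. The cutoff bookkeeping in your last paragraph is fine.

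There is one subtle point in your local step. The decomposition $g(z,\bar z)=\sum_{j=0}^k a_j(\bar z)\,z^j + z^{k+1}h(z,\bar z)$ with $a_j$ depending only on $\bar z$ and $h$ smooth is \emph{not} an elementary Taylor expansion: since $z$ and $\bar z$ are not independent real coordinates, producing genuinely antiholomorphic $a_j$ with the required jets needs Borel's lemma, and one then still has to argue that the remainder is $C^\infty$-divisible by $z^{k+1}$. This can be made to work, but it is more than you need. The argument in \cite{li2020regularized} bypasses it: for $m\ge 2$ and \emph{any} smooth $g$ one has
\[
\frac{g}{z^{m}}\,dz \;=\; \partial\!\left(\frac{-g}{(m-1)\,z^{m-1}}\right) \;+\; \frac{1}{m-1}\,\frac{\partial_z g}{z^{m-1}}\,dz,
\]
so subtracting a $\partial$-exact term lowers the pole order by one without any hypothesis on $g$. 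Iterating from $m=k+1$ down to $m=2$ gives the local $\eta_1^{(p)}$ directly; your partition-of-unity gluing then finishes the proof unchanged.
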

Using this decomposition, we can define the regularized integral.
\begin{defn}
  We define the regularized integral
  $$
  \dashint_X: \Omega^{0,1}(X,\omega_X(*D))\rightarrow \mathbb{C}
  $$
  by
  $$
  \dashint_X\eta=\int_X\eta_{\log},
  $$
  here $\eta=\eta_{\log}+\partial\eta_1$ as in Lemma \ref{Decompo}.
\end{defn}

\begin{thm}
  The regularized integral is well defined and factors through the quotient
  $$
  \dashint_X:\frac{\Omega^{0,1}(X,\omega_X(*D))}{\partial \Omega^{0,1}(X,\mathcal{O}_X(*D))}\rightarrow \mathbb{C}.
  $$
\end{thm}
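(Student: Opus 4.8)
The statement to prove is that the regularized integral $\dashint_X \colon \Omega^{0,1}(X,\omega_X(*D)) \to \mathbb{C}$ is well defined and descends to the quotient by $\partial\,\Omega^{0,1}(X,\mathcal{O}_X(*D))$. There are two things to verify: first, that the value $\int_X \eta_{\log}$ does not depend on the choice of decomposition $\eta = \eta_{\log} + \partial\eta_1$ furnished by Lemma~\ref{Decompo}; second, that if $\eta = \partial\eta_1$ for some $\eta_1 \in \Omega^{0,1}(X,\mathcal{O}_X(*D))$ then $\dashint_X\eta = 0$. Note the second claim is a special case of well-definedness (take $\eta_{\log} = 0$), but it is worth isolating since it is the content of the displayed factorization through the quotient.

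The plan is to reduce both points to a single lemma: if $\alpha \in \Omega^{0,1}(X,\omega_X(\log D))$ is a form with at worst logarithmic poles along $D$ and $\alpha = \partial\beta$ for some $\beta \in \Omega^{0,1}(X,\mathcal{O}_X(*D))$, then $\int_X \alpha = 0$. Granting this lemma, well-definedness follows immediately: if $\eta = \eta_{\log} + \partial\eta_1 = \eta'_{\log} + \partial\eta'_1$ are two decompositions, then $\eta_{\log} - \eta'_{\log} = \partial(\eta'_1 - \eta_1)$ is a logarithmic form that is $\partial$-exact with primitive in $\Omega^{0,1}(X,\mathcal{O}_X(*D))$, hence $\int_X(\eta_{\log} - \eta'_{\log}) = 0$, i.e. $\int_X \eta_{\log} = \int_X \eta'_{\log}$. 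Likewise the factorization through the quotient by $\partial\,\Omega^{0,1}(X,\mathcal{O}_X(*D))$ is exactly the lemma applied with $\eta_{\log}=0$ plus linearity of $\int_X$ on the logarithmic part.

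To prove the key lemma I would argue as follows. Since $\alpha$ has only logarithmic singularities along the finite set $D$, it is locally integrable on $X$ (a pole of the form $\frac{d\bar z}{z}$ or $\frac{dz}{z}$-type term is $L^1$ near the puncture), so $\int_X \alpha$ is a genuine convergent Lebesgue integral; write it as $\lim_{\varepsilon \to 0} \int_{X \setminus \bigcup_{p \in D} B_\varepsilon(p)} \alpha$ where $B_\varepsilon(p)$ are small coordinate disks. On $X_\varepsilon := X \setminus \bigcup_p B_\varepsilon(p)$ we have $\alpha = \partial\beta$ with $\beta$ smooth away from $D$, and $\partial\beta = d\beta$ on a $(0,1)$-form of the relevant bidegree (the $\bar\partial$ of a $(0,1)$-form is a $(0,2)$-form, which vanishes on a Riemann surface — so $d\beta = \partial\beta$ on $\Omega^{0,1}$), whence Stokes' theorem gives $\int_{X_\varepsilon} \alpha = -\sum_{p\in D}\int_{\partial B_\varepsilon(p)} \beta$ with the induced boundary orientation. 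The remaining task is to show each boundary term tends to $0$ as $\varepsilon \to 0$: this uses that $\beta \in \Omega^{0,1}(X,\mathcal{O}_X(*D))$, so in a local coordinate $z$ centered at $p$, $\beta = g(z,\bar z)\, d\bar z$ with $g$ of the form (smooth)$/z^N$ for some $N$; the circle $\partial B_\varepsilon(p)$ has length $O(\varepsilon)$ while $|\beta|$ restricted to it is $O(\varepsilon^{-N})$ — this is not obviously small, so I would instead exploit the logarithmic constraint on $\alpha = \partial\beta$: writing out $\partial\beta = \partial_z g \cdot dz\wedge d\bar z$ and requiring this to have at most a simple pole forces cancellation in the Laurent expansion of $g$ in $z$, precisely killing all terms $z^{-k}$ with $k \geq 2$ up to $\partial$-exact adjustments, so that effectively $\beta$ may be taken to have at worst a simple pole, and then $\int_{\partial B_\varepsilon(p)}\beta = O(\varepsilon\cdot\varepsilon^{-1}) = O(1)$ with the constant term being the residue, which one checks integrates to zero around the full circle by the mean-value argument. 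The main obstacle is exactly this last boundary estimate — organizing the bookkeeping of how the logarithmic condition on $\partial\beta$ restricts the polar part of $\beta$, and extracting the precise vanishing of $\lim_{\varepsilon\to 0}\oint_{|z|=\varepsilon}\beta$. I expect the cleanest route is not to estimate crudely but to first replace $\beta$ by $\beta - \partial(\text{explicit rational function})$ to remove higher-order poles without changing $\alpha$ (legitimate since we only care about $\alpha$), reducing to the case where $\beta$ has a simple pole, where the boundary integral is manifestly the residue of a meromorphic expression times $2\pi i$ and vanishes by the residue theorem applied on the compact surface $X$ (sum of residues is zero, but here we can arrange it term by term). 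I would cite \cite{li2020regularized} for the details of this reduction rather than reproduce them.
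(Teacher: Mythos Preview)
The paper itself provides no proof of this theorem: it sits in the appendix on regularized integrals, where the authors explicitly state that results are given without proof and refer to \cite{li2020regularized}. So your closing remark --- to cite that reference for the details --- is exactly what the paper does.

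That said, the explicit argument you sketch misidentifies where the difficulty lies. The reduction to the key lemma (if $\alpha\in\Omega^{0,1}(X,\omega_X(\log D))$ satisfies $\alpha=\partial\beta$ for some $\beta\in\Omega^{0,1}(X,\mathcal{O}_X(*D))$ then $\int_X\alpha=0$) is correct, as is the application of Stokes on $X_\varepsilon$. But the boundary term $\oint_{|z|=\varepsilon}\beta$ vanishes in the limit \emph{without} any appeal to the logarithmic constraint on $\partial\beta$; that constraint is only needed to make $\int_X\alpha$ converge absolutely in the first place. Writing $\beta=\tfrac{f(z,\bar z)}{z^N}\,d\bar z$ near $p$ with $f$ smooth, on the circle one has $d\bar z=-i\varepsilon e^{-i\theta}\,d\theta$, and after Taylor-expanding $f$ the angular integral $\int_0^{2\pi}e^{i(j-k-N-1)\theta}\,d\theta$ kills every monomial $z^j\bar z^k$ except those with $j=k+N+1$, which then contribute $\varepsilon^{1-N}\cdot\varepsilon^{j+k}=\varepsilon^{2k+2}\to 0$. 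The mismatch between poles in the holomorphic variable and the antiholomorphic factor $d\bar z$ does all the work --- no pole-order reduction on $\beta$ is required. Your proposed fix also contains a type error: $\partial$ of a function is a $(1,0)$-form, so subtracting $\partial(\text{rational function})$ from the $(0,1)$-form $\beta$ does not make sense.
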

Now we describe  an extension of the residue operation. We first recall a lemma from \cite{li2020regularized}.
\begin{lem}\label{Log}
  Let $f$ be a smooth function around the origin $0\in \mathbb{C}$. Let $n$ be a positive integer. Then
  $$
  \lim_{\varepsilon\rightarrow 0}\int_{|z|=\varepsilon}\frac{fdz}{z^n}=\frac{2\pi i}{(n-1)!}\partial^{n-1}_zf(0).
  $$
  Here the integration contour is counter-clockwise oriented.
\end{lem}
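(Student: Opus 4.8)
\textbf{Proof plan for Lemma \ref{Log}.}

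The plan is to reduce the statement to a standard residue computation by writing a Taylor expansion of $f$ at the origin with remainder. First I would write $f(z) = \sum_{k=0}^{n-1} c_k z^k + R(z)$ where $c_k = \frac{1}{k!}\partial_z^k f(0)$ is the Taylor coefficient (here $\partial_z$ means the holomorphic derivative $\tfrac{1}{2}(\partial_x - i\partial_y)$; since $f$ is only smooth, not holomorphic, one must be a little careful about what ``Taylor expansion'' means, but the point is only that $R(z) = o(|z|^{n-1})$ together with control on its first derivatives, which follows from Taylor's theorem for smooth functions of the real variables $x,y$ after collecting terms by homogeneous degree). The contribution of each monomial $c_k z^k$ to $\int_{|z|=\varepsilon} \frac{f\,dz}{z^n}$ is $c_k \int_{|z|=\varepsilon} z^{k-n}\,dz$, which is $2\pi i\, c_k$ when $k = n-1$ and $0$ otherwise, independently of $\varepsilon$ (parametrize $z = \varepsilon e^{i\theta}$). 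This already produces the claimed value $\frac{2\pi i}{(n-1)!}\partial_z^{n-1} f(0)$.

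Next I would show the remainder term contributes nothing in the limit. On the circle $|z| = \varepsilon$ we have $\left| \int_{|z|=\varepsilon} \frac{R(z)}{z^n}\,dz \right| \le 2\pi \varepsilon \cdot \frac{\sup_{|z|=\varepsilon}|R(z)|}{\varepsilon^n}$. The subtlety is that for a merely smooth $f$, a homogeneous Taylor expansion in $x,y$ produces terms like $z^a \bar z^b$ with $a+b \le n-1$, and $z^a\bar z^b / z^n$ need not have vanishing contour integral when $b>0$; however, such a term contributes $\int_0^{2\pi} \varepsilon^{a+b-n} e^{i(a-b-n)\theta}\, i\varepsilon\, d\theta$, which vanishes unless $a - b = n$, impossible when $a+b\le n-1$. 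So in fact only the holomorphic monomials $z^{n-1}$ matter, and the genuine remainder $R(z)$ can be taken to satisfy $\sup_{|z|=\varepsilon}|R(z)| = o(\varepsilon^{n-1})$, whence the bound above is $o(1)$ as $\varepsilon \to 0$. I would organize the argument so that this bookkeeping of which monomials survive the angular integration is done once and cleanly.

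The main obstacle — really the only non-formal point — is handling the smoothness (as opposed to holomorphy) of $f$: one cannot simply invoke the Cauchy integral formula, and one must be slightly careful that the ``lower order'' mixed monomials $z^a\bar z^b$ do not conspire to produce a nonzero or divergent contribution. As explained above this is resolved by the elementary observation that $\int_{|z|=\varepsilon} z^a\bar z^b\, z^{-n}\, dz = 0$ whenever $a - b - n \ne -1$, i.e. whenever $a - b \ne n$, and this inequality is automatic in the range $a + b \le n-1$. Once this is noted, the proof is just the computation of the residue of the leading holomorphic monomial plus an $o(1)$ estimate, and assembling these gives the stated formula. (This lemma is then used, together with Lemma \ref{Decompo} and a partition-of-unity argument localizing near each $p\in D$, to establish the residue formula $\dashint_X \bar\partial\eta = -2\pi i \sum_{p\in D}\mathrm{Res}_{z\to p}\eta$ quoted in Section \ref{Tracemap}.)
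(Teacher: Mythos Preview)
The paper does not actually prove this lemma: it is stated in the appendix on regularized integrals as a fact ``recalled from \cite{li2020regularized}'' and no argument is given. Your Taylor-expansion-plus-remainder approach is the standard one and is correct in outline; it is almost certainly what the cited reference does as well.

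There is, however, a small arithmetic slip in your angular computation. With $z=\varepsilon e^{i\theta}$ one has $dz = i\varepsilon e^{i\theta}\,d\theta$, so
\[
\int_{|z|=\varepsilon} z^a\bar z^b\,z^{-n}\,dz
= i\varepsilon^{\,a+b-n+1}\int_0^{2\pi} e^{i(a-b-n+1)\theta}\,d\theta,
\]
and the non-vanishing condition is $a-b=n-1$, not $a-b=n$ as you wrote. This does not damage your conclusion: for $a+b\le n-1$ the equation $a-b=n-1$ forces $b=0$ and $a=n-1$, so among all monomials of total degree $\le n-1$ the only survivor is still $z^{n-1}$, exactly as you claim. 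With that correction your argument is complete.
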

\begin{defn}
  Let $\eta\in \Omega^{0,0}(X,\omega_X(*D))$. Let $p\in D$ and $z$ be a local coordinate around $p$ such that $z(p)=0$. Then the following limit
  $$
  \lim_{\varepsilon\rightarrow 0}\frac{1}{2\pi i}\int_{|z|=\varepsilon}\eta
  $$
  exists and does not depend on the choice of the local coordinate $z$. Here the integration contour is counter-clockwise oriented. We will denote this limit by $\mathrm{Res}_{z\rightarrow p}\eta.$
\end{defn}

The next theorem describes a version of the Stokes formula for a regularized integral.
\begin{thm}
  Let $\eta\in \Omega^{0,0}(X,\omega_X(*D))$. Then we have
  $$
  \dashint_X\bar{\partial}\eta=-2\pi i\sum_{p\in D}\mathrm{Res}_{z\rightarrow p}(\eta).
  $$
\end{thm}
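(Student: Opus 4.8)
The plan is to reduce the identity to the ordinary Stokes theorem on the surface with small coordinate disks removed around the points of $D$, and then to let the disks shrink. For each $p\in D$ fix a holomorphic coordinate $z$ centered at $p$, so that near $p$ one has $\eta=g\,dz$ with $g$ smooth away from $p$, say $\eta=f_p(z,\bar z)\,z^{-k_p}\,dz$ with $f_p$ smooth and $k_p\ge 1$. For $\eps>0$ small enough that the closed disks $\{|z_p|\le\eps\}$ are disjoint and contained in the chosen charts, set $X_\eps:=X\setminus\bigsqcup_{p\in D}\{|z_p|\le\eps\}$, a compact surface with boundary on which both $\eta$ and $\bar\partial\eta$ are smooth.

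Since $\eta$ is a $(1,0)$-form, $\partial\eta=0$, hence $d\eta=\bar\partial\eta$ on $X\setminus D$. The boundary of $X_\eps$ is $-\bigsqcup_{p}\{|z_p|=\eps\}$ with each circle counter-clockwise oriented, so the ordinary Stokes theorem gives
\[
\int_{X_\eps}\bar\partial\eta=\int_{X_\eps}d\eta=-\sum_{p\in D}\int_{|z_p|=\eps}\eta .
\]
By Lemma~\ref{Log} and the definition of $\mathrm{Res}_{z\to p}$,
\[
\int_{|z_p|=\eps}\eta=\int_{|z_p|=\eps}\frac{f_p\,dz}{z^{k_p}}\longrightarrow\frac{2\pi i}{(k_p-1)!}\,\partial_z^{k_p-1}f_p(0)=2\pi i\,\mathrm{Res}_{z\to p}\eta\qquad(\eps\to 0),
\]
so $\displaystyle\lim_{\eps\to 0}\int_{X_\eps}\bar\partial\eta=-2\pi i\sum_{p\in D}\mathrm{Res}_{z\to p}\eta$, which is the right-hand side of the claimed formula.

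It remains to identify this limit with $\dashint_X\bar\partial\eta$. Apply Lemma~\ref{Decompo} to $\bar\partial\eta\in\Omega^{0,1}(X,\omega_X(*D))$ to write $\bar\partial\eta=\mu_{\log}+\partial\eta_1$ with $\mu_{\log}\in\Omega^{0,1}(X,\omega_X(\log D))$ and $\eta_1\in\Omega^{0,1}(X,\mathcal{O}_X(*D))$, so that $\dashint_X\bar\partial\eta=\int_X\mu_{\log}$ by definition. Since $\mu_{\log}$ has at worst a simple pole along $D$, it is a top form with a locally integrable singularity, so $\int_{X_\eps}\mu_{\log}\to\int_X\mu_{\log}$ by dominated convergence. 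On the other hand $\eta_1$ is locally of the form $h\,d\bar z$ with $h$ smooth away from $D$, so $\bar\partial\eta_1=0$ and hence $\partial\eta_1=d\eta_1$ on $X\setminus D$; Stokes on $X_\eps$ then gives $\int_{X_\eps}\partial\eta_1=-\sum_{p}\int_{|z_p|=\eps}\eta_1$, and each term tends to $0$ because the integral over $\{|z_p|=\eps\}$ of a form of type $(\text{smooth})\,z^{-m}\,d\bar z$ is $O(\eps)$. Therefore $\int_{X_\eps}\bar\partial\eta=\int_{X_\eps}\mu_{\log}+\int_{X_\eps}\partial\eta_1\to\int_X\mu_{\log}=\dashint_X\bar\partial\eta$, and comparison with the preceding paragraph proves the theorem.

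The only genuine obstacle is this last step: one must be sure that the naive $\eps$-cutoff integral recovers $\dashint_X$ rather than a different regularization. This is controlled by the structure of the decomposition in Lemma~\ref{Decompo} through the two facts used above, namely that the logarithmic part $\mu_{\log}$ has an integrable singularity and that the correction $\partial\eta_1$ contributes only a vanishing boundary term; alternatively one may invoke the equivalence, established in \cite{li2020regularized}, between the cutoff-with-finite-part and the log-decomposition descriptions of $\dashint$. The remaining points are routine: disjointness of the removed disks and smoothness of $f_p$ and $h$; the integrability of $|z|^{-1}$ against the area form in one complex dimension; and the vanishing of $\int_{|z|=\eps}(\text{smooth})\,z^{-m}\,d\bar z$, which follows by writing $z=\eps e^{i\theta}$ and noting that every Fourier mode of the smooth numerator contributes a power $\eps^{a+b}$ with $a+b\ge m+1$, beating the factor $\eps^{1-m}$ coming from $z^{-m}\,d\bar z$.
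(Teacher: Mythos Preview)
Your proof is correct. The paper itself does not give a proof of this statement; the appendix explicitly says that the results on regularized integrals are stated without proof and refers to \cite{li2020regularized} for details. Your approach---excise $\varepsilon$-disks around $D$, apply Stokes, and pass to the limit---is the standard one and is the argument given in \cite{li2020regularized}. You have correctly identified and handled the only nontrivial point, namely that the naive cutoff integral $\int_{X_\varepsilon}\bar\partial\eta$ recovers $\dashint_X\bar\partial\eta$: the log part $\mu_{\log}$ is absolutely integrable (a simple pole in one complex variable is $L^1$), and the boundary term from $\partial\eta_1$ vanishes by your Fourier-mode estimate, which is right (the surviving modes have $a-b=m+1$, hence $a+b\ge m+1$, giving an overall $O(\varepsilon^2)$).
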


Now we  turn to the definition of the regularized integral over the product of Riemann surfaces. One can define a regularized integral using a method similar to Lemma \ref{Log}
$$
\dashint_{X^I}:\Omega^{0,|I|}(X^I,\omega_{X^I}(*\Delta_I)).
$$
In fact, we can write $\eta\in   \Omega^{0,|I|}(X^I,\omega_{X^I}(*\Delta_I))$

$$
  \eta=\eta_{\log}+d_{\mathrm{DR}}\eta_1,
  $$
  where
  $$
  \eta_{\log}\in \Omega^{0,|I|}(X^I,\omega_{X^I}(\log \Delta_I)), \eta_1\in \Omega^{0,|I|}(X^I,\omega_{X^I}(*\Delta_I)\otimes\Theta_{X^I}).
  $$
  Define
  $$
  \dashint_{X^I}\eta:=\int_{X^I}\eta_{\log}.
  $$

And it is equal to the iterated regularized integral over factors of $X^I$:
$$
\dashint_{X^I}=\dashint_{X_1}\cdots\dashint_{X_{|I|}}, X^I=X_1\times\cdots\times X_{|I|}.
$$
And the definition of the residue can be also generalized as follows.
\begin{defn}\label{genResidue}
  Choose a Riemann metric on $X$. Then we have following diagram
$$
\begin{tikzcd}
{X_{ij,\varepsilon}} \arrow[d, "\pi_j", bend left] \arrow[d, "\pi_i"', bend right] & {=\{|p_i-p_j|=\epsilon\}\subset X^I} \\
X^{I'}                                                                          &
\end{tikzcd}
$$
here $\epsilon>0$ is a sufficiently small positive real number and $\pi_{i,j}$ is the projection by forgetting the i-th or j-th factor. Projections $\pi_i$ and $\pi_j$ are circle bundles and give rise  to two ways to identify$X_{ij,\varepsilon}$  with $ST_{ij}(X^{I'})$. Here $ST_{ij}(X^{I'})$ is the unit tangent circle bundle on the factor whose preimage is $\{i,j\}$. Denote these two isomorphisms by $\mathrm{Iso}_i, \mathrm{Iso}_j$ and the inclusion by $\iota:X_{ij,\varepsilon}\hookrightarrow X^I$, we define the residue as follows
$$
\mathrm{Res}_{z_i\rightarrow z_j}\eta= \lim_{\varepsilon\rightarrow 0}\frac{1}{2\pi i}\int_{S^1} \mathrm{Iso}_i^*\iota^*\eta,\ \mathrm{Res}_{z_j\rightarrow z_i}\eta= \lim_{\varepsilon\rightarrow 0}\frac{1}{2\pi i}\int_{S^1} \mathrm{Iso}_j^*\iota^*\eta.
$$
\end{defn}
In in terms of local coordinates, we have the expression (\ref{Local}).
Now we give a proof of Proposition \ref{Stokes}.
\begin{proof}
The first statement follows from the part (a) of a lemma in  \cite[2.1.1]{Beilinson1988tb}.

Now we prove the second statement using the Stokes theorem.  By type reasons, we can assume that $\eta=\sum_{t\in I}\eta^t$, where
   $$
  \eta^{t}=f\cdot dz_1\wedge d\bar{z}_1\wedge\cdots \wedge dz_{t}\wedge\widehat{d\bar{z}}_{t}\wedge\cdots\wedge dz_{n}\wedge d\bar{z}_{n}.
  $$
  By the definition of residue, we have
  $$
  \mathrm{Res}_{z_{\alpha'}\rightarrow z_{\alpha''}}\eta= \mathrm{Res}_{z_{\alpha'}\rightarrow z_{\alpha''}}(\eta^{\alpha'}+\eta^{\alpha''}).
  $$
  Note that $\mathrm{Res}_{z_{\alpha'}\rightarrow z_{\alpha''}}\eta^{\alpha''}$ is equal to $\mathrm{Res}_{z_{\alpha''}\rightarrow z_{\alpha'}}\eta^{\alpha''}$ modulo holomorphic total derivatives, thus they have the same regularized integral.
  Now we have
  \begin{align*}
     \dashint_{X^{I}}\bar{\partial}\eta&=\dashint_{X^{I}}(\sum_{t}\bar{\partial}_{z_t}\eta^{t})  \\
     &=-2\pi i\sum_t \dashint_{X^{I-\{t\}}}(\sum_{s\in I-\{t\}}\mathrm{Res}_{z_t\rightarrow z_s}\eta^{t})\\
     &=-2\pi i\sum_{I'\in \equiQ(I,|I|-1)}\dashint_{X^{I'}}(\mathrm{Res}_{z_{\alpha'(I')}\rightarrow z_{\alpha''(I')}}\eta^{\alpha'(I')}+\mathrm{Res}_{z_{\alpha''(I')}\rightarrow z_{\alpha'(I')}}\eta^{\alpha''(I')})\\
     &=-2\pi i\sum_{I'\in \equiQ(I,|I|-1)}\dashint_{X^{I'}}(\mathrm{Res}_{z_{\alpha'(I')}\rightarrow z_{\alpha''(I')}}\eta^{\alpha'(I')}+\mathrm{Res}_{z_{\alpha'(I')}\rightarrow z_{\alpha''(I')}}\eta^{\alpha''(I')})\\
     &=-2\pi i\sum_{I'\in \equiQ(I,|I|-1)}\dashint_{X^{I'}}\mathrm{Res}_{z_{\alpha'(I')}\rightarrow z_{\alpha''(I')}}\eta=-2\pi i\sum_{I'\in \equiQ(I,|I|-1)}\dashint_{X^{I'}}\mathrm{Res}_{I,I'}\eta.
  \end{align*}

\end{proof}

\subsection{Feynman diagram computations}\label{FeynmanCompute}

  Here we provide some details of the Feynman diagram argument used in Proposition \ref{BVFeynman}. This discussion is similar to \cite{2011WittenGenus,gorbounov2016chiral,Btwisted}.

By the standard technique of Feynman diagrams (see e.g. \cite{costello2011renormalization}), $\Trace_{\widetilde{W}_N}(\underline{\mathrm{\mathbf{1}}})$ can be expressed as
$$
\Trace_{\widetilde{W}_N}(\underline{\mathrm{\mathbf{1}}})=\exp(\frac{1}{\hbar}\mathop{\sum}\limits_{\Gamma}h^{g(\Gamma)}\frac{W_{\Gamma}}{|\mathrm{Aut}(\Gamma)|}),
$$
where the sum is over the connected graph $\Gamma$. The Feynman weights $W_{\Gamma}$ and the automorphism group $\mathrm{Aut}(\Gamma)$ of the graph are defined in the usual way.   The genus of the graph $g(\Gamma)$ is defined by
$$
g(\Gamma)=b_1(\Gamma)+\sum_{v\in V(\Gamma)}\ell(v),
$$
 here $b_1(\Gamma)$ is the first Betti number of $\Gamma$ and $V(\Gamma)$ is the set of vertices of $\Gamma$. Since our inputs are $\frac{\pi}{i\hbar}f_i(\gamma)\beta^{l_i}+g_i(\gamma)\partial\gamma^{m_i}, i=1,\dots,k+1$, we only have the following two types of vertices
 \begin{itemize}
   \item Tree vertices $v$ labeled by $\frac{\pi}{i\hbar}f_i(\gamma)\beta^{l_i},i=1,\dots,k+1.$ We set $\ell(v)=0.$
   \item One-loop vertices $v$ labeled by $g_i(\gamma)\partial\gamma^{m_i},i=1,\dots,k+1.$ We set $\ell(v)=1.$
 \end{itemize}
 The Feynman weight $W_{\Gamma}$ of a connected graph $\Gamma$ is a product of propagators $\hbar P(z,w)$ connecting $\beta$ and $\gamma$. The explicit formula for the Feynman weight is not needed for our purpose in this paper.

There are seven types of connected Feynman diagrams in the graph expansion of $\Trace_{\widetilde{W}_N}(\underline{\mathrm{\mathbf{1}}})$. The first three types of graphs involve the one-loop vertex $g_j(\gamma)\partial\gamma^{m_j}, j=1,\dots,k+1$. We will prove that these will have no contribution to the final result. The type $IV$ and $V$ graphs have tails labelled by $f_j(\gamma)\frac{\partial}{\partial\gamma^{l_j}}, j=1,\dots,k+1$. The weights of these graphs are also zero. In summary, only the last two types of graphs contribute.

\begin{itemize}
  \item Let $\mathfrak{G}_I$ be the set of graphs with a tail labeled by $g_j(\gamma)\partial\gamma^{m_j}, j=1,\dots,k+1$ and a propagator connecting the edge labeled by $\partial\gamma^{m_j}$. Then the weight $W_{\Gamma_I}=0$ for $\Gamma\in \mathfrak{G}_I$, since
      $$
      \dashint_{X}d^2z\partial_zP(z,w)=0.
      $$
      See Fig \ref{Type123}.
  \item Let $\mathfrak{G}_{II}$ be the set of graphs such that each element contains an internal vertex labeled by $g_j(\gamma)\partial\gamma^{m_j}, j=1,\dots,k+1$. Then each $\Gamma_{II}\in \mathfrak{G}_{II}$ must have a tail labeled by $f_{j'}(\gamma)\frac{\partial}{\partial\gamma^{l_{j'}}},j'=1,\dots,k+1$. Thus the weight $W_{\Gamma_{II}}=0$ for $\Gamma\in \mathfrak{G}_{II}$, since
      $$
      \dashint_{X}d^2zP(z,w)=0.
      $$
      See Fig \ref{Type123}.
      \item Let $\mathfrak{G}_{III}$ be the set of graphs that have a vertex labeled by $g_j(\gamma)\partial\gamma^{m_j}, j=1,\dots,k+1$, and no propagator connecting the edge labeled by $\partial\gamma^{m_j}$. Then the weight $W_{\Gamma_{III}}=0$ for $\Gamma_{III}\in \mathfrak{G}_{III}$, since by definition $\partial \gamma^{m_j}=0\in O_{\mathrm{BV}}$. See Fig \ref{Type123}.
        \item Let $\mathfrak{G}_{IV}$ be the set of graphs such that each element is a wheel diagram with a tail labeled by $g_j(\gamma)\partial\gamma^{m_j}, j=1,\dots,k+1$. Then the weight $W_{\Gamma_{IV}}=0$ for $\Gamma\in \mathfrak{G}_{IV}$ by the same reason discussed in the case of $\mathfrak{G}_{II}$. See Fig \ref{Type45}.
            \item Let $\mathfrak{G}_{V}$ be the set of graphs that are tree-like diagrams (may have one-loop vertex) with a tail labeled by $g_j(\gamma)\partial\gamma^{m_j}, j=1,\dots,k+1$. Then the weight $W_{\Gamma_{V}}=0$ for $\Gamma\in \mathfrak{G}_{V}$ by the same reason discussed in the case of $\mathfrak{G}_{II}$. See Fig \ref{Type45}.
           \item Let $\mathfrak{G}_{1}$ be the set of graphs that are wheel diagrams without tails. See Fig \ref{Type1}.
                  \item Let $\mathfrak{G}_{2}$ be the set of graphs such that each element consists of only a single vertex labeled by $f_{j}(\gamma)\frac{\partial}{\partial\gamma^{l_{j}}},j=1,\dots,k+1$ and its edges. See Fig \ref{Type2}.
\end{itemize}
\begin{figure}[h]
  \centering

\tikzset{every picture/.style={line width=0.75pt}} 

\begin{tikzpicture}[x=0.75pt,y=0.75pt,yscale=-1,xscale=1]

\draw  [dash pattern={on 0.84pt off 2.51pt}] (178,62) .. controls (178,48.19) and (189.19,37) .. (203,37) .. controls (216.81,37) and (228,48.19) .. (228,62) .. controls (228,75.81) and (216.81,87) .. (203,87) .. controls (189.19,87) and (178,75.81) .. (178,62) -- cycle ;
\draw    (299.25,64.24) -- (281.83,102.66) ;
\draw [shift={(281.83,102.66)}, rotate = 159.39] [color={rgb, 255:red, 0; green, 0; blue, 0 }  ][line width=0.75]    (-5.59,0) -- (5.59,0)(0,5.59) -- (0,-5.59)   ;
\draw [shift={(299.25,64.24)}, rotate = 114.39] [color={rgb, 255:red, 0; green, 0; blue, 0 }  ][fill={rgb, 255:red, 0; green, 0; blue, 0 }  ][line width=0.75]      (0, 0) circle [x radius= 3.35, y radius= 3.35]   ;
\draw    (299.25,64.24) -- (262.64,78.75) ;
\draw [shift={(260.46,79.61)}, rotate = 158.38] [color={rgb, 255:red, 0; green, 0; blue, 0 }  ][line width=0.75]      (0, 0) circle [x radius= 3.35, y radius= 3.35]   ;
\draw [shift={(299.25,64.24)}, rotate = 158.38] [color={rgb, 255:red, 0; green, 0; blue, 0 }  ][fill={rgb, 255:red, 0; green, 0; blue, 0 }  ][line width=0.75]      (0, 0) circle [x radius= 3.35, y radius= 3.35]   ;
\draw    (299.25,64.24) -- (267.82,38.48) ;
\draw [shift={(266,36.99)}, rotate = 219.34] [color={rgb, 255:red, 0; green, 0; blue, 0 }  ][line width=0.75]      (0, 0) circle [x radius= 3.35, y radius= 3.35]   ;
\draw [shift={(299.25,64.24)}, rotate = 219.34] [color={rgb, 255:red, 0; green, 0; blue, 0 }  ][fill={rgb, 255:red, 0; green, 0; blue, 0 }  ][line width=0.75]      (0, 0) circle [x radius= 3.35, y radius= 3.35]   ;
\draw    (299.25,64.24) -- (321.34,42.14) ;
\draw [shift={(323,40.48)}, rotate = 314.99] [color={rgb, 255:red, 0; green, 0; blue, 0 }  ][line width=0.75]      (0, 0) circle [x radius= 3.35, y radius= 3.35]   ;
\draw [shift={(299.25,64.24)}, rotate = 314.99] [color={rgb, 255:red, 0; green, 0; blue, 0 }  ][fill={rgb, 255:red, 0; green, 0; blue, 0 }  ][line width=0.75]      (0, 0) circle [x radius= 3.35, y radius= 3.35]   ;
\draw  [dash pattern={on 0.84pt off 2.51pt}]  (203,62) -- (246,61.99) ;
\draw [color={rgb, 255:red, 74; green, 144; blue, 226 }  ,draw opacity=1 ]   (246,61.99) -- (296.9,64.14) ;
\draw [shift={(299.25,64.24)}, rotate = 2.42] [color={rgb, 255:red, 74; green, 144; blue, 226 }  ,draw opacity=1 ][line width=0.75]      (0, 0) circle [x radius= 3.35, y radius= 3.35]   ;
\draw  [dash pattern={on 0.84pt off 2.51pt}] (180,188) .. controls (180,174.19) and (191.19,163) .. (205,163) .. controls (218.81,163) and (230,174.19) .. (230,188) .. controls (230,201.81) and (218.81,213) .. (205,213) .. controls (191.19,213) and (180,201.81) .. (180,188) -- cycle ;
\draw    (301.25,190.24) -- (283.83,228.66) ;
\draw [shift={(283.83,228.66)}, rotate = 159.39] [color={rgb, 255:red, 0; green, 0; blue, 0 }  ][line width=0.75]    (-5.59,0) -- (5.59,0)(0,5.59) -- (0,-5.59)   ;
\draw [shift={(301.25,190.24)}, rotate = 114.39] [color={rgb, 255:red, 0; green, 0; blue, 0 }  ][fill={rgb, 255:red, 0; green, 0; blue, 0 }  ][line width=0.75]      (0, 0) circle [x radius= 3.35, y radius= 3.35]   ;
\draw    (301.25,190.24) -- (264.64,204.75) ;
\draw [shift={(262.46,205.61)}, rotate = 158.38] [color={rgb, 255:red, 0; green, 0; blue, 0 }  ][line width=0.75]      (0, 0) circle [x radius= 3.35, y radius= 3.35]   ;
\draw [shift={(301.25,190.24)}, rotate = 158.38] [color={rgb, 255:red, 0; green, 0; blue, 0 }  ][fill={rgb, 255:red, 0; green, 0; blue, 0 }  ][line width=0.75]      (0, 0) circle [x radius= 3.35, y radius= 3.35]   ;
\draw    (301.25,190.24) -- (269.82,164.48) ;
\draw [shift={(268,162.99)}, rotate = 219.34] [color={rgb, 255:red, 0; green, 0; blue, 0 }  ][line width=0.75]      (0, 0) circle [x radius= 3.35, y radius= 3.35]   ;
\draw [shift={(301.25,190.24)}, rotate = 219.34] [color={rgb, 255:red, 0; green, 0; blue, 0 }  ][fill={rgb, 255:red, 0; green, 0; blue, 0 }  ][line width=0.75]      (0, 0) circle [x radius= 3.35, y radius= 3.35]   ;
\draw    (301.25,190.24) -- (323.34,168.14) ;
\draw [shift={(325,166.48)}, rotate = 314.99] [color={rgb, 255:red, 0; green, 0; blue, 0 }  ][line width=0.75]      (0, 0) circle [x radius= 3.35, y radius= 3.35]   ;
\draw [shift={(301.25,190.24)}, rotate = 314.99] [color={rgb, 255:red, 0; green, 0; blue, 0 }  ][fill={rgb, 255:red, 0; green, 0; blue, 0 }  ][line width=0.75]      (0, 0) circle [x radius= 3.35, y radius= 3.35]   ;
\draw  [dash pattern={on 0.84pt off 2.51pt}]  (205,188) -- (248,187.99) ;
\draw [color={rgb, 255:red, 74; green, 144; blue, 226 }  ,draw opacity=1 ]   (248,187.99) -- (298.9,190.14) ;
\draw [shift={(301.25,190.24)}, rotate = 2.42] [color={rgb, 255:red, 74; green, 144; blue, 226 }  ,draw opacity=1 ][line width=0.75]      (0, 0) circle [x radius= 3.35, y radius= 3.35]   ;
\draw  [dash pattern={on 0.84pt off 2.51pt}] (329.41,190.96) .. controls (329.41,172.18) and (344.64,156.96) .. (363.41,156.96) .. controls (382.19,156.96) and (397.41,172.18) .. (397.41,190.96) .. controls (397.41,209.73) and (382.19,224.96) .. (363.41,224.96) .. controls (344.64,224.96) and (329.41,209.73) .. (329.41,190.96) -- cycle ;
\draw  [dash pattern={on 0.84pt off 2.51pt}]  (320,192.32) -- (342,191.32) ;
\draw    (390.41,192.96) -- (418,198.32) ;
\draw [shift={(418,198.32)}, rotate = 56.01] [color={rgb, 255:red, 0; green, 0; blue, 0 }  ][line width=0.75]    (-5.59,0) -- (5.59,0)(0,5.59) -- (0,-5.59)   ;
\draw [shift={(390.41,192.96)}, rotate = 11.01] [color={rgb, 255:red, 0; green, 0; blue, 0 }  ][fill={rgb, 255:red, 0; green, 0; blue, 0 }  ][line width=0.75]      (0, 0) circle [x radius= 3.35, y radius= 3.35]   ;
\draw [color={rgb, 255:red, 74; green, 144; blue, 226 }  ,draw opacity=1 ][line width=0.75]    (390.41,192.96) -- (374,192.32) ;
\draw    (400.92,213.24) -- (390.41,192.96) ;
\draw [shift={(390.41,192.96)}, rotate = 242.62] [color={rgb, 255:red, 0; green, 0; blue, 0 }  ][fill={rgb, 255:red, 0; green, 0; blue, 0 }  ][line width=0.75]      (0, 0) circle [x radius= 3.35, y radius= 3.35]   ;
\draw [shift={(402,215.32)}, rotate = 242.62] [color={rgb, 255:red, 0; green, 0; blue, 0 }  ][line width=0.75]      (0, 0) circle [x radius= 3.35, y radius= 3.35]   ;
\draw [color={rgb, 255:red, 74; green, 144; blue, 226 }  ,draw opacity=1 ][line width=0.75]  [dash pattern={on 0.84pt off 2.51pt}]  (374,192.32) -- (357.59,191.69) ;
\draw    (340.08,301.06) -- (322.95,337.57) ;
\draw [shift={(322.95,337.57)}, rotate = 160.13] [color={rgb, 255:red, 0; green, 0; blue, 0 }  ][line width=0.75]    (-5.59,0) -- (5.59,0)(0,5.59) -- (0,-5.59)   ;
\draw [shift={(340.08,301.06)}, rotate = 115.13] [color={rgb, 255:red, 0; green, 0; blue, 0 }  ][fill={rgb, 255:red, 0; green, 0; blue, 0 }  ][line width=0.75]      (0, 0) circle [x radius= 3.35, y radius= 3.35]   ;
\draw    (340.08,301.06) -- (304.13,314.83) ;
\draw [shift={(301.93,315.67)}, rotate = 159.04] [color={rgb, 255:red, 0; green, 0; blue, 0 }  ][line width=0.75]      (0, 0) circle [x radius= 3.35, y radius= 3.35]   ;
\draw [shift={(340.08,301.06)}, rotate = 159.04] [color={rgb, 255:red, 0; green, 0; blue, 0 }  ][fill={rgb, 255:red, 0; green, 0; blue, 0 }  ][line width=0.75]      (0, 0) circle [x radius= 3.35, y radius= 3.35]   ;
\draw    (340.08,301.06) -- (299.44,284.69) ;
\draw [shift={(297.26,283.81)}, rotate = 201.95] [color={rgb, 255:red, 0; green, 0; blue, 0 }  ][line width=0.75]      (0, 0) circle [x radius= 3.35, y radius= 3.35]   ;
\draw [shift={(340.08,301.06)}, rotate = 201.95] [color={rgb, 255:red, 0; green, 0; blue, 0 }  ][fill={rgb, 255:red, 0; green, 0; blue, 0 }  ][line width=0.75]      (0, 0) circle [x radius= 3.35, y radius= 3.35]   ;
\draw    (340.08,301.06) -- (376.68,295.26) ;
\draw [shift={(379,294.89)}, rotate = 350.99] [color={rgb, 255:red, 0; green, 0; blue, 0 }  ][line width=0.75]      (0, 0) circle [x radius= 3.35, y radius= 3.35]   ;
\draw [shift={(340.08,301.06)}, rotate = 350.99] [color={rgb, 255:red, 0; green, 0; blue, 0 }  ][fill={rgb, 255:red, 0; green, 0; blue, 0 }  ][line width=0.75]      (0, 0) circle [x radius= 3.35, y radius= 3.35]   ;
\draw  [dash pattern={on 0.84pt off 2.51pt}]  (243.83,298.82) -- (286.83,298.81) ;
\draw [color={rgb, 255:red, 74; green, 144; blue, 226 }  ,draw opacity=1 ]   (286.83,298.81) -- (337.73,300.96) ;
\draw [shift={(340.08,301.06)}, rotate = 2.42] [color={rgb, 255:red, 74; green, 144; blue, 226 }  ,draw opacity=1 ][line width=0.75]      (0, 0) circle [x radius= 3.35, y radius= 3.35]   ;
\draw  [dash pattern={on 0.84pt off 2.51pt}]  (435.83,311.05) -- (392.97,307.59) ;
\draw [color={rgb, 255:red, 74; green, 144; blue, 226 }  ,draw opacity=1 ]   (392.97,307.59) -- (342.41,301.35) ;
\draw [shift={(340.08,301.06)}, rotate = 187.04] [color={rgb, 255:red, 74; green, 144; blue, 226 }  ,draw opacity=1 ][line width=0.75]      (0, 0) circle [x radius= 3.35, y radius= 3.35]   ;
\draw [color={rgb, 255:red, 74; green, 144; blue, 226 }  ,draw opacity=1 ]   (320,192.32) -- (303.59,190.5) ;
\draw [shift={(301.25,190.24)}, rotate = 186.34] [color={rgb, 255:red, 74; green, 144; blue, 226 }  ,draw opacity=1 ][line width=0.75]      (0, 0) circle [x radius= 3.35, y radius= 3.35]   ;
\draw [color={rgb, 255:red, 74; green, 144; blue, 226 }  ,draw opacity=1 ][line width=0.75]    (504,56.64) -- (570,56.64) ;
\draw    (531,97.64) -- (579,97.29) ;
\draw [shift={(579,97.29)}, rotate = 404.59] [color={rgb, 255:red, 0; green, 0; blue, 0 }  ][line width=0.75]    (-5.59,0) -- (5.59,0)(0,5.59) -- (0,-5.59)   ;
\draw    (570.65,143.64) -- (527,143.64) ;
\draw [shift={(573,143.64)}, rotate = 180] [color={rgb, 255:red, 0; green, 0; blue, 0 }  ][line width=0.75]      (0, 0) circle [x radius= 3.35, y radius= 3.35]   ;
\draw  [dash pattern={on 0.84pt off 2.51pt}] (291.37,135.49) .. controls (298.05,135.11) and (304.3,139.76) .. (305.31,145.87) .. controls (306.33,151.98) and (301.73,157.24) .. (295.04,157.62) .. controls (288.35,158) and (282.11,153.35) .. (281.09,147.24) .. controls (280.08,141.13) and (284.68,135.86) .. (291.37,135.49) -- cycle ;
\draw  [dash pattern={on 0.84pt off 2.51pt}]  (292.26,147.56) -- (295.43,166.6) ;
\draw [color={rgb, 255:red, 74; green, 144; blue, 226 }  ,draw opacity=1 ]   (295.43,166.6) -- (300.69,187.96) ;
\draw [shift={(301.25,190.24)}, rotate = 76.16] [color={rgb, 255:red, 74; green, 144; blue, 226 }  ,draw opacity=1 ][line width=0.75]      (0, 0) circle [x radius= 3.35, y radius= 3.35]   ;

\draw (261,50.4) node [anchor=north west][inner sep=0.75pt]  [font=\tiny]  {$\partial \gamma ^{m}$};
\draw (96,53.4) node [anchor=north west][inner sep=0.75pt]    {$\Gamma _{I} :$};
\draw (366,66.4) node [anchor=north west][inner sep=0.75pt]  [font=\tiny]  {$\int \partial _{z} P( z,w) =0$};
\draw (292.5,108.64) node [anchor=north west][inner sep=0.75pt]  [font=\tiny]  {$dy=\gamma ^{\dagger }$};
\draw (98,179.4) node [anchor=north west][inner sep=0.75pt]    {$\Gamma _{II} :$};
\draw (437,190.4) node [anchor=north west][inner sep=0.75pt]  [font=\tiny]  {$\int P( z,w) =0$};
\draw (294.5,234.64) node [anchor=north west][inner sep=0.75pt]  [font=\tiny]  {$dy=\gamma ^{\dagger }$};
\draw (350,129.4) node [anchor=north west][inner sep=0.75pt]    {$\Gamma _{II} '$};
\draw (293.94,343.4) node [anchor=north west][inner sep=0.75pt]  [font=\tiny]  {$dy=\gamma ^{\dagger }$};
\draw (243.46,271.55) node [anchor=north west][inner sep=0.75pt]  [font=\tiny]  {$\partial \gamma ^{m} =0\in O_{BV}$};
\draw (108,293.4) node [anchor=north west][inner sep=0.75pt]    {$\Gamma _{III} :$};
\draw (521,38.4) node [anchor=north west][inner sep=0.75pt]  [font=\tiny]  {$P( z,w)$};
\draw (492,52.4) node [anchor=north west][inner sep=0.75pt]  [font=\tiny]  {$\beta $};
\draw (574,52.4) node [anchor=north west][inner sep=0.75pt]  [font=\tiny]  {$\gamma $};
\draw (589,93.4) node [anchor=north west][inner sep=0.75pt]  [font=\tiny]  {$dy=\gamma ^{\dagger }$};
\draw (589,139.4) node [anchor=north west][inner sep=0.75pt]  [font=\tiny]  {$y=\gamma $};

\end{tikzpicture}
  \caption{}\label{Type123}
\end{figure}
\begin{figure}
  \centering

\tikzset{every picture/.style={line width=0.75pt}} 

\begin{tikzpicture}[x=0.75pt,y=0.75pt,yscale=-1,xscale=1]

\draw [color={rgb, 255:red, 74; green, 144; blue, 226 }  ,draw opacity=1 ][line width=0.75]    (118,154.32) -- (136.33,127.79) ;
\draw [shift={(136.33,127.79)}, rotate = 304.65] [color={rgb, 255:red, 74; green, 144; blue, 226 }  ,draw opacity=1 ][fill={rgb, 255:red, 74; green, 144; blue, 226 }  ,fill opacity=1 ][line width=0.75]      (0, 0) circle [x radius= 3.35, y radius= 3.35]   ;
\draw [shift={(118,154.32)}, rotate = 304.65] [color={rgb, 255:red, 74; green, 144; blue, 226 }  ,draw opacity=1 ][fill={rgb, 255:red, 74; green, 144; blue, 226 }  ,fill opacity=1 ][line width=0.75]      (0, 0) circle [x radius= 3.35, y radius= 3.35]   ;
\draw [color={rgb, 255:red, 74; green, 144; blue, 226 }  ,draw opacity=1 ][fill={rgb, 255:red, 74; green, 144; blue, 226 }  ,fill opacity=1 ][line width=0.75]    (118,154.32) -- (149.67,183.8) ;
\draw [color={rgb, 255:red, 74; green, 144; blue, 226 }  ,draw opacity=1 ][line width=0.75]    (149.67,183.8) -- (183.83,180.11) ;
\draw [shift={(183.83,180.11)}, rotate = 353.85] [color={rgb, 255:red, 74; green, 144; blue, 226 }  ,draw opacity=1 ][fill={rgb, 255:red, 74; green, 144; blue, 226 }  ,fill opacity=1 ][line width=0.75]      (0, 0) circle [x radius= 3.35, y radius= 3.35]   ;
\draw [shift={(149.67,183.8)}, rotate = 353.85] [color={rgb, 255:red, 74; green, 144; blue, 226 }  ,draw opacity=1 ][fill={rgb, 255:red, 74; green, 144; blue, 226 }  ,fill opacity=1 ][line width=0.75]      (0, 0) circle [x radius= 3.35, y radius= 3.35]   ;
\draw [color={rgb, 255:red, 74; green, 144; blue, 226 }  ,draw opacity=1 ][line width=0.75]    (136.33,127.79) -- (176.33,124.85) ;
\draw [color={rgb, 255:red, 74; green, 144; blue, 226 }  ,draw opacity=1 ][line width=0.75]    (176.33,124.85) -- (191.33,151.37) ;
\draw [shift={(191.33,151.37)}, rotate = 60.51] [color={rgb, 255:red, 74; green, 144; blue, 226 }  ,draw opacity=1 ][fill={rgb, 255:red, 74; green, 144; blue, 226 }  ,fill opacity=1 ][line width=0.75]      (0, 0) circle [x radius= 3.35, y radius= 3.35]   ;
\draw [shift={(176.33,124.85)}, rotate = 60.51] [color={rgb, 255:red, 74; green, 144; blue, 226 }  ,draw opacity=1 ][fill={rgb, 255:red, 74; green, 144; blue, 226 }  ,fill opacity=1 ][line width=0.75]      (0, 0) circle [x radius= 3.35, y radius= 3.35]   ;
\draw [color={rgb, 255:red, 74; green, 144; blue, 226 }  ,draw opacity=1 ][line width=0.75]    (191.33,151.37) -- (183.83,180.11) ;
\draw    (108.73,96.65) -- (136.33,127.79) ;
\draw [shift={(136.33,127.79)}, rotate = 48.45] [color={rgb, 255:red, 0; green, 0; blue, 0 }  ][fill={rgb, 255:red, 0; green, 0; blue, 0 }  ][line width=0.75]      (0, 0) circle [x radius= 3.35, y radius= 3.35]   ;
\draw [shift={(107.17,94.89)}, rotate = 48.45] [color={rgb, 255:red, 0; green, 0; blue, 0 }  ][line width=0.75]      (0, 0) circle [x radius= 3.35, y radius= 3.35]   ;
\draw    (124.67,82.85) -- (136.33,127.79) ;
\draw [shift={(124.67,82.85)}, rotate = 120.45] [color={rgb, 255:red, 0; green, 0; blue, 0 }  ][line width=0.75]    (-5.59,0) -- (5.59,0)(0,5.59) -- (0,-5.59)   ;
\draw    (149.67,183.8) -- (131.33,224.32) ;
\draw [shift={(131.33,224.32)}, rotate = 159.34] [color={rgb, 255:red, 0; green, 0; blue, 0 }  ][line width=0.75]    (-5.59,0) -- (5.59,0)(0,5.59) -- (0,-5.59)   ;
\draw [shift={(149.67,183.8)}, rotate = 114.34] [color={rgb, 255:red, 0; green, 0; blue, 0 }  ][fill={rgb, 255:red, 0; green, 0; blue, 0 }  ][line width=0.75]      (0, 0) circle [x radius= 3.35, y radius= 3.35]   ;
\draw    (183.83,180.11) ;
\draw [shift={(183.83,180.11)}, rotate = 0] [color={rgb, 255:red, 0; green, 0; blue, 0 }  ][fill={rgb, 255:red, 0; green, 0; blue, 0 }  ][line width=0.75]      (0, 0) circle [x radius= 3.35, y radius= 3.35]   ;
\draw [shift={(183.83,180.11)}, rotate = 0] [color={rgb, 255:red, 0; green, 0; blue, 0 }  ][fill={rgb, 255:red, 0; green, 0; blue, 0 }  ][line width=0.75]      (0, 0) circle [x radius= 3.35, y radius= 3.35]   ;
\draw    (191.33,151.37) -- (228,141.8) ;
\draw [shift={(228,141.8)}, rotate = 390.36] [color={rgb, 255:red, 0; green, 0; blue, 0 }  ][line width=0.75]    (-5.59,0) -- (5.59,0)(0,5.59) -- (0,-5.59)   ;
\draw [shift={(191.33,151.37)}, rotate = 345.36] [color={rgb, 255:red, 0; green, 0; blue, 0 }  ][fill={rgb, 255:red, 0; green, 0; blue, 0 }  ][line width=0.75]      (0, 0) circle [x radius= 3.35, y radius= 3.35]   ;
\draw    (176.33,124.85) -- (190.5,95.37) ;
\draw [shift={(190.5,95.37)}, rotate = 340.67] [color={rgb, 255:red, 0; green, 0; blue, 0 }  ][line width=0.75]    (-5.59,0) -- (5.59,0)(0,5.59) -- (0,-5.59)   ;
\draw [shift={(176.33,124.85)}, rotate = 295.67] [color={rgb, 255:red, 0; green, 0; blue, 0 }  ][fill={rgb, 255:red, 0; green, 0; blue, 0 }  ][line width=0.75]      (0, 0) circle [x radius= 3.35, y radius= 3.35]   ;
\draw    (79.67,158.01) -- (118,154.32) ;
\draw [shift={(118,154.32)}, rotate = 354.51] [color={rgb, 255:red, 0; green, 0; blue, 0 }  ][fill={rgb, 255:red, 0; green, 0; blue, 0 }  ][line width=0.75]      (0, 0) circle [x radius= 3.35, y radius= 3.35]   ;
\draw [shift={(79.67,158.01)}, rotate = 399.51] [color={rgb, 255:red, 0; green, 0; blue, 0 }  ][line width=0.75]    (-5.59,0) -- (5.59,0)(0,5.59) -- (0,-5.59)   ;
\draw    (88.42,169.7) -- (118,154.32) ;
\draw [shift={(118,154.32)}, rotate = 332.53] [color={rgb, 255:red, 0; green, 0; blue, 0 }  ][fill={rgb, 255:red, 0; green, 0; blue, 0 }  ][line width=0.75]      (0, 0) circle [x radius= 3.35, y radius= 3.35]   ;
\draw [shift={(86.33,170.79)}, rotate = 332.53] [color={rgb, 255:red, 0; green, 0; blue, 0 }  ][line width=0.75]      (0, 0) circle [x radius= 3.35, y radius= 3.35]   ;
\draw    (100.08,108.52) -- (136.33,127.79) ;
\draw [shift={(136.33,127.79)}, rotate = 28] [color={rgb, 255:red, 0; green, 0; blue, 0 }  ][fill={rgb, 255:red, 0; green, 0; blue, 0 }  ][line width=0.75]      (0, 0) circle [x radius= 3.35, y radius= 3.35]   ;
\draw [shift={(98,107.42)}, rotate = 28] [color={rgb, 255:red, 0; green, 0; blue, 0 }  ][line width=0.75]      (0, 0) circle [x radius= 3.35, y radius= 3.35]   ;
\draw [color={rgb, 255:red, 74; green, 144; blue, 226 }  ,draw opacity=1 ][line width=0.75]    (263.67,73.52) -- (318.67,73.52) ;
\draw [color={rgb, 255:red, 74; green, 144; blue, 226 }  ,draw opacity=1 ][line width=0.75]    (185.07,182.11) -- (197.81,202.6) ;
\draw [shift={(183.83,180.11)}, rotate = 58.13] [color={rgb, 255:red, 74; green, 144; blue, 226 }  ,draw opacity=1 ][line width=0.75]      (0, 0) circle [x radius= 3.35, y radius= 3.35]   ;
\draw    (197.81,202.6) -- (199.35,228.32) ;
\draw [shift={(199.35,228.32)}, rotate = 131.57] [color={rgb, 255:red, 0; green, 0; blue, 0 }  ][line width=0.75]    (-5.59,0) -- (5.59,0)(0,5.59) -- (0,-5.59)   ;
\draw [shift={(197.81,202.6)}, rotate = 86.57] [color={rgb, 255:red, 0; green, 0; blue, 0 }  ][fill={rgb, 255:red, 0; green, 0; blue, 0 }  ][line width=0.75]      (0, 0) circle [x radius= 3.35, y radius= 3.35]   ;
\draw [color={rgb, 255:red, 74; green, 144; blue, 226 }  ,draw opacity=1 ][line width=0.75]    (197.81,202.6) -- (218.91,201.23) ;
\draw [shift={(197.81,202.6)}, rotate = 356.3] [color={rgb, 255:red, 74; green, 144; blue, 226 }  ,draw opacity=1 ][fill={rgb, 255:red, 74; green, 144; blue, 226 }  ,fill opacity=1 ][line width=0.75]      (0, 0) circle [x radius= 3.35, y radius= 3.35]   ;
\draw [color={rgb, 255:red, 74; green, 144; blue, 226 }  ,draw opacity=1 ][line width=0.75]  [dash pattern={on 0.84pt off 2.51pt}]  (218.91,201.23) -- (240,199.87) ;
\draw    (276.41,210.96) -- (304,216.32) ;
\draw [shift={(304,216.32)}, rotate = 56.01] [color={rgb, 255:red, 0; green, 0; blue, 0 }  ][line width=0.75]    (-5.59,0) -- (5.59,0)(0,5.59) -- (0,-5.59)   ;
\draw [shift={(276.41,210.96)}, rotate = 11.01] [color={rgb, 255:red, 0; green, 0; blue, 0 }  ][fill={rgb, 255:red, 0; green, 0; blue, 0 }  ][line width=0.75]      (0, 0) circle [x radius= 3.35, y radius= 3.35]   ;
\draw [color={rgb, 255:red, 74; green, 144; blue, 226 }  ,draw opacity=1 ][line width=0.75]    (276.41,210.96) -- (256.32,204.4) ;
\draw [shift={(276.41,210.96)}, rotate = 198.08] [color={rgb, 255:red, 74; green, 144; blue, 226 }  ,draw opacity=1 ][fill={rgb, 255:red, 74; green, 144; blue, 226 }  ,fill opacity=1 ][line width=0.75]      (0, 0) circle [x radius= 3.35, y radius= 3.35]   ;
\draw [color={rgb, 255:red, 74; green, 144; blue, 226 }  ,draw opacity=1 ][line width=0.75]  [dash pattern={on 0.84pt off 2.51pt}]  (256.32,204.4) -- (240,199.87) ;
\draw    (418.06,156) -- (374,167.32) ;
\draw [shift={(374,167.32)}, rotate = 210.58] [color={rgb, 255:red, 0; green, 0; blue, 0 }  ][line width=0.75]    (-5.59,0) -- (5.59,0)(0,5.59) -- (0,-5.59)   ;
\draw [shift={(418.06,156)}, rotate = 165.58] [color={rgb, 255:red, 0; green, 0; blue, 0 }  ][fill={rgb, 255:red, 0; green, 0; blue, 0 }  ][line width=0.75]      (0, 0) circle [x radius= 3.35, y radius= 3.35]   ;
\draw [color={rgb, 255:red, 74; green, 144; blue, 226 }  ,draw opacity=1 ][line width=0.75]    (418.06,156) -- (442.31,154.55) ;
\draw [shift={(418.06,156)}, rotate = 356.58] [color={rgb, 255:red, 74; green, 144; blue, 226 }  ,draw opacity=1 ][fill={rgb, 255:red, 74; green, 144; blue, 226 }  ,fill opacity=1 ][line width=0.75]      (0, 0) circle [x radius= 3.35, y radius= 3.35]   ;
\draw [color={rgb, 255:red, 74; green, 144; blue, 226 }  ,draw opacity=1 ][line width=0.75]  [dash pattern={on 0.84pt off 2.51pt}]  (442.31,154.55) -- (466.55,153.1) ;
\draw    (508.4,164.88) -- (547,175.89) ;
\draw [shift={(547,175.89)}, rotate = 60.92] [color={rgb, 255:red, 0; green, 0; blue, 0 }  ][line width=0.75]    (-5.59,0) -- (5.59,0)(0,5.59) -- (0,-5.59)   ;
\draw [shift={(508.4,164.88)}, rotate = 15.92] [color={rgb, 255:red, 0; green, 0; blue, 0 }  ][fill={rgb, 255:red, 0; green, 0; blue, 0 }  ][line width=0.75]      (0, 0) circle [x radius= 3.35, y radius= 3.35]   ;
\draw [color={rgb, 255:red, 74; green, 144; blue, 226 }  ,draw opacity=1 ][line width=0.75]    (508.4,164.88) -- (485.31,157.91) ;
\draw [shift={(508.4,164.88)}, rotate = 196.79] [color={rgb, 255:red, 74; green, 144; blue, 226 }  ,draw opacity=1 ][fill={rgb, 255:red, 74; green, 144; blue, 226 }  ,fill opacity=1 ][line width=0.75]      (0, 0) circle [x radius= 3.35, y radius= 3.35]   ;
\draw [color={rgb, 255:red, 74; green, 144; blue, 226 }  ,draw opacity=1 ][line width=0.75]  [dash pattern={on 0.84pt off 2.51pt}]  (485.31,157.91) -- (466.55,153.1) ;
\draw    (380.02,133.52) -- (418.06,156) ;
\draw [shift={(418.06,156)}, rotate = 30.58] [color={rgb, 255:red, 0; green, 0; blue, 0 }  ][fill={rgb, 255:red, 0; green, 0; blue, 0 }  ][line width=0.75]      (0, 0) circle [x radius= 3.35, y radius= 3.35]   ;
\draw [shift={(378,132.32)}, rotate = 30.58] [color={rgb, 255:red, 0; green, 0; blue, 0 }  ][line width=0.75]      (0, 0) circle [x radius= 3.35, y radius= 3.35]   ;
\draw    (186.18,180.27) -- (213.66,182.16) ;
\draw [shift={(216,182.32)}, rotate = 3.93] [color={rgb, 255:red, 0; green, 0; blue, 0 }  ][line width=0.75]      (0, 0) circle [x radius= 3.35, y radius= 3.35]   ;
\draw [shift={(183.83,180.11)}, rotate = 3.93] [color={rgb, 255:red, 0; green, 0; blue, 0 }  ][line width=0.75]      (0, 0) circle [x radius= 3.35, y radius= 3.35]   ;
\draw    (286.92,231.24) -- (276.41,210.96) ;
\draw [shift={(276.41,210.96)}, rotate = 242.62] [color={rgb, 255:red, 0; green, 0; blue, 0 }  ][fill={rgb, 255:red, 0; green, 0; blue, 0 }  ][line width=0.75]      (0, 0) circle [x radius= 3.35, y radius= 3.35]   ;
\draw [shift={(288,233.32)}, rotate = 242.62] [color={rgb, 255:red, 0; green, 0; blue, 0 }  ][line width=0.75]      (0, 0) circle [x radius= 3.35, y radius= 3.35]   ;

\draw (114.5,63.64) node [anchor=north west][inner sep=0.75pt]  [font=\tiny]  {$dy=\gamma ^{\dagger }$};
\draw (50.42,94.93) node [anchor=north west][inner sep=0.75pt]  [font=\tiny]  {$y=\gamma $};
\draw (276.42,55.35) node [anchor=north west][inner sep=0.75pt]  [font=\tiny]  {$P( z,w)$};
\draw (251.83,69.45) node [anchor=north west][inner sep=0.75pt]  [font=\tiny]  {$\beta $};
\draw (320.17,65.45) node [anchor=north west][inner sep=0.75pt]  [font=\scriptsize]  {$\gamma $};
\draw (25,24.4) node [anchor=north west][inner sep=0.75pt]  [font=\footnotesize]  {$\text{The contribution of Type IV graphs } \ \text{vanishes}$};
\draw (337,26.4) node [anchor=north west][inner sep=0.75pt]  [font=\footnotesize]  {$\text{The contribution of Type V graphs } \ \text{vanishes}$};
\draw (204,249.4) node [anchor=north west][inner sep=0.75pt]  [font=\tiny]  {$\int P( z,w) =0$};
\draw (458,210.4) node [anchor=north west][inner sep=0.75pt]  [font=\tiny]  {$\int P( z,w) =0$};
\draw (259,195) node [anchor=north west][inner sep=0.75pt]   [align=left] {};
\draw (487,153) node [anchor=north west][inner sep=0.75pt]   [align=left] {};
\draw (19,62.4) node [anchor=north west][inner sep=0.75pt]    {$\Gamma _{IV} :$};
\draw (395,73.4) node [anchor=north west][inner sep=0.75pt]    {$\Gamma _{V} :$};
\draw    (485.22,206) -- (490.13,175.97) ;
\draw [shift={(490.45,174)}, rotate = 459.29] [color={rgb, 255:red, 0; green, 0; blue, 0 }  ][line width=0.75]    (10.93,-3.29) .. controls (6.95,-1.4) and (3.31,-0.3) .. (0,0) .. controls (3.31,0.3) and (6.95,1.4) .. (10.93,3.29)   ;
\draw    (236.57,245) -- (254.97,217.66) ;
\draw [shift={(256.09,216)}, rotate = 483.94] [color={rgb, 255:red, 0; green, 0; blue, 0 }  ][line width=0.75]    (10.93,-3.29) .. controls (6.95,-1.4) and (3.31,-0.3) .. (0,0) .. controls (3.31,0.3) and (6.95,1.4) .. (10.93,3.29)   ;

\end{tikzpicture}
  \caption{}\label{Type45}
\end{figure}
\begin{figure}
  \centering

\tikzset{every picture/.style={line width=0.75pt}} 

\begin{tikzpicture}[x=0.75pt,y=0.75pt,yscale=-1,xscale=1]

\draw [color={rgb, 255:red, 74; green, 144; blue, 226 }  ,draw opacity=1 ][line width=0.75]    (238,140.29) -- (260,104.29) ;
\draw [shift={(260,104.29)}, rotate = 301.43] [color={rgb, 255:red, 74; green, 144; blue, 226 }  ,draw opacity=1 ][fill={rgb, 255:red, 74; green, 144; blue, 226 }  ,fill opacity=1 ][line width=0.75]      (0, 0) circle [x radius= 3.35, y radius= 3.35]   ;
\draw [shift={(238,140.29)}, rotate = 301.43] [color={rgb, 255:red, 74; green, 144; blue, 226 }  ,draw opacity=1 ][fill={rgb, 255:red, 74; green, 144; blue, 226 }  ,fill opacity=1 ][line width=0.75]      (0, 0) circle [x radius= 3.35, y radius= 3.35]   ;
\draw [color={rgb, 255:red, 74; green, 144; blue, 226 }  ,draw opacity=1 ][fill={rgb, 255:red, 74; green, 144; blue, 226 }  ,fill opacity=1 ][line width=0.75]    (238,140.29) -- (276,180.29) ;
\draw [color={rgb, 255:red, 74; green, 144; blue, 226 }  ,draw opacity=1 ][line width=0.75]    (276,180.29) -- (317,175.29) ;
\draw [shift={(317,175.29)}, rotate = 353.05] [color={rgb, 255:red, 74; green, 144; blue, 226 }  ,draw opacity=1 ][fill={rgb, 255:red, 74; green, 144; blue, 226 }  ,fill opacity=1 ][line width=0.75]      (0, 0) circle [x radius= 3.35, y radius= 3.35]   ;
\draw [shift={(276,180.29)}, rotate = 353.05] [color={rgb, 255:red, 74; green, 144; blue, 226 }  ,draw opacity=1 ][fill={rgb, 255:red, 74; green, 144; blue, 226 }  ,fill opacity=1 ][line width=0.75]      (0, 0) circle [x radius= 3.35, y radius= 3.35]   ;
\draw [color={rgb, 255:red, 74; green, 144; blue, 226 }  ,draw opacity=1 ][line width=0.75]    (260,104.29) -- (308,100.29) ;
\draw [color={rgb, 255:red, 74; green, 144; blue, 226 }  ,draw opacity=1 ][line width=0.75]    (308,100.29) -- (326,136.29) ;
\draw [shift={(326,136.29)}, rotate = 63.43] [color={rgb, 255:red, 74; green, 144; blue, 226 }  ,draw opacity=1 ][fill={rgb, 255:red, 74; green, 144; blue, 226 }  ,fill opacity=1 ][line width=0.75]      (0, 0) circle [x radius= 3.35, y radius= 3.35]   ;
\draw [shift={(308,100.29)}, rotate = 63.43] [color={rgb, 255:red, 74; green, 144; blue, 226 }  ,draw opacity=1 ][fill={rgb, 255:red, 74; green, 144; blue, 226 }  ,fill opacity=1 ][line width=0.75]      (0, 0) circle [x radius= 3.35, y radius= 3.35]   ;
\draw [color={rgb, 255:red, 74; green, 144; blue, 226 }  ,draw opacity=1 ][line width=0.75]    (326,136.29) -- (317,175.29) ;
\draw    (226.45,61.49) -- (260,104.29) ;
\draw [shift={(260,104.29)}, rotate = 51.91] [color={rgb, 255:red, 0; green, 0; blue, 0 }  ][fill={rgb, 255:red, 0; green, 0; blue, 0 }  ][line width=0.75]      (0, 0) circle [x radius= 3.35, y radius= 3.35]   ;
\draw [shift={(225,59.64)}, rotate = 51.91] [color={rgb, 255:red, 0; green, 0; blue, 0 }  ][line width=0.75]      (0, 0) circle [x radius= 3.35, y radius= 3.35]   ;
\draw    (246,43.29) -- (260,104.29) ;
\draw [shift={(246,43.29)}, rotate = 122.07] [color={rgb, 255:red, 0; green, 0; blue, 0 }  ][line width=0.75]    (-5.59,0) -- (5.59,0)(0,5.59) -- (0,-5.59)   ;
\draw    (276,180.29) -- (254,235.29) ;
\draw [shift={(254,235.29)}, rotate = 156.8] [color={rgb, 255:red, 0; green, 0; blue, 0 }  ][line width=0.75]    (-5.59,0) -- (5.59,0)(0,5.59) -- (0,-5.59)   ;
\draw [shift={(276,180.29)}, rotate = 111.8] [color={rgb, 255:red, 0; green, 0; blue, 0 }  ][fill={rgb, 255:red, 0; green, 0; blue, 0 }  ][line width=0.75]      (0, 0) circle [x radius= 3.35, y radius= 3.35]   ;
\draw    (317,175.29) -- (360,208.29) ;
\draw [shift={(360,208.29)}, rotate = 82.5] [color={rgb, 255:red, 0; green, 0; blue, 0 }  ][line width=0.75]    (-5.59,0) -- (5.59,0)(0,5.59) -- (0,-5.59)   ;
\draw [shift={(317,175.29)}, rotate = 37.5] [color={rgb, 255:red, 0; green, 0; blue, 0 }  ][fill={rgb, 255:red, 0; green, 0; blue, 0 }  ][line width=0.75]      (0, 0) circle [x radius= 3.35, y radius= 3.35]   ;
\draw    (326,136.29) -- (370,123.29) ;
\draw [shift={(370,123.29)}, rotate = 388.54] [color={rgb, 255:red, 0; green, 0; blue, 0 }  ][line width=0.75]    (-5.59,0) -- (5.59,0)(0,5.59) -- (0,-5.59)   ;
\draw [shift={(326,136.29)}, rotate = 343.54] [color={rgb, 255:red, 0; green, 0; blue, 0 }  ][fill={rgb, 255:red, 0; green, 0; blue, 0 }  ][line width=0.75]      (0, 0) circle [x radius= 3.35, y radius= 3.35]   ;
\draw    (308,100.29) -- (325,60.29) ;
\draw [shift={(325,60.29)}, rotate = 338.03] [color={rgb, 255:red, 0; green, 0; blue, 0 }  ][line width=0.75]    (-5.59,0) -- (5.59,0)(0,5.59) -- (0,-5.59)   ;
\draw [shift={(308,100.29)}, rotate = 293.03] [color={rgb, 255:red, 0; green, 0; blue, 0 }  ][fill={rgb, 255:red, 0; green, 0; blue, 0 }  ][line width=0.75]      (0, 0) circle [x radius= 3.35, y radius= 3.35]   ;
\draw    (192,145.29) -- (235.66,140.55) ;
\draw [shift={(238,140.29)}, rotate = 353.8] [color={rgb, 255:red, 0; green, 0; blue, 0 }  ][line width=0.75]      (0, 0) circle [x radius= 3.35, y radius= 3.35]   ;
\draw [shift={(192,145.29)}, rotate = 398.8] [color={rgb, 255:red, 0; green, 0; blue, 0 }  ][line width=0.75]    (-5.59,0) -- (5.59,0)(0,5.59) -- (0,-5.59)   ;
\draw    (202.03,161.44) -- (238,140.29) ;
\draw [shift={(238,140.29)}, rotate = 329.54] [color={rgb, 255:red, 0; green, 0; blue, 0 }  ][fill={rgb, 255:red, 0; green, 0; blue, 0 }  ][line width=0.75]      (0, 0) circle [x radius= 3.35, y radius= 3.35]   ;
\draw [shift={(200,162.64)}, rotate = 329.54] [color={rgb, 255:red, 0; green, 0; blue, 0 }  ][line width=0.75]      (0, 0) circle [x radius= 3.35, y radius= 3.35]   ;
\draw    (216.01,77.85) -- (260,104.29) ;
\draw [shift={(260,104.29)}, rotate = 31.02] [color={rgb, 255:red, 0; green, 0; blue, 0 }  ][fill={rgb, 255:red, 0; green, 0; blue, 0 }  ][line width=0.75]      (0, 0) circle [x radius= 3.35, y radius= 3.35]   ;
\draw [shift={(214,76.64)}, rotate = 31.02] [color={rgb, 255:red, 0; green, 0; blue, 0 }  ][line width=0.75]      (0, 0) circle [x radius= 3.35, y radius= 3.35]   ;
\draw [color={rgb, 255:red, 74; green, 144; blue, 226 }  ,draw opacity=1 ][line width=0.75]    (419,64.64) -- (485,64.64) ;
\draw    (446,105.64) -- (494,105.29) ;
\draw [shift={(494,105.29)}, rotate = 404.59] [color={rgb, 255:red, 0; green, 0; blue, 0 }  ][line width=0.75]    (-5.59,0) -- (5.59,0)(0,5.59) -- (0,-5.59)   ;
\draw    (485.65,151.64) -- (442,151.64) ;
\draw [shift={(488,151.64)}, rotate = 180] [color={rgb, 255:red, 0; green, 0; blue, 0 }  ][line width=0.75]      (0, 0) circle [x radius= 3.35, y radius= 3.35]   ;

\draw (223,25.4) node [anchor=north west][inner sep=0.75pt]  [font=\tiny]  {$dy=\gamma ^{\dagger }$};
\draw (180,72.4) node [anchor=north west][inner sep=0.75pt]  [font=\tiny]  {$y=\gamma $};
\draw (436,46.4) node [anchor=north west][inner sep=0.75pt]  [font=\tiny]  {$P( z,w)$};
\draw (407,60.4) node [anchor=north west][inner sep=0.75pt]  [font=\tiny]  {$\beta $};
\draw (489,60.4) node [anchor=north west][inner sep=0.75pt]  [font=\tiny]  {$\gamma $};
\draw (504,101.4) node [anchor=north west][inner sep=0.75pt]  [font=\tiny]  {$dy=\gamma ^{\dagger }$};
\draw (504,147.4) node [anchor=north west][inner sep=0.75pt]  [font=\tiny]  {$y=\gamma $};
\draw (66,34.4) node [anchor=north west][inner sep=0.75pt]    {$\Gamma _{1} :$};

\end{tikzpicture}
  \caption{}\label{Type1}
\end{figure}
\begin{figure}
  \centering

\tikzset{every picture/.style={line width=0.75pt}} 

\begin{tikzpicture}[x=0.75pt,y=0.75pt,yscale=-1,xscale=1]

\draw    (309.86,90.63) -- (332.42,143.09) ;
\draw [shift={(332.42,143.09)}, rotate = 111.73] [color={rgb, 255:red, 0; green, 0; blue, 0 }  ][line width=0.75]    (-5.59,0) -- (5.59,0)(0,5.59) -- (0,-5.59)   ;
\draw [shift={(309.86,90.63)}, rotate = 66.73] [color={rgb, 255:red, 0; green, 0; blue, 0 }  ][fill={rgb, 255:red, 0; green, 0; blue, 0 }  ][line width=0.75]      (0, 0) circle [x radius= 3.35, y radius= 3.35]   ;
\draw    (309.86,90.63) -- (269.81,112.56) ;
\draw [shift={(267.74,113.69)}, rotate = 151.29] [color={rgb, 255:red, 0; green, 0; blue, 0 }  ][line width=0.75]      (0, 0) circle [x radius= 3.35, y radius= 3.35]   ;
\draw [shift={(309.86,90.63)}, rotate = 151.29] [color={rgb, 255:red, 0; green, 0; blue, 0 }  ][fill={rgb, 255:red, 0; green, 0; blue, 0 }  ][line width=0.75]      (0, 0) circle [x radius= 3.35, y radius= 3.35]   ;
\draw    (309.86,90.63) -- (264.62,64.56) ;
\draw [shift={(262.59,63.39)}, rotate = 209.95] [color={rgb, 255:red, 0; green, 0; blue, 0 }  ][line width=0.75]      (0, 0) circle [x radius= 3.35, y radius= 3.35]   ;
\draw [shift={(309.86,90.63)}, rotate = 209.95] [color={rgb, 255:red, 0; green, 0; blue, 0 }  ][fill={rgb, 255:red, 0; green, 0; blue, 0 }  ][line width=0.75]      (0, 0) circle [x radius= 3.35, y radius= 3.35]   ;
\draw    (352.27,100.25) -- (312.16,91.15) ;
\draw [shift={(309.86,90.63)}, rotate = 192.78] [color={rgb, 255:red, 0; green, 0; blue, 0 }  ][line width=0.75]      (0, 0) circle [x radius= 3.35, y radius= 3.35]   ;
\draw [shift={(354.56,100.77)}, rotate = 192.78] [color={rgb, 255:red, 0; green, 0; blue, 0 }  ][line width=0.75]      (0, 0) circle [x radius= 3.35, y radius= 3.35]   ;
\draw    (309.86,49.67) -- (309.86,88.28) ;
\draw [shift={(309.86,90.63)}, rotate = 90] [color={rgb, 255:red, 0; green, 0; blue, 0 }  ][line width=0.75]      (0, 0) circle [x radius= 3.35, y radius= 3.35]   ;
\draw [shift={(309.86,47.32)}, rotate = 90] [color={rgb, 255:red, 0; green, 0; blue, 0 }  ][line width=0.75]      (0, 0) circle [x radius= 3.35, y radius= 3.35]   ;

\draw (101,53.4) node [anchor=north west][inner sep=0.75pt]    {$\Gamma _{2} :$};
\draw (226.5,187.72) node [anchor=north west][inner sep=0.75pt]  [font=\tiny]  {$\theta \left( f_{j}( y)\frac{\partial }{\partial y^{l_{j}}}\right) =f_{j}( \gamma ) \beta ^{l_{j} \dagger } +\mathop{\sum}\limits _{s=1}^{N}\frac{\partial f_{j}( \gamma )}{\partial \gamma ^{s}} \gamma ^{s\dagger } \beta ^{l_{j}}$};
\draw (292.35,103.84) node [anchor=north west][inner sep=0.75pt]    {$$};
\draw (225.42,53.93) node [anchor=north west][inner sep=0.75pt]  [font=\tiny]  {$y=\gamma $};
\draw (348.5,140.64) node [anchor=north west][inner sep=0.75pt]  [font=\tiny]  {$dy=\gamma ^{\dagger }$};

\end{tikzpicture}
  \caption{}\label{Type2}
\end{figure}
From the above discussion, we conclude that
$$
\Trace(\underline{\mathrm{\mathbf{1}}})=e^{\frac{\pi\theta}{i\hbar}}e^{\mathop{\sum}\limits_{\small{\Gamma_1\in \mathfrak{G}_1}}\frac{W_{\Gamma_1}}{{\mathrm{Aut}(\Gamma_1)|}}}.
$$
In particular, $  \Trace_{(k),0}=e^{\mathop{\sum}\limits_{\small{\Gamma_1\in \mathfrak{G}_1}}\frac{W_{\Gamma_1}}{{\mathrm{Aut}(\Gamma_1)|}}}$ is a product of weights of some wheel diagrams in $\mathfrak{G}_1$ and $  \Trace_{(k+1),-1}$ is the sum of products of $  \Trace_{(k),0}$ and the weight of a diagram in $\mathfrak{G}_2$.

And we have
\begin{equation}\label{FeynmanBVtreeOneloop}
  \Trace_{(k+1),-1}=\sum_{j=1}^{k+1} (-1)^{\bullet_j} \Delta_{\mathrm{BV}}(\dashint_{X}dz_j\frac{\pi}{i}(\sum_{s=1}^{N}\frac{\partial f_j(\gamma)}{\partial \gamma^s}\gamma^{s\dagger}\beta^{l_j}d\bar{z}_j+f_j(\gamma)\beta^{l_j\dagger}d\bar{z}_j)\wedge \mathrm{\mathbf{Tr}}_{(k),0}\{\xi_1\wedge\cdots\wedge \widehat{\xi}_j\wedge\cdots\wedge \xi_{k+1}\}).
\end{equation}
See Fig \ref{TraceTreeOneloop}.
\begin{figure}
  \centering

\tikzset{every picture/.style={line width=0.75pt}} 

\begin{tikzpicture}[x=0.75pt,y=0.75pt,yscale=-1,xscale=1]

\draw [color={rgb, 255:red, 74; green, 144; blue, 226 }  ,draw opacity=1 ][line width=0.75]    (291.11,122.44) -- (302.43,103.93) ;
\draw [shift={(302.43,103.93)}, rotate = 301.43] [color={rgb, 255:red, 74; green, 144; blue, 226 }  ,draw opacity=1 ][fill={rgb, 255:red, 74; green, 144; blue, 226 }  ,fill opacity=1 ][line width=0.75]      (0, 0) circle [x radius= 3.35, y radius= 3.35]   ;
\draw [shift={(291.11,122.44)}, rotate = 301.43] [color={rgb, 255:red, 74; green, 144; blue, 226 }  ,draw opacity=1 ][fill={rgb, 255:red, 74; green, 144; blue, 226 }  ,fill opacity=1 ][line width=0.75]      (0, 0) circle [x radius= 3.35, y radius= 3.35]   ;
\draw [color={rgb, 255:red, 74; green, 144; blue, 226 }  ,draw opacity=1 ][fill={rgb, 255:red, 74; green, 144; blue, 226 }  ,fill opacity=1 ][line width=0.75]    (291.11,122.44) -- (310.66,143.01) ;
\draw [color={rgb, 255:red, 74; green, 144; blue, 226 }  ,draw opacity=1 ][line width=0.75]    (310.66,143.01) -- (331.74,140.44) ;
\draw [shift={(331.74,140.44)}, rotate = 353.05] [color={rgb, 255:red, 74; green, 144; blue, 226 }  ,draw opacity=1 ][fill={rgb, 255:red, 74; green, 144; blue, 226 }  ,fill opacity=1 ][line width=0.75]      (0, 0) circle [x radius= 3.35, y radius= 3.35]   ;
\draw [shift={(310.66,143.01)}, rotate = 353.05] [color={rgb, 255:red, 74; green, 144; blue, 226 }  ,draw opacity=1 ][fill={rgb, 255:red, 74; green, 144; blue, 226 }  ,fill opacity=1 ][line width=0.75]      (0, 0) circle [x radius= 3.35, y radius= 3.35]   ;
\draw [color={rgb, 255:red, 74; green, 144; blue, 226 }  ,draw opacity=1 ][line width=0.75]    (302.43,103.93) -- (327.11,101.87) ;
\draw [color={rgb, 255:red, 74; green, 144; blue, 226 }  ,draw opacity=1 ][line width=0.75]    (327.11,101.87) -- (336.37,120.38) ;
\draw [shift={(336.37,120.38)}, rotate = 63.44] [color={rgb, 255:red, 74; green, 144; blue, 226 }  ,draw opacity=1 ][fill={rgb, 255:red, 74; green, 144; blue, 226 }  ,fill opacity=1 ][line width=0.75]      (0, 0) circle [x radius= 3.35, y radius= 3.35]   ;
\draw [shift={(327.11,101.87)}, rotate = 63.44] [color={rgb, 255:red, 74; green, 144; blue, 226 }  ,draw opacity=1 ][fill={rgb, 255:red, 74; green, 144; blue, 226 }  ,fill opacity=1 ][line width=0.75]      (0, 0) circle [x radius= 3.35, y radius= 3.35]   ;
\draw [color={rgb, 255:red, 74; green, 144; blue, 226 }  ,draw opacity=1 ][line width=0.75]    (336.37,120.38) -- (331.74,140.44) ;
\draw    (285.88,82.81) -- (302.43,103.93) ;
\draw [shift={(302.43,103.93)}, rotate = 51.91] [color={rgb, 255:red, 0; green, 0; blue, 0 }  ][fill={rgb, 255:red, 0; green, 0; blue, 0 }  ][line width=0.75]      (0, 0) circle [x radius= 3.35, y radius= 3.35]   ;
\draw [shift={(284.43,80.96)}, rotate = 51.91] [color={rgb, 255:red, 0; green, 0; blue, 0 }  ][line width=0.75]      (0, 0) circle [x radius= 3.35, y radius= 3.35]   ;
\draw    (295.23,72.55) -- (302.43,103.93) ;
\draw [shift={(295.23,72.55)}, rotate = 122.08] [color={rgb, 255:red, 0; green, 0; blue, 0 }  ][line width=0.75]    (-5.59,0) -- (5.59,0)(0,5.59) -- (0,-5.59)   ;
\draw    (310.66,143.01) -- (299.34,171.3) ;
\draw [shift={(299.34,171.3)}, rotate = 156.8] [color={rgb, 255:red, 0; green, 0; blue, 0 }  ][line width=0.75]    (-5.59,0) -- (5.59,0)(0,5.59) -- (0,-5.59)   ;
\draw [shift={(310.66,143.01)}, rotate = 111.8] [color={rgb, 255:red, 0; green, 0; blue, 0 }  ][fill={rgb, 255:red, 0; green, 0; blue, 0 }  ][line width=0.75]      (0, 0) circle [x radius= 3.35, y radius= 3.35]   ;
\draw    (331.74,140.44) -- (353.86,157.42) ;
\draw [shift={(353.86,157.42)}, rotate = 82.50999999999999] [color={rgb, 255:red, 0; green, 0; blue, 0 }  ][line width=0.75]    (-5.59,0) -- (5.59,0)(0,5.59) -- (0,-5.59)   ;
\draw [shift={(331.74,140.44)}, rotate = 37.51] [color={rgb, 255:red, 0; green, 0; blue, 0 }  ][fill={rgb, 255:red, 0; green, 0; blue, 0 }  ][line width=0.75]      (0, 0) circle [x radius= 3.35, y radius= 3.35]   ;
\draw    (336.37,120.38) -- (359,113.7) ;
\draw [shift={(359,113.7)}, rotate = 388.54] [color={rgb, 255:red, 0; green, 0; blue, 0 }  ][line width=0.75]    (-5.59,0) -- (5.59,0)(0,5.59) -- (0,-5.59)   ;
\draw [shift={(336.37,120.38)}, rotate = 343.54] [color={rgb, 255:red, 0; green, 0; blue, 0 }  ][fill={rgb, 255:red, 0; green, 0; blue, 0 }  ][line width=0.75]      (0, 0) circle [x radius= 3.35, y radius= 3.35]   ;
\draw    (327.11,101.87) -- (335.86,81.29) ;
\draw [shift={(335.86,81.29)}, rotate = 338.02] [color={rgb, 255:red, 0; green, 0; blue, 0 }  ][line width=0.75]    (-5.59,0) -- (5.59,0)(0,5.59) -- (0,-5.59)   ;
\draw [shift={(327.11,101.87)}, rotate = 293.02] [color={rgb, 255:red, 0; green, 0; blue, 0 }  ][fill={rgb, 255:red, 0; green, 0; blue, 0 }  ][line width=0.75]      (0, 0) circle [x radius= 3.35, y radius= 3.35]   ;
\draw    (267.46,125.01) -- (291.11,122.44) ;
\draw [shift={(291.11,122.44)}, rotate = 353.8] [color={rgb, 255:red, 0; green, 0; blue, 0 }  ][fill={rgb, 255:red, 0; green, 0; blue, 0 }  ][line width=0.75]      (0, 0) circle [x radius= 3.35, y radius= 3.35]   ;
\draw [shift={(267.46,125.01)}, rotate = 398.8] [color={rgb, 255:red, 0; green, 0; blue, 0 }  ][line width=0.75]    (-5.59,0) -- (5.59,0)(0,5.59) -- (0,-5.59)   ;
\draw    (273.6,132.74) -- (291.11,122.44) ;
\draw [shift={(291.11,122.44)}, rotate = 329.54] [color={rgb, 255:red, 0; green, 0; blue, 0 }  ][fill={rgb, 255:red, 0; green, 0; blue, 0 }  ][line width=0.75]      (0, 0) circle [x radius= 3.35, y radius= 3.35]   ;
\draw [shift={(271.57,133.93)}, rotate = 329.54] [color={rgb, 255:red, 0; green, 0; blue, 0 }  ][line width=0.75]      (0, 0) circle [x radius= 3.35, y radius= 3.35]   ;
\draw    (280.79,90.91) -- (302.43,103.93) ;
\draw [shift={(302.43,103.93)}, rotate = 31.02] [color={rgb, 255:red, 0; green, 0; blue, 0 }  ][fill={rgb, 255:red, 0; green, 0; blue, 0 }  ][line width=0.75]      (0, 0) circle [x radius= 3.35, y radius= 3.35]   ;
\draw [shift={(278.77,89.7)}, rotate = 31.02] [color={rgb, 255:red, 0; green, 0; blue, 0 }  ][line width=0.75]      (0, 0) circle [x radius= 3.35, y radius= 3.35]   ;
\draw [color={rgb, 255:red, 74; green, 144; blue, 226 }  ,draw opacity=1 ][line width=0.75]    (411.11,122.44) -- (422.43,103.93) ;
\draw [shift={(422.43,103.93)}, rotate = 301.43] [color={rgb, 255:red, 74; green, 144; blue, 226 }  ,draw opacity=1 ][fill={rgb, 255:red, 74; green, 144; blue, 226 }  ,fill opacity=1 ][line width=0.75]      (0, 0) circle [x radius= 3.35, y radius= 3.35]   ;
\draw [shift={(411.11,122.44)}, rotate = 301.43] [color={rgb, 255:red, 74; green, 144; blue, 226 }  ,draw opacity=1 ][fill={rgb, 255:red, 74; green, 144; blue, 226 }  ,fill opacity=1 ][line width=0.75]      (0, 0) circle [x radius= 3.35, y radius= 3.35]   ;
\draw [color={rgb, 255:red, 74; green, 144; blue, 226 }  ,draw opacity=1 ][fill={rgb, 255:red, 74; green, 144; blue, 226 }  ,fill opacity=1 ][line width=0.75]    (411.11,122.44) -- (430.66,143.01) ;
\draw [color={rgb, 255:red, 74; green, 144; blue, 226 }  ,draw opacity=1 ][line width=0.75]    (422.43,103.93) -- (447.11,101.87) ;
\draw [color={rgb, 255:red, 74; green, 144; blue, 226 }  ,draw opacity=1 ][line width=0.75]    (447.11,101.87) -- (448,135.3) ;
\draw [shift={(448,135.3)}, rotate = 88.48] [color={rgb, 255:red, 74; green, 144; blue, 226 }  ,draw opacity=1 ][fill={rgb, 255:red, 74; green, 144; blue, 226 }  ,fill opacity=1 ][line width=0.75]      (0, 0) circle [x radius= 3.35, y radius= 3.35]   ;
\draw [shift={(447.11,101.87)}, rotate = 88.48] [color={rgb, 255:red, 74; green, 144; blue, 226 }  ,draw opacity=1 ][fill={rgb, 255:red, 74; green, 144; blue, 226 }  ,fill opacity=1 ][line width=0.75]      (0, 0) circle [x radius= 3.35, y radius= 3.35]   ;
\draw [color={rgb, 255:red, 74; green, 144; blue, 226 }  ,draw opacity=1 ][line width=0.75]    (448,135.3) -- (430.66,143.01) ;
\draw    (405.88,82.81) -- (422.43,103.93) ;
\draw [shift={(422.43,103.93)}, rotate = 51.91] [color={rgb, 255:red, 0; green, 0; blue, 0 }  ][fill={rgb, 255:red, 0; green, 0; blue, 0 }  ][line width=0.75]      (0, 0) circle [x radius= 3.35, y radius= 3.35]   ;
\draw [shift={(404.43,80.96)}, rotate = 51.91] [color={rgb, 255:red, 0; green, 0; blue, 0 }  ][line width=0.75]      (0, 0) circle [x radius= 3.35, y radius= 3.35]   ;
\draw    (415.23,72.55) -- (422.43,103.93) ;
\draw [shift={(415.23,72.55)}, rotate = 122.08] [color={rgb, 255:red, 0; green, 0; blue, 0 }  ][line width=0.75]    (-5.59,0) -- (5.59,0)(0,5.59) -- (0,-5.59)   ;
\draw    (430.66,143.01) -- (419.34,171.3) ;
\draw [shift={(419.34,171.3)}, rotate = 156.8] [color={rgb, 255:red, 0; green, 0; blue, 0 }  ][line width=0.75]    (-5.59,0) -- (5.59,0)(0,5.59) -- (0,-5.59)   ;
\draw [shift={(430.66,143.01)}, rotate = 111.8] [color={rgb, 255:red, 0; green, 0; blue, 0 }  ][fill={rgb, 255:red, 0; green, 0; blue, 0 }  ][line width=0.75]      (0, 0) circle [x radius= 3.35, y radius= 3.35]   ;
\draw    (448,135.3) -- (474,148.3) ;
\draw [shift={(474,148.3)}, rotate = 71.57] [color={rgb, 255:red, 0; green, 0; blue, 0 }  ][line width=0.75]    (-5.59,0) -- (5.59,0)(0,5.59) -- (0,-5.59)   ;
\draw [shift={(448,135.3)}, rotate = 26.57] [color={rgb, 255:red, 0; green, 0; blue, 0 }  ][fill={rgb, 255:red, 0; green, 0; blue, 0 }  ][line width=0.75]      (0, 0) circle [x radius= 3.35, y radius= 3.35]   ;
\draw    (447.11,101.87) -- (470.86,88.29) ;
\draw [shift={(470.86,88.29)}, rotate = 375.24] [color={rgb, 255:red, 0; green, 0; blue, 0 }  ][line width=0.75]    (-5.59,0) -- (5.59,0)(0,5.59) -- (0,-5.59)   ;
\draw [shift={(447.11,101.87)}, rotate = 330.24] [color={rgb, 255:red, 0; green, 0; blue, 0 }  ][fill={rgb, 255:red, 0; green, 0; blue, 0 }  ][line width=0.75]      (0, 0) circle [x radius= 3.35, y radius= 3.35]   ;
\draw    (387.46,125.01) -- (411.11,122.44) ;
\draw [shift={(411.11,122.44)}, rotate = 353.8] [color={rgb, 255:red, 0; green, 0; blue, 0 }  ][fill={rgb, 255:red, 0; green, 0; blue, 0 }  ][line width=0.75]      (0, 0) circle [x radius= 3.35, y radius= 3.35]   ;
\draw [shift={(387.46,125.01)}, rotate = 398.8] [color={rgb, 255:red, 0; green, 0; blue, 0 }  ][line width=0.75]    (-5.59,0) -- (5.59,0)(0,5.59) -- (0,-5.59)   ;
\draw    (393.6,132.74) -- (411.11,122.44) ;
\draw [shift={(411.11,122.44)}, rotate = 329.54] [color={rgb, 255:red, 0; green, 0; blue, 0 }  ][fill={rgb, 255:red, 0; green, 0; blue, 0 }  ][line width=0.75]      (0, 0) circle [x radius= 3.35, y radius= 3.35]   ;
\draw [shift={(391.57,133.93)}, rotate = 329.54] [color={rgb, 255:red, 0; green, 0; blue, 0 }  ][line width=0.75]      (0, 0) circle [x radius= 3.35, y radius= 3.35]   ;
\draw    (210.76,130.28) -- (224,157.3) ;
\draw [shift={(224,157.3)}, rotate = 108.89] [color={rgb, 255:red, 0; green, 0; blue, 0 }  ][line width=0.75]    (-5.59,0) -- (5.59,0)(0,5.59) -- (0,-5.59)   ;
\draw [shift={(210.76,130.28)}, rotate = 63.89] [color={rgb, 255:red, 0; green, 0; blue, 0 }  ][fill={rgb, 255:red, 0; green, 0; blue, 0 }  ][line width=0.75]      (0, 0) circle [x radius= 3.35, y radius= 3.35]   ;
\draw    (210.76,130.28) -- (188.15,141.14) ;
\draw [shift={(186.03,142.16)}, rotate = 154.34] [color={rgb, 255:red, 0; green, 0; blue, 0 }  ][line width=0.75]      (0, 0) circle [x radius= 3.35, y radius= 3.35]   ;
\draw [shift={(210.76,130.28)}, rotate = 154.34] [color={rgb, 255:red, 0; green, 0; blue, 0 }  ][fill={rgb, 255:red, 0; green, 0; blue, 0 }  ][line width=0.75]      (0, 0) circle [x radius= 3.35, y radius= 3.35]   ;
\draw    (210.76,130.28) -- (185.1,117.31) ;
\draw [shift={(183,116.25)}, rotate = 206.82] [color={rgb, 255:red, 0; green, 0; blue, 0 }  ][line width=0.75]      (0, 0) circle [x radius= 3.35, y radius= 3.35]   ;
\draw [shift={(210.76,130.28)}, rotate = 206.82] [color={rgb, 255:red, 0; green, 0; blue, 0 }  ][fill={rgb, 255:red, 0; green, 0; blue, 0 }  ][line width=0.75]      (0, 0) circle [x radius= 3.35, y radius= 3.35]   ;
\draw    (234.7,135.04) -- (213.06,130.74) ;
\draw [shift={(210.76,130.28)}, rotate = 191.26] [color={rgb, 255:red, 0; green, 0; blue, 0 }  ][line width=0.75]      (0, 0) circle [x radius= 3.35, y radius= 3.35]   ;
\draw [shift={(237,135.5)}, rotate = 191.26] [color={rgb, 255:red, 0; green, 0; blue, 0 }  ][line width=0.75]      (0, 0) circle [x radius= 3.35, y radius= 3.35]   ;
\draw    (210.76,110.32) -- (210.76,127.93) ;
\draw [shift={(210.76,130.28)}, rotate = 90] [color={rgb, 255:red, 0; green, 0; blue, 0 }  ][line width=0.75]      (0, 0) circle [x radius= 3.35, y radius= 3.35]   ;
\draw [shift={(210.76,107.97)}, rotate = 90] [color={rgb, 255:red, 0; green, 0; blue, 0 }  ][line width=0.75]      (0, 0) circle [x radius= 3.35, y radius= 3.35]   ;

\draw (66,108.4) node [anchor=north west][inner sep=0.75pt]    {$\Trace_{( k+1) ,-1} =\sum \limits_{j=1}^{k+1}$};
\draw (495,109.4) node [anchor=north west][inner sep=0.75pt]    {$\cdots $};
\draw (331,179.4) node [anchor=north west][inner sep=0.75pt]    {$\underbrace{\ \ \ \ \ \ \ \ \ \ \ \ \ \ \ \ \ \ \ \ \ \ }_{\Trace_{( k-1) ,0}}$};
\draw (95,206.37) node [anchor=north west][inner sep=0.75pt]  [font=\tiny]  {$\theta \left( f_{j}( y)\frac{\partial }{\partial y^{l_{j}}}\right) =f_{j}( \gamma ) \beta ^{l_{j} \dagger } +\mathop{\sum} \limits_{s=1}^{N}\frac{\partial f_{j}( \gamma )}{\partial \gamma ^{s}} \gamma ^{s\dagger } \beta ^{l_{j}}$};
\draw (198,133.4) node [anchor=north west][inner sep=0.75pt]    {$$};
\draw    (178.42,201.97) -- (198.19,154.84) ;
\draw [shift={(198.97,153)}, rotate = 472.76] [color={rgb, 255:red, 0; green, 0; blue, 0 }  ][line width=0.75]    (10.93,-3.29) .. controls (6.95,-1.4) and (3.31,-0.3) .. (0,0) .. controls (3.31,0.3) and (6.95,1.4) .. (10.93,3.29)   ;

\end{tikzpicture}
  \caption{}\label{TraceTreeOneloop}
\end{figure}

Finally, according to the computation in \cite{2011WittenGenus,gorbounov2016chiral}, we have
$$
\mathop{\sum}\limits_{\small{\Gamma_1\in \mathfrak{G}_1}}\frac{W_{\Gamma_1}}{{\mathrm{Aut}(\Gamma_1)|}}=p^*\log \mathrm{Wit}_N(\tau)+\frac{1}{32\pi^4}\widehat{E}_2\cdot p^*\mathrm{tr}(\mathrm{At}^2)
$$
which is cohomologous to $p^*\log \mathrm{Wit}_N(\tau)$ in $\mathrm{C}_{\mathrm{Lie}}(\widetilde{W}_N, \mathrm{GL}_N;\Omega_{\fO})$ since $p^*\mathrm{tr}(\mathrm{At}^2)$ is a coboundary.
\bibliographystyle{amsplain}
\providecommand{\bysame}{\leavevmode\hbox to3em{\hrulefill}\thinspace}
\providecommand{\MR}{\relax\ifhmode\unskip\space\fi MR }
\providecommand{\MRhref}[2]{%
  \href{http://www.ams.org/mathscinet-getitem?mr=#1}{#2}
}
\providecommand{\href}[2]{#2}

\end{document}